\documentclass[12pt,leqno]{amsart}

\bibliographystyle{abbrv}

\usepackage{verbatim}

\usepackage{amsfonts}

\usepackage{amsthm}

\usepackage{amssymb}

\usepackage{amsmath}

\usepackage{mathabx}

\usepackage{enumerate}

\usepackage[all]{xy}

\usepackage{graphicx}

\usepackage[pagebackref,colorlinks,linkcolor=red,citecolor=blue,urlcolor=blue,hypertexnames=true]{hyperref}

\usepackage[pagebackref,hypertexnames=true]{hyperref}

\newcommand{\N}{\mathbb{N}}

\newcommand{\R}{\mathbb{R}}

\newcommand{\Q}{\mathbb{Q}}

\newcommand{\C}{\mathbb{C}}

\newcommand{\supp}{\text{supp}}

\newcommand{\pr}{\text{Prob}}

\newcommand{\e}{{}_Y\mathcal{E}_A}

\newcommand{\eg}{\mathcal{E}_{A\rtimes_r\Gamma}}

\newcommand{\Manoa}{M\=anoa}

\newcommand{\Hawaii}{Hawai\kern.05em`\kern.05em\relax i}

\numberwithin{equation}{section}

\setlength{\parskip}{0.1truein}

\theoremstyle{plain}
\newtheorem{theorem}{Theorem}[section]
\newtheorem{lemma}[theorem]{Lemma}
\newtheorem{corollary}[theorem]{Corollary}
\newtheorem{proposition}[theorem]{Proposition}

\newtheorem{definition-theorem}[theorem]{Definition / Theorem}


\newtheorem*{conjecture*}{Conjecture}
\newtheorem*{theorem*}{Theorem}
\newtheorem*{proposition*}{Proposition}

\newtheorem*{proposition*1}{Proposition~\ref{basecase}}
\newtheorem*{proposition*2}{Proposition~\ref{indstep}}

\theoremstyle{definition}
\newtheorem{definition}[theorem]{Definition}

\theoremstyle{remark}
\newtheorem{remark}[theorem]{Remark}
\newtheorem{question}[theorem]{Question}


\newtheorem*{example*}{Example}  
\newtheorem*{remark*}{Remark}

\title[Dynamical complexity and $K$-theory]{Dynamical complexity and controlled operator $K$-theory}

\author{Erik Guentner}
\address{University of \Hawaii\ at \Manoa.
}
\email{erik@math.hawaii.edu}
\author{Rufus Willett}
\address{University of \Hawaii\ at \Manoa.
}
\email{rufus@math.hawaii.edu}
\author{Guoliang Yu}
\address{Texas A\&M University 
 and Shanghai Center for Mathematical Sciences.}
 \email{guoliangyu@math.tamu.edu}

\begin{document}

\maketitle

\begin{abstract}
In this paper, we introduce a property of topological dynamical systems that we call finite dynamical complexity.  For systems with this property,  one can in principle compute the $K$-theory of the associated crossed product $C^*$-algebra by splitting it up into simpler pieces and using the methods of controlled $K$-theory.  The main part of the paper illustrates this idea by giving a new proof of the Baum-Connes conjecture for actions with finite dynamical complexity.

We have tried to keep the paper as self-contained as possible: we hope the main part will be accessible to someone with the equivalent of a first course in operator $K$-theory.  In particular, we do not assume prior knowledge of controlled $K$-theory, and use a new and concrete model for the Baum-Connes conjecture with coefficients that requires no bivariant $K$-theory to set up. 
\end{abstract}

\tableofcontents

\section{Introduction}\label{intro sec}

Throughout this paper, the symbol `$\Gamma\lefttorightarrow X$' will mean that $\Gamma$ is a countable discrete group, $X$ is a compact Hausdorff space, and $\Gamma$ acts on $X$ by homeomorphisms.  We will abbreviate this information by saying that `$\Gamma\lefttorightarrow X$ is an action'.   

Our work here is based around a new property for actions, which we call \emph{finite dynamical complexity}.  This is partly inspired by the geometric notion of \emph{finite decomposition complexity}, introduced by the first and third authors together with Tessera \cite{Guentner:2009tg}, and by the notion of \emph{dynamic asymptotic dimension}, which was introduced by the current authors in earlier work \cite{Guentner:2014aa}.  

The precise definition of finite dynamical complexity requires groupoid language to state; rather than get into details here, we just give an idea and refer the reader to Definition \ref{gpd fdc} (see also Definition \ref{real gpd fdc}) for the precise version.   Roughly, then, we say an action $\Gamma\lefttorightarrow X$ \emph{decomposes} over some collection $\mathcal{C}$ of `dynamical systems' (more precisely, \'{e}tale groupoids) if it can be `locally cut into two pieces', each of which is in $\mathcal{C}$.  The action $\Gamma\lefttorightarrow X$ has \emph{finite dynamical complexity} if it is contained in the smallest class $\mathcal{C}$ that is: closed under decompositions; and contains all dynamical systems that are `essentially finite' (more precisely, have compact closure inside the ambient \'{e}tale groupoid).  

This definition allows the $K$-theory groups $K_*(C(X)\rtimes_r\Gamma)$ to be computed, at least in principle: the idea is that one can often compute the $K$-theory of essentially finite pieces using classical (`commutative') techniques from algebraic topology and the theory of type $I$ $C^*$-algebras, then use generalized (`controlled' \cite{Oyono-Oyono:2011fk}) Mayer-Vietoris arguments to reassemble this into the $K$-theory of the whole crossed product $C(X)\rtimes_r\Gamma$.  Strikingly, the $C^*$-algebras $C(X)\rtimes_r\Gamma$ to which these methods apply are often simple; this one has no hope of applying classical Mayer-Vietoris techniques, as these require the presence of non-trivial ideals.  This strategy works particularly well when one is trying to show vanishing of certain $K$-theory groups.

To illustrate this strategy for computing $K$-theory, the main part of this paper applies the idea above to the Baum-Connes conjecture for an action $\Gamma\lefttorightarrow X$ with finite dynamical complexity.  This conjecture (a special case of the Baum-Connes conjecture for $\Gamma$ with coefficients \cite{Baum:1994pr}) posits that a particular \emph{assembly map}
\begin{equation}\label{bc map}
\mu:KK^\text{top}_*(\Gamma,C(X))\to K_*(C(X)\rtimes_r\Gamma)
\end{equation}
is an isomorphism; here the domain is a topologically defined group associated to the action, and the codomain is the operator $K$-theory of the reduced crossed product $C^*$-algebra $C(X)\rtimes_r\Gamma$, an analytically defined object.  The existence of such an isomorphism relating two quite different aspects of the action has important consequences for both: for example, it has consequences for Novikov-type conjectures associated to $\Gamma$, and implies the existence of various tools to better understand the $K$-theory of the crossed product.

The main part of the paper proves the following result, which is inspired in part by the third author's work \cite{Yu:1998wj} on the coarse Baum-Connes conjecture for spaces with finite asymptotic dimension, the first and third authors' work with Tessera on the bounded Borel conjecture for spaces with finite decomposition complexity \cite{Guentner:2009tg}, and the work of all three authors on dynamic asymptotic dimension \cite{Guentner:2014aa}.

\begin{theorem}\label{act the}
Let $\Gamma\lefttorightarrow X$ be an action with finite dynamical complexity, where $X$ is a second countable compact space.  Then the Baum-Connes conjecture holds for $\Gamma$ with coefficients in $C(X)$.
\end{theorem}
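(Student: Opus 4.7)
The plan is to proceed by induction on the complexity hierarchy implicit in finite dynamical complexity. Let $\mathcal{D}_0$ denote the class of \'{e}tale groupoids with compact closure inside the ambient transformation groupoid, and for each ordinal $\alpha$ define $\mathcal{D}_\alpha$ to consist of those groupoids that decompose into two open subgroupoids lying in $\bigcup_{\beta<\alpha} \mathcal{D}_\beta$. Finite dynamical complexity of the action means the transformation groupoid lies in some $\mathcal{D}_\alpha$, so it suffices to verify the assembly map (\ref{bc map}) is an isomorphism for each groupoid arising in this hierarchy. Second countability of $X$ should ensure that the relevant ordinals remain countable and that the relevant $K$-theory groups are accessible by continuity arguments.

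The \emph{base case} (Proposition~\ref{basecase}) treats essentially finite groupoids: here the reduced crossed product is effectively subhomogeneous, both sides of the assembly map should admit explicit descriptions via classical algebraic topology and the representation theory of type~$I$ $C^*$-algebras, and the assembly map should be identifiable with a canonical comparison map between these descriptions. The \emph{inductive step} (Proposition~\ref{indstep}) takes a groupoid in $\mathcal{D}_\alpha$, fixes a decomposition into open subgroupoids $U$ and $V$ of strictly lower complexity, and builds a commuting diagram of six-term Mayer--Vietoris sequences --- one on the topological side $KK^{\text{top}}_*(\Gamma, C(X))$ and one on the analytic side $K_*(C(X)\rtimes_r\Gamma)$ --- with vertical arrows the assembly maps for $U$, $V$, $U\cap V$, and the whole action. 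Since three of the four vertical arrows are isomorphisms by the inductive hypothesis, a five-lemma argument promotes the isomorphism to the whole action and closes the induction.

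The \emph{main obstacle} is producing the Mayer--Vietoris sequence on the analytic side. Since $C(X)\rtimes_r\Gamma$ is often simple, there is no honest ideal decomposition to drive a classical excision sequence. The remedy is controlled $K$-theory: one tracks the propagation of operators in the groupoid direction, so that propagation-bounded elements of $K$-theory are effectively supported on a compact piece of the groupoid and can be cut along the decomposition; passing to a limit in the propagation parameter recovers the full $K$-theory. To make the five-lemma go through, one must set up a concrete model of the assembly map in which \emph{both} sides carry compatible controlled Mayer--Vietoris sequences. I expect this to be the technical heart of the paper, and it explains the introduction's emphasis on building a new concrete model for $KK^{\text{top}}_*(\Gamma,C(X))$: the point is to match, term by term and with naturality, the propagation-based decomposition available on the crossed product side.
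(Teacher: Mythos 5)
Your high-level instincts — induction over the decomposition hierarchy, controlled $K$-theory to substitute for absent ideals, and the need for a concrete model of the assembly map — are all aligned with the paper. But the architecture of the argument you propose (two parallel Mayer--Vietoris sequences, one on the $KK^{\mathrm{top}}$ side and one on the crossed product side, linked by assembly maps, closed by the five lemma) is not what the paper does, and I think it would run into serious trouble.

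First, there is no useful six-term Mayer--Vietoris sequence on the topological side adapted to a groupoid decomposition. The decomposition here is a cover of the unit space $G^{(0)}$ by open sets $U_0,U_1$ and the resulting subgroupoids $G_i$ are \emph{not} transformation groupoids of $\Gamma$ acting on closed subsets of $X$, nor do they correspond to ideals in the coefficient algebra $C(X)$. So the groups $KK^{\mathrm{top}}_*(\Gamma,C(X))$ do not visibly decompose along the $G_i$. Second, even on the analytic side, the controlled Mayer--Vietoris of Proposition~\ref{con mv} is not a six-term exact sequence of abelian groups at a fixed scale: the boundary map $\partial_c$ increases the propagation parameter ($r_0 \mapsto r_1$), and the splitting $x = y + z$ lives at a still larger scale $r_2$ and a changed Rips parameter. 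The five lemma requires honest exact sequences with the same groups in all positions; here the constants degrade at every step, and the controlled $K_1$ object is only a monoid (see the footnote after Definition~\ref{k1}). There is no clean statement of the form ``three of the five vertical arrows are isomorphisms, hence so is the fourth'' that one could invoke.

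What the paper does instead is to sidestep the five lemma entirely. Lemma~\ref{oblem} replaces ``$\mu$ is an isomorphism'' with ``$\lim_s K_*(C^*_{L,0}(\Gamma\lefttorightarrow X;s)) = 0$,'' so that one only has to prove a \emph{vanishing} statement for a single family of $C^*$-algebras. Vanishing is much more forgiving of controlled $K$-theory's defects: one need only show that any fixed controlled class dies after relaxing the propagation and scale parameters enough (Proposition~\ref{indstep}), which is exactly the kind of statement the controlled Mayer--Vietoris machinery can deliver. For the base case, the paper does not identify both sides of an assembly map against explicit computations; it shows directly that $K_*(A^s(G)) = 0$ when $G$ has bounded propagation, using an Eilenberg-swindle-flavored homotopy-invariance argument (Proposition~\ref{basecase}, via Proposition~\ref{al e s}). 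Your description of the base case (``subhomogeneous,'' ``explicit descriptions via classical algebraic topology'') gestures at something that might be true but is not what the argument needs and is not proved here.

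So the key missing idea in your proposal is the reformulation via the obstruction algebra, which converts the isomorphism problem into a vanishing problem and thereby makes the controlled Mayer--Vietoris usable without a five lemma.
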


Our proof of Theorem \ref{act the} starts by replacing the problem of proving that $\mu$ as in line \eqref{bc map} above is an isomorphism with the problem of showing that the $K$-theory of a certain \emph{obstruction $C^*$-algebra} $A(\Gamma\lefttorightarrow X)$ vanishes.  For this obstruction $C^*$-algebra one can apply the strategy for computing $K$-theory outlined above, and show that it is indeed zero.

The hypotheses of Theorem \ref{act the} cover many interesting actions: we refer the reader to our companion paper \cite{Guentner:2014aa}, particularly the introduction, for a discussion of the case of finite dynamic asymptotic dimension.  We suspect that finite dynamic dimension implies finite dynamical complexity, but did not seriously pursue at this stage.\\

Relating the above to the literature, we should note that Theorem \ref{act the} is implied by earlier work: indeed, it follows from work of Tu \cite{Tu:1999bq} on the Baum-Connes conjecture for amenable groupoids and the fact (Theorem \ref{fdc vs fdc} below) that finite dynamical complexity of a groupoid implies amenability.  Some of the key tools in Tu's proof are the Dirac-dual-Dirac method of Kasparov \cite{Kasparov:1988dw}, the work of Higson and Kasparov on the Baum-Connes conjecture for a-T-menable groups \cite{Higson:2001eb}, and Le Gall's groupoid-equivariant bivariant $K$-theory \cite{Le-Gall:1999aa}.  As already hinted at above, our proof is quite different: it gives a direct way of understanding the group $K_*(C(X)\rtimes_r\Gamma)$ that uses much less machinery.

Our motivations for giving a new proof of Theorem \ref{act the} are fourfold.  First, we want to illustrate the controlled methods for computing $K$-theory as already mentioned above.  Second, we want to make the Baum-Connes theory more direct so that it might be adapted to computations of $K$-theory for much more general classes of $C^*$-algebras with an eye on the K\"{u}nneth theorem \cite{Oyono-Oyono:2016qd} and UCT problem\footnote{Since this paper was written, a program of this sort has been carried out  in \cite{Willett:2021te}.}.  Third, we want to make techniques from the Baum-Connes theory more algebraic, so as to highlight and strengthen interactions with the Farrell-Jones theory in algebraic topology \cite{Bartels:2012fk,Bartels:2012fu}.  Fourth, the proof is fairly self-contained: we have tried to make it accessible to a reader who has understood an introduction to $C^*$-algebra $K$-theory at the level of \cite{Rordam:2000mz} or \cite{Wegge-Olsen:1993kx}.  

On this fourth point, we hope that the paper can be read without prior  knowledge of Baum-Connes theory, groupoids, controlled $K$-theory, or even crossed product $C^*$-algebras.  This makes the proof more elementary than most existing proofs of special cases of the Baum-Connes conjecture.  In order to do this, we introduce a direct geometric / combinatorial reformulation of the Baum-Connes conjecture; we show that it agrees with the traditional one using Kasparov's $KK$-theory \cite{Baum:1994pr} in an appendix.  Using these elementary methods also has the advantage that Theorem \ref{act the} remains true (correctly interpreted) if one drops the second countability assumption on $X$.

To conclude this introduction, we should note that this paper only just starts the study of finite dynamical complexity and its relation to other properties.  We ask several open questions in \ref{fdc vs amen q} through \ref{fdc c* q} below: some of these might be difficult, but we suspect some are quite accessible.

\subsubsection*{Outline of the paper}

Section \ref{ass sec} builds a concrete model for the Baum-Connes assembly map for an action $\Gamma\lefttorightarrow X$ based on the localization algebras used by the third author to give a model for the coarse Baum-Connes assembly map \cite{Yu:1997kb}.  Section \ref{gpd sec} introduces some language from groupoid theory that will be useful in carrying out various decompositions, and which is crucial for the definition of finite dynamical complexity given at the end of that section.  Section \ref{conk sec} gives a self-contained description of the controlled $K$-theory groups we will need for the proof, following work of the third author \cite{Yu:1998wj}, and of Oyono-Oyono in collaboration with the third author \cite{Oyono-Oyono:2011fk}.    Section \ref{strat sec} lays out the strategy for proving Theorem \ref{act the}, which is based roughly on the proof of the coarse Baum-Connes conjecture for spaces with finite asymptotic dimension of the third author \cite{Yu:1998wj}, and the work of the first and third authors with Tessera \cite{Guentner:2009tg} on the stable Borel conjecture; in particular, it reduces the proof to two technical propositions.  These technical propositions are established in Sections \ref{base sec} and \ref{ind sec}.  There are two appendices, which require a bit more background of the reader.  Appendix \ref{fdc sec} relates our finite dynamical complexity to finite decomposition complexity in the sense of \cite{Guentner:2009tg}, and to topological amenability \cite{Anantharaman-Delaroche:2000mw} as well as asking some questions; this requires some background in the general theory of \'{e}tale groupoids.  Appendix \ref{bc sec} identifies our model for the Baum-Connes assembly map with one of the standard models using $KK$-theory; as such, it requires some background in equivariant $KK$-theory.  The appendices are included to connect what we have done here to preexisting theory, and are certainly not needed to understand the rest of the paper.

\subsubsection*{Acknowledgments}

The authors would like to thank the University of \Hawaii~at \Manoa, the Shanghai Center for Mathematical Sciences, and Texas A\&M University for their hospitality during some of the work on this project.  We would also like to thank Yeong Chyuan Chung, Hao Guo, Yuhei Suzuki, and Santiago Vega for some helpful comments on earlier versions.  We are particularly grateful to Suzuki and Vega who (independently) spotted a fairly serious mistake in an earlier version of this manuscript.  This mistake, and the fact that the current version seems more natural and general, were our motivations for switching the focus of the paper from dynamic asymptotic dimension (as was the case in earlier drafts) to finite dynamical complexity.

The first author was partially supported by the Simons Foundation ($\#245398$).  The second author was partially supported by the US NSF (DMS-$1401126$ and DMS-$1564281$).  The third author was partially supported by the US NSF (DMS-$1362772$ and DMS-$1564281$) and the NSFC (NSFC$11420101001$).  The authors gratefully acknowledge this support.

\section{Assembly maps}\label{ass sec}

Throughout this section, $\Gamma\lefttorightarrow X$ is an action in our usual sense: $\Gamma$ is a countable discrete group, $X$ is a compact Hausdorff topological space, and $\Gamma$ acts on $X$ by homeomorphisms.   Our goal in this section is to develop a concrete and elementary model for the Baum-Connes assembly map for $\Gamma$ with coefficients in $C(X)$.  The construction is modeled on the localization algebra approach to the coarse Baum-Connes conjecture of the third author \cite{Yu:1997kb}.

We will assume throughout that $\Gamma$ is equipped with a proper length function and the associated right invariant metric as in the next definition.

\begin{definition}\label{length fun}
A \emph{(proper) length function} on a group $\Gamma$ is a function $|\cdot|:\Gamma\to\N$ that satisfies the following conditions:
\begin{enumerate}[(i)]
\item $|g|=0$ if and only if $g=e$ (where $e$ is the identity element of $\Gamma$);
\item $|gh|\leq |g|+|h|$;
\item $|g^{-1}|=|g|$;
\item for any $r\geq 0$, $\{g\in \Gamma\mid |g|\leq r\}$ is finite.
\end{enumerate}
Associated to such a length function is a metric defined by $d(g,h):=|gh^{-1}|$.
\end{definition}
Note that the right action of the group on itself preserves the metric associated to a length function, and that if the length function is proper, then for any $r\geq 0$ there is a uniform bound on the cardinality of all $r$-balls.  Examples of length functions and associated metrics are provided by word metrics associated to finite generating sets, when such exist.  Length functions always exist on a countable group, whether or not it is finitely generated, and the metrics they define are unique up to `coarse' (large-scale) equivalence: see for example \cite[Proposition 2.3.3]{Willett:2009rt}.  Our use of a length function will only depend on the coarse equivalence class, and therefore fixing one makes no real difference.

\begin{definition}\label{rips}
Let $s\geq 0$. The \emph{Rips complex of $\Gamma$ at scale $s$}, denoted $P_s(\Gamma)$, is the simplicial complex with vertex set $\Gamma$, and where a finite subset $E$ of $\Gamma$ spans a simplex if and only if 
\begin{equation}\label{ripscon}
d(g,h)\leq s \text{ for all } g,h\in E.
\end{equation}
Points $z\in P_s(\Gamma)$ can be written as formal linear combinations 
$$
z=\sum_{g\in \Gamma}t_gg,
$$
where each $t_g$ is in $[0,1]$ and $\sum_{g\in \Gamma}t_g=1$.  We equip the space $P_s(\Gamma)$ with the \emph{$\ell^1$-metric}
$$
d\Big( \sum_{g\in \Gamma}t_gg,\sum_{g\in \Gamma}s_gg\Big)=\sum_{g\in \Gamma}|t_g-s_g|.
$$
The \emph{barycentric coordinates} on $P_s(\Gamma)$ are the continuous functions $(t_g:P_s(\Gamma)\to [0,1])_{g\in \Gamma}$ indexed by $g\in \Gamma$ that are uniquely determined by the condition 
$$
z=\sum_{g\in \Gamma}t_g(z)g
$$
for all $z\in P_s(\Gamma)$.
\end{definition}

Using the fact that balls of radius $s$ in $\Gamma$ are (uniformly) finite, it is straightforward to check that $P_s(\Gamma)$ is finite dimensional and locally compact.  Note also that the right translation action of $\Gamma$ on itself extends to a right action of $\Gamma$ on $P_s(\Gamma)$ by (isometric) simplicial automorphisms.

We now want to build Hilbert spaces and $C^*$-algebras connected to both the large scale geometry of $\Gamma$ (called `Roe algebras') and the topological structure of $P_s(\Gamma)$ (called `localization algebras').  

\begin{definition}\label{zf def}
For each $s\geq 0$, define 
$$
Z_s:=\Bigg\{\sum_{g\in \Gamma} t_gg\in P_s(\Gamma)~\Big|~t_g\in \Q \text{ for all } g\in \Gamma\Bigg\}.
$$
\end{definition}
\noindent Note that $Z_s$ is $\Gamma$-invariant, so the $\Gamma$-action on $P_s(\Gamma)$ induces a (right) action on each $Z_s$.  

Let $\ell^2(Z_s)$ denote the Hilbert space of square-summable functions on $Z_s$.  Let $\ell^2(X)$ denote the Hilbert space of square-summable functions on $X$.  Fix also a separable infinite dimensional Hilbert space $H$, and define
$$
H_s:=\ell^2(Z_s)\otimes \ell^2(X)\otimes H\otimes \ell^2(\Gamma).
$$
Equip $H_s$ with the unitary $\Gamma$ action defined for $g\in \Gamma$ by
$$
u_g:\delta_z\otimes \delta_x\otimes \eta\otimes \delta_h \mapsto \delta_{zg^{-1}}\otimes \delta_{gx}\otimes \eta\otimes \delta_{gh},
$$
where $z\in Z_s$, $x\in X$, $\eta\in H$ and $h\in \Gamma$.  When convenient, we will use the canonical identification 
\begin{equation}\label{hf2}
H_s=\ell^2(Z_s\times X,H\otimes \ell^2(\Gamma))
\end{equation}
of $H_s$ with the Hilbert space of square-summable functions from $Z_s\times X$ to $H\otimes \ell^2(\Gamma)$.

Note that if $s_0\leq s$ then $P_{s_0}(\Gamma)$ identifies equivariantly and isometrically with a subcomplex of $P_{s}(\Gamma)$, and moreover $Z_{s_0}\subseteq Z_{s}$.  Hence there are canonical equivariant isometric inclusions
\begin{equation}\label{inc}
H_{s_0}\subseteq H_{s}
\end{equation}
which we will use many times below.

Write now $\mathcal{K}_\Gamma$ for the compact operators on $H\otimes \ell^2(\Gamma)$ equipped with the $\Gamma$ action by $*$-automorphisms that is induced by the tensor product of the trivial action on $H$ and the (left) regular representation on $\ell^2(\Gamma)$.   Equip the $C^*$-algebra $C(X,\mathcal{K}_\Gamma)=C(X)\otimes \mathcal{K}_\Gamma$ of continuous functions from $X$ to $\mathcal{K}_\Gamma$ with the diagonal action of $\Gamma$ induced by the actions on $C(X)$ and $\mathcal{K}_\Gamma$.  Note that the natural faithful representation of $C(X,\mathcal{K}_\Gamma)$ on $\ell^2(X)\otimes H\otimes \ell^2(\Gamma)$ is covariant for the unitary representation defined by tensoring the natural permutation action on $\ell^2(X)$, the trivial representation on $H$, and the regular representation on $\ell^2(\Gamma)$.

\begin{definition}\label{roe}
Let $T$ be a bounded operator on $H_s$.  We may think of $T$ as a $Z_s\times Z_s$-indexed matrix $T=(T_{y,z})$, where each entry is a bounded operator on $\ell^2(X)\otimes H\otimes \ell^2(\Gamma)$.  We will be interested in the following properties of such $T$.
\begin{enumerate}[(i)]
\item $T$ is \emph{$\Gamma$-invariant} if $u_gTu_g^*=T$ for all $g\in \Gamma$.
\item The \emph{Rips-propagation} of $T$ is the number 
$$
r=\sup\{d_{P_s(\Gamma)}(y,z)~|~T_{y,z}\neq 0\}.
$$
\item The \emph{$\Gamma$-propagation} of $T$ is the supremum (possibly infinite) of the set
$$
\{d_\Gamma(g,h)\mid T_{y,z}\neq 0 \text{ for some }y,z\in Z_s \text{ with } t_g(y)\neq 0,~ t_h(z)\neq 0\},
$$
where $t_g,t_h:P_s(\Gamma)\to [0,1]$ are the barycentric coordinates associated to $g$ and $h$ as in Definition \ref{rips} above.
\item $T$ is \emph{$X$-locally compact} if for all $y,z\in Z_s$, the operator $T_{y,z}$ is in the $C^*$-subalgebra $C(X,\mathcal{K}_\Gamma)$ of the bounded operators on $\ell^2(X)\otimes H\otimes \ell^2(\Gamma)$, and moreover if for any compact subset $K$ of $P_s(\Gamma)$, the set 
$$
\{(y,z)\in K\times K\mid T_{y,z}\neq 0\}
$$
is finite.
\end{enumerate}
\end{definition}

\begin{remark}
(This remark relates the definition above to earlier ones in the literature, and can be safely ignored by readers who do not know the earlier material).   Traditionally in this area (see for example \cite{Yu:1997kb}) one defines a suitable length metric on each Rips complex $P_s(\Gamma)$, and uses only the propagation defined relative to this metric.  We have `decoupled' the notion of propagation into the $\Gamma$-propagation (relevant only for large-scale structure) and the Rips-propagation (relevant only for small scale structure).  The reason for doing this is that in the traditional approach the metric depends on the Rips parameter $s$, and it is convenient for us to have metrics that do \emph{not} vary in this way.
\end{remark}

\begin{definition}\label{roe alg}
Let $\C[\Gamma\lefttorightarrow X;s]$ denote the collection of all $\Gamma$-invariant, $X$-locally compact operators on $H_s$, with finite $\Gamma$-propagation.  It is straightforward to check that $\C[\Gamma\lefttorightarrow X;s]$ is a $*$-algebra of bounded operators.

Let $C^*(\Gamma\lefttorightarrow X;s)$ denote the closure of $\C[\Gamma\lefttorightarrow X;s]$ with respect to the operator norm.  The $C^*$-algebra $C^*(\Gamma\lefttorightarrow X;s)$ is called the \emph{Roe algebra} of $\Gamma\lefttorightarrow X$ at scale $s$.
\end{definition}

We will always consider $\C[\Gamma\lefttorightarrow X;s]$ and $C^*(\Gamma\lefttorightarrow X;s)$ as concretely represented on $H_s$, and equipped with the corresponding operator norm.   Elements of $C^*(\Gamma\lefttorightarrow X;s)$ can be thought of as matrices $(T_{y,z})_{y,z\in Z_s}$ with entries continuous equivariant functions $T_{y,z}:X\to \mathcal{K}_\Gamma$ in a way that is compatible with the $*$-algebra structure; we will frequently use this description below.

\begin{remark}\label{cp alg rem}
The Roe algebras $C^*(\Gamma\lefttorightarrow X;s)$ are all isomorphic to the stabilization of the reduced crossed product $C^*$-algebra $C(X)\rtimes_r\Gamma$.  We do not need this remark in the main body of the paper, but include it now as it may help orient some readers.  See Appendix \ref{bc sec} for a proof.
\end{remark}

Note that the Rips-propagation is not relevant to the definition of the Roe algebras; it is, however, used in a crucial way in the next definition.

\begin{definition}\label{local}
Let $\C_L[\Gamma\lefttorightarrow X;s]$ denote the $*$-algebra of all bounded, uniformly continuous functions 
$$
a:[0,\infty)\to \C[\Gamma\lefttorightarrow X;s]
$$
such that the $\Gamma$-propagation of $a(t)$ is uniformly finite as $t$ varies, and so that the Rips-propagation of $a(t)$ tends to zero as $t$ tends to infinity.

Let $C^*_L(\Gamma\lefttorightarrow X;s)$ denote the completion of $\C_L[\Gamma\lefttorightarrow X;s]$ for the norm 
$$
\|a\|:=\sup_{t\in [0,\infty)}\|a(t)\|_{C^*(\Gamma\lefttorightarrow X;s)}.
$$
The $C^*$-algebra $C^*_L(\Gamma\lefttorightarrow X;s)$ is called the \emph{localization algebra} of $\Gamma\lefttorightarrow X$ at scale $s$.  
\end{definition}

Note that an element $a$ of $C^*_L(\Gamma\lefttorightarrow X;s)$ comes from a unique bounded, uniformly continuous functions 
$$
a:[0,\infty)\to C^*(\Gamma\lefttorightarrow X;s)
$$
(satisfying some additional properties).  We will think of $\C_L[\Gamma\lefttorightarrow X;s]$ and $C^*_L(\Gamma\lefttorightarrow X;s)$ as concretely represented on the Hilbert space $L^2[0,\infty)\otimes H_s$ in the obvious way.  

Finally in this section, we come to the definition of the assembly map.   First, let 
\begin{equation}\label{s ass}
\epsilon_0:K_*(C^*_L(\Gamma\lefttorightarrow X;s))\to K_*(C^*(\Gamma\lefttorightarrow X;s))
\end{equation}
denote the evaluation-at-zero $*$-homomorphism $a\mapsto a(0)$.   Say that $s_0\leq s$.  Then the isometric equivariant inclusion $H_{s_0}\subseteq  H_s$ from line \eqref{inc} above induces isometric inclusions 
$$
C^*(\Gamma\lefttorightarrow X;s_0)\subseteq C^*(\Gamma\lefttorightarrow X;s)
$$ 
and 
$$
\quad C^*_L(\Gamma\lefttorightarrow X;s_0)\subseteq C^*_L(\Gamma\lefttorightarrow X;s)
$$ 
of $C^*$-algebras, and thus to directed systems $(C^*(\Gamma\lefttorightarrow X;s)\big)_{s\geq 0}$ and $\big(C^*_L(\Gamma\lefttorightarrow X;s)\big)_{s\geq 0}$ of $C^*$-algebra inclusions.  Moreover the evaluation-at-zero maps from line \eqref{s ass} are clearly compatible with these inclusions, whence we may make the following definition.

\begin{definition}\label{ass}
The \emph{assembly map} for $\Gamma\lefttorightarrow X$ is the direct limit 
$$
\epsilon_0:\lim_{s\to\infty}K_*(C^*_L(\Gamma\lefttorightarrow X;s))\to \lim_{s\to\infty}K_*(C^*(\Gamma\lefttorightarrow X;s))
$$ 
of the evaluation-at-zero maps from line \eqref{s ass} above.
\end{definition}

\begin{remark}\label{bc id rem}
This map identifies naturally with the Baum-Connes assembly map for $\Gamma$ with coefficients in $C(X)$, whence the name.  Analogously to Remark \ref{cp alg rem} above, we do not need this fact in the main body of the paper, but include it now in case it is helpful for some readers.  See  Appendix \ref{bc sec} for a proof.
\end{remark}

Our main goal in this paper is to prove the following theorem.

\begin{theorem}\label{main}
Let $\Gamma \lefttorightarrow X$ be an action with finite dynamical complexity.  Then the assembly map is an isomorphism.
\end{theorem}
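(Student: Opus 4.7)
The plan is to follow the strategy outlined in the introduction: reformulate the isomorphism statement for the assembly map $\epsilon_0$ as a vanishing statement for the $K$-theory of a suitable obstruction $C^*$-algebra $A(\Gamma \lefttorightarrow X)$, and then verify that vanishing by transfinite induction on the complexity of the groupoid.  The mapping cone of the direct limit of evaluation-at-zero maps from $C^*_L(\Gamma\lefttorightarrow X;s)$ to $C^*(\Gamma\lefttorightarrow X;s)$ is a natural choice: its $K$-theory sits in a six-term exact sequence with the domain and codomain of $\epsilon_0$, and the connecting maps are (up to sign) $\epsilon_0$ itself, so $\epsilon_0$ is an isomorphism precisely when $K_*(A(\Gamma \lefttorightarrow X))=0$.

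To exploit finite dynamical complexity, I would pass to the controlled $K$-theory picture of Section \ref{conk sec}: at every finite scale $s$ and propagation parameter $r$, one has controlled $K$-groups that reassemble in the limit $r,s\to\infty$ to the ordinary $K$-theory of the relevant algebras.  Decomposing the groupoid into two sub-pieces (plus a `thin' overlap on which one can be very explicit) should manifest itself as a controlled Mayer-Vietoris sequence for the obstruction algebras, which is what makes the inductive argument possible without non-trivial ideals in the crossed product.

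With this set-up, the proof reduces to two propositions, each to be proved in its own section:

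\textbf{Base case (Proposition \ref{basecase}).}  For groupoids that are `essentially finite' (relatively compact in the ambient \'etale groupoid), the obstruction algebra has vanishing controlled $K$-theory.  Such groupoids are proper, so in this regime the problem collapses to a computation one can handle by Morita-equivalence / Type~I arguments applied at each level of the system, and the controlled Mayer-Vietoris machinery is essentially trivial.

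\textbf{Inductive step (Proposition \ref{indstep}).}  If the obstruction vanishes for a class $\mathcal{C}$ of groupoids, it also vanishes for any groupoid that decomposes over $\mathcal{C}$.  Here one cuts an element $T$ of the Roe/localization algebra using a partition adapted to the decomposition, writes it as a sum of elements essentially supported on each of the two sub-pieces (plus a controllable error on the overlap), and runs a controlled Mayer-Vietoris long exact sequence to conclude the vanishing.

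The main obstacle is the inductive step.  The delicate point is synchronizing all the parameters: the decomposition produces sub-groupoids that are only `essentially finite at a given scale,' so the sub-algebras to which the inductive hypothesis applies sit inside $C^*(\Gamma\lefttorightarrow X;s)$ only after enlarging $s$ and the propagation cutoff.  Making the controlled Mayer-Vietoris machinery interact correctly with the direct limits in $s$ and with the uniform continuity conditions in the localization algebra — without losing track of which algebras contain which operators — is the technical heart of the proof.

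Finally, since finite dynamical complexity is defined as membership in the smallest class closed under decomposition and containing the essentially finite groupoids, one obtains $K_*(A(\Gamma\lefttorightarrow X))=0$ by transfinite induction on the ordinal at which $\Gamma\lefttorightarrow X$ enters this class, using the base case at ordinal $0$ and the inductive step at successor and limit stages.  Combined with the six-term sequence relating $A(\Gamma\lefttorightarrow X)$ to $\epsilon_0$, this gives Theorem \ref{main}.
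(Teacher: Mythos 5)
Your overall architecture matches the paper's: reduce to a vanishing statement for an obstruction $C^*$-algebra (the paper uses the ideal $C^*_{L,0}(\Gamma\lefttorightarrow X;s)\subseteq C^*_L(\Gamma\lefttorightarrow X;s)$ rather than a mapping cone, but the two give the same $K$-theory via the six-term sequence, so this is cosmetic), track propagation via controlled $K$-theory, and argue by showing that the class of ``good'' groupoids contains the relatively compact ones and is closed under decomposition (your transfinite induction is the same argument, since $\mathcal{D}$ is the smallest such class). The inductive step you sketch — partition of unity adapted to the cover, write an element as a sum of pieces supported on the two subgroupoids plus controllable error, run a controlled Mayer-Vietoris sequence, and keep track of how the propagation/Rips parameters grow — is indeed what the paper's Proposition~\ref{indstep} does, and you correctly flag parameter bookkeeping as the technical heart.

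Where you part ways with the paper, and where a genuine gap opens up, is the base case. You propose to handle relatively compact subgroupoids by ``Morita-equivalence / Type~I arguments,'' with the controlled Mayer-Vietoris machinery ``essentially trivial.'' The paper's Proposition~\ref{basecase} does something quite different and uses no Mayer-Vietoris at all: it is a homotopy-invariance argument. It constructs, via the isometries $v_n$ interpolating between $w(0)$ and $w(1)$, a family implementing a linear contraction of the Rips complex over $G$ to the unit space $G^{(0)}$, verifies in Lemma~\ref{con es} that this family constitutes a ``stable equivalence'' of isometries, and then applies the Eilenberg-swindle machinery of Proposition~\ref{al e s} together with Lemmas~\ref{phiinfid} and~\ref{phi0zero} to get $K_*(A^s(G))=0$. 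The gap in your route is that ``Morita/Type~I'' establishes $K$-theory statements about groupoid $C^*$-algebras, not about the concrete obstruction algebra $C^*_{L,0}(G;s)$; to bridge that gap you would have to identify $K_*(C^*_{L,0}(G;s))$ with the obstruction to an abstract Baum-Connes map for $G$ and then invoke classical isomorphism results for proper/Type~I groupoids. That identification is essentially the content of Appendix~\ref{bc sec}, requires the $KK$-theoretic apparatus the main argument is explicitly designed to avoid, and is a significant undertaking. The homotopy-invariance argument works directly inside the localization-algebra framework, using that the space $P_s(G)$ is contractible to $G^{(0)}$ precisely when $G$ satisfies the boundedness hypothesis of Proposition~\ref{basecase}. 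If you want your version of the base case, you must supply this bridge explicitly — and the proof then ceases to be elementary in the sense the paper aims for.
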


\begin{remark}\label{sc rem}
Thanks to the results of Appendix \ref{bc sec}, this is the same result as Theorem \ref{act the} from the introduction, although without the assumption that $X$ is second countable. The only reason for including second countability of $X$ in the statement of Theorem \ref{act the} is to avoid technical complications that arise in the traditional statement of the Baum-Connes conjecture with coefficients in a non-separable $C^*$-algebra.  Assuming separability would make no difference for the proof of Theorem \ref{main}, however, so we omit the assumption here.
\end{remark}

In order to prove this theorem, it is convenient to shift attention to an `obstruction group'.

\begin{definition}\label{obstruction}
Let $C^*_{L,0}(\Gamma\lefttorightarrow X;s)$ denote the $C^*$-subalgebra of $C^*_L(\Gamma\lefttorightarrow X;s)$ consisting of functions $a$ such that $a(0)=0$.  The $C^*$-algebra $C^*_{L,0}(\Gamma\lefttorightarrow X;s)$ is called the \emph{obstruction algebra} of $\Gamma\lefttorightarrow X$ at scale $s$. 
\end{definition}

There is clearly a directed system $\big(C^*_{L,0}(\Gamma\lefttorightarrow X;s)\big)_{s\geq 0}$ of obstruction algebras.  The following straightforward lemma explains the terminology `obstruction algebra': the $K$-theory of these algebras obstructs isomorphism of the assembly map.

\begin{lemma}\label{oblem}
The assembly map of Definition \ref{ass} is an isomorphism if and only if 
$$
\lim_{s\to\infty}K_*(C^*_{L,0}(\Gamma\lefttorightarrow X;s))=0.
$$
\end{lemma}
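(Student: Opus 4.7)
The plan is to exhibit $C^*_{L,0}(\Gamma\lefttorightarrow X;s)$ as the kernel of a surjective $*$-homomorphism $\epsilon_0:C^*_L(\Gamma\lefttorightarrow X;s)\to C^*(\Gamma\lefttorightarrow X;s)$, apply the six-term exact sequence in $K$-theory at each scale $s$, take the direct limit in $s$, and read off the lemma from the resulting long exact sequence of abelian groups.

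The first step is surjectivity of $\epsilon_0$ at each scale. Given any $T\in\mathbb{C}[\Gamma\lefttorightarrow X;s]$, I would set
$$a(t):=\max(0,1-t)\,T.$$
Then $a$ is Lipschitz (hence bounded and uniformly continuous) as a function $[0,\infty)\to\mathbb{C}[\Gamma\lefttorightarrow X;s]$; the $\Gamma$-invariance, $X$-local compactness, and $\Gamma$-propagation of $a(t)$ equal those of $T$ for every $t$, so in particular the $\Gamma$-propagation is uniformly bounded; the Rips-propagation of $a(t)$ equals that of $T$ for $t\in[0,1)$ and is zero for $t\geq 1$, so it vanishes for all large $t$. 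Thus $a\in\mathbb{C}_L[\Gamma\lefttorightarrow X;s]$ with $a(0)=T$. Hence the image of $\epsilon_0$ contains the dense subalgebra $\mathbb{C}[\Gamma\lefttorightarrow X;s]$; since the image of a $*$-homomorphism of $C^*$-algebras is automatically closed, $\epsilon_0$ is surjective.

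Surjectivity, together with the defining relation $C^*_{L,0}(\Gamma\lefttorightarrow X;s)=\ker\epsilon_0$, yields a short exact sequence
$$0\to C^*_{L,0}(\Gamma\lefttorightarrow X;s)\to C^*_L(\Gamma\lefttorightarrow X;s)\xrightarrow{\epsilon_0} C^*(\Gamma\lefttorightarrow X;s)\to 0$$
for every $s\geq 0$. The associated six-term exact sequence in $K$-theory is natural in $s$ with respect to the inclusions of scale parameters, so taking the direct limit as $s\to\infty$ (which preserves exactness of sequences of abelian groups) produces a six-term exact sequence connecting $\lim_s K_*(C^*_{L,0}(\Gamma\lefttorightarrow X;s))$, $\lim_s K_*(C^*_L(\Gamma\lefttorightarrow X;s))$, and $\lim_s K_*(C^*(\Gamma\lefttorightarrow X;s))$, in which the map from the middle to the right term is the assembly map of Definition \ref{ass}.

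The conclusion is then a purely homological observation: a six-term exact sequence of abelian groups in which one of the three maps is an isomorphism in both degrees forces the remaining group to vanish in both degrees, and conversely. In our setting this says that $\epsilon_0$ on direct limits is an isomorphism if and only if $\lim_{s\to\infty} K_*(C^*_{L,0}(\Gamma\lefttorightarrow X;s))=0$, which is exactly the statement of the lemma. The only step requiring any content is the construction of the witness $a(t)=\max(0,1-t)T$ for surjectivity; everything else is formal.
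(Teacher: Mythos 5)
Your proof is correct and follows essentially the same route as the paper: pass to the short exact sequence $0 \to C^*_{L,0} \to C^*_L \xrightarrow{\epsilon_0} C^* \to 0$, take the six-term $K$-theory sequence, and invoke exactness of direct limits. The one thing you do that the paper leaves implicit is verify surjectivity of $\epsilon_0$ via the explicit lift $a(t)=\max(0,1-t)T$; that is a sensible detail to spell out and is correct as you state it (the Rips-propagation condition holds because $a(t)=0$ for $t\geq 1$, regardless of whether $T$ itself has finite Rips-propagation).
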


\begin{proof}
The short exact sequence 
$$
\xymatrix{ 0 \ar[r] & C^*_{L,0}(\Gamma\lefttorightarrow X;s) \ar[r] & C^*_{L}(\Gamma\lefttorightarrow X;s) \ar[r] & 
C^*(\Gamma\lefttorightarrow X;s) \ar[r] & 0}
$$
gives rise to six term exact sequence in $K$-theory.  The lemma follows from this, continuity of $K$-theory under direct limits, and the fact that a direct limit of an exact sequence of abelian groups is exact.
\end{proof}

Thus in order to prove Theorem \ref{main}, it suffices to prove that the group in the statement of Lemma \ref{oblem} vanishes whenever $\Gamma\lefttorightarrow X$ has finite dynamical complexity.  The proof of this occupies the next five sections. We spend the next two sections developing machinery: in Section \ref{gpd sec} we introduce some convenient language from groupoid theory, use this to define useful subalgebras of the obstruction algebras, and introduce finite dynamical complexity; and in Section \ref{conk sec} we introduce controlled $K$-theory, which gives us extra flexibility when performing $K$-theoretic computations. 

Having built this machinery, Section \ref{strat sec} sketches out the strategy of the proof of Theorem \ref{main}: the basic idea is to first use a homotopy invariance result to show that the $K$-theory of the obstruction algebra associated to an `essentially finite' dynamical subsystem of $\Gamma\lefttorightarrow X$ vanishes; and then to use a Mayer-Vietoris type argument to show that the class of dynamical subsystems of $\Gamma\lefttorightarrow X$ for which the $K$-theory of the obstruction algebra vanishes is closed under decomposability.  The proofs of the homotopy invariance result, and of the Mayer-Vietoris type argument are somewhat technical, and are carried out in Sections \ref{base sec} and \ref{ind sec} respectively.

\section{Groupoids and decompositions}\label{gpd sec}

Our goal in this section is to show how `subgroupoids' of the action $\Gamma\lefttorightarrow X$ give rise to $C^*$-subalgebras of the Roe algebras and localization algebras of Section \ref{ass sec}.  We try to keep the exposition self-contained: in particular, we do not assume that the reader has any background in the theory of locally compact groupoids or their $C^*$-algebras.

Throughout this section $\Gamma\lefttorightarrow X$ is an action in our usual sense: $\Gamma$ is a countable discrete group and $X$ is a compact space equipped with an action of a $\Gamma$ by homeomorphisms.  We also fix a (proper) length function on $\Gamma$ and associated right-invariant metric as in Definition \ref{length fun}.  

\begin{definition}\label{transgpd}
The \emph{transformation groupoid} associated to $\Gamma\lefttorightarrow X$, denoted $\Gamma\ltimes X$, is defined as follows.  As a set, $\Gamma\ltimes X$ is equal to
$$
\{(gx,g,x)\in X\times \Gamma\times X\mid g\in \Gamma,x\in X\}.
$$
The set $\Gamma\ltimes X$ is equipped with the topology such that the (bijective) projection $\Gamma\ltimes X\to \Gamma\times X$ onto the second and third factors is a homeomorphism.

The topological space $\Gamma\ltimes X$ is equipped with the following additional structure. 
\begin{enumerate}[(i)]
\item A pair $\big((hy,h,y),(gx,g,x)\big)$ of elements of $\Gamma\ltimes X$ is \emph{composable} if $y=gx$.  If the pair is composable, their \emph{product} is defined by 
$$
(hgx,h,gx)(gx,g,x):=(hgx,hg,x).
$$
\item The \emph{inverse} of an element $(gx,g,x)$ of $\Gamma\ltimes X$ is defined by
$$
(gx,g,x)^{-1}:=(x,g^{-1},gx).
$$
\item The \emph{units} are the elements of the open and closed subspace 
$$
(\Gamma\ltimes X)^{(0)}:=\{(x,e,x)\in \Gamma\ltimes X\mid x\in X\}
$$
of $\Gamma\ltimes X$.
\end{enumerate}
\end{definition}

We can now discuss the algebra of supports of elements in the Roe algebra.  For this, recall from Definitions \ref{roe} and \ref{roe alg} that we can think of an operator $T$ in $C^*(\Gamma\lefttorightarrow X;s)$ as a matrix $(T_{y,z})_{y,z\in Z_s}$ indexed by $Z_s$ with entries continuous functions $T_{y,z}:X\to \mathcal{K}_\Gamma$.  

\begin{definition}\label{t supp}
Let $s\geq 0$, and let $P_s(\Gamma)$ be the associated Rips complex with barycentric coordinates $t_g:P_s(\Gamma)\to [0,1]$ as in  Definition \ref{rips}.  Define the \emph{support} of  $z\in P_s(\Gamma)$ to be the finite subset 
$$
\text{supp}(z):=\{g\in \Gamma\mid t_g(z)\neq 0\}
$$
of $\Gamma$.  Define the \emph{support} of $T\in C^*(\Gamma\lefttorightarrow X;s)$ to be the subset
\begin{align*}
\text{supp}(T):=\left\{\begin{array}{l|l}(gx,gh^{-1},hx) &   \text{ there are }  y,z\in Z_s \text{ with } T_{y,z}(x)\neq 0 \\ \quad\in \Gamma\ltimes X& \text{ and } g\in \text{supp}(y),~h\in \text{supp}(z)\end{array}\right\}
\end{align*}
of $\Gamma\ltimes X$. 
\end{definition}

Note that for $T\in \C[\Gamma\lefttorightarrow X;s]$, the $\Gamma$-propagation of $T$ as in Definition \ref{roe} is equal to the largest value of $|k|$ such that $(x,k,y)$ appears in $\text{supp}(T)$ for some $x,y\in X$.  

Supports of operators behave well with respect to composition and adjoints; this is the content of the next lemma.  To state it, note that the groupoid operations on $\Gamma\ltimes X$ extend to subsets in natural ways: if $A,B\subseteq \Gamma\ltimes X$ then we define
$$
A^{-1}:=\{a^{-1}\mid a\in A\}
$$
and 
$$
AB:=\{ab\mid a\in A,b\in B\text{ and } (a,b) \text{ composable}\}.
$$

\begin{lemma}\label{supp alg}
Let $S,T\in C^*(\Gamma\lefttorightarrow X;s)$.  Then 
$$
\text{supp}(S^*)=\text{supp}(S)^{-1} \quad \text{and}\quad \text{supp}(ST)\subseteq \text{supp}(S)\text{supp}(T).
$$
\end{lemma}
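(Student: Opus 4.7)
The plan is to carry out both claims directly on matrix entries, using the description of elements of $C^*(\Gamma\lefttorightarrow X;s)$ as $Z_s\times Z_s$-indexed matrices with entries in $C(X,\mathcal{K}_\Gamma)$, acting on $\ell^2(X)\otimes H\otimes\ell^2(\Gamma)$ by pointwise multiplication. The adjoint equality follows immediately: pointwise multiplication makes operator adjoints commute with pointwise evaluation, so $(S^*)_{y,z}(x)=S_{z,y}(x)^*$ in $\mathcal{K}_\Gamma$, whence $(S^*)_{y,z}(x)\neq 0$ iff $S_{z,y}(x)\neq 0$. Swapping $y,z$ in the definition of support then shows $(gx,gh^{-1},hx)\in\supp(S^*)$ iff $(hx,hg^{-1},gx)\in\supp(S)$, which is precisely the groupoid inversion $(gx,gh^{-1},hx)=(hx,hg^{-1},gx)^{-1}$ in $\Gamma\ltimes X$.

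For the product, I would first establish the matrix-product identity $(ST)_{y,z}(x)=\sum_w S_{y,w}(x)T_{w,z}(x)$ with convergence in the strong operator topology. The cleanest route is to notice that for each fixed $x$ the subspace $\ell^2(Z_s)\otimes\C\delta_x\otimes H\otimes\ell^2(\Gamma)$ of $H_s$ is reducing for $C^*(\Gamma\lefttorightarrow X;s)$ (since multiplication by $f\in C(X)$ preserves the line $\C\delta_x$), and restriction to it yields a $*$-homomorphism $\pi_x$ sending $T$ to the matrix $(T_{y,z}(x))_{y,z}$; reading off the $(y,z)$-entry of $\pi_x(ST)=\pi_x(S)\pi_x(T)$ then produces the desired identity. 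If $(ST)_{y,z}(x)\neq 0$, at least one summand cannot vanish, giving some $w\in Z_s$ with both $S_{y,w}(x)\neq 0$ and $T_{w,z}(x)\neq 0$.

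Given an arbitrary $(gx,gh^{-1},hx)\in\supp(ST)$ witnessed by $y,z,x$ with $g\in\supp(y)$ and $h\in\supp(z)$, I would pick such a $w$ together with any $k\in\supp(w)$ (nonempty since $\sum_k t_k(w)=1$); then $(gx,gk^{-1},kx)\in\supp(S)$ and $(kx,kh^{-1},hx)\in\supp(T)$ are a composable pair with product $(gx,gh^{-1},hx)$, exhibiting membership in $\supp(S)\,\supp(T)$. The main delicate point is extending the matrix manipulations from the algebraic subalgebra $\C[\Gamma\lefttorightarrow X;s]$ (where sums are finite) to the full $C^*$-closure without propagation control; the reducing-subspace construction of $\pi_x$ sidesteps this cleanly, since it is automatically norm-continuous and makes no reference to propagation.
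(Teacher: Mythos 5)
Your proof is correct and follows essentially the same route as the paper's: establish the matrix-product identity for entries, use it to produce $w$ with nonvanishing $S_{y,w}(x)$ and $T_{w,z}(x)$, pick any $k\in\supp(w)$, and factor $(gx,gh^{-1},hx)=(gx,gk^{-1},kx)(kx,kh^{-1},hx)$. The one genuine addition is your careful justification of the identity $(ST)_{y,z}(x)=\sum_w S_{y,w}(x)T_{w,z}(x)$ via the fiberwise representation $\pi_x$ on the reducing subspace $\ell^2(Z_s)\otimes\C\delta_x\otimes H\otimes\ell^2(\Gamma)$; the paper simply asserts the existence of such a $w$, so your remark that this is the step needing care when passing from $\C[\Gamma\lefttorightarrow X;s]$ to the $C^*$-closure is a useful observation, not a deviation.
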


\begin{proof}
As the adjoint of $S$ has matrix entries $(S^*)_{y,z}=S_{z,y}^*$, the statement about adjoints is clear.  To see the statement about multiplication, say $T,S\in C^*(\Gamma\lefttorightarrow X;s)$, and $(gx,gh^{-1},hx)$ is a point in the support of $TS$.  Then there are $y,z\in Z_s$ such that $g\in \text{supp}(y)$, $h\in \text{supp}(z)$, and $(TS)_{y,z}(x)\neq 0$.  Hence there is $w\in Z_s$ with $T_{y,w}(x)\neq 0$, and $S_{w,z}(x)\neq 0$.  Say $k$ is any point in $\text{supp}(w)$, so we must have that $(gx,gk^{-1},kx)$ is in the support of $T$ and $(kx,kh^{-1},hx)$ is in the support of $S$.  As 
$$
(gx,gh^{-1},hx)=(gx,gk^{-1},kx) (kx,kh^{-1},hx),
$$
this shows that the support of $TS$ is contained in the product of the supports of $T$ and $S$.
\end{proof}

The lemma implies that subspaces of $\Gamma\ltimes X$ that are closed under the groupoid operations will give rise to $*$-subalgebras of $\C[\Gamma\lefttorightarrow X;s]$.  The relevant algebraic  notion is that of a subgroupoid as in the next definition.

\begin{definition}\label{subgpd}
Let $\Gamma\ltimes X$ be the transformation groupoid associated to the action $\Gamma\lefttorightarrow X$.  A \emph{subgroupoid} of $\Gamma\ltimes X$ is a subset $G$ of $\Gamma\ltimes X$ closed under the operations in the following sense.
\begin{enumerate}[(i)]
\item If $(hgx,h,gx)$ and $(gx,g,x)$ are in $G$, then so is the composition $(hgx,hg,x)$.
\item If $(gx,g,x)$ is in $G$, then so is its inverse $(x,g^{-1},gx)$.
\item If $(gx,g,x)$ is in $G$, then so are the units $(x,e,x)$ and $(gx,e,gx)$.
\end{enumerate}
A subgroupoid is equipped with the subspace topology inherited from $\Gamma\ltimes X$. 
\end{definition}

The following lemma is now almost clear.

\begin{lemma}\label{gsubalgebra}
Let $G$ be an open subgroupoid of $\Gamma\ltimes X$.  Define $\C[G;s]$ to be the subspace of $\C[\Gamma\lefttorightarrow X;s]$ consisting of all operators $T$ with support contained in a compact subset of $G$.  Then $\C[G;s]$ is a $*$-subalgebra of $\C[\Gamma\lefttorightarrow X;s]$.
\end{lemma}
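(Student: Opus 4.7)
The plan is to verify the three algebraic closure conditions (sums, adjoints, products) directly, using Lemma \ref{supp alg} to reduce each to a statement about subsets of the groupoid $G$, and then using the fact that $G$ is closed under the groupoid operations together with standard compactness arguments.

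First I would handle the easy closures. Linearity is immediate: for $T,S \in \C[G;s]$ with supports contained in compact sets $K_T,K_S \subseteq G$, any scalar multiple $\lambda T$ has support contained in $K_T$, and $T+S$ has support contained in $K_T \cup K_S$, which is a compact subset of $G$. For adjoints, Lemma \ref{supp alg} gives $\supp(T^*)=\supp(T)^{-1}$. Since inversion in $\Gamma\ltimes X$ is a homeomorphism (its coordinate formula shows it is continuous, and it is its own inverse), the set $K_T^{-1}$ is compact; and since $G$ is a subgroupoid, $K_T^{-1} \subseteq G$. Thus $T^*\in \C[G;s]$.

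The main content is closure under products. Given $T,S \in \C[G;s]$ with supports in compact sets $K_T, K_S \subseteq G$, Lemma \ref{supp alg} yields $\supp(TS) \subseteq K_T K_S$. I would show $K_T K_S$ is a compact subset of $G$. The set of composable pairs
\[
(K_T \times K_S) \cap \{((a,b)) \mid s(a) = r(b)\}
\]
is a closed subset of the compact space $K_T \times K_S$ (closedness follows from continuity of the source and range maps $(gx,g,x)\mapsto x$ and $(gx,g,x)\mapsto gx$, which are continuous by the definition of the topology on $\Gamma\ltimes X$), hence itself compact. Its image under the continuous multiplication map is therefore compact, and lies in $G$ because $G$ is closed under composition of composable pairs. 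Thus $K_T K_S$ is a compact subset of $G$ containing $\supp(TS)$, so $TS \in \C[G;s]$.

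The only real point requiring care is this last compactness argument, and specifically the observation that the source and range maps are continuous so that composable pairs form a closed (hence compact) subset of $K_T \times K_S$; once that is in place, everything else is formal from Lemma \ref{supp alg} and the defining closure properties of a subgroupoid.
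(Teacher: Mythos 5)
Your proof is correct and follows essentially the same approach as the paper: apply Lemma \ref{supp alg} and then verify that the product of two compact subsets of $G$ is a compact subset of $G$. The paper leaves the compactness verification to the reader (pointing to \cite[Lemma 5.2]{Guentner:2014aa}), whereas you have spelled out the argument via closedness of the composable-pairs locus and continuity of the multiplication map, which is exactly the right way to fill that gap.
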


\begin{proof}
Lemma \ref{supp alg} gives most of this: the only remaining point to check is that a product of two relatively compact subsets\footnote{Recall that a subset of a topological space is \emph{relatively compact} if its closure is compact.} of $G$ is relatively compact, which we leave it to the reader (or see for example \cite[Lemma 5.2]{Guentner:2014aa} for a more general statement and proof).
\end{proof}

Using this lemma, the following definitions make sense.

\begin{definition}\label{smallob}
Let $G$ be an open subgroupoid of $\Gamma\ltimes X$.  Let $\C_{L}[G;s]$ denote the $*$-subalgebra of $\C_L[\Gamma\lefttorightarrow X;s]$ (see Definition \ref{local} above) consisting of all $a$ such that $\bigcup_t \text{supp}(a(t))$ has compact closure inside $G$.  Let $\C_{L,0}[G;s]$ denote the ideal of $\C_{L}[G;s]$ consisting of functions such that $a(0)=0$.  

Let $C^*(G;s)$, $C^*_L(G;s)$, and $C^*_{L,0}(G;s)$ denote the closures of $\C[G;s]$, $\C_{L}[G;s]$, and $\C_{L,0}[G;s]$ inside $C^*(\Gamma\lefttorightarrow X;s)$, $C^*_L(\Gamma\lefttorightarrow X;s)$, and $C^*_{L,0}(\Gamma\lefttorightarrow X;s)$ respectively.
\end{definition}
\noindent Note that operators of finite $\Gamma$-propagation always have support contained in some compact subset of $\Gamma\ltimes X$.  Hence if $G=\Gamma\ltimes X$ then $C^*(G;s)$ is just $C^*(\Gamma\lefttorightarrow X;s)$, and similarly for the localization and obstruction algebras.

\begin{remark}\label{gpd alg rem}
(This remark may be safely ignored by readers who do not have any background in groupoids and the associated $C^*$-algebras.)  Analogously to Remark \ref{cp alg rem}, for any open subgroupoid $G$ of $\Gamma\ltimes X$, the $C^*$-algebra $C^*(G;s)$ is Morita equivalent to the reduced groupoid $C^*$-algebra $C^*_r(G)$; this makes sense, as an open subgroupoid of $\Gamma\ltimes X$ is \'{e}tale so has a canonical Haar system given by counting measures.   We only include this remark as it might help to orient some readers; we will not use it in any way, or prove it.  
\end{remark}

Our next goal in this section is to construct filtrations on these $C^*$-algebras in the sense of the definition below, and discuss how they interact with the subalgebras coming from groupoids above.  

\begin{definition}\label{filt}
A \emph{filtration} on a $C^*$-algebra $A$ is a collection of self-adjoint subspaces $(A_r)_{r\geq 0}$ of $A$ indexed by the non-negative real numbers that satisfies the following properties:
\begin{enumerate}[(i)]
\item if $r_1\leq r_2$, then $A_{r_1}\subseteq A_{r_2}$;
\item for all $r_1,r_2$, we have $A_{r_1}\cdot A_{r_2}\subseteq A_{r_1+r_2}$;
\item the union $\bigcup_{r\geq 0}A_r$ is dense in $A$.
\end{enumerate}
\end{definition}

As the case of the obstruction $C^*$-algebras will be particularly important for us, we introduce some shorthand notation for that case. 

\begin{definition}\label{main ob}
For an open subgroupoid $G$ of $\Gamma\ltimes X$ and $s\geq 0$, define $A^s(G)$ to be the $C^*$-algebra $C^*_{L,0}(G;s)$.  For each $r\in [0,\infty)$, define 
$$
A^s(G)_r:=\{a\in \C_{L,0}[G;s] \mid a(t) \text{ has $\Gamma$-propagation at most $r$ for all $t$}\},
$$
and define $A^s(G)_\infty:=\C_{L,0}[G;s]$.  In the special case that $G=\Gamma\ltimes X$, we omit it from the notation and just write $A^s$ and $A^s_r$.
\end{definition}
\noindent When convenient, we will consider all these $C^*$-algebras as faithfully represented on the Hilbert space
$$
L^2([0,\infty),H_s)=L^2[0,\infty)\otimes \ell^2(Z_s)\otimes \ell^2(X)\otimes H\otimes \ell^2(\Gamma).
$$

\begin{lemma}\label{filt lem}
For any open subgroupoid $G$ of $\Gamma\ltimes X$ and any $s\geq 0$, the subsets $(A^s(G)_r)_{r\geq 0}$ define a filtration of $A^s(G)$ in the sense of Definition \ref{filt}.
\end{lemma}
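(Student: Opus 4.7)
The plan is to verify the three axioms of Definition \ref{filt} (together with self-adjointness of each $A^s(G)_r$) by reducing each to a statement about supports, which was already established in Lemma \ref{supp alg}. Throughout, the key observation is that the $\Gamma$-propagation of an operator $T\in \C[\Gamma\lefttorightarrow X;s]$ is precisely the supremum of $|k|$ over triples $(gx,k,hx)\in \text{supp}(T)$, so propagation bounds translate into bounds on the length function of the group elements appearing in supports.

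First I would verify self-adjointness: if $a(t)$ has $\Gamma$-propagation at most $r$, then by Lemma \ref{supp alg} we have $\text{supp}(a(t)^*)=\text{supp}(a(t))^{-1}$, and since inversion in $\Gamma\ltimes X$ sends $(gx,k,hx)$ to $(hx,k^{-1},gx)$ with $|k^{-1}|=|k|$ (property (iii) of a length function), the adjoint has the same $\Gamma$-propagation. Condition (i) is immediate from the definition of $A^s(G)_r$. For condition (ii), given $a\in A^s(G)_{r_1}$ and $b\in A^s(G)_{r_2}$, the support inclusion $\text{supp}(a(t)b(t))\subseteq \text{supp}(a(t))\text{supp}(b(t))$ from Lemma \ref{supp alg} combined with the triangle inequality $|kl|\leq |k|+|l|$ (property (ii) of a length function) shows that $(ab)(t)=a(t)b(t)$ has $\Gamma$-propagation at most $r_1+r_2$. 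One also needs to note that $ab$ still lies in $\C_{L,0}[G;s]$, which is immediate as $\C_{L,0}[G;s]$ is an algebra.

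For condition (iii), the point is that by the very definition of $\C_L[\Gamma\lefttorightarrow X;s]$ (Definition \ref{local}), every $a\in \C_{L,0}[G;s]$ has $\Gamma$-propagation that is \emph{uniformly} bounded in $t$; hence there exists a finite $r$ with $a\in A^s(G)_r$. This gives $\bigcup_{r\geq 0}A^s(G)_r=\C_{L,0}[G;s]=A^s(G)_\infty$, which is by definition dense in $A^s(G)=C^*_{L,0}(G;s)$.

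There is no real obstacle here; the whole argument is bookkeeping built on Lemma \ref{supp alg} and the axioms of a length function. The only thing worth being careful about is that the decomposition of propagation into the \emph{$\Gamma$}-propagation (not the Rips-propagation) is exactly what makes condition (ii) additive: the Rips-propagation does not satisfy such an additive bound under composition in a useful way, which is one of the reasons the authors decoupled the two notions of propagation in the first place.
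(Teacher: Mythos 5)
Your proof is correct and takes the same route as the paper's (very terse) proof: identifying $A^s(G)_r$ via a support condition, then applying Lemma \ref{supp alg} together with the length-function axioms $|gh|\leq|g|+|h|$ and $|g^{-1}|=|g|$ to get self-adjointness and the multiplicative filtration property, with density coming from the definition of $\C_{L,0}[G;s]$. Your version just spells out the bookkeeping that the paper leaves implicit.
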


\begin{proof} 
An element $a\in A^s(G)$ is in $A^s(G)_r$ if and only if it is in the dense $*$-subalgebra $\C_{L,0}[G;s]$ and if whenever $(gx,g,x)$ is in $\text{supp}(a(t))$ for some $t$, we have $|g|\leq r$.  The filtration properties follows directly from this, the facts that $|gh|\leq |g||h|$ and $|g^{-1}|=|g|$, and Lemma \ref{supp alg}.
\end{proof}

Our next goal in this section is to discuss what happens when we take products of elements from $A^s_r$ and $A^s(G)$ for some $G$ and $r$.  Note first that analogously to the case of subgroups, one may build a subgroupoid generated by some $S\subseteq \Gamma\ltimes X$ by iteratively closing under taking compositions, inverses, and units in the sense of parts (i)-(iii) of Definition \ref{subgpd} above.  From this, it is straightforward to check that if $S$ is an open subset of $\Gamma\ltimes X$, then the subgroupoid it generates is also open: see \cite[Lemma 5.2]{Guentner:2014aa} for a proof.  

\begin{definition}\label{extgpd}
Let $r\geq 0$ and $G$ be an open subgroupoid of $\Gamma\ltimes X$, and $H$ be an open subgroupoid of $G$.  The \emph{expansion} of $H$ by $r$ relative to $G$, denoted $H^{+r}$, is the open subgroupoid of $\Gamma\ltimes X$ generated by 
$$
H\cup \{(gx,g,x)\in G\mid |g|\leq r,~x\in H^{(0)}\}.
$$
\end{definition}
Note that the expansion $H^{+r}$ depends on the ambient groupoid $G$; we do not include $G$ in the notation, however, to avoid clutter, and as which groupoid we are working inside should be clear from context.

We now have two basic lemmas.

\begin{lemma}\label{filt ext}
Let $G$ be an open subgroupoid of $\Gamma\ltimes X$, and $H$ an open subgroupoid of $G$.  Let $r,s\geq 0$.  Then 
$$
A^s(H)\cdot A^s_r(G)\cup A^s_r(G)\cdot A^s(H)\subseteq A^s(H^{+r}).
$$
\end{lemma}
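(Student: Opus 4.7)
The plan is to argue by density and reduce the containment to a support-level calculation. Since $A^s(H)$ is by definition the norm-closure of $\C_{L,0}[H;s]$, since $A^s_r(G)$ is contained in $\C_{L,0}[G;s]$, and since $A^s(H^{+r})$ is norm-closed, it suffices to show that whenever $a\in \C_{L,0}[H;s]$ and $b\in A^s_r(G)$, both $ab$ and $ba$ belong to $\C_{L,0}[H^{+r};s]$. The function $t\mapsto a(t)b(t)$ is automatically bounded, uniformly continuous, vanishes at $0$, and has $\Gamma$-propagation that is uniformly bounded and Rips-propagation tending to zero (as these properties are inherited from $a$ and $b$ via Lemma \ref{supp alg} and standard estimates). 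Thus the only issue is verifying the support condition from Definition \ref{smallob}, namely that $\bigcup_t\supp((ab)(t))$ has compact closure inside $H^{+r}$.

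For this, let $K_a\subseteq H$ and $K_b\subseteq G$ be compact sets containing the closures of $\bigcup_t\supp(a(t))$ and $\bigcup_t\supp(b(t))$ respectively; such sets exist by the definitions of $\C_{L,0}[H;s]$ and $A^s_r(G)\subseteq \C_{L,0}[G;s]$. By Lemma \ref{supp alg},
$$
\bigcup_t\supp((ab)(t))\ \subseteq\ K_a\cdot K_b,
$$
and the product $K_a\cdot K_b$ is compact in $\Gamma\ltimes X$ because it is the continuous image under composition of the closed subset of composable pairs in the compact space $K_a\times K_b$.

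The key geometric point is to see that every element of $K_a\cdot K_b$ already lies in $H^{+r}$. Consider a composable pair $\alpha=(hy,h,y)\in K_a\subseteq H$ and $\beta=(gx,g,x)\in K_b\subseteq G$; composability forces $y=gx$, so the range of $\beta$ equals $y\in H^{(0)}$ (since the units of elements of $H$ lie in $H^{(0)}$). As $b$ has $\Gamma$-propagation at most $r$, we have $|g|\leq r$, so the inverse $\beta^{-1}=(x,g^{-1},gx)$ lies in $G$, satisfies $|g^{-1}|\leq r$, and has source $gx\in H^{(0)}$; hence $\beta^{-1}$ belongs to the generating set for $H^{+r}$ in Definition \ref{extgpd}, and therefore $\beta\in H^{+r}$. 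Combined with $\alpha\in H\subseteq H^{+r}$, this gives $\alpha\beta\in H^{+r}$. Thus $K_a\cdot K_b$ is a compact subset of $\Gamma\ltimes X$ lying inside the open subgroupoid $H^{+r}$, and so it is compact there as well, giving $ab\in \C_{L,0}[H^{+r};s]$. The argument for $ba$ is symmetric: composability of $\beta$ with $\alpha$ on the opposite side instead forces $s(\beta)\in H^{(0)}$, placing $\beta$ directly into the generating set for $H^{+r}$.

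The only mildly delicate step is the groupoid bookkeeping in the previous paragraph: one must exploit the composability condition to produce either the source or the range of $\beta$ inside $H^{(0)}$, so that (possibly after taking inverses) $\beta$ is witnessed as one of the generators of $H^{+r}$. Everything else is routine: a density/continuity reduction together with Lemma \ref{supp alg} and the fact that products of compact sets in $\Gamma\ltimes X$ are compact.
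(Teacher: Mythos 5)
Your proof is correct and follows the same approach the paper intends: the paper's proof is the one-liner ``Immediate from Lemma \ref{supp alg},'' and you have correctly filled in the details — the density reduction, the support estimate via Lemma \ref{supp alg}, the compactness of products of relatively compact sets, and most usefully the groupoid bookkeeping showing that composability plus the $\Gamma$-propagation bound on $b$ forces the $K_b$-factor (or its inverse) into the generating set for $H^{+r}$.
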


\begin{proof}
Immediate from Lemma \ref{supp alg}.
\end{proof}

\begin{lemma}\label{twice}
For any $r_1,r_2\geq 0$ and open subgroupoid $H$ of an open subgroupoid $G$ of $\Gamma\ltimes X$, we have that $(H^{+r_1})^{+r_2}\subseteq H^{+(r_1+r_2)}$.
\end{lemma}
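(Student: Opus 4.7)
The plan is to unravel the expansion construction on both sides and then show that every generator of $(H^{+r_1})^{+r_2}$ already lies in the subgroupoid $H^{+(r_1+r_2)}$. Since any subgroupoid is closed under compositions, inverses, and units, containment at the level of generators forces containment of the groupoids they generate.

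My first step is to pin down the unit space of $H^{+r_1}$. The unit space of a groupoid generated by a subset $S$ is just the set of sources and ranges of elements of $S$, so the generating set of $H^{+r_1}$ given in Definition \ref{extgpd} yields
$$
(H^{+r_1})^{(0)} = \{gx : (gx, g, x) \in G,\ x \in H^{(0)},\ |g| \leq r_1\}
$$
(the case $g = e$ absorbs $H^{(0)}$ itself). Consequently the generators of $(H^{+r_1})^{+r_2}$ split into two types: (a) elements of $H^{+r_1}$, and (b) triples $(hgx, h, gx) \in G$ with $|h| \leq r_2$, $(gx, g, x) \in G$, $x \in H^{(0)}$, and $|g| \leq r_1$.

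Type (a) poses no problem, since every generator of $H^{+r_1}$ is \emph{a fortiori} a generator of $H^{+(r_1+r_2)}$. For type (b) I will use the factorization
$$
(hgx, h, gx) = (hgx, hg, x)\,(gx, g, x)^{-1},
$$
valid in any transformation groupoid. The right-hand factor is already a generator of $H^{+r_1}$. For the left-hand factor, $(hgx, hg, x)$ lies in $G$ (because $G$ is a groupoid containing both $(hgx, h, gx)$ and $(gx, g, x)$), has source $x \in H^{(0)}$, and satisfies $|hg| \leq |h| + |g| \leq r_1 + r_2$ by subadditivity of the length function; these are exactly the three conditions that qualify $(hgx, hg, x)$ as a generator of $H^{+(r_1+r_2)}$.

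The only subtle point — the thing to keep an eye on — is that both expansions must be taken inside the same ambient groupoid $G$, so that the composite $(hgx, hg, x)$ really is a legitimate element of $G$ available for expansion. Past that, the argument is pure book-keeping in the transformation groupoid, and I do not anticipate any genuine obstacle.
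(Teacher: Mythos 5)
Your proof is correct and takes essentially the same approach as the paper's: both reduce to showing that a type-(b) generator factors as a product of two elements of $H^{+(r_1+r_2)}$, and your factorization $(hgx,h,gx)=(hgx,hg,x)(gx,g,x)^{-1}$ is the paper's $(gx,g,x)=(gx,gh^{-1},hx)(hx,h,x)$ after relabelling. One minor imprecision: $(gx,g,x)^{-1}=(x,g^{-1},gx)$ is not itself a \emph{generator} of $H^{+r_1}$ (its source $gx$ need not lie in $H^{(0)}$), but it is the inverse of the generator $(gx,g,x)$ and hence belongs to $H^{+r_1}$, which is all the argument requires.
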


\begin{proof}
Clearly $H^{+r_1}\subseteq H^{+(r_1+r_2)}$, so it suffices to show that if $x$ is in the unit space of $H^{+r_1}$ and $|g|\leq r_2$ is such that $(gx,g,x)$ is in $G$, then $(gx,g,x)$ is in $H^{+(r_1+r_2)}$.  Indeed, as $x$ is in the unit space of $H^{+r_1}$ there is $(hx,h,x)\in G$ with $h\in \Gamma$, $|h|\leq r_1$, and $hx\in H^{(0)}$.  Hence $(gx,gh^{-1},hx)$ and $(hx,h,x)$ are in $H^{+(r_1+r_2)}$ and we have 
$$
(gx,g,x)=(gx,gh^{-1},hx)(hx,h,x)\in H^{+(r_1+r_2)}
$$
as required.
\end{proof}

Finally, we conclude this section with the definition of finite dynamical complexity and a basic lemma about the property.

\begin{definition}\label{gpd fdc}
Let $\Gamma\lefttorightarrow X$ be an action, let $G$ be an open subgroupoid of $\Gamma\ltimes X$, and let $\mathcal{C}$ be a set of open subgroupoids of $\Gamma\ltimes X$.  We say that $G$ is \emph{decomposable} over $\mathcal{C}$ if for all $r\geq 0$ there exists an open cover $G^{(0)}=U_0\cup U_1$ of the unit space of $G$ such that for each $i\in \{0,1\}$ the subgroupoid of $G$ generated by 
$$
\{(gx,g,x)\in G\mid x\in U_i,~|g|\leq r\}
$$
(i.e.\ the expansion $U_i^{+r}$ relative to $G$ of Definition \ref{extgpd}) is in $\mathcal{C}$. 

An open subgroupoid of $\Gamma\ltimes X$ (for example, $\Gamma\ltimes X$ itself) has \emph{finite dynamical complexity} if it is contained in the smallest set $\mathcal{D}$ of open subgroupoids of $\Gamma\ltimes X$ that: contains all relatively compact open subgroupoids; and is closed under decomposability\footnote{More precisely, we mean `upwards closed': if $G$ decomposes over $\mathcal{D}$, then $G$ is in $\mathcal{D}$.}.
\end{definition}

We will need a slight variation of this definition.

\begin{definition}\label{strong fdc}
Say that an open subgroupoid $G$ of $\Gamma\ltimes X$ is \emph{strongly decomposable} over a set $\mathcal{C}$ of open subgroupoids of $\Gamma\ltimes X$ if for all $r\geq 0$ there exists an open cover $G^{(0)}=U_0\cup U_1$ of the unit space of $G$ such that for each $i\in \{0,1\}$, if $G_i$ is the subgroupoid of $G$ generated by 
$$
\{(gx,g,x)\in G\mid x\in U_i,~|g|\leq r\},
$$
then $G_i^{+r}$ (with expansion taken relative to $G$) is in $\mathcal{C}$.  Let $\mathcal{D}_s$ be the smallest class of open subgroupoids of $G$ that contains the relatively compact open subgroupoids, and that is closed under strong decomposability. 
\end{definition}

The following lemma records two basic properties of finite dynamical complexity that we will need later.

\begin{lemma}\label{fdc lem}
With notation as above:
\begin{enumerate}[(i)]
\item if $G$ is an open subgroupoid of $\Gamma\ltimes X$ in the class $\mathcal{D}$ (respectively $\mathcal{D}_s$), then all open subgroupoids of $G$ are in $\mathcal{D}$ (respectively $\mathcal{D}_s$); 
\item we have $\mathcal{D}=\mathcal{D}_s$.
\end{enumerate}
\end{lemma}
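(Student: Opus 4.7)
The plan is to prove (i) by a standard ``minimal class'' argument, then derive (ii) from (i) together with Lemma \ref{twice}.

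For (i), I will treat $\mathcal{D}$ in detail; the argument for $\mathcal{D}_s$ will proceed identically with ``strongly decomposable'' everywhere in place of ``decomposable''. Define
$$
\mathcal{D}':=\{G\in\mathcal{D}\mid \text{every open subgroupoid of } G \text{ lies in } \mathcal{D}\}.
$$
By minimality of $\mathcal{D}$, it suffices to verify that $\mathcal{D}'$ contains every relatively compact open subgroupoid and is itself closed under decomposability. The first is immediate, since the closure of a subset is contained in the closure of the whole. For the second, suppose $G$ decomposes over $\mathcal{D}'$ and let $H$ be any open subgroupoid of $G$; I aim to exhibit, for each $r\ge 0$, a decomposition of $H$ over $\mathcal{D}$. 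Given $r$, apply decomposability of $G$ to obtain $G^{(0)}=U_0\cup U_1$ with each $U_i^{+r}$ (relative to $G$) in $\mathcal{D}'$. Set $V_i:=U_i\cap H^{(0)}$. Then the expansion $V_i^{+r}$ relative to $H$ is generated by a subset of the generators of $U_i^{+r}$ relative to $G$, so it embeds as an open subgroupoid of $U_i^{+r}$; because $U_i^{+r}\in\mathcal{D}'$ this forces $V_i^{+r}\in\mathcal{D}$.

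For (ii) I establish the two inclusions separately, in each case using part (i) to pass freely to open subgroupoids within the class. To show $\mathcal{D}\subseteq\mathcal{D}_s$, I will verify that $\mathcal{D}_s$ is closed under plain decomposability: given $G$ decomposable over $\mathcal{D}_s$ and $r\ge 0$, apply decomposability at parameter $2r$ to produce a cover $G^{(0)}=U_0\cup U_1$ with each $U_i^{+2r}\in\mathcal{D}_s$. By Lemma \ref{twice}, $(U_i^{+r})^{+r}\subseteq U_i^{+2r}$, and since it is open this makes it an open subgroupoid of $U_i^{+2r}$; part (i) for $\mathcal{D}_s$ then places $(U_i^{+r})^{+r}$ in $\mathcal{D}_s$, so the same cover witnesses strong decomposability of $G$ at parameter $r$. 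For the reverse inclusion $\mathcal{D}_s\subseteq\mathcal{D}$, I will check that $\mathcal{D}$ is closed under strong decomposability: a witnessing cover $U_0\cup U_1$ at parameter $r$ presents each $U_i^{+r}$ as an open subgroupoid of $(U_i^{+r})^{+r}\in\mathcal{D}$, and part (i) for $\mathcal{D}$ immediately yields $U_i^{+r}\in\mathcal{D}$, exhibiting ordinary decomposability.

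The only real obstacle is keeping careful track of which ambient groupoid each expansion $(\cdot)^{+r}$ is taken relative to, since the notation suppresses this decoration. The essential geometric observation---that intersecting a cover of $G^{(0)}$ with $H^{(0)}$ produces an expansion relative to $H$ that embeds as an open subgroupoid of the corresponding expansion relative to $G$---is formal from the description of the generators, but it is exactly what makes hereditariness propagate through the decomposability step. Lemma \ref{twice} then supplies the slack in parameters needed to shuttle between the two notions of decomposability in part (ii).
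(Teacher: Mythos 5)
Your proposal is correct and follows essentially the same route as the paper: part (i) via the auxiliary hereditary class $\mathcal{D}'$ and intersecting the cover with $H^{(0)}$, and part (ii) by showing each of $\mathcal{D}$, $\mathcal{D}_s$ is closed under the other notion of decomposability, using the ``$2r$ trick'' for the inclusion $\mathcal{D}\subseteq\mathcal{D}_s$. The one small improvement is that you invoke Lemma~\ref{twice} to get $(U_i^{+r})^{+r}\subseteq U_i^{+2r}$, whereas the paper re-derives this containment from scratch inside the proof.
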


\begin{proof}
For part (i), we just look at the case of $\mathcal{D}$; the case of $\mathcal{D}_s$ is similar.  Let $\mathcal{D}'$ be the set of all open subgroupoids of $\Gamma\ltimes X$, all of whose open subgroupoids are in $\mathcal{D}$.  Clearly $\mathcal{D}'\subseteq \mathcal{D}$, and $\mathcal{D}'$ contains all open subgroupoids with compact closure.  To complete the proof of (i), it suffices to show that $\mathcal{D}'$ is closed under decomposability.  Say then $G$ is an open subgroupoid of $\Gamma\ltimes X$ that decomposes over $\mathcal{D}'$.   Say $H$ is an open subgroupoid of $G$ and $r\geq 0$.  Let $\{U_0,U_1\}$ be an open cover of $G^{(0)}$ such that for each $i\in \{0,1\}$, the subgroupoid $G_i$ of $G$ generated by 
$$
\{(gx,g,x)\in G\mid x\in U_i,~|g|\leq r\}
$$
is in $\mathcal{D}'$.  Let $V_i=U_i\cap H^{(0)}$.  Then $\{V_0,V_1\}$ is an open cover of $H^{(0)}$ such that the (open) subgroupoid $H_i$ of $H$ generated by
$$
\{(gx,g,x)\in H\mid x\in V_i,~|g|\leq r\}
$$
is contained in $G_i$.  As each $G_i$ is in $\mathcal{D}'$, this implies that each $H_i$ is in $\mathcal{D}$; in other words, $H$ decomposes over $\mathcal{D}$, and is thus in $\mathcal{D}$ as required.

For part (ii), it clearly suffices to prove that $\mathcal{D}$ is closed under strong decomposability, and that $\mathcal{D}_s$ is closed under decomposability.  For the former, say that $G$ strongly decomposes over $\mathcal{D}$.  Then for any $r\geq 0$, there is an open cover $\{U_0,U_1\}$ of $G^{(0)}$ such that if $G_i$ is generated by 
$$
\{(gx,g,x)\in G\mid x\in U_i,~|g|\leq r\},
$$
then $G_i^{+r}$ is in $\mathcal{D}$.  However, $G_i$ is an open subgroupoid of $G_i^{+r}$ whence is in $\mathcal{D}$ by part (i). Hence $G$ decomposes over $\mathcal{D}$, and thus is in $\mathcal{D}$ as required.

For the other case, say $G$ decomposes over $\mathcal{D}_s$ and let $r\geq 0$.  Then there is an open cover $G^{(0)}=U_0\cup U_1$ such that the subgroupoid $H_i$ of $G$ generated by 
$$
\{(gx,g,x)\in G \mid x\in U_i,~|g|\leq 2r\}
$$
is in $\mathcal{D}_s$.  We claim that if $G_i$ is the subgroupoid of $G$ generated by 
$$
\{(gx,g,x)\in G \mid x\in U_i,~|g|\leq r\},
$$
then $G_i^{+r}$ is an open subgroupoid of $H_i$; this will suffice to complete the proof by part (i).  Indeed, we have that $G_i^{+r}$ is generated by $G_i$ and 
\begin{equation}\label{exp gen}
\{(gx,g,x)\in G \mid x\in G_i^{(0)},~|g|\leq r\};
\end{equation}
as $G_i$ is clearly contained in $H_i$, it suffices to show that the latter set if in $H_i$.  Let then $(gx,g,x)$ be in the set in line \eqref{exp gen}. As $x\in G_i^{(0)}$, we have $x=ky$ for some $y\in U_i$, and $k\in \Gamma$ with $|k|\leq r$.  Hence we may rewrite 
$$
(gx,g,x)=(gky,g,ky)=(gky,gk,y)(y,k^{-1},ky);
$$
as $y\in U_i$ and $|gk|,|k^{-1}|\leq 2r$, the product on the right is in $H_i$ and we are done.
\end{proof}

\section{Controlled $K$-theory}\label{conk sec}

In this section we will introduce the main general tool needed for the proof of Theorem \ref{main}: controlled $K$-theory. 

Our treatment in this section is based on the detailed development given by Oyono-Oyono and the third author \cite{Oyono-Oyono:2011fk}.  Our controlled $K$-theory groups are, however, both more general in some ways, and more specific in others, than those of \cite{Oyono-Oyono:2011fk}. We thus try to keep our exposition self-contained; in particular, we assume no background beyond the basics of $C^*$-algebra $K$-theory as covered for example in \cite{Rordam:2000mz} or \cite{Wegge-Olsen:1993kx}.  Throughout, we provide references to \cite{Oyono-Oyono:2011fk} for comparison purposes, and the reader is encouraged to look there for a broader picture of the theory. 

We now define the controlled $K$-groups that we need.  Compare  \cite[Section 1.2]{Oyono-Oyono:2011fk} for the following definition.

\begin{definition}\label{qp}
Let $A$ be a $C^*$-algebra.  A \emph{quasi-projection} in $A$ is an element $p$ of $A$  such that $p=p^*$ and $\|p^2-p\|<1/8$.  If $S$ is a self-adjoint subspace of $A$, write $M_n(S)$ for the matrices in $M_n(A)$ with all entries coming from $S$, and $P^{1/8}_n(S)$ for the collection of quasi-projections in $M_n(S)$.  

Let $\chi=\chi_{(1/2,\infty]}$ be the characteristic function of $(1/2,\infty]$.  Then $\chi$ is continuous on the spectrum of any quasi-projection, and thus there is a well-defined map
$$
\kappa:P_n^{1/8}(S)\to P_n(A),\quad p\mapsto \chi(p),
$$
where $P_n(A)$ denotes the projections in $M_n(A)$.
\end{definition}

\begin{remark}\label{1/8 rem}
The choice of `$1/8$' in the above is not important: any number no larger than $1/4$ would do just as well.  In some arguments in controlled $K$-theory, it is useful to allow the bound on the `projection error' $\|p^2-p\|$ to change; for this reason in \cite[Section 1.2]{Oyono-Oyono:2011fk}, what we have called a quasi-projection would be called a \emph{$(1/8)$-projection}.   We do not need this extra flexibility, so it is more convenient to just fix an absolute error bound throughout.  Similar remarks apply to quasi-unitaries as introduced in Definition \ref{qu} below.
\end{remark}

If $A$ is a $C^*$-algebra we denote its unitization by $\widetilde{A}$.  For a natural number $n$, let $1_n$ denote the unit of $M_n(\widetilde{A})$.  For the following definition, compare \cite[Section 1, particularly Definition 1.12]{Oyono-Oyono:2011fk}.  

\begin{definition}\label{k0}
Let $A$ be a non-unital $C^*$-algebra, and let $S$ be a self-adjoint subspace of $A$.  Let $\widetilde{S}$ denote the subspace $S+\C1$ of $\widetilde{A}$.  

Using the inclusions
$$
P_n^{1/8}(\widetilde{S})\owns p\mapsto \begin{pmatrix} p & 0 \\ 0 & 0 \end{pmatrix}\in P_{n+1}^{1/8}(\widetilde{S})
$$
we may define the union
$$
P_\infty^{1/8}(\widetilde{S}):=\bigcup_{n=1}^\infty P_n^{1/8}(\widetilde{S}).
$$

Let $C([0,1],\widetilde{S})$ denote the self-adjoint subspace of the $C^*$-algebra $C([0,1],\widetilde{A})$ consisting of functions with values in $\widetilde{S}$.  Define an equivalence relation on $P^{1/8}_\infty(\widetilde{S})\times \N$ by $(p,m)\sim (q,n)$ if there exists a positive integer $k$ and an element $h$ of $P^{1/8}_\infty(C([0,1],\widetilde{S}))$ such that 
$$
h(0)=\begin{pmatrix} p & 0 \\ 0 & 1_{n+k}\end{pmatrix}~~\text{ and } ~~h(1)=\begin{pmatrix} q & 0 \\ 0 & 1_{m+k}\end{pmatrix}.
$$
For $(p,m)\in P_\infty^{1/8}(\widetilde{S})\times \N$, denote by $[p,m]$ its equivalence class under $\sim$.

Let now $\rho:M_n(\widetilde{S})\to M_n(\C)$ be the restriction to $M_n(\widetilde{S})$ of the map induced on matrices by the canonical unital $*$-homomorphism $\rho:\widetilde{A}\to \C$ with kernel $A$.  Finally, define
$$
K_0^{1/8}(S):=\{[p,m]\in \big(P_\infty^{1/8}(\widetilde{S})\times \N \big)\,/\sim~\mid~ \text{rank}(\kappa(\rho(p)))=m\}.
$$
The set $K_0^{1/8}(S)$ is equipped with an operation defined by
$$
[p,m]+[q,n]:=\Big[\begin{pmatrix} p & 0 \\ 0 & q \end{pmatrix}\,,\,m+n\Big].
$$
\end{definition}

\noindent Using standard arguments in $K$-theory, one sees that $K_0^{1/8}(S)$ is an abelian group with unit $[0,0]$: compare \cite[Lemmas 1.14 and 1.15]{Oyono-Oyono:2011fk}.\\

We now look at controlled $K_1$ groups.  Compare \cite[Section 1.2]{Oyono-Oyono:2011fk} for the following definition.

\begin{definition}\label{qu}
Let $A$ be a unital $C^*$-algebra.  A \emph{quasi-unitary} in $A$ is an element $u$ of $A$  such that $\|1-uu^*\|<1/8$ and $\|1-u^*u\|<1/8$.    If $S$ is a self-adjoint subspace of $A$ containing the unit, write $U^{1/8}_n(S)$ for the collection of quasi-unitaries in $M_n(S)$.  

Note that as $\|1-u^*u\|<1/8<1$, $u^*u$ is invertible whence there is a well-defined map
$$
\kappa:U_n^{1/8}(S)\to U_n(A),~~~u\mapsto u(u^*u)^{-1/2},
$$
where $U_n(A)$ denotes the unitaries in $M_n(A)$.
\end{definition}

For the following definition, compare \cite[Section 1, particularly Definition 1.12]{Oyono-Oyono:2011fk}.

\begin{definition}\label{k1}
Let $A$ be a non-unital $C^*$-algebra, and let $S$ be a self-adjoint subspace of $A$.  Let $\widetilde{A}$ denote the unitization of $A$ and let $\widetilde{S}$ be the subspace $S+\C1$ of $\widetilde{A}$.  

Using the inclusions
$$
U_n^{1/8}(\widetilde{S})\owns u\mapsto \begin{pmatrix} u & 0 \\ 0 & 1 \end{pmatrix}\in U_{n+1}^{1/8}(\widetilde{S})
$$
we may define the union
$$
U_\infty^{1/8}(\widetilde{S}):=\bigcup_{n=1}^\infty U_n^{1/8}(\widetilde{S}).
$$
Define an equivalence relation on $U^{1/8}_\infty(\widetilde{S})$ by $u\sim v$ if there exists an element $h$ of $U^{1/8}_\infty(C([0,1],\widetilde{S}))$ such that $h(0)=u$ and $h(1)=v$.  For $u\in U_\infty^{1/8}(\widetilde{S})$, denote by $[u]$ its equivalence class under $\sim$.

Finally define
$$
K_1^{1/8}(S):=U_\infty^{1/8}(\widetilde{S})~/\sim.
$$
The set $K_1^{1/8}(S)$ is equipped with the operation defined by
$$
[u]+[v]:=\begin{bmatrix} u & 0 \\ 0 & v\end{bmatrix}.
$$
\end{definition}

Using standard arguments in $K$-theory, one sees that the operation on $K_1^{1/8}(S)$ makes it into an abelian monoid\footnote{The usual arguments showing $K_1(A)$ has inverses do not apply as they involve multiplying elements together, and so potentially go outside $\widetilde{S}$.} with unit $[1]$: compare \cite[Lemmas 1.14 and 1.16]{Oyono-Oyono:2011fk}.

\begin{definition}\label{k*}
Let $A$ be a $C^*$-algebra and $S$ a self-adjoint subspace of $A$.  Define
$$
K_*^{1/8}(S):=K_0^{1/8}(S)\oplus K_1^{1/8}(S).
$$
The (graded, unital, abelian) semigroup $K_*^{1/8}(S)$ is called the \emph{controlled $K$-theory semigroup of $S$}.
\end{definition}

Note that $K_*^{1/8}(S)$ depends on the embedding of $S$ inside the ambient $C^*$-algebra $A$; which embedding is being used will always be obvious, however, so we omit it from the notation.

\begin{remark}\label{subspaces}
If $S\subseteq T$ are nested self-adjoint subspaces of a (non-unital) $C^*$-algebra $A$, then we may consider elements of matrix algebras over $\widetilde{S}$ as elements of matrix algebras over $\widetilde{T}$.  This clearly gives rise to a map on controlled $K$-theory
\begin{equation}\label{subs incl}
K_*^{1/8}(S)\to K_*^{1/8}(T)
\end{equation}
induced by the inclusion.  To avoid clutter, we will not use specific notation for these inclusion maps; sometimes we refer to them as `subspace-inclusion' maps.  We also sometimes abuse terminology and say something like `let $x$ and $y$ be elements of $K_*^{1/8}(S)$ that are equal in $K_*^{1/8}(T)$'; more precisely, this means that $x$ and $y$ are elements of $K_*^{1/8}(S)$ that go to the same element of $K_*^{1/8}(T)$ under the map in line \eqref{subs incl} above.
\end{remark}

For the next definition and lemma, which compares controlled $K$-theory to the usual $K$-theory groups of a $C^*$-algebra, compare \cite[Remark 1.18 and surrounding discussion]{Oyono-Oyono:2011fk}.  Let $K_*(A):=K_0(A)\oplus K_1(A)$ denote the usual (topological) $K$-theory group of a $C^*$-algebra $A$.

\begin{definition}\label{compare}
Let $A$ be a non-unital $C^*$-algebra, and let $S$ be a self-adjoint subspace of $A$.  Let $\kappa$ be one of the maps from Definition \ref{qp} and \ref{qu} (it will be obvious from context which is meant).  Define maps
\begin{align*}
c_0 & :K_0^{1/8}(S)\to K_0(A),~~~[p,m]\mapsto [\kappa(p)]-[1_m] \\
c_1 & :K_1^{1/8}(S)\to K_1(A),~~~[u]\mapsto [\kappa(u)] \\ 
c :=c_0\oplus c_1& :K_*^{1/8}(S)\to K_*(A).
\end{align*}
We call $c_0$, $c_1$, and $c$ the \emph{comparison maps}.
\end{definition}

Standard arguments in $C^*$-algebra $K$-theory show that $c_0$, $c_1$ and $c$ are well-defined unital semigroup homomorphisms. The map $c$ is `almost isomorphic' in the following sense.  

\begin{proposition}\label{comparelem}
Let $A$ be a $C^*$-algebra.  Let $(S_i)_{i\in I}$ be a collection of self-adjoint subspaces of $A$ such that the union $\bigcup_{i\in I} S_i$ is dense.  Then for any $x\in K_*(A)$ there exists $S_i$ and $y\in K_*^{1/8}(S_i)$ such that $c(y)=x$. 

Assume moreover that the collection $(S_i)_{i\in I}$ is directed for the partial order given by inclusion.  Then if $x,y\in K_*^{1/8}(S_i)$ are such that $c(x)=c(y)$ there exists $S_j$ containing $S_i$ such that $x$ and $y$ become equal in $K_*^{1/8}(S_j)$ in the sense of Remark \ref{subspaces}.

In particular, if $S$ is a dense self-adjoint subspace of $A$, then the comparison map $c:K_*^{1/8}(S)\to K_*(A)$ is an isomorphism.
\end{proposition}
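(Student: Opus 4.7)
The plan is to prove the two statements about existence of lifts and about injectivity-up-to-enlargement directly, and then derive the final isomorphism assertion as a corollary by applying both to the one-element (hence trivially directed) collection $\{S\}$.

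For the surjectivity statement, I would represent a class $x\in K_0(A)$ in the usual form $[p]-[1_m]$ with $p\in P_n(\widetilde{A})$ and $\text{rank}(\rho(p))=m$, and a class in $K_1(A)$ as $[u]$ with $u\in U_n(\widetilde{A})$. The element $p-\rho(p)$ lies in $M_n(A)$, so density of $\bigcup_i S_i$ lets me choose a self-adjoint $p'\in M_n(\widetilde{S_i})$ (for some $i$) with $\|p-p'\|$ arbitrarily small. Once $\|p-p'\|$ is small enough, $\|p'^2-p'\|<1/8$, so $p'\in P^{1/8}_n(\widetilde{S_i})$, and $\rho(p')$ is close enough to $\rho(p)$ to force $\text{rank}(\kappa(\rho(p')))=m$. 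Then $\kappa(p')$ lies in a norm-neighbourhood of $p$ on which $\chi$ is continuous, so $\kappa(p')$ is homotopic to $p$ through genuine projections in $M_n(\widetilde{A})$, giving $c_0([p',m])=x$. The $K_1$ case is parallel, with quasi-unitaries in place of quasi-projections (and using $\|1-u^*u\|=0$ for a unitary, so small perturbations remain quasi-unitary).

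For the injectivity statement, I would treat the $K_0$ case; $K_1$ is analogous. Suppose $x=[p,m]$ and $y=[q,n]$ in $K_0^{1/8}(S_i)$ satisfy $c_0(x)=c_0(y)$. Then, after stabilization, there is a continuous path $H\colon[0,1]\to P_N(\widetilde{A})$ joining $\kappa(p)\oplus 1_{n+k}$ to $\kappa(q)\oplus 1_{m+k}$ for some $k$ and $N$. I would uniformly approximate $H$ by a self-adjoint element $H'\in M_N(C([0,1],\widetilde{S_j}))$ for some $S_j\supseteq S_i$ supplied by directedness of $I$, with the approximation fine enough that each $H'(t)$ is a quasi-projection. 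The endpoints $H'(0)$ and $H'(1)$ are then close to $\kappa(p)\oplus 1_{n+k}$ and $\kappa(q)\oplus 1_{m+k}$, hence close to $p\oplus 1_{n+k}$ and $q\oplus 1_{m+k}$ (since $\|p-\kappa(p)\|$ is small for any quasi-projection, by functional calculus). Splicing $H'$ with short linear paths in $M_N(\widetilde{S_j})$ from $p\oplus 1_{n+k}$ to $H'(0)$ and from $H'(1)$ to $q\oplus 1_{m+k}$ — which remain quasi-projection paths because the condition $\|r^2-r\|<1/8$ is open — produces a quasi-projection homotopy in $M_N(C([0,1],\widetilde{S_j}))$ witnessing $[p,m]=[q,n]$ in $K_0^{1/8}(S_j)$.

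The main obstacle will be coordinating the various approximation thresholds so that every ingredient simultaneously satisfies the $1/8$ quasi-projection (or quasi-unitary) tolerance: the bound on $\|p'^2-p'\|$, the rank-stability of $\rho(\cdot)$, the pointwise quasi-projection condition along the path $H'$, and the linear splices at both endpoints all need to be controlled by a single choice of error. Each of these is a standard continuous-functional-calculus estimate, but a careful bookkeeping of constants is what makes the combined argument work.
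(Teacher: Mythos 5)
Your argument is correct and follows essentially the same route as the paper: for surjectivity, approximate a projection/unitary representative by an element of some $M_n(\widetilde{S_i})$ using density, observe that a close enough approximant is a quasi-projection/quasi-unitary whose $\kappa$-image is homotopic to the original via functional calculus; for injectivity, approximate the connecting homotopy and invoke directedness to land in a single $M_n(\widetilde{S_j})$. The one place where the paper is more explicit than you are is the mechanism for producing the approximating homotopy $H'$: it samples $h$ at finitely many times $k/m$, approximates each $h(k/m)$ in some $M_n(\widetilde{S_{i_k}})$, takes $S_j$ to contain all the $S_{i_k}$ by directedness, and then joins the samples by linear interpolation. Your additional endpoint-splicing step (connecting $p\oplus 1$ to $H'(0)$ and $H'(1)$ to $q\oplus 1$ by short linear paths, using $\|p-\kappa(p)\|$ small) is a point the paper leaves implicit, and it is worth spelling out as you do; the quantitative bookkeeping you flag at the end is indeed the only thing left to check, and it does work out with the fixed tolerance $1/8$.
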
 

\begin{proof}
We just look at the case of $K_1$: the $K_0$ case is similar.  Let $[u]$ be a class in $K_1(A)$.  As $\bigcup_i S_i$ is dense in $A$, for any $\epsilon>0$ there is an $i\in I$ and $v$ in a matrix algebra over the unitization of $S_i$ such that $\|u-v\|<\epsilon$.  For $\epsilon$ suitably small, this implies that $v$ is a quasi-unitary, and moreover that $\|\kappa(v)-u\|<2$, whence $[\kappa(v)]=[u]$ in $K_1(A)$.  

The injectivity statement follows on applying a similar argument to homotopies.  Indeed, it suffices to show that any homotopy $h$ in $C([0,1],M_n(\widetilde{A}))$ can be approximated by a homotopy in $C([0,1],M_n(\widetilde{S_i}))$ for some $i$.  For this, note that density gives us elements $a_0,...,a_m$ with $a_k$ in some $M_n(\widetilde{S_{i_k}})$ such that the map $g:[0,1]\to A$ which sends $k/m$ to $a_k$ and linearly interpolates between these points is a good approximant to $h$.  The directedness assumption implies there is some $S_j$ that contains all of $S_{i_0},....,S_{i_m}$, so $g$ is in $C([0,1],M_n(\widetilde{S_j}))$.
\end{proof}

The most important examples of subspaces we will use (particularly in the context of Proposition \ref{comparelem}) come from filtrations as in Definition \ref{filt} (see Definition \ref{main ob} and Lemma \ref{filt lem} for examples).  It will be convenient to have some additional notation in the case when $A$ is filtered. 

\begin{definition}\label{conkfilt}
Let $A$ be a non-unital filtered $C^*$-algebra, and $S$ be a self-adjoint subspace of $A$.  For each $r\geq 0$ and $i\in \{0,1,*\}$, define
$$
K_i^{r,1/8}(S):=K_i^{1/8}(S\cap A_r).
$$
\end{definition} 
\noindent Note that in the case that $S=A$, our notation agrees with that of \cite[Section 1.3]{Oyono-Oyono:2011fk}.

\section{Strategy of proof of Theorem \ref{main}}\label{strat sec}

In this short section, we set out our strategy for the proof of Theorem \ref{main}.  Our goal is to reduce the proof to two technical propositions and show how finite dynamical complexity can be used as an input for these.  The two propositions will be proved in the next two sections.

Throughout this section, $\Gamma\lefttorightarrow X$ is an action as usual, and we use the shorthand notations of Definition \ref{main ob} for the obstruction $C^*$-algebras and their groupoid versions.

Here are the two technical propositions, which should be thought of as the base case and inductive step in the proof.  The names `homotopy invariance' and `Mayer-Vietoris' will be explained in later sections. 

\begin{proposition}[Homotopy invariance argument]\label{basecase}
Let $G$ be an open subgroupoid of $\Gamma\ltimes X$, and assume $s\geq 0$ is such that 
$$
G\subseteq \{(gx,g,x)\in \Gamma\ltimes X \mid |g|\leq s\}.
$$
Then $K_*(A^s(G))=0$.
\end{proposition}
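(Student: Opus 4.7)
Proof plan.  The hypothesis $G\subseteq\{(gx,g,x):|g|\leq s\}$, combined with compactness of $X$, makes $G$ have compact closure in $\Gamma\ltimes X$ (since $\{g\in\Gamma:|g|\leq s\}$ is finite), and it forces every $a\in A^s(G)=C^*_{L,0}(G;s)$ to have $\Gamma$-propagation at most $s$.  So the only essential remaining conditions on an element $a$ of $A^s(G)$ are: $a(0)=0$; $a$ is bounded and uniformly continuous in $t$; and the Rips-propagation of $a(t)$ tends to $0$ as $t\to\infty$.  The plan is to show $K_*(A^s(G))=0$ by exhibiting a $K$-theoretic null-homotopy of $\mathrm{id}_{A^s(G)}$, via an Eilenberg swindle built from the half-line $[0,\infty)$ in the $t$-direction together with the separable infinite-dimensional Hilbert space $H$ appearing as a tensor factor of $H_s$.

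The first ingredient is a continuous family of $*$-endomorphisms $\psi_r:A^s(G)\to A^s(G)$ obtained by conjugation with the right-shift isometry by $r$ on $L^2[0,\infty)$: concretely $\psi_r(a)(t)=a(t-r)$ for $t\geq r$, and $0$ otherwise.  (This really is a $*$-endomorphism, since conjugation by the shift sends pointwise-multiplication operators to pointwise-multiplication operators.)  The conditions $a(0)=0$ and uniform continuity of $a$ in $t$ together make $r\mapsto \psi_r(a)$ norm-continuous: the potentially problematic ``boundary contribution" for $t\in[\min(r,r'),\max(r,r')]$ is controlled by $\sup_{u\in[0,|r-r'|]}\|a(u)\|$, which is small by uniform continuity and $a(0)=0$.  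In particular the shift $\psi_1$, and hence each iterate $\psi_1^n$, is homotopic to $\mathrm{id}_{A^s(G)}$ through $*$-endomorphisms of $A^s(G)$, so all of them agree on $K$-theory.

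The second ingredient is a sequence of isometries $W_n$ on $H$ with pairwise orthogonal ranges and $\sum_n W_nW_n^*=1$ strongly.  Extended trivially over the remaining tensor factors of $H_s$, each $W_n$ commutes with the $\Gamma$-action, the $C(X)$-multiplication representation, and the $Z_s$-indexing, so $W_n$ sits in the multiplier algebra of $A^s(G)$.  Using these, define the Eilenberg swindle
\[
\Sigma:A^s(G)\to M(A^s(G)),\qquad \Sigma(a):=\sum_{n\geq 1}W_n\,\psi_1^n(a)\,W_n^*,
\]
with the sum interpreted strictly in $M(A^s(G))$ thanks to the orthogonality of the $W_n$-ranges.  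A standard matching-isometry argument then gives $\Sigma\oplus\mathrm{id}_{A^s(G)}\cong\Sigma$ (each summand of $\Sigma$ is homotopic to $\mathrm{id}$, and the combined direct sum absorbs one extra copy of $\mathrm{id}$), so $K_*(\Sigma)+K_*(\mathrm{id})=K_*(\Sigma)$ as maps on $K_*(A^s(G))$, forcing $K_*(\mathrm{id}_{A^s(G)})=0$, i.e.\ $K_*(A^s(G))=0$.

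Main obstacle.  The technical heart of the argument is making the Eilenberg swindle $\Sigma$ rigorous: each summand $W_n\psi_1^n(a)W_n^*$ has norm $\|a\|$, so the series cannot converge in norm, and one must argue strict convergence in $M(A^s(G))$ and verify that the resulting object genuinely multiplies $A^s(G)$ into itself.  It is likely cleanest to run this argument inside the controlled $K$-theory framework of Section~\ref{conk sec}: there the swindle becomes a finite-stage statement at the level of quasi-projections and quasi-unitaries and their controlled homotopies, and since the $\Gamma$-propagation filtration on $A^s(G)$ already stabilizes at $r\geq s$ (all elements have $\Gamma$-propagation at most $s$), the controlled $K$-groups $K_*^{r,1/8}(A^s(G))$ become eventually constant in $r$, which lets the swindle close up in a finite-$N$ form that bypasses the strict-convergence issue.
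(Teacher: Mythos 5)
Your proposal correctly identifies two of the ingredients used in the paper's proof — the time-shift endomorphisms $\psi_r$ (which really are norm-continuous in $r$ for the reason you give, since $a(0)=0$) and the orthogonal isometries $W_n$ coming from a decomposition of the auxiliary factor $H$.  However, the central Eilenberg swindle $\Sigma(a)=\sum_n W_n\psi_1^n(a)W_n^*$ does not take values in $A^s(G)$, nor in $M(A^s(G))$, and this is a genuine obstruction, not merely a technical convergence wrinkle.  The problem is the Rips-propagation decay condition built into $C^*_{L,0}(G;s)$: for large $t$, the block-diagonal operator
$$
\Sigma(a)(t)=\sum_{1\le n\le t}W_n\,a(t-n)\,W_n^*
$$
contains summands $W_n a(t-n)W_n^*$ with $t-n$ small (e.g.\ $n\approx t-1$), whose Rips-propagation equals the Rips-propagation of $a$ near $0$ and in particular does not shrink.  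Hence the Rips-propagation of $\Sigma(a)(t)$ does not tend to $0$ as $t\to\infty$, so $\Sigma(a)\notin A^s(G)$.  The same computation applied to products shows $\Sigma(a)\cdot b$ also has non-decaying Rips-propagation for generic $b\in A^s(G)$, so $\Sigma(a)$ is not a multiplier of $A^s(G)$ either.  Your appeal to controlled $K$-theory does not help: the controlled filtration on $A^s(G)$ tracks $\Gamma$-propagation, while the Rips-propagation decay is a fixed part of the definition of the algebra and is not relaxed at any finite stage of the controlled apparatus.

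This is exactly why the paper's proof is a genuine homotopy-invariance argument and not a direct swindle.  The paper first constructs two isometries $v_0$ and $v_\infty$ implementing, respectively, the linear contraction of the Rips complex onto the unit space $G^{(0)}$ and the identity, verifies they are ``stably equivalent'' (so conjugation by them agrees on $K$-theory, via an abstract Cuntz-type lemma built from a double algebra), shows conjugation by $v_\infty$ is the identity on $K_*(A^s(G))$, and shows conjugation by $v_0$ factors through $K_*(A^s(G^{(0)}))$.  Only then does the time-shift swindle you describe get applied, and \emph{only} to $A^s(G^{(0)})$, where it is valid precisely because elements supported in the unit space have identically zero Rips-propagation, so the non-decay problem above does not arise.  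So the missing idea in your proof is the reduction from $G$ to $G^{(0)}$ using the Rips-complex contraction; without it, the swindle is applied to an algebra where it is not available.
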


\begin{proposition}[Mayer-Vietoris]\label{indstep}
Let $G$ be an open subgroupoid of $\Gamma\ltimes X$ that is in the class $\mathcal{D}$ of Definition \ref{gpd fdc}, and let $r_0,s_0\geq 0$.  Then there is $s\geq \max\{r_0,s_0\}$ depending on $r_0$, $s_0$ and $G$ such that the subspace-inclusion map \textup{(}cf.\ Remark \ref{subspaces}\textup{)}
$$
K_*^{r_0,1/8}(A^{s_0}(G)) \to K_*^{s,1/8}(A^s(G))
$$
is the zero map.
\end{proposition}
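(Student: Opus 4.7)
The plan is induction on the structure of the class $\mathcal{D}$. By Lemma~\ref{fdc lem}(ii), we may work with $\mathcal{D}=\mathcal{D}_s$, reducing to two cases: a base case for relatively compact open $G$, and an inductive step when $G$ strongly decomposes over a subclass already satisfying the desired conclusion.

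For the base case, relative compactness of $G$ in $\Gamma\ltimes X$ forces the $\Gamma$-coordinate to be bounded, so there is $s_1\geq 0$ with $G\subseteq\{(gx,g,x):|g|\leq s_1\}$. Taking $s=\max\{r_0,s_0,s_1\}$ satisfies the hypothesis of Proposition~\ref{basecase}, yielding $K_*(A^s(G))=0$. My plan for upgrading this to the controlled statement is to exploit the expected form of the proof of Proposition~\ref{basecase}, which (as its name ``homotopy invariance'' suggests) is naturally proved by an explicit Eilenberg-style swindle in the extra $[0,\infty)$-direction provided by the localization algebra. Such a swindle produces, for every quasi-projection or quasi-unitary $p\in A^{s_0}(G)_{r_0}$, a concrete trivializing path in $A^s(G)$ whose $\Gamma$-propagation is uniformly bounded by a function of $r_0$ alone, independent of the representative $p$. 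This delivers a single $s$ killing the whole group.

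For the inductive step, the plan is a controlled Mayer--Vietoris argument. First fix a scale $r_1\geq r_0$ (to be determined by back-solving against the scales produced by the inductive hypothesis). Strong decomposability at scale $r_1$ yields an open cover $G^{(0)}=U_0\cup U_1$ with subgroupoids $G_0^{+r_1},G_1^{+r_1}\in\mathcal{D}$ (expansions relative to $G$). Choose a continuous partition of unity $\{\psi_0^2,\psi_1^2\}$ on $G^{(0)}$ subordinate to $\{U_0,U_1\}$, and let the $\psi_i$ act as multiplication operators on $H_s$. These operators have zero $\Gamma$-propagation, so for any quasi-projection or quasi-unitary $p\in A^{s_0}(G)_{r_0}$ the cut pieces $p_i:=\psi_i p\psi_i$ retain $\Gamma$-propagation at most $r_0$, and by support-tracking (Lemmas~\ref{supp alg} and~\ref{filt ext}) live in $A^{s_0}(G_i^{+r_1})$ once $r_1\geq r_0$. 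Applying the inductive hypothesis to each $G_i^{+r_1}$ yields Rips scales $s^{(0)},s^{(1)}$ at which the classes $[p_i]$ are killed; combining these with a partition-of-unity homotopy from $p$ to the orthogonal sum $p_0\oplus p_1$ performed at a further enlarged Rips scale then produces the desired single $s$ for $G$.

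The main obstacle will be the quantitative bookkeeping in the inductive step. The pieces $p_i$ are only approximate projections/unitaries, and the homotopy from $p$ to $p_0\oplus p_1$ introduces cross-term and propagation errors that must be absorbed into the $1/8$-slack of Definitions~\ref{qp} and~\ref{qu}. More subtly, strong decomposability gives only \emph{existence} of a cover $\{U_0,U_1\}$ for each $r_1$, with no a priori control on the ``Lebesgue scale'' of the cover relative to the propagation of $p$; so the order of quantifiers (choice of $r_1$ versus application of the inductive hypothesis) must be arranged carefully. The propagation arithmetic encoded in Lemmas~\ref{filt ext} and~\ref{twice} is exactly what allows this loop to close, and the detailed execution will be carried out in Section~\ref{ind sec}.
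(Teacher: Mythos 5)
Your overall architecture — induction over the class $\mathcal{D}_s$ via Lemma~\ref{fdc lem}(ii), with a base case for relatively compact $G$ and an inductive step for strongly decomposable $G$ — is the same as the paper's.  However, both the base case and the inductive step diverge from the paper in ways that matter.

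\textbf{Base case.}  You propose to extract quantitative propagation bounds for trivializing homotopies by re-examining the swindle that proves Proposition~\ref{basecase}.  This is unnecessary.  Once $s\ge s_1:=\max\{|g|:(gx,g,x)\in G\}$, \emph{every} operator in $\C_{L,0}[G;s]$ automatically has $\Gamma$-propagation at most $s_1\le s$, so $A^s(G)_s=\C_{L,0}[G;s]$ is dense in $A^s(G)$.  By the last clause of Proposition~\ref{comparelem}, $K_*^{s,1/8}(A^s(G))\cong K_*(A^s(G))$, which vanishes by Proposition~\ref{basecase}.  The subspace-inclusion map therefore factors through a zero group, and no control on individual homotopies is needed.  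The paper's version is shorter and avoids the dependence on the internal form of Proposition~\ref{basecase}'s proof that you were trying to exploit.

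\textbf{Inductive step — this is where the real gap is.}  You cut a quasi-projection $p\in A^{s_0}(G)_{r_0}$ with a partition of unity as $p_i=\psi_i p\psi_i$ and then propose to homotope $p$ to $p_0\oplus p_1$.  This does not work.  First, $p_0+p_1\ne p$: the cross terms $\psi_0 p\psi_1+\psi_1 p\psi_0$ have no reason to be small, so they cannot be ``absorbed into the $1/8$-slack.''  Second, and more fundamentally, even granting $[p]=[p_0]+[p_1]$ in $K_0$ at some scale, this equality in $K_*(A^s(G))$ cannot be obtained by a direct partition-of-unity homotopy.  The obstruction is exactly the Mayer--Vietoris boundary: writing $I$, $J$ for (suitable controlled versions of) the ideals $A^\bullet(G_0^{+r})$ and $A^\bullet(G_1^{+r})$, an element of $K_*(I+J)$ decomposes as a sum of classes from $K_*(I)$ and $K_*(J)$ if and only if its image under the boundary map $\partial$ in $K_{*+1}(I\cap J)$ vanishes.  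The paper (Proposition~\ref{con mv}, Definition~\ref{uni algs}) builds a controlled version $\partial_c$ of this boundary map and kills $\partial_c(x)$ first by applying the inductive hypothesis to the \emph{intersection} groupoid $G_0^{+r_1}\cap G_1^{+r_1}$ (this is where the ``second'' uniformly excisive family with the relaxed filtration $A^s(\,\cdot\,)_{sr}$ is used — a forget-control step).  Only after $\partial_c(x)=0$ does exactness produce the decomposition $x=y+z$, and only then does one kill $y$ and $z$ with the inductive hypothesis on $G_0^{+r_2}$ and $G_1^{+r_2}$.  Your plan has no analogue of this intersection step, so the decomposition you need never becomes available.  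This is not a bookkeeping issue that more careful propagation arithmetic fixes; it is a missing structural ingredient.

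If you want to repair your approach without adopting the paper's boundary-map formalism outright, you would at minimum need to: (a) show that the intersection class carried by the cross terms $\psi_0 p\psi_1+\psi_1 p\psi_0$ lives in a controlled $K$-group over $A^{s_0}(G_0^{+r_1}\cap G_1^{+r_1})$ at a scale governed by Lemmas~\ref{supp alg}, \ref{filt ext}, \ref{twice}; (b) kill it via the inductive hypothesis applied to $G_0^{+r_1}\cap G_1^{+r_1}$ (note this requires the full generality of the inductive hypothesis over open subgroupoids, which the paper bakes in by defining $\mathcal{B}$ to require the conclusion for \emph{all} open subgroupoids with a uniform $s$); and (c) show this vanishing yields the decomposition with quantitative control.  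By then you will have reconstructed the controlled Mayer--Vietoris sequence.
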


\begin{proof}[Proof of Theorem \ref{main}]
We need to show that for any $s_0\geq 0$ and any class $x\in K_*(A^{s_0})$ there is $s\geq s_0$ such that the subspace-inclusion map
$$
K_*(A^{s_0})\to K_*(A^{s})
$$
sends $x$ to zero.  Proposition \ref{comparelem} implies that there is $r_0\geq 0$ such that $x$ is in the image of the comparison map $c:K_*^{r_0,1/8}(A^{s_0})\to K_*(A^{s_0})$ from Definition \ref{compare}.   Proposition \ref{indstep} applied to $G=\Gamma\ltimes X$ implies that there is $s\geq \max\{r_0,s_0\}$ such that the subspace-inclusion map
$$
K_*^{r_0,1/8}(A^{s_0}) \to K_*^{s,1/8}(A^s)
$$
is zero.  Consider the diagram
$$
\xymatrix{  K_*(A^{s_0}) \ar[r]  & K_*(A^{s}) \\
K_*^{r_0,1/8}(A^{s_0}) \ar[r] \ar[u]^-c &  K_*^{s,1/8}(A^s) \ar[u]^-{c}~ ,} 
$$
where the two horizontal arrows are induced by subspace inclusions, and the two vertical arrows are comparison maps; it is clear from the definition of the comparison maps that it commutes.  We have that the element $x\in K_*(A^{s_0})$ is in the image of the left comparison map $c$, and that the bottom horizontal map is zero.  Hence the image of $x$ under the top horizontal map is zero as claimed.
\end{proof}

\section{Homotopy invariance}\label{base sec}

In this section, we prove Proposition \ref{basecase}, which we repeat below for the reader's convenience.

\begin{proposition*1}
Let $G$ be an open subgroupoid of $\Gamma\ltimes X$, and assume $s\geq 0$ is such that 
$$
G\subseteq \{(gx,g,x)\in \Gamma\ltimes X \mid |g|\leq s\}.
$$
Then $K_*(A^s(G))=0$.
\end{proposition*1}

The proof is based on a technique of the third author: see for example \cite[Lemma 4.8]{Yu:1998wj}.   The $K$-theory of the localization algebra is an (equivariant) generalized homology theory in an appropriate sense, and the point of the proof is to show that this homology theory is homotopy invariant.  Having done this, the condition on $s$ in the statement implies that if $\{(g_1x,g,x),...,(g_nx,g_n,x)\}$ are elements of $G$ for some $x\in X$, then $\{g_1,...,g_n\}$ spans a simplex in $P_s(\Gamma)$; hence the relevant space becomes contractible in an appropriate sense, and so the result follows from homotopy invariance. 

To try to make the proof more palatable, we will separate it into two parts.  The first is purely $K$-theoretic: it gives a sufficient condition for two $*$-homomorphisms to induce the same map on $K$-theory.  The second part uses the underlying dynamics to build an input to this $K$-theoretic machine, and completes the proof.  

\subsection*{$K$-theoretic part}

We start the $K$-theoretic part with three basic $K$-theory lemmas; there are well-known, but we include proofs where we could not find a good reference.  Recall first that if $A$ is a $C^*$-algebra, represented faithfully and non-degenerately on a Hilbert space $H$, then the \emph{multiplier algebra} of $A$ is 
$$
M(A):=\{b\in \mathcal{B}(H) \mid ba,~ab\in A \text{ for all } a\in A\};
$$
it is a $C^*$-algebra.  The \emph{strict topology} on $M(A)$ is the topology generated by the seminorms
$$
a\mapsto \|ba\|,\quad a\mapsto \|ab\|
$$
as $b$ ranges over $A$.  The multiplier algebra and strict topology do not depend on the choice of $H$ up to canonical isomorphism: see for example \cite[Section 3.12]{Pedersen:1979zr}.

\begin{lemma}\label{isom con}
Let $\alpha:A\to C$ be a $*$-homomorphism of $C^*$-algebras, and $v$ a partial isometry in the multiplier algebra of $C$ such that $\alpha(a)v^*v=\alpha(a)$ for all $a\in A$.  Then the map
$$
a\mapsto v\alpha(a)v^*
$$
is a $*$-homomorphism from $A$ to $C$, and induces the same map as $\alpha$ on the level of $K$-theory.
\end{lemma}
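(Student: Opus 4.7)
The plan is first to check that $\beta(a):=v\alpha(a)v^*$ is a genuine $*$-homomorphism. Taking adjoints of the hypothesis $\alpha(a)v^*v=\alpha(a)$ gives $v^*v\alpha(a)=\alpha(a)$ as well, so $\beta(a)\beta(b)=v\alpha(a)(v^*v)\alpha(b)v^*=v\alpha(ab)v^*$, and $*$-compatibility is immediate.

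For the $K$-theoretic statement, the idea is a standard rotation / Whitehead trick: conjugation by a unitary homotopic to the identity induces the identity on $K$-theory. First, I would embed the partial isometry $v$ into a unitary
$$
W=\begin{pmatrix} v & 1-vv^* \\ 1-v^*v & v^* \end{pmatrix}\in M_2(M(C)).
$$
Verifying that $W$ is unitary is a routine calculation using $v^*vv^*=v^*$ and $vv^*v=v$, and using the identities $\alpha(a)(1-v^*v)=0=(1-v^*v)\alpha(a)$ one checks directly that $W\,\mathrm{diag}(\alpha(a),0)\,W^*=\mathrm{diag}(\beta(a),0)$.

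Next I would invoke the classical Whitehead-style fact that for any unitary $W$ in a unital $C^*$-algebra $B$, the unitary $\mathrm{diag}(W,W^*)\in M_2(B)$ is norm-continuously homotopic to $I_2$ through unitaries, via the explicit $2\times 2$ rotation homotopy through $\mathrm{diag}(W,1)$ and $\mathrm{diag}(1,W^*)$. Applied to $B=M_2(M(C))$, this yields a norm-continuous path $U_t$ of unitaries in $M_4(M(C))$ with $U_0=I_4$ and $U_1=\mathrm{diag}(W,W^*)$. Conjugating the stabilization $\alpha'(a):=\mathrm{diag}(\alpha(a),0,0,0)$ by $U_t$ produces a homotopy $\Phi_t(a):=U_t\alpha'(a)U_t^*$ of $*$-homomorphisms $A\to M_4(C)$ connecting $\alpha'$ to $\beta'(a):=\mathrm{diag}(\beta(a),0,0,0)$, and homotopy invariance of $K$-theory together with the triviality of padding by zeros at the $K$-theory level yields $\alpha_*=\beta_*$.

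The only real subtlety is to verify that each $\Phi_t(a)$ actually lies in $M_4(C)$ rather than merely in $M_4(M(C))$, and that $t\mapsto\Phi_t(a)$ is norm-continuous into $M_4(C)$. Both are immediate from the defining properties of the multiplier algebra ($M(C)\cdot C,\ C\cdot M(C)\subseteq C$, with jointly continuous multiplication on bounded sets), so once the rotation trick is set up the remainder is routine.
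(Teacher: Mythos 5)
Your argument is correct. Note that the paper itself does not prove this lemma — it cites \cite[Lemma 2 in Section 3]{Higson:1993th} — so there is no in-paper proof to compare against; your self-contained argument (checking multiplicativity via $v^*v\alpha(\cdot)=\alpha(\cdot)$, dilating $v$ to the unitary $W\in M_2(M(C))$, observing $W\,\mathrm{diag}(\alpha(a),0)\,W^*=\mathrm{diag}(v\alpha(a)v^*,0)$ using $\alpha(a)(1-v^*v)=(1-v^*v)\alpha(a)=0$, then applying the Whitehead rotation homotopy for $\mathrm{diag}(W,W^*)$ in $M_4(M(C))$ and noting the path of conjugates lands in $M_4(C)$ and is norm-continuous) is precisely the standard proof of the cited result.
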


\begin{proof}
See for example \cite[Lemma 2 in Section 3]{Higson:1993th}.
\end{proof}

\begin{lemma}\label{double}
Let $I$ be an ideal in a unital $C^*$-algebra $C$, and define the \emph{double} of $C$ along $I$ to be the $C^*$-algebra
$$
D:=\{(c_1,c_2)\in C\oplus C\mid c_1-c_2\in I\}.
$$
Assume that $C$ has trivial $K$-theory.  Then the inclusion $\iota:I\to D$ defined by $c\mapsto (c,0)$ induces an isomorphism on $K$-theory, and the inclusion $\kappa:I\to D$ defined by $c\mapsto (c,c)$ induces the zero map on $K$-theory. 
\end{lemma}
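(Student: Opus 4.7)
The plan is to exhibit $D$ as fitting into a split short exact sequence and then exploit the hypothesis $K_*(C)=0$ via the six-term exact sequence. Concretely, the second coordinate projection
\[
\pi_2:D\to C,\qquad (c_1,c_2)\mapsto c_2
\]
is a surjective $*$-homomorphism whose kernel is $\{(c,0):c\in I\}$, which is exactly the image of $\iota$. Thus one has a short exact sequence
\[
0\longrightarrow I\xrightarrow{\ \iota\ } D\xrightarrow{\ \pi_2\ } C\longrightarrow 0.
\]
Moreover, the $*$-homomorphism $s:C\to D$, $c\mapsto(c,c)$, is well-defined since $c-c=0\in I$, and satisfies $\pi_2\circ s=\mathrm{id}_C$; so the sequence is split.

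The first claim is then immediate from the six-term exact sequence in $K$-theory: since $K_*(C)=0$, the boundary maps are trivial on both sides of $K_*(D)$, leaving $\iota_*:K_*(I)\to K_*(D)$ as an isomorphism. (Alternatively, the splitting $s$ gives a direct sum decomposition $K_*(D)\cong K_*(I)\oplus K_*(C)=K_*(I)$ with the isomorphism implemented by $\iota_*$.)

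For the second claim, observe that $\kappa$ admits the factorization
\[
\kappa \;=\; s\circ j,
\]
where $j:I\hookrightarrow C$ is the inclusion of the ideal and $s:C\to D$ is the splitting above. On $K$-theory this factors as
\[
K_*(I)\xrightarrow{\ j_*\ } K_*(C)\xrightarrow{\ s_*\ } K_*(D),
\]
and since $K_*(C)=0$ by hypothesis, $\kappa_*=s_*\circ j_*$ is the zero map.

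There is no real obstacle here: both conclusions are formal consequences of the existence of the split exact sequence above together with the vanishing $K_*(C)=0$. The only mildly subtle point is verifying that $D$ really is a $C^*$-algebra (it is a pullback of $C\to C/I\leftarrow C$, hence closed in $C\oplus C$), and that $\iota$, $\kappa$, $\pi_2$, $s$ are genuine $*$-homomorphisms, all of which are routine.
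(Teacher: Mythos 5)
Your proof is correct and follows essentially the same route as the paper: you identify the short exact sequence $0\to I\xrightarrow{\iota} D\xrightarrow{\pi_2} C\to 0$ and use $K_*(C)=0$ in the six-term sequence for the first claim, and factor $\kappa = s\circ j$ through $C$ for the second. The only cosmetic addition is your remark that the sequence is split via $s$, which is harmless but not needed once $K_*(C)=0$ is in hand.
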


\begin{proof}
Note that $\iota(I)$ is an ideal in $D$, and $D/\iota(I)$ is isomorphic to $C$ via the second coordinate projection.  Hence $\iota$ is an isomorphism by the six-term exact sequence.  On the other hand, $\kappa$ factors through the inclusion $C\to D$ defined by the same formula $c\mapsto (c,c)$, and thus $\kappa_*=0$ on $K$-theory as $K_*(C)=0$.
\end{proof}

\begin{lemma}\label{orth}
Say $\alpha,\beta:C\to D$ are $*$-homomorphisms between $C^*$-algebras with orthogonal images (this means that $\alpha(c_1)\beta(c_2)=0$ for all $c_1,c_2\in C$).  

Then $\alpha+\beta$ is a $*$-homomorphism from $C$ to $D$, and as maps on $K$-theory $\alpha_*+\beta_*=(\alpha+\beta)_*$.
\end{lemma}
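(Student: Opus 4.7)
The plan is first to verify that $\alpha+\beta$ is a $*$-homomorphism, then prove the $K$-theoretic equality using a rotation homotopy. The $*$-homomorphism check is straightforward: linearity and $*$-preservation are automatic, and the multiplicativity
$$
(\alpha(c_1)+\beta(c_1))(\alpha(c_2)+\beta(c_2)) = \alpha(c_1 c_2) + \beta(c_1 c_2)
$$
follows because the orthogonality hypothesis gives $\alpha(c_1)\beta(c_2)=0$ and, taking adjoints of $\alpha(c_2^*)\beta(c_1^*)=0$, also $\beta(c_1)\alpha(c_2)=0$, so the two cross terms vanish.

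For the $K$-theoretic equality $(\alpha+\beta)_* = \alpha_* + \beta_*$ I will construct a norm-continuous path $t \mapsto \gamma_t$ of $*$-homomorphisms $C \to M_2(D)$ connecting the two expressions. For $t\in[0,1]$ let $R_t \in M_2(\C)\subseteq M_2(M(D))$ denote the scalar rotation
$$
R_t = \begin{pmatrix} \cos(\pi t/2) & -\sin(\pi t/2) \\ \sin(\pi t/2) & \cos(\pi t/2) \end{pmatrix},
$$
and set
$$
\gamma_t(c) := \begin{pmatrix} \alpha(c) & 0 \\ 0 & 0 \end{pmatrix} + R_t \begin{pmatrix} \beta(c) & 0 \\ 0 & 0 \end{pmatrix} R_t^*.
$$
Since $R_t$ has scalar entries, $\gamma_t(c)$ lies in $M_2(D)$, and the dependence on $t$ is visibly norm-continuous. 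The crucial verification is that each $\gamma_t$ is multiplicative: expanding $\gamma_t(c_1)\gamma_t(c_2)$ entry by entry, every summand involving a factor $\alpha(c_1)\beta(c_2)$ or $\beta(c_1)\alpha(c_2)$ vanishes by orthogonality, and the remaining $\beta$-$\beta$ contributions combine via $\cos^2(\pi t/2)+\sin^2(\pi t/2)=1$ to reassemble $\gamma_t(c_1 c_2)$.

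At the endpoints $\gamma_0(c) = \mathrm{diag}((\alpha+\beta)(c),0)$ and $\gamma_1(c) = \mathrm{diag}(\alpha(c),\beta(c))$. Under the canonical isomorphism $K_*(D)\cong K_*(M_2(D))$ induced by the top-left corner embedding, $(\gamma_0)_*$ corresponds to $(\alpha+\beta)_*$, while $(\gamma_1)_*$ corresponds to $\alpha_*+\beta_*$ by the standard additivity of $K$-theory classes under block-diagonal direct sums. Viewing $c\mapsto(t\mapsto\gamma_t(c))$ as a $*$-homomorphism $C\to C([0,1],M_2(D))$ and composing with the two evaluation maps (which agree on $K$-theory by homotopy invariance) then yields $(\gamma_0)_*=(\gamma_1)_*$, completing the argument. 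The only point that is not purely formal is the multiplicativity check for intermediate $t$; this is the step where the orthogonality hypothesis actually gets used, as the rotation would otherwise introduce obstructing cross terms.
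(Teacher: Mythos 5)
Your proof is correct and uses essentially the same argument as the paper's: a rotation homotopy in $M_2(D)$ connecting $\mathrm{diag}((\alpha+\beta)(c),0)$ to $\mathrm{diag}(\alpha(c),\beta(c))$, with orthogonality killing the cross terms at every $t$. The only difference is cosmetic — you place $\beta$ in the top-left corner and rotate it to the bottom-right, while the paper does the reverse.
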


\begin{proof}
Orthogonality of the images of $\alpha$ and $\beta$ directly implies that $\alpha+\beta$ is a $*$-homomorphism.  For $t\in [0,\pi/2]$, consider the map $\gamma_t:C\to M_2(D)$ defined by
$$
c\mapsto \begin{pmatrix} \alpha(c) & 0 \\ 0 & 0 \end{pmatrix}+\begin{pmatrix} \cos(t) & \sin(t) \\ -\sin(t) & \cos(t) \end{pmatrix}\begin{pmatrix} 0 & 0 \\ 0 & \beta(c) \end{pmatrix}\begin{pmatrix} \cos(t) & -\sin(t) \\ \sin(t) & \cos(t) \end{pmatrix}.
$$
One directly checks that this is a homotopy of $*$-homomorphisms.  Moreover, identifying $K_*(D)$ with $K_*(M_2(D))$ in the canonical way via the top-left-corner inclusion $D\to M_2(D)$, it is straightforward to check that $(\gamma_0)_*=\alpha_*+\beta_*$ and $(\gamma_1)_*=(\alpha+\beta)_*$.
\end{proof}

Before getting to the main result, we need one more preliminary.  First, a definition.

\begin{definition}\label{stab str}
Let $A$ be a $C^*$-algebra.  A \emph{stability structure} for $A$ consists of a sequence of isometries $(u_n)_{n=0}^\infty$ in $M(A)$ and a topology $\tau$ on $M(A)$ such that multiplication is continuous on norm-bounded sets\footnote{For example, the strict topology could be used here, but we will need something a little different.} with the properties that:
\begin{enumerate}[(i)]
\item $u_n^*u_m=0$ for $n\neq m$;
\item for all $a\in A$, 
$$
\sum_{n=0}^\infty u_nau_n^*
$$
$\tau$-converges to an element of $M(A)$;
\item there is an isometry $v\in M(A)$ such that 
$$
v\Big(\sum_{n=0}^\infty u_nau_n^*\Big)v^*=\sum_{n=1}^\infty u_nau_n^*
$$
\end{enumerate}
\end{definition}
\noindent Note that a stable $C^*$-algebra has a stability structure in a natural way: indeed, if $H$ is a separable infinite-dimensional Hilbert space, fix isometries $u_{n}:H\to H$ with mutually orthogonal ranges such that $\sum_n u_n$ converges strongly to the identity; then $u_n$ naturally defines a multiplier of $A\otimes \mathcal{K}(H)$, which acts by $u_n(a\otimes k)=a\otimes u_nk$ on elementary tensors (and similarly for multiplication on the right); then the isometries $(u_n)$ together with the strict topology $\tau$ define a stability structure.  This is the motivation for the terminology.

\begin{lemma}\label{mult sum}
Let $A$ have a stability structure as in Definition \ref{stab str}.  Then $K_*(M(A))=0$.
\end{lemma}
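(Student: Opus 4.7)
The plan is a standard Eilenberg swindle argument on the unital $C^*$-algebra $M(A)$. Define a map $\alpha: M(A) \to M(A)$ by
\[
\alpha(m) := \sum_{n=0}^\infty u_n m u_n^*,
\]
where the sum $\tau$-converges. Condition (ii) gives convergence for $m \in A$; extending to $m \in M(A)$ uses that the partial sums are norm-bounded by $\|m\|$ (by orthogonality of the ranges coming from condition (i)) together with continuity of multiplication on bounded sets in $\tau$, plus some form of $\tau$-density of $A$ in $M(A)$. The same continuity of multiplication makes $\alpha$ into a $*$-homomorphism.

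Next, split $\alpha$ as $\alpha = \iota + \beta$, where $\iota(m) := u_0 m u_0^*$ and $\beta(m) := v \alpha(m) v^*$. By condition (iii), one has $\beta(m) = \sum_{n \geq 1} u_n m u_n^*$. Pulling multiplications through the sum (again using continuity on bounded sets in $\tau$) and using $u_0^* u_n = 0$ for all $n \geq 1$ from condition (i), one checks termwise that $\iota$ and $\beta$ have mutually orthogonal images:
\[
\iota(m_1)\beta(m_2) = 0 = \beta(m_2)\iota(m_1) \quad \text{for all } m_1, m_2 \in M(A).
\]

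Now the three $K$-theoretic lemmas finish the job. Lemma \ref{orth} gives $\alpha_* = \iota_* + \beta_*$ on $K_*(M(A))$. Lemma \ref{isom con} applied to the identity homomorphism $\mathrm{id}_{M(A)}$ with the isometry $u_0$ (noting $m \cdot u_0^* u_0 = m$) gives $\iota_* = \mathrm{id}_*$. Lemma \ref{isom con} applied to $\alpha$ itself with the isometry $v$ (noting $\alpha(m) v^* v = \alpha(m)$) gives $\beta_* = \alpha_*$. Combining, $\alpha_* = \mathrm{id}_* + \alpha_*$ on $K_*(M(A))$, so $\mathrm{id}_* = 0$, i.e.\ $K_*(M(A)) = 0$.

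The main obstacle is the technical step of extending the swindle map $\alpha$ from $A$ to all of $M(A)$: the stability-structure axiom (ii) is literally stated only for $m \in A$, whereas every part of the argument above requires that $\alpha$ be defined (and be a $*$-homomorphism) on the full multiplier algebra. Making this rigorous is the one nontrivial point; once it is in hand, the $K$-theoretic swindle is essentially formal.
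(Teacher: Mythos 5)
Your proof is essentially identical to the paper's: the paper defines $\mu(a)=\sum_n u_n a u_n^*$ on $M(A)$, splits it as $\mu^0+\mu^{+1}$ (your $\iota+\beta$), and runs the same Eilenberg swindle via Lemmas \ref{isom con} and \ref{orth} to conclude $\mathrm{id}_*=0$. You have also correctly flagged the one place where care is needed: Definition \ref{stab str}(ii) only guarantees $\tau$-convergence of $\sum_n u_n a u_n^*$ for $a\in A$, while the swindle needs the map defined on all of $M(A)$; the paper's proof dispatches this in one clause (``using continuity of multiplication on bounded sets for the $\tau$ topology'') and likewise does not spell out the extension, so you are identifying a real subtlety rather than a defect unique to your argument.
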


\begin{proof}
Using continuity of multiplication on bounded sets for the $\tau$ topology, 
$$
\mu:M(A)\to M(A),\quad a\mapsto \sum_{n=0}^\infty u_nau_n^*
$$
is a $*$-homomorphism.  Let $v$ be the isometry appearing in Definition \ref{stab str}.  Using Lemma \ref{isom con}, $\mu$ induces the same map on $K$-theory as the map $a\mapsto v\mu(a)v^*$, i.e.\ as 
$$
\mu^{+1}(a):=\sum_{n=1}^\infty u_nau_n^*.
$$ 
Let $\mu^0(a)=u_0au_0^*$.  Then the images of $\mu^0$ and $\mu^{+1}$ are orthogonal, and clearly $\mu^0+\mu^{+1}=\mu$, whence by Lemma \ref{orth}
$$
\mu_*=\mu^0_*+\mu^{+1}_*=\mu^0_*+\mu_*
$$
as maps on $K$-theory.  Hence $\mu^0_*=0$.  However, by Lemma \ref{isom con} again, $\mu^0_*$ is the identity map on $K_*(M(A))$.
\end{proof}

\begin{definition}\label{st equiv}
Let $A$ be a $C^*$-algebra which is faithfully and non-degenerately represented on a Hilbert space $H$, and which is equipped with a stability structure $(u_n)_{n=1}^\infty$ and $\tau$ as in Lemma \ref{mult sum}.  

Let $v_0$ and $v_\infty$ be isometries on $H$ that conjugate $A$ to itself.   We say that $v_0$ and $v_\infty$ are \emph{stably equivalent} if there are in addition isometries $(v_n)_{n=1}^\infty$ on $H$ that conjugate $A$ into itself and that satisfy:
\begin{enumerate}[(i)]
\item \label{new con} for any $a\in A$, the sum
$$
\sum_{n=0}^\infty u_nv_nav_n^*u_n^*
$$
$\tau$-converges to an element of $M(A)$;
\item \label{new con 2} for all $0\leq n \leq \infty$, $v_{n+1}v_{n}^*$ is in $M(A)$ (where $\infty+1=\infty$) and the sums 
$$
\sum_{n=0}^\infty u_nv_{n+1} v_n^*u_n^*\quad \text{and}\quad \sum_{n=0}^\infty u_nv_\infty v_\infty^*u_n^*
$$
$\tau$-converge to elements of $M(A)$;
\item \label{diffs0} for any $a$ in $A$, the difference
$$
\sum_{n=0}^\infty u_nv_nav_n^*u_n^*-\sum_{n=0}^\infty u_nv_\infty av_\infty^*u_n^*
$$
of elements of $M(A)$ is in $A$;
\item \label{diffs} for all $a\in A$ 
$$
\sum_{n=0}^\infty u_n a (v_\infty v_\infty^* - v_{n+1}v_n^*)u_n^*\quad \text{and}\quad \sum_{n=0}^\infty u_n (v_\infty v_\infty^* - v_{n+1}v_n^*) a u_n^*
$$
are in $A$.
\end{enumerate}
\end{definition}
For readers who know the terminology, compare condition \eqref{diffs0} above to the definition of a quasi-morphism in the sense of Cuntz \cite[Section 17.6]{Blackadar:1998yq}.

Here finally is the main $K$-theoretic ingredient we need.

\begin{proposition}\label{al e s}
Let $A$ be $C^*$-algebra faithfully represented on a Hilbert space $H$ equipped with a stability structure $(u_n)$ and $\tau$.  Let $v_0$ and $v_\infty$ be stably equivalent isometries for this representation.  Then the homomorphisms 
$$
\phi_0,\phi_\infty:A\to A
$$
induced by conjugation by $v_0$ and $v_\infty$ induce the same map on $K$-theory.
\end{proposition}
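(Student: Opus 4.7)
The plan is to package $\phi_0$ and $\phi_\infty$ as a single $*$-homomorphism into a ``doubled'' $C^*$-algebra whose $K$-theory records their difference, then exhibit this packaged map as absorbing itself up to a shift, forcing the difference to vanish. Set
\[
D := \{(m_1, m_2) \in M(A) \oplus M(A) \mid m_1 - m_2 \in A\}.
\]
Since $K_*(M(A)) = 0$ by Lemma~\ref{mult sum}, Lemma~\ref{double} gives that the inclusion $\iota: A \to D$, $a \mapsto (a, 0)$, is a $K$-isomorphism, while the diagonal $\kappa: A \to D$, $a \mapsto (a, a)$, is $K$-trivial. Writing $\iota'(a) := (0, a)$, the maps $\iota, \iota'$ have orthogonal images in $D$ and sum to $\kappa$, so Lemma~\ref{orth} yields $\iota'_* = -\iota_*$.

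Define $\Phi(a) := \sum_n u_n v_n a v_n^* u_n^*$ (convergent in $\tau$ by Definition~\ref{st equiv}(i)) and $\Psi(a) := \mu(\phi_\infty(a)) = \sum_n u_n v_\infty a v_\infty^* u_n^*$. By (iii), $\Phi - \Psi$ lands in $A$, so $\widetilde{\alpha} := (\Phi, \Psi): A \to D$ is a well-defined $*$-homomorphism. I will split $\widetilde{\alpha} = \widetilde{\alpha}^0 + \widetilde{\alpha}^{+1}$, where $\widetilde{\alpha}^0(a) := (u_0 \phi_0(a) u_0^*,\, u_0 \phi_\infty(a) u_0^*)$ and $\widetilde{\alpha}^{+1}$ is the tail for $n \geq 1$; both are $*$-homomorphisms into $D$ with mutually orthogonal images, so $\widetilde{\alpha}_* = \widetilde{\alpha}^0_* + \widetilde{\alpha}^{+1}_*$ by Lemma~\ref{orth}. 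A further orthogonal decomposition $\widetilde{\alpha}^0 = \iota \circ \mathrm{Ad}(u_0) \circ \phi_0 + \iota' \circ \mathrm{Ad}(u_0) \circ \phi_\infty$, combined with Lemma~\ref{isom con} applied to the isometry $u_0 \in M(A)$, yields
\[
\widetilde{\alpha}^0_* = \iota_* (\phi_0)_* + \iota'_* (\phi_\infty)_* = \iota_* \bigl( (\phi_0)_* - (\phi_\infty)_* \bigr).
\]

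The crux, and the principal technical obstacle, is to prove $\widetilde{\alpha}^{+1}_* = \widetilde{\alpha}_*$; combined with the above this gives $\iota_*((\phi_0)_* - (\phi_\infty)_*) = 0$, and hence $(\phi_0)_* = (\phi_\infty)_*$ by injectivity of $\iota_*$. I intend to achieve this by constructing a partial isometry $\widetilde{V} \in M(D)$ satisfying $\widetilde{V} \widetilde{\alpha}(a) \widetilde{V}^* = \widetilde{\alpha}^{+1}(a)$ and $\widetilde{\alpha}(a) \widetilde{V}^* \widetilde{V} = \widetilde{\alpha}(a)$ for all $a \in A$, and then invoking Lemma~\ref{isom con}. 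The two coordinates of $\widetilde{V}$ are built from the $\tau$-convergent ``shift'' operators $\sum_n u_{n+1} v_{n+1} v_n^* u_n^*$ and $\sum_n u_{n+1} v_\infty v_\infty^* u_n^*$, which lie in $M(A)$ by Definition~\ref{st equiv}(ii) combined with the stability isometry of Definition~\ref{stab str}(iii); a direct matrix computation using $u_n^* u_m = \delta_{nm}$ and $v_n^* v_n = 1$ then verifies both conjugation identities. The delicate point is confirming $\widetilde{V} \in M(D)$: for $(d_1, d_2) \in D$ one writes $V_1 d_1 - V_2 d_2 = V_1(d_1 - d_2) + (V_1 - V_2) d_2$, and while the first term is in $A$ automatically, placing the second in $A$ requires exactly the estimates furnished by Definition~\ref{st equiv}(iv), applied after an approximate-identity argument. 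Marrying the shift on the $u_n$'s to the shift on the $v_n$'s while staying inside $M(D)$ is what conditions (ii) and (iv) are engineered to enable, and it is here that the bulk of the technical effort lies.
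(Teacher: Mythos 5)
Your overall architecture is the same as the paper's: work in the double $D$ of $M(A)$ along $A$, show the $n=0$ piece $\widetilde{\alpha}^0$ records $\iota_*\bigl((\phi_0)_* - (\phi_\infty)_*\bigr)$, and use a shift to show the tail absorbs the whole map. Your bookkeeping for the $n=0$ piece (decomposing via $\iota,\iota'$ and $\mathrm{Ad}(u_0)$, rather than the paper's $\psi_0, \psi_\infty, \zeta$ plus an auxiliary degenerate summand $\delta$) is a harmless repackaging.

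The gap is exactly where you flag it, and the approximate-identity argument you gesture at will not close it. For $\widetilde{V}=(V_1,V_2)$ to lie in $M(D)$ you need $(V_1-V_2)\,m\in A$ for \emph{every} $m\in M(A)$, and since $1\in M(A)$ this is equivalent to $V_1-V_2\in A$, i.e.\ to $\sum_n u_{n+1}(v_{n+1}v_n^*-v_\infty v_\infty^*)u_n^*\in A$. Nothing in Definition~\ref{st equiv} delivers that. Condition~\eqref{diffs} only puts $\sum_n u_n(v_\infty v_\infty^*-v_{n+1}v_n^*)\,a\,u_n^*$ in $A$ when an element $a\in A$ is \emph{inserted between} the difference and $u_n^*$; multiplying the whole sum on the right by a general multiplier $m$ is not of that form, because $u_n^*m\neq m\,u_n^*$ in general. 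The same obstruction shows that even the paper's un-shifted $w=(w_1,w_2)$ is not in $M(D)$.

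This is precisely why the paper restricts to the subalgebra $C\subseteq D$ in which the second coordinate has the rigid form $\sum_n u_n v_\infty a'v_\infty^*u_n^*$: then $u_n^*$ meets the $u_m$ inside $d_2$, the Kronecker delta collapses the double sum, and $(w_1-w_2)d_2=\sum_n u_n(v_{n+1}v_n^*-v_\infty v_\infty^*)v_\infty a'v_\infty^*u_n^*$ falls squarely under condition~\eqref{diffs} with $a=v_\infty a'v_\infty^*$. Your combined $\widetilde{V}=(v,v)\cdot w$ does not lie in $M(C)$ either, since the $(v,v)$-shift destroys the rigid form of the second coordinate. That is why the paper separates the two conjugations: first by $w\in M(C)$, passing from $\alpha$ to $\beta$ inside $C$ and then composing with the inclusion $C\hookrightarrow D$; then by $(v,v)$, which lies in $M(D)$ for trivial reasons, passing from $\beta$ to $\gamma=\widetilde{\alpha}^{+1}$ inside $D$. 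If you break your single conjugation into those two stages and keep track of which algebra each one lives in, the remainder of your argument goes through as written.
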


\begin{proof}
Let
$$
D:=\{(a,b)\in M(A)\oplus M(A)\mid a-b\in A\}
$$
be the double of $M(A)$ along $A$ as in Lemma \ref{double}.  Note that Lemma \ref{mult sum} implies that $K_*(M(A))=0$, so we may apply the conclusions of Lemma \ref{double} to $D$.  Let 
$$
C=\Big\{(c,d)\in D\mid d=\sum_{n=0}^\infty u_nv_\infty a v_\infty^*u_n^* \text{ for some } a\in A\Big\},
$$
which is a $C^*$-subalgebra of $D$.  Define also
$$
w_1:=\sum_{n=0}^\infty u_n v_{n+1}v_n^*u_n^*,\quad w_2:=\sum_{n=0}^\infty u_nv_\infty v_\infty^* u_n^* 
$$
(which are elements of $M(A)$ by condition \eqref{new con 2} in Definition \ref{st equiv}) and set $w:=(w_1,w_2)\in M(A)\oplus M(A)$.  We claim that $w$ is actually in the multiplier algebra of $C$.

Indeed, if $(c,d)$ is in $C$, then $dw_2=w_2d=d$, so it suffices to show that $cw_1-d$ and $w_1c-d$ are in $A$; we focus on $w_1c-d$, the other case being similar.  We have
$$
w_1c-d=w_1(c-d)+(w_1d-d),
$$
whence as $c-d\in A$ and $w_1\in M(A)$, it suffices to show that $w_1d-d$ is in $A$.  There exists $a\in A$ with
\begin{align*}
w_1d-d &=\sum_{n=0}^\infty u_n (v_{n+1}v_n^*v_\infty a v_\infty^*-v_\infty av_\infty^*)u_n^* \\
&=\sum_{n=0}^\infty u_n(v_{n+1}v_n^*-v_\infty v_\infty^*)v_\infty av_\infty^*u_n^*,
\end{align*}
and this is in $A$ by condition \eqref{diffs} of Definition \ref{st equiv}, completing the proof of the claim.

Now, provisionally define $*$-homomorphisms 
$$
\alpha,\beta:A\to C
$$
by the formulas
$$
\alpha(a):=\Bigg( \sum_{n=0}^\infty u_nv_nav_n^*u_n^*~,~\sum_{n=0}^\infty u_nv_\infty av_\infty^*u_n^*\Bigg)
$$
and
$$
\beta(a):=\Bigg( \sum_{n=0}^\infty u_nv_{n+1}av_{n+1}^*u_n^*~,~\sum_{n=0}^\infty u_nv_\infty av_\infty^*u_n^*\Bigg).
$$ 
It is clear from our assumptions that $\alpha:A\to C$ is a homomorphism.  That $\beta$ is a homomorphism and has image in $C$ follows as $w$ is in the multiplier algebra of $C$, and as $w\alpha(a)w^*=\beta(a)$ for all $a\in A$.  Moreover, a direct computation gives that $\alpha(a)w^*w=\alpha(a)$, whence $\alpha$ and $\beta$ induce the same map $K_*(A)\to K_*(C)$ by Lemma \ref{isom con}.  Post-composing with the map $K_*(C)\to K_*(D)$ induced by the inclusion of $C$ into $D$, it follows that $\alpha$ and $\beta$ induce the same map $K_*(A)\to K_*(D)$.  

Let now $v\in M(A)$ be the isometry with the property in Definition \ref{stab str}.  Then $(v,v)$ is a multiplier of $D$; conjugating by $(v,v)$ and applying Lemma \ref{isom con} shows that $\beta$ induces the same map $K_*(A)\to K_*(D)$ as the $*$-homomorphism $\gamma:A\to D$ defined by 
$$
\gamma (a):= \Bigg( \sum_{n=1}^\infty u_nv_{n}av_{n}^*u_n^*~,~\sum_{n=1}^\infty u_nv_\infty av_\infty^*u_n^*\Bigg).
$$
On the other hand, the $*$-homomorphism $\delta:A\to D$ defined by 
$$
\delta:a\mapsto (u_0v_\infty a v_\infty^* u_0^*~,~u_0v_\infty a v_\infty^* u_0^*)
$$
induces the zero map on $K$-theory (by Lemma \ref{double}) and has orthogonal image to $\gamma$.  Hence from Lemma \ref{orth} the sum $\epsilon:=\gamma+\delta$ is a well-defined $*$-homomorphism that induces the same map on $K$-theory as $\beta$.  

Compiling our discussion so far, we have
\begin{equation}\label{same homo}
\alpha_*=\beta_*=\gamma_*=\gamma_*+\delta_*=\epsilon_*
\end{equation}
as maps $K_*(A)\to K_*(D)$.  Let $\psi_0,\psi_\infty:A\to D$ be the $*$-homomorphisms defined by
$$
\psi_0:a\mapsto (u_0v_0av_0^*u_0^*,0),\quad \text{ and } \quad \psi_\infty: a\mapsto (u_0v_\infty av_\infty^* u_0^*,0),
$$
and define $\zeta:A\to D$ by
$$
\zeta(a):= \Bigg( \sum_{n=1}^\infty u_nv_{n}av_{n}^*u_n^*~,~\sum_{n=0}^\infty u_nv_\infty av_\infty^*u_n^*\Bigg).
$$
Note that $\zeta$ has orthogonal image to $\psi_0$ and $\psi_\infty$, and that 
$$
\psi_0+\zeta=\alpha\quad  \text{ and } \quad \psi_\infty+\zeta=\epsilon;
$$
hence from Lemma \ref{orth} and line \eqref{same homo},
$$
(\psi_0)_*+\zeta_*=\alpha_*=\epsilon_*=(\psi_\infty)_*+\zeta_*.
$$
Cancelling $\zeta_*$ thus gives that $\psi_0$ and $\psi_\infty$ induce the same maps on $K$-theory.  

Finally, note that if $\iota:A\to D$ is the map of Lemma \ref{double}, then 
$$
\psi_i(a)=u_0\iota(\phi_i(a))u_0^*
$$
for all $a\in A$ and $i\in \{0,\infty\}$.  This implies the desired result as Lemmas \ref{isom con} and \ref{double} show that conjugation of $D$ by $(u_0,u_0)$ and $\iota:A\to D$ both induce isomorphisms on $K$-theory.
\end{proof}

\subsection*{Dynamical part}

We now provide the dynamical input for Proposition \ref{al e s} needed to complete the proof of Proposition \ref{basecase}.  Recall that we want to show that $K_*(A^s(G))$ is zero whenever the open subgroupoid $G$ of $\Gamma\ltimes X$ and number $s\geq 0$ satisfy
\begin{equation}\label{f contain}
G\subseteq \{(gx,g,x)\in \Gamma\ltimes X\mid |g|\leq s\}.
\end{equation}
For the remainder of the section, fix $s$ and $G$ satisfying these hypotheses.

We will start by building a convenient representation of the $C^*$-algebra $A^s(G)$.  For $z\in P_s(\Gamma)$, recall from Definition \ref{t supp} that $\text{supp}(z)$ consists of those $g\in \Gamma$ spanning the minimal simplex containing $z$, and define
$$
P_s(G):=\{(z,x)\in P_s(\Gamma)\times X\mid (gx,g,x)\in G \text{ for all } g\in \text{supp}(z)\}.
$$
Recall from Definition \ref{zf def} that $Z_s$ is our fixed dense subset of $P_s(\Gamma)$. Define $Z_G:=(Z_s\times X)\cap P_s(G)$ and 
$$
H_{G}:=\ell^2(Z_{G},H\otimes \ell^2(\Gamma))=\ell^2(Z_G)\otimes H\otimes \ell^2(\Gamma)
$$
which is a subspace of $H_s=\ell^2(Z_s\times X,H\otimes \ell^2(\Gamma))$ as described in line \eqref{hf2} above.  We have the following lemma; the proof involves essentially the same computations as Lemma \ref{supp alg} above, and is thus omitted.

\begin{lemma}\label{g rep}
The faithful representation of $C^*(G;s)$ on $H_{s}$ restricts to a faithful representation on $H_{G}$. \qed
\end{lemma}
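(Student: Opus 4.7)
The plan is to split the lemma into two assertions: (a) every $T\in C^*(G;s)$ preserves $H_G$, so the compression $\pi(T):=TP_G$ is a well-defined $*$-homomorphism $C^*(G;s)\to\mathcal{B}(H_G)$; and (b) this $\pi$ is injective. Both are proved by support bookkeeping at the dense subalgebra $\C[G;s]$, the relevant conditions being norm-closed so as to pass to $C^*(G;s)$.

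For (a), fix $T\in\C[G;s]$ and $(z',x)\in Z_G$ with $T_{y,z'}(x)\neq 0$. For any $g\in\supp(y)$ and $h\in\supp(z')$ one has $(gx,gh^{-1},hx)\in\supp(T)\subseteq G$ and $(hx,h,x)\in G$, so composing gives $(gx,g,x)\in G$; hence $(y,x)\in Z_G$ and $TH_G\subseteq H_G$. The same argument applied to $T^*$ (whose support lies in $G^{-1}=G$) gives $T^*H_G\subseteq H_G$, so $T$ commutes with $P_G$, and $\pi$ extends to the claimed $*$-homomorphism.

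For (b), set $H_{\Gamma Z_G}:=\ell^2(\Gamma\cdot Z_G,H\otimes\ell^2(\Gamma))$, which is the closed span of $\bigcup_{g\in\Gamma}u_gH_G$. Using the same composition technique (together with inverses closure of $G$), if $T\in\C[G;s]$ satisfies $T_{y,z}(x)\neq 0$ then fixing any $h_0\in\supp(z)$ shows that $(yh_0^{-1},h_0x)$ and $(zh_0^{-1},h_0x)$ both lie in $Z_G$; so the matrix entries of $T$ are supported on pairs in $\Gamma\cdot Z_G$, forcing both $T$ and $T^*$ to annihilate $H_{\Gamma Z_G}^\perp$. On the other hand, $\Gamma$-invariance of elements of $C^*(G;s)$ gives $u_gT=Tu_g$, so $\pi(T)=0$ forces $T|_{u_gH_G}=u_g(T|_{H_G})u_g^{-1}=0$ for every $g$; hence $T|_{H_{\Gamma Z_G}}=0$. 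Combining the two conclusions yields $T=0$ on $H_s$.

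The main obstacle is (b). Part (a) is essentially a rerun of Lemma \ref{supp alg}, but injectivity cannot be shown by a direct approximation argument, since an element of $\ker(\pi)\subseteq C^*(G;s)$ need not be a norm limit of elements of $\C[G;s]\cap\ker(\pi)$. The trick is to package everything into two norm-closed conditions---$\Gamma$-invariance and vanishing on $H_{\Gamma Z_G}^\perp$---which together with $\pi(T)=0$ exhaust the orthogonal decomposition $H_s=H_{\Gamma Z_G}\oplus H_{\Gamma Z_G}^\perp$.
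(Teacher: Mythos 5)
Your proof is correct and is the natural filling-in of the details the paper declines to spell out (the paper merely remarks that the argument parallels the support bookkeeping in Lemma~\ref{supp alg}). Part~(a) is exactly that: composing an element of $\operatorname{supp}(T)\subseteq G$ with the groupoid elements $(hx,h,x)\in G$ coming from the definition of $Z_G$ shows $T$ and $T^*$ preserve $H_G$, so $T$ commutes with $P_G$. Part~(b) is where you correctly identify the only subtle point, namely that a kernel element of $C^*(G;s)$ need not be approximable by kernel elements of $\C[G;s]$, and your fix is sound: the two conditions you isolate --- $\Gamma$-invariance and vanishing on $H_{\Gamma Z_G}^\perp$ --- are each established on $\C[G;s]$ by support composition/inversion, are manifestly norm-closed, and together with $T|_{H_G}=0$ and the identity $T|_{u_gH_G}=u_g(T|_{H_G})u_g^{-1}$ force $T=0$ on all of $H_s=H_{\Gamma Z_G}\oplus H_{\Gamma Z_G}^\perp$. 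One small remark: in verifying $(zh_0^{-1},h_0x)\in Z_G$ you need $(hx,hh_0^{-1},h_0x)\in G$ for $h,h_0\in\operatorname{supp}(z)$, which is not literally an element of $\operatorname{supp}(T)$; you get it by fixing any $g_0\in\operatorname{supp}(y)$ and composing $(g_0x,g_0h^{-1},hx)^{-1}$ with $(g_0x,g_0h_0^{-1},h_0x)$, both of which are in $\operatorname{supp}(T)\subseteq G$. This is precisely the Lemma~\ref{supp alg}-style computation the authors had in mind.
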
 
For the remainder of this section, we will consider $C^*(G;s)$ as faithfully represented on $H_{G}$, and $A^s(G):=C^*_{L,0}(G;s)$ as faithfully represented on $L^2([0,\infty),H_{G})$ in the obvious way.

Now, the assumption in line \eqref{f contain} implies that if $(z,x)\in P_s(G)$ and $\text{supp}(z)=\{g_1,...,g_n\}$, then $\{e,g_1,...,g_n\}$ also spans a simplex $\Delta$ in $P_s(\Gamma)$ such that $\Delta\times \{x\}$ is contained in $P_s(G)$.  Hence the family of functions
$$
F_r:P_{s}(G)\to P_{s}(G),\quad (z,x)\mapsto ((1-r)z+re,x), \quad 0\leq r\leq 1
$$
defines a homotopy between the identity map on $P_s(G)$ and the obvious projection onto the subset $\{(z,x)\in P_s(G)~|~z=e\}$, which is just a copy of the unit space $G^{(0)}$.  

Let $\pi:Z_{G}\to Z_s$ denote the projection onto the first coordinate, and let $Z$ denote the image of $\pi$.  Note that $Z$ is countable, whence as $H$ is infinite dimensional we may find a family  $\{w_{z,0}\}_{z\in Z}$ of isometries on $H$ such that $\sum_{z\in Z} w_{z,0}w_{z,0}^*$ converges strongly to the identity.  For each $z\in Z$, let $w_z:H\otimes \ell^2(\Gamma)\to H\otimes \ell^2(\Gamma)$ be the isometry defined by $w_z:=w_{z,0}\otimes 1_{\ell^2(\Gamma)}$.

Now, for each $r\in \Q\cap [0,1]$ define 
\begin{align*}
w(r):\ell^2(Z_{G})\otimes H\otimes \ell^2(\Gamma) & \to \ell^2(Z_{G})\otimes H\otimes \ell^2(\Gamma) \\  \delta_{z,x}\otimes \eta & \mapsto \delta_{F_r(z,x)}\otimes w_z\eta
\end{align*}
which is a well-defined isometry by definition of $Z_s$ (Definition \ref{zf def} above) and of $Z_G$; note that the different $w(r)$ have mutually orthogonal ranges as $r$ ranges over $\Q\cap [0,1]$.  For each $t\in [0,\infty)$ and $n\in \N\cup\{\infty\}$ (we assume $\N$ includes zero), define an isometry
$$
v_n(t):\ell^2(Z_{G})\otimes H\otimes \ell^2(\Gamma)\to \ell^2(Z_{G})\otimes H\otimes \ell^2(\Gamma)
$$ 
by the following prescription.  First, we define for $m\in \N$
$$
v_n(m)=\left\{\begin{array}{ll} w(0) & m\leq n \\ w(\frac{1}{n}(m-n)) & m\in (n,2n)\cap \N \\ w(1) & m\geq 2n\end{array}\right..
$$
Schematically, we thus have 
$$
v_n(m)=  \underbrace{w(0),\,\cdots ,\,w(0)}_{m\in [0,n]},\, \underbrace{w({\scriptstyle \frac{1}{n}}),\,w({\scriptstyle \frac{2}{n}}),\,\cdots,\,w({\scriptstyle \frac{n-1}{n}})}_{m\in (n,2n)},\,\underbrace{w(1),\,w(1),\,\cdots}_{m\in [2n,\infty)}.
$$
Now we interpolate between these values by defining for $t=m+s$ with $s\in (0,1)$,
$$
v_n(t)=\cos(\frac{\pi}{2}s)v_n(m)+\sin(\frac{\pi}{2}s)v_n(m+1).
$$
It is not difficult to check that the map 
$$
[0,\infty)\to \mathcal{B}(\ell^2(Z_{G})\otimes H\otimes \ell^2(\Gamma)),\quad t\mapsto v_n(t)
$$
is norm continuous for each $n$ (and in fact, the family is equicontinuous as $n$ varies), and that the image consists entirely of isometries.  The following schematic may help to visualize the operators $v_n(t)$.\\
\setlength{\unitlength}{1cm}
\begin{picture}(10,6)(-0.5,-1)
\put(0,0){\vector(1,0){9}}
\put(8.8,-0.5){\small{$t$}}
\put(0,0){\circle*{0.15}}
\put(0,0.5){\circle*{0.15}}
\put(0,1){\circle*{0.15}}
\put(0,1.5){\circle*{0.15}}
\put(0,2){\circle*{0.15}}
\put(0,2.5){\circle*{0.15}}
\put(0,3){\circle*{0.15}}
\put(0,3.5){\circle*{0.15}}
\put(0,4){\circle*{0.15}}
\put(0,4.2){\vector(0,1){0.5}}
\put(-0.5,4.4){\small{$n$}}
\put(0,0){\line(1,1){4}}
\put(3.7,4.1){\small{$t=n$}}
\put(0,0){\line(2,1){8}}
\put(7.6,4.1){\small{$t=2n$}}
\put(0.8,2.5){$w(0)$}
\put(6,1.5){$w(1)$}
\put(0.5,0.5){\vector(1,0){0.5}}
\put(1,1){\vector(1,0){1}}
\put(1.5,1.5){\vector(1,0){1.5}}
\put(2,2){\vector(1,0){2}}
\put(2.5,2.5){\vector(1,0){2.5}}
\put(3,3){\vector(1,0){3}}
\put(3.5,3.5){\vector(1,0){3.5}}
\end{picture}\\
Here $v_n(t)$ is constantly equal to $w(0)$ in the left triangular region, and constantly equal to $w(1)$ in the right triangular region.  Along each of the horizontal arrows in the intermediate region, $v_n(t)$ interpolates between $w(0)$ and $w(1)$, taking longer and longer to do so as $n$ increases.

We are now ready to construct isometries as demanded by the definition of stable equivalence.  Define an isometry
$$
v_n:L^2([0,\infty), H_G) \to L^2([0,\infty), H_G)
$$
for each $n$ by defining for each $\xi\in L^2([0,\infty), H_G)$ 
$$
(v_n\xi)(t):=v_n(t)(\xi(t)).
$$
On the other hand, choose a unitary isomorphism 
$$
H\cong \bigoplus_{n=0}^\infty H
$$
and use this to define isometries $u_{n,0}:H\to H$ with mutually orthogonal ranges such that the sum $\sum u_{n,0}u_{n,0}^*$ converges strongly to the identity operator.  Define $u_n$ to be the isometry on 
$$
L^2([0,\infty), H_G)=L^2[0,\infty)\otimes \ell^2(Z_G)\otimes H\otimes \ell^2(\Gamma) 
$$ 
induced by tensoring $u_{n,0}$ with the identity on the other factors.  We may think of elements of $M(A^s(G))$ as functions from $[0,\infty)$ to $M(C^*(G;s))$ (subject to various additional conditions, but those are not important here).  Thought of like this, let $\tau$ be the topology of pointwise strict convergence on $M(A^s(G))$, i.e.\ a net $(m_i)$ converges to $m$ if and only if $(m_i(t))$ converges to $m(t)$ strictly for all $t\in [0,\infty)$.  It is then not difficult to see that $(u_n)$ and $\tau$ together define a stability structure, where we take the isometry $v$ needed by the definition to be
$$
v:=\sum_{n=0}^\infty u_{n+1}u_n^*,
$$
noting that the sum $\tau$-converges to an element of $M(A^s(G))$.  

\begin{lemma}\label{con es}
With notation and stability structure as above, the isometries $v_0$ and $v_\infty$ are stably equivalent with respect to the $C^*$-algebra $A^s(G)$.
\end{lemma}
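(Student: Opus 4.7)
The plan is to verify each of the four conditions of Definition \ref{st equiv} for the isometries $v_0, v_\infty, (v_n)_{n=1}^\infty$, the stability structure $(u_n)$, and the topology $\tau$ of pointwise strict convergence on $M(A^s(G))$. As a preliminary I would check that each $v_n$ conjugates $A^s(G)$ into itself. The key geometric facts are that $v_n(t)$ moves the $Z_G$-coordinate only within simplices of $P_s(G)$ (since the hypothesis on $s$ and $G$ ensures that $F_r$ preserves $P_s(G)$), with bounded $\ell^1$-displacement; acts on the $H$-factor by isometries $w_z$ that commute with the $\Gamma$-action on $\ell^2(\Gamma)$; and is constant in $t$ outside a compact interval. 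Consequently conjugation by $v_n(t)$ preserves $\Gamma$-propagation, $X$-local compactness, uniform $t$-continuity, vanishing at $t=0$, and the decay of Rips-propagation as $t\to\infty$, so $v_n a v_n^*\in\mathbb{C}_{L,0}[G;s]\subseteq A^s(G)$ whenever $a$ is.

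Conditions (i) and (ii) are then immediate from orthogonality of the $u_n$: for any uniformly norm-bounded family $\{b_n\}$ of multipliers of $A^s(G)$, the operators $u_n b_n u_n^*$ have pairwise orthogonal ranges, so $\sum u_n b_n u_n^*$ converges strongly (hence $\tau$-converges) at each $t$ to a multiplier of $A^s(G)$.  This applies to $b_n=v_n a v_n^*$ in (i), and to $b_n=v_{n+1}v_n^*$ or $v_\infty v_\infty^*$ in (ii), once one observes that $v_{n+1}(t)v_n(t)^*$ is a partial isometry of zero $\Gamma$-propagation and bounded Rips-displacement (which in fact becomes zero for large $t$, so it defines a multiplier of $A^s(G)$).

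The crux is conditions (iii) and (iv), where the sums must lie in the ideal $A^s(G)$ rather than merely its multiplier algebra. The decisive observation is that $v_n(t)=w(0)=v_\infty(t)$ for all integer times $t\leq n$, and by the equicontinuity of the family $\{v_n(\cdot)\}$ the difference $v_n(t)-v_\infty(t)$ is uniformly small on any bounded $t$-interval once $n$ is large. Thus the $n$-th summand in (iii) is supported (up to small norm error) on $t\in(n-1,\infty)$; combining this tail-support with the fact that the Rips-propagation of $a\in\mathbb{C}_{L,0}[G;s]$ tends to zero at infinity, one approximates the full sum in norm by elements of $\mathbb{C}_{L,0}[G;s]$ with uniformly bounded $\Gamma$-propagation and Rips-propagation uniformly decaying at infinity, placing it in $A^s(G)$. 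Condition (iv) uses the same tail-vanishing principle applied to $v_\infty v_\infty^*-v_{n+1}v_n^*$, which also vanishes on $[0,n]$ at integer times. The main obstacle is the delicate balancing in (iii) of decay in $n$ (from tail-support) against decay in $t$ (from the Rips-propagation of $a$); the equicontinuity of $\{v_n(\cdot)\}$ and the uniform $\Gamma$-propagation bound on $a$ are the tools needed to interweave these decays into a single norm approximation.
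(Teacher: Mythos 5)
Your argument follows the paper's proof closely: the same Rips-propagation estimate gives $v_n a v_n^*\in\C_{L,0}[G;s]$, the same structure of $v_{n+1}(t)v_n(t)^*$ (constant matrix entries in $X$, vanishing Rips-propagation, bounded $\Gamma$-propagation) yields the multiplier conditions, and the same observation that $v_n(t)=v_\infty(t)$ for $t\leq n$ handles conditions (iii) and (iv). Two small points where the paper is slightly sharper: $v_{n+1}(t)v_n(t)^*$ has $\Gamma$-propagation bounded by $s$ rather than zero as you state (though boundedness is all that is actually used); and the exact identity $v_n(t)=v_\infty(t)$ for $n>t$ makes each sum in (iii)--(iv) \emph{locally finite in $t$}, so the ``delicate balancing'' of decays in $n$ and $t$ you describe at the end is not really needed --- one simply checks that the finitely many nonzero terms at time $t$ already have uniformly bounded $\Gamma$-propagation and Rips-propagation tending to zero.
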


\begin{proof}
Let $a$ be an element of $\C_{L,0}[G;s]$, and let $T=a(t)$ for some fixed $t\in [0,\infty)$.  The matrix entries $(v_n(t)Tv_n(t)^*)_{y,z}(x)$ of $v_n(t)Tv_n(t)^*$ will then be a linear combination of at most four terms of the form
$$
w_z^*T_{F_{r_1}(y),F_{r_2}(z)}(x)w_y^*
$$
where $r_1$ and $r_2$ are in $\Q\cap [0,1]$, and $|r_1-r_2|<1/m$ whenever $t>2m$.  From this description, it is straightforward to check  that the Rips-propagation of $v_n(t)a(t)v_n(t)^*$ is at most the Rips-propagation of $a(t)$ plus $\min\{1,2/(t-1)\}$; and therefore in particular that $v_nav_n^*$ is in $\C_{L,0}[G;s]$.  Condition \eqref{new con} follows from this estimate on Rips propagation and equicontinuity of the sequence of maps $(t\mapsto v_n(t))$.    

To see that $v_{n+1}v_n^*$ is a multiplier of $A^s(G)$, note that the operators $S_t:=v_{n+1}(t)v_n(t)^*$ on $\ell^2(Z_G)\otimes H\otimes \ell^2(\Gamma)$ have matrix entries $(S_t)_{y,z}$ that act as constant functions $X\to \mathcal{B}(H\otimes \ell^2(\Gamma))$; 
that their Rips-propagation tends to zero as $t$ tends to infinity uniformly in $n$; and that they have $\Gamma$-propagation at most $s$ for all $t$.  Condition \eqref{new con 2} follows from this.

Finally, the fact that the operators
$$
\sum_{n=0}^\infty u_nv_nav_n^*u_n^*-\sum_{n=0}^\infty u_nv_\infty av_\infty^*u_n^*,
$$
as well as 
$$
\sum_{n=0}^\infty u_n a (v_\infty v_\infty^* - v_{n+1}v_n^*)u_n^*\quad \text{and}\quad \sum_{n=0}^\infty u_n (v_\infty v_\infty^* - v_{n+1}v_n^*) a u_n^*
$$
are in $A^s(G)$ for all $a\in A^s(G)$ follows from the above discussion and as for any fixed $t$ and all $n>t$, $v_n(t)=v_\infty(t)$.
\end{proof}

Let now
$$
\phi_0,\phi_\infty:K_*(A^{s}(G))\to K_*(A^{s}(G))
$$
be the maps induced on $K$-theory by conjugation by $v_0$ and by $v_\infty$.  Proposition \ref{al e s} and Lemma \ref{con es} imply that these are the same map.   The following two lemmas will now complete the proof of Proposition~\ref{basecase}.

\begin{lemma}\label{phiinfid}
The map 
$$
\phi_\infty:K_*(A^{s}(G))\to K_*(A^{s}(G))
$$
constructed above is the identity map.
\end{lemma}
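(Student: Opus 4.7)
The plan is to exploit the observation that $v_\infty$ is in fact constant in $t$. Indeed, inspecting the construction with $n=\infty$, the rule $v_n(m)=w(0)$ for $m\leq n$ gives $v_\infty(m)=w(0)$ for every $m\in\N$, and then the cosine-sine interpolation forces $v_\infty(t)=w(0)$ for every $t\in[0,\infty)$. Hence $\phi_\infty$ is pointwise conjugation by the single isometry $w(0)$, lifted diagonally to $L^2([0,\infty),H_G)$.

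With this observation, the natural strategy is to apply Lemma~\ref{isom con} with $\alpha=\mathrm{id}_{A^s(G)}$ and $v=v_\infty$. Since $v_\infty$ is a genuine isometry we have $v_\infty^*v_\infty=1$, so the hypothesis $\alpha(a)v_\infty^*v_\infty=\alpha(a)$ is immediate. Provided $v_\infty$ lies in the multiplier algebra of $A^s(G)$, the lemma gives that $a\mapsto v_\infty\alpha(a)v_\infty^*=\phi_\infty(a)$ induces the same map on $K$-theory as $\alpha=\mathrm{id}$, which is the identity.

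Verifying that $v_\infty$ is a multiplier of $A^s(G)$ is the main content of the argument. The basic propagation estimates are essentially trivial: $w(0)$ has Rips-propagation zero because $F_0=\mathrm{id}$, has $\Gamma$-propagation zero because $w_z=w_{z,0}\otimes 1_{\ell^2(\Gamma)}$ acts trivially on the group factor, and is uniformly continuous in $t$ by being constant. Using these, one checks that for any $a\in A^s(G)$ the products $v_\infty a$ and $av_\infty$ inherit the propagation, support, and uniform-continuity conditions characterizing $\C_{L,0}[G;s]\subseteq A^s(G)$, and vanish at $t=0$ since $a$ does. The main obstacle I anticipate is the bookkeeping needed to confirm that left and right multiplication by $v_\infty$ preserves the $\Gamma$-invariance built into $A^s(G)$, despite the $w_{z,0}$ not being chosen $\Gamma$-equivariantly; this should be handled essentially as in the analogous checks in Lemma~\ref{con es}. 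Once $v_\infty\in M(A^s(G))$ is established, Lemma~\ref{isom con} immediately yields $(\phi_\infty)_*=\mathrm{id}$ on $K_*(A^s(G))$.
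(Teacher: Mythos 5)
Your argument is identical to the paper's proof, which observes that $\phi_\infty$ is conjugation by the constant isometry $w(0)$, asserts that $w(0)$ lies in $M(A^s(G))$, and cites Lemma~\ref{isom con}. The multiplier verification you single out as the main obstacle is indeed the only substantive step; the published proof asserts it without elaboration, so your discussion of the propagation and support conditions fills in exactly the details left implicit there.
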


\begin{proof}
The map $\phi_\infty$ is given by conjugation by the isometry $w(0)$ (constantly in the `localization variable' $t$), which is in the multiplier algebra of $A^s(G)$.  Hence it induces the identity on $K$-theory by Lemma \ref{isom con}.
\end{proof}

\begin{lemma}\label{phi0zero}
The map 
$$
\phi_0:K_*(A^{s}(G))\to K_*(A^{s}(G))
$$
constructed above is the zero map.
\end{lemma}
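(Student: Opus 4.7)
The plan is to prove $(\phi_0)_* = 0$ by running an Eilenberg-swindle argument tailored to the specific structure of $v_0$, following closely the template of the proof of Lemma \ref{mult sum}. The essential feature to exploit is that $v_0(t) = w(1)$ for all $t \geq 1$, whose image lies in the single-vertex slice $\ell^2(\{e\} \times G^{(0)}) \otimes H \otimes \ell^2(\Gamma)$ of $H_G$; combined with the stability structure $(u_n)_{n=0}^\infty$ and topology $\tau$ on $M(A^s(G))$ constructed above Lemma \ref{con es}, this concentration at a single vertex is what allows the swindle to produce a nontrivial conclusion about $(\phi_0)_*$.

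First I would form the formal sum
$$
\Psi(a) := \sum_{n=0}^\infty u_n \phi_0(a) u_n^*,
$$
which $\tau$-converges to an element of $M(A^s(G))$ by condition (ii) of the stability structure applied to $\phi_0(a) \in A^s(G)$; this defines a $*$-homomorphism $\Psi: A^s(G) \to M(A^s(G))$. Next I would apply the shift trick. With the multiplier isometry $v := \sum_n u_{n+1} u_n^*$ one has $\Psi(a) v^* v = \Psi(a)$, so Lemma \ref{isom con} gives that $\Psi$ and the shifted map $\Psi^{+1}(a) := \sum_{n \geq 1} u_n \phi_0(a) u_n^*$ induce the same map on $K$-theory. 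Combining with the orthogonal-image decomposition $\Psi = u_0 \phi_0(\cdot) u_0^* + \Psi^{+1}$ and Lemma \ref{orth}, one deduces $(u_0 \phi_0(\cdot) u_0^*)_* = 0$. A further application of Lemma \ref{isom con} with the isometry $u_0$ then identifies this map with $\phi_0$ on $K$-theory, yielding $(\phi_0)_* = 0$.

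The main obstacle is that, naively carried out, this Eilenberg swindle lands only in $M(A^s(G))$, whose $K$-theory is zero by Lemma \ref{mult sum}, so the argument would be vacuous. The genuine content lies in arranging matters so that the relevant $K$-theoretic identity takes place at the level of $A^s(G)$ rather than $M(A^s(G))$. This is where the special geometric structure of $\phi_0$ enters: because $\phi_0(a)(t)$ is supported at the single vertex $e$ and its matrix entries lie in $C(X) \otimes \mathcal{K}_\Gamma$ (hence are compact in the $H$-direction), one expects that $\|u_n \phi_0(a) u_n^*\| \to 0$ as $n \to \infty$, promoting $\tau$-convergence of $\Psi(a)$ to norm convergence and placing $\Psi(a)$ in $A^s(G)$ itself. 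Verifying this norm-decay---using the finite $\Gamma$-propagation and $X$-local compactness of $a$ together with the concentration of $\phi_0(a)$ at vertex $e$---is the technical core of the argument, and once it is in hand the swindle directly delivers $(\phi_0)_* = 0$ in $K_*(A^s(G))$, completing the proof of Proposition \ref{basecase} via Lemmas \ref{phiinfid}, \ref{con es}, and Proposition \ref{al e s}.
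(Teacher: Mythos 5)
Your proposal has a genuine gap at its technical core, and as a result the approach cannot work as stated.

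The critical error is the claim that $\|u_n \phi_0(a) u_n^*\| \to 0$ as $n \to \infty$. Since each $u_n$ is an isometry, $\|u_n b u_n^*\| = \|b\|$ for \emph{every} bounded operator $b$, regardless of compactness or of where $b$ is supported. So the sum $\sum_n u_n \phi_0(a) u_n^*$ has no chance of converging in norm unless $\phi_0(a) = 0$. Compactness of the matrix entries in the $H$-direction is irrelevant here: compactness gives decay of $\|u_n^* b u_m\|$ for $n\neq m$ (off-diagonal terms), not of the diagonal conjugates $u_n b u_n^*$. Your swindle therefore still lands only in $M(A^s(G))$, and as you yourself observe this is vacuous because $K_*(M(A^s(G)))=0$.

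What you are missing is the device that makes the swindle live inside the obstruction algebra: you must not sum infinitely many copies of a \emph{fixed} element, but rather \emph{time-shifted} copies. The paper factors $\phi_0$ through $K_*(A^s(G^{(0)}))$ (using that $v_0(t)=w(1)$ concentrates supports at the vertex $e$, so $v_0 a v_0^*$ has support in the unit space), and then shows $K_*(A^s(G^{(0)}))=0$ by defining $a^{(n)}(t) := a(t-n)$ for $t\geq n$ (and $0$ otherwise) and setting $\alpha(a) := \sum_n u_n a^{(n)} u_n^*$. The point is that $a^{(n)}(t)=0$ for $n>t$, so at each fixed $t$ the sum is \emph{finite} (with uniformly bounded norm since the $u_n$ have orthogonal ranges), and $\alpha(a)$ genuinely lies in $A^s(G^{(0)})$. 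Crucially, this requires the zero Rips-propagation property of elements of $A^s(G^{(0)})$---otherwise $\alpha(a)(t)$ would pick up the Rips-propagation of $a$ near time $0$ for arbitrarily large $t$, destroying membership in the localization algebra. Your proposal, which bypasses the factorization through $G^{(0)}$, never gets access to this structure, and so has no way of making the swindle converge inside $A^s(G)$.

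To repair the argument you should: (1) establish the factorization $\phi_0 : K_*(A^s(G)) \to K_*(A^s(G^{(0)})) \to K_*(A^s(G))$; (2) observe that supports in $G^{(0)}$ force zero Rips-propagation; and (3) run the Eilenberg swindle with the time-shifts $a^{(n)}$, not with constant copies of a fixed element.
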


\begin{proof}
Let $G^{(0)}$ be the unit space of $G$, which is an open subgroupoid of $\Gamma\ltimes X$, and thus $A^s(G^{(0)})$ makes sense.  It is straightforward to check that $\phi_0$ fits into a commutative diagram
$$
\xymatrix{ K_*(A^{s}(G)) \ar[dr] \ar[rr]^{\phi_0}  & & K_*(A^{s}(G)) \\
& K_*(A^{s}(G^{(0)})) \ar[ur] & },
$$
whence it suffices to show that $K_*(A^{s}(G^{(0)}))=0$. 

Say now that $a$ is an element of $A^s(G^{(0)})$ and $t\in [0,\infty)$, $y,z\in P_s(\Gamma)$, and $x\in X$ are such that $a(t)_{y,z}(x)\neq 0$.  Then by the condition that the support of $a(t)$ is contained in $G^{(0)}$ (compare Definitions \ref{t supp} and \ref{smallob} above), we must have that for all $g\in \text{supp}(y)$ and all $h\in \text{supp}(z)$, $(gx,gh^{-1},hx)$ is in $G^{(0)}$.  This forces $gh^{-1}$ to be the identity element of $\Gamma$ and thus $g=h$.  As this happens for all $g\in \text{supp}(y)$ and $h\in \text{supp}(z)$, this forces $\text{supp}(y)$ and $\text{supp}(z)$ to both reduce to a single element of $\Gamma$, and moreover that these elements are necessarily the same.  In particular, $a(t)$ has zero Rips-propagation for all $t$.  

Now, let $u_n:L^2([0,\infty),H_G)\to L^2([0,\infty),H_G)$ be the isometries constructed above.  For each $n\in \N$ and element $a$ of $A^s(G^{(0)})$, define $a^{(n)}$ to be the function 
$$
a^{(n)}(t)=\left\{\begin{array}{ll} a(t-n) & t\geq n \\ 0 & t<n \end{array}\right.
$$
in $A^s(G^{(0)})$.  Define 
$$
\alpha:A^s(G^{(0)})\to  A^s(G^{(0)}),~~~a\mapsto \sum_{n=0}^\infty u_n a^{(n)}u_n^*.
$$
As every element of $A^s(G^{(0)})$ has zero Rips-propagation and satisfies $a(0)=0$, $\alpha$ is a well-defined $*$-homomorphism.  It thus induces a map on $K$-theory 
$$
\alpha_*:K_*(A^s(G^{(0)}))\to K_*(A^s(G^{(0)})).
$$
However, if $\iota:A^s(G^{(0)})\to A^s(G^{(0)})$ is the identity map, then a straightforward homotopy using uniform continuity of each element of $A^s(G^{(0)})$ shows that $\alpha_*+\iota_*=\alpha_*$ and we are done.
\end{proof}

\section{Mayer-Vietoris}\label{ind sec}

In this section, we prove Proposition \ref{indstep}, which we repeat below for the reader's convenience.

\begin{proposition*2}
Let $G$ be an open subgroupoid of $\Gamma\ltimes X$ that is in the class $\mathcal{D}$ of Definition \ref{gpd fdc}, and let $r_0,s_0\geq 0$.  Then there is $s\geq \max\{r_0,s_0\}$ (depending on $r_0$, $s_0$ and $G$) such that the subspace-inclusion map (cf.\ Remark \ref{subspaces})
$$
K_*^{r_0,1/8}(A^{s_0}(G)) \to K_*^{s,1/8}(A^s(G))
$$
is the zero map.
\end{proposition*2}

As in Section \ref{base sec}, we first build an abstract $K$-theoretic machine, and then use the dynamical assumptions to produce ingredients for that machine.

\subsection*{$K$-theoretic part}

We start with a technical lemma about when elements of controlled $K$-groups are zero: compare \cite[Section 1.6]{Oyono-Oyono:2011fk}.  The proof is in large part the same as that of \cite[Proposition 1.31]{Oyono-Oyono:2011fk}, but as our set up and precise statement are a little different, and to keep things self-contained, we give a complete proof here.

Before stating the lemma, we recall some notation from Section \ref{conk sec}, and introduce some more.  Let $A$ be a non-unital $C^*$-algebra and $S\subseteq A$ a self-adjoint subspace.  Let $\widetilde{A}$ be the unitization of $A$, and let $\widetilde{S}$ be the subspace of $\widetilde{A}$ spanned by $S$ and the unit.  Then 
$$
\kappa:P_n^{1/8}(\widetilde{S})\to P_n(\widetilde{A}),\quad p\mapsto \chi_{(1/2,\infty]}(p)
$$
is the map from quasi-projections in $M_n(\widetilde{S})$ to projections in $M_n(\widetilde{A})$ of Definition \ref{qp}.  Similarly, 
$$
\kappa:U_n^{1/8}(\widetilde{S})\to U_n(\widetilde{A}), \quad u\mapsto u(u^*u)^{-1/2}
$$
is the map from quasi-unitaries in $M_n(\widetilde{S})$ to unitaries in $M_n(\widetilde{A})$ of Definition \ref{qu}.  
For each $m\in \N$, define 
$$
\widetilde{S}^m:=\text{span}\{a_1\cdots a_m\in A\mid a_i\in \widetilde{S} \text{ for all } i\in \{1,...,m\} \}.
$$
We will also need some notation for standard matrices.  Given $m\in \N$, we will write `$1_m$' for the $m\times m$ identity matrix and `$0_m$' for the $m\times m$ zero matrix.  We will adopt the shorthand `$\text{diag}(\cdots)$' for a diagonal matrix with given entries: for example
$$
\text{diag}(a,b,0)=\begin{pmatrix} a & 0 & 0 \\ 0 & b & 0 \\ 0 & 0 & 0\end{pmatrix}.
$$

\begin{lemma}\label{con zero}
For any $\epsilon>0$ there are constants $L=L(\epsilon)\geq 0$ and $M=M(\epsilon)\in \N$ with the following properties.   Let $A$ be a non-unital $C^*$-algebra and $S\subseteq A$ a self-adjoint subspace.  
\begin{enumerate}[(i)]
\item Say $l\in \N$ and $(p,n)\in P^{1/8}_l(\widetilde{S})\times \N$ is such that the class $[p,n]$ is zero in $K_0^{1/8}(S)$.  Then there exist $k_1,k_2\in \N$ and a homotopy $h:[0,1]\to P_{l+k_1+k_2}(\widetilde{A})$ with
$$
h(0)=\text{diag}(0_{k_1}, \kappa(p), 1_{k_2}) \quad \text{and}\quad h(1)=\text{diag}(0_{k_1},0_{l-n}, 1_{n+k_2}),
$$ 
such that there is an $L$-Lipschitz map $h_\epsilon:[0,1]\to M_{l+k_1+k_2}(\widetilde{S}^M)$ with 
$$\sup_{t\in [0,1]}\|h_\epsilon(t)-h(t)\|<\epsilon.$$
\item Say $l\in \N$, and $u\in U^{1/8}_l(\widetilde{S})$ is such that the class $[u]$ is zero in $K_1^{1/8}(S)$.  Then there is $k\in \N$ and a homotopy $h:[0,1]\to U_{l+k}(\widetilde{A})$ with 
$$
h(0)=\text{diag}(\kappa(u), 1_k )  \quad \text{and}\quad h(1)=(1_l,1_k )
$$ 
such that there is an $L$-Lipschitz map $h_\epsilon:[0,1]\to M_{l+k}(\widetilde{S}^M)$ such that $$\sup_{t\in [0,1]}\|h_\epsilon(t)-h(t)\|<\epsilon.$$
\end{enumerate}
\end{lemma}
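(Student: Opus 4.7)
The plan is to lift the homotopies of quasi-projections (part (i)) or quasi-unitaries (part (ii)) coming from the zero-class assumption to genuine projection (unitary) homotopies in $\widetilde A$ via $\kappa$, and then to approximate these by Lipschitz paths that stay in $\widetilde S^M$. The universality of $L(\epsilon)$ and $M(\epsilon)$ hinges on a uniform spectral observation: any quasi-projection has spectrum in a fixed compact set $[-a,a]\cup[1-a,1+a]$ with $a<1/2$ (forced by $\|p^2-p\|<1/8$), and any quasi-unitary $u$ has $\sigma(u^*u)\subset (7/8,9/8)$. Consequently a single polynomial can approximate $\chi=\chi_{(1/2,\infty]}$ or $x\mapsto x^{-1/2}$ uniformly to within any $\epsilon/2$ on these sets, independent of $A$, $S$, or the particular element.

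For part (i), I would first unpack $[p,n]=[0,0]$ in $K_0^{1/8}(S)$ to produce $k\in\N$ and $q\in P_{l+k}^{1/8}(C([0,1],\widetilde S))$ with $q(0)=\text{diag}(p,1_k)$ and $q(1)=\text{diag}(0_{l-n},1_{n+k})$; the desired genuine projection homotopy is $h(t):=\kappa(q(t))$, which is continuous because $\chi$ is continuous on the fixed spectral set. Fix once and for all a polynomial $P_\epsilon$ (degree $M=M(\epsilon)$) with $|P_\epsilon(\lambda)-\chi(\lambda)|<\epsilon/2$ on $[-a,a]\cup[1-a,1+a]$, available by Stone--Weierstrass applied to the two disjoint intervals. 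To achieve Lipschitz control I would partition $[0,1]$ uniformly at scale $\delta=\delta(\epsilon)$ fine enough that the oscillation of $q$ on each subinterval is smaller than a prescribed tolerance, and replace $q$ by its piecewise-linear interpolant $q_\delta$: since $\widetilde S$ is a subspace, $q_\delta$ remains $\widetilde S$-valued, and it is $O(1/\delta)$-Lipschitz. Setting $h_\epsilon(t):=P_\epsilon(q_\delta(t))$ then gives a path in $M_{l+k}(\widetilde S^{M})$ that is $L(\epsilon)$-Lipschitz (using that $P_\epsilon$ is Lipschitz on any fixed norm-bounded set of $\widetilde A$, with a constant depending only on degree and coefficients). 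The triangle inequality
$$\|h_\epsilon(t)-h(t)\|\leq\|P_\epsilon(q_\delta(t))-P_\epsilon(q(t))\|+\|P_\epsilon(q(t))-\chi(q(t))\|$$
then yields the desired $\epsilon$-approximation, and the endpoints match after padding with $k_1=0$ zero blocks (or more, if convenient).

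Part (ii) follows the same template: unpack $[u]=[1]$ in $K_1^{1/8}(S)$ to obtain a quasi-unitary homotopy $v\in U_{l+k}^{1/8}(C([0,1],\widetilde S))$ with $v(0)=\text{diag}(u,1_k)$ and $v(1)=1_{l+k}$; fix a polynomial $Q_\epsilon$ approximating $x^{-1/2}$ on $(7/8,9/8)$ to within $\epsilon/2$; form a piecewise-linear interpolant $v_\delta$; and set $h_\epsilon(t):=v_\delta(t)\,Q_\epsilon(v_\delta(t)^*v_\delta(t))$, which lies in $M_{l+k}(\widetilde S^{2\deg(Q_\epsilon)+1})$, is Lipschitz with constant depending only on $\epsilon$, and uniformly approximates $h(t):=\kappa(v(t))$.

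The main obstacle will be coordinating the three approximations simultaneously — the polynomial for functional calculus, the piecewise-linear reparametrization for Lipschitz-ness, and the control of norms in $M_{l+k}(\widetilde S^M)$ — so that the resulting constants $L(\epsilon)$ and $M(\epsilon)$ depend truly only on $\epsilon$ and not on $A$, $S$, or the particular element. The universal spectral estimates for quasi-projections and quasi-unitaries are precisely what decouple these dependencies, since they force the functional calculus to be implemented by polynomials drawn from a fixed family indexed only by $\epsilon$.
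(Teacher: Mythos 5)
Your overall plan — lift to a genuine projection/unitary homotopy via $\kappa$, then replace the functional calculus by a fixed polynomial $P_\epsilon$ using the universal spectral estimates — matches the paper, and the observation that the spectra of quasi-projections and quasi-unitaries are confined to a universal compact set bounded away from $1/2$ (resp.\ away from $0$) is indeed the right reason why the polynomial degree $M(\epsilon)$ can be chosen universally. However, there is a genuine gap in the way you propose to obtain the universal Lipschitz constant $L(\epsilon)$.

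The piecewise-linear interpolant $q_\delta$ of the given homotopy $q$ at mesh $\delta$ has Lipschitz constant comparable to $\sup_i\|q(s_{i+1})-q(s_i)\|/\delta$, and for $h_\epsilon=P_\epsilon(q_\delta)$ to approximate $h=\kappa(q)$ within $\epsilon$ you need $q_\delta$ to be uniformly close to $q$; that forces $\delta$ to be small enough relative to the modulus of continuity of $q$. But the homotopy $q$ witnessing $[p,n]=0$ (or $[u]=0$) is only required to be continuous — nothing in the definition of $K_*^{1/8}(S)$ bounds how fast it can oscillate. So the required $\delta$, and hence the Lipschitz constant of $q_\delta$, genuinely depends on $q$ and not just on $\epsilon$. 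Writing ``$\delta=\delta(\epsilon)$'' papers over this; there is no $\delta(\epsilon)$ that works for all $q$ simultaneously. The spectral estimates you invoke control the polynomial-approximation step but say nothing about the modulus of continuity of $q$, so they do not ``decouple'' the Lipschitz issue as you claim at the end.

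The paper's proof resolves exactly this point by \emph{not} reparametrizing the given homotopy. Instead it samples $q$ at finitely many points $t_0<\cdots<t_N$ with $\|q(t_i)-q(t_{i-1})\|$ small (at most $1/12$, resp.\ $1/32$), pads with $N$ extra $m\times m$ blocks, and then builds an entirely new homotopy by concatenating a fixed number of steps, each with a universal Lipschitz constant: rotation homotopies (universally Lipschitz because the matrix entries are norm-bounded) and straight-line homotopies of length at most $1/12$ (resp.\ $1/32$). The number $N$ of samples — and hence the size $k_1,k_2$ (resp.\ $k$) of the padding — is allowed to depend on $q$, which the statement of the lemma permits; but the concatenation consists of a fixed number of uniformly Lipschitz stages, so after reparametrizing to $[0,1]$ the resulting path $q_t$ is $L$-Lipschitz for a universal $L$. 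The care taken to verify $\|q_t^2-q_t\|<5/24$ (resp.\ $\|v_t^*v_t-1\|<7/8$) along the new path is what then guarantees that a fixed polynomial applies to all of $q_t$. This construction of a new, universally controlled homotopy in a larger matrix algebra is the key idea you would need to add.
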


To summarize the idea, if an element of $K_*^{1/8}(S)$ is zero, then the fact that its image under the comparison map $c:K_*^{1/8}(S)\to K_*(A)$ is zero can be witnessed by a homotopy that is well-controlled, both with respect to how fast it goes, and with respect to the subspace of $M_\infty(\widetilde{A})$ it passes through.

\begin{proof}
We look first at the case of $K_0$.  Let $(p,n)\in P^{1/8}_l(\widetilde{S})\times \N$ satisfy the hypotheses of the lemma.  Unwrapping the definitions, this is equivalent to saying that there exist $j_1,j_2,j_3\in \N$ and an element $\{p_t\}_{t\in [0,1]}$ of $P_{l+j_1+n+j_2+j_3}^{1/8}(C([0,1],\widetilde{S}))$ such that 
$$
p_0=\text{diag}(p,0_{j_1},0_n,1_{j_2},0_{j_3}) \quad \text{and} \quad p_1=\text{diag}(0_l,0_{j_1},1_n,1_{j_2},0_{j_3}).
$$
As $[0,1]$ is compact, there are $0=t_0<\cdots <t_N=1$ such that
\begin{equation}\label{eps}
\|p_{t_i}-p_{t_{i-1}}\|<1/12 \quad \text{for all $i\in \{1,...,N\}$}.
\end{equation}
Set $m=j_1+j_2+j_3+n+l$.  We will first define a Lipschitz homotopy between 
$$
\text{diag} (p_0, 1_{mN},0_{mN})\quad  \text{and} \quad \text{diag}( 0_{mN},p_1,1_{mN})
$$
by concatenating the steps below.

\begin{enumerate}[(i)]
\item Perform a rotation homotopy between 
$$
\text{diag}(p_0,1_{mN},0_{mN}) \text{ and } \text{diag}(p_0,\underbrace{1_m,0_m,...,1_m,0_m}_{N\text{ copies of }(1_m,0_m)});
$$
\item Let 
$$
r(t)=\begin{pmatrix} \cos(\pi t/2) & -\sin(\pi t/2) \\ \sin(\pi t/2) & \cos(\pi t/2) \end{pmatrix}\in M_{2m}(\C),
$$
where each entry represents the corresponding scalar times $1_m$.  For $i\in \{1,...,N\}$, in the $i^{th}$ `block' $\text{diag}(1_m,0_m)$ appearing in the above apply the homotopy
$$
t\mapsto \begin{pmatrix} 1_m-p_{t_i} & 0 \\ 0 & 0 \end{pmatrix}+r(t)\begin{pmatrix} p_{t_i} & 0 \\ 0 & 0 \end{pmatrix}r(t)^*
$$
between $\text{diag}(1_m,0_m)$ and $\text{diag}(1-p_{t_i},p_{t_i})$ to get a homotopy between 
\begin{align*}
\text{diag}&(p_0,1_m,0_m,...,1_m,0_m) \\& \text{and}\quad  \text{diag}(p_0,1_m-p_{t_1},p_{t_1},1_m-p_{t_2},p_{t_2},...,1-p_{t_N},p_{t_N}).
\end{align*}
\item For each $i\in \{1,...,N\}$, use a straight line homotopy between $1_m-p_{t_i}$ and $1_m-p_{t_{i-1}}$ in each appropriate entry to build a homotopy between
\begin{align*}
\text{diag}&(p_0,1_m-p_{t_1},p_{t_1},1_m-p_{t_2},p_{t_2},...,1-p_{t_N},p_{t_N}) \\ & \text{and}\quad \text{diag}(p_0,1_m-p_{t_0},p_{t_1},1_m-p_{t_1},p_{t_2},...,1-p_{t_{N-1}},p_{t_N}).
\end{align*}
\item Using a similar homotopy to step (ii), and that $p_0=p_{t_0}$, build a homotopy between
\begin{align*}
\text{diag} (p_0,1_m-p_{t_0},p_{t_1},1_m-p_{t_1},p_{t_2}&,...,1-p_{t_{N-1}},p_{t_N}) \\ 
& \text{and}\quad  \text{diag}(0_m,1_m,....,0_m,1_m,p_{t_N}).
\end{align*}
\item Finally, recall that $p_{t_N}=p_1$ and use another rotation homotopy between
$$
\text{diag}(0_m,1_m,....,0_m,1_m,p_1) \quad \text{and} \quad \text{diag}(0_{mN},p_1,1_{mN})
$$
to complete the proof of the claim.
\end{enumerate}

Write $\{q_t\}_{t\in [0,1]}$ for the homotopy arrived at by concatenating the steps above; it is straightforward to check that this homotopy is Lipschitz for some universal Lipschitz constant.  Note also that all of the matrices from steps (i)-(v) above have entries from $\widetilde{S}$, so this homotopy has image in $M_{m(2N+1)}(\widetilde{S})$.

We claim that $\|q_t^2-q_t\|<5/24$ for all $t$.  Indeed, for all $t$ associated to steps (i), (ii), (iv), and (v) we clearly have that $\|q_t^2-q_t\|<1/8$, so the only thing to be checked is the straight line homotopy in step (iii).  For this, using the bound in line \eqref{eps} it suffices to show that if $p,q$ are two quasi-projections with $\|p-q\|<1/12$, then for any $t\in [0,1]$, 
$$
\|((1-t)p+tq)^2-((1-t)p+tq)\|<5/24.
$$
Computing,
\begin{align*}
(&(1-t)p+tq)^2-((1-t)p+tq) \\ & =(1-t)(p^2-p)-(1-t)tp(p-q)-(1-t)tq(q-p)+t(q^2-q).
\end{align*}
As $\|p\|$ and $\|q\|$ are both bounded above by $2$, this gives
\begin{align*}
\|(&(1-t)p+tq)^2-((1-t)p+tq)\| \\ & < (1-t)(1/8)+t(1-t)(1/6)+t(1-t)(1/6)+t(1/8) \\ &\leq 1/8+2/24=5/24
\end{align*}
as claimed.  

Hence the spectrum of every $q_t$ is bounded away from $1/2$, and thus defining $h(t):=\kappa(q_t)$ makes sense.  Fix a sequence of real-valued polynomials $(f_i)$ converging uniformly to $\chi_{(1/2,\infty]}$ on the spectrum of every $q_t$.  As $f_i$ is a polynomial and $t\mapsto q_t$ is Lipschitz and bounded, $t\mapsto f_i(q_t)$ is Lipschitz for each $i$, with some Lipschitz constant depending only on the fixed choice of $f_i$, and on the Lipschitz constant of $t\mapsto q_t$.   It moreover takes image in $M_{m(2N+1)}(\widetilde{S}^{M_i})$, where $M_i$ is the degree of $f_i$.  
It follows from all of this that we may take $h_\epsilon(t):=f_i(q_t)$ for some suitably large $i$, and this will have all the right properties.\\

We now turn to the case of $K_1$.  Let $[u]$ satisfy the hypotheses, so there exist $j\in \N$ and a homotopy $\{u_t\}_{t\in [0,1]}$ in $U_{l+j}^{1/8}(C([0,1],\widetilde{S}))$ such that 
$$
u_0=\text{diag}(u,1_j) \quad \text{and} \quad u_1=\text{diag}(1_l,1_j).
$$
Set $m=l+j$.  Let $0=t_0<\cdots <t_N=1$ be such that 
\begin{equation}\label{eps2}
\|u_{t_i}-u_{t_{i-1}}\|<1/32 \quad \text{for all $i\in \{1,...,N\}$}.
\end{equation}
We will define a Lipschitz homotopy between
$$
\text{diag}(u_0,1_{2mN}) \quad \text{and} \quad \text{diag}(u_1,1_{2mN}).
$$
by concatenating the homotopies below.
\begin{enumerate}[(i)]
\item Connect 
\begin{align*}
\text{diag}(u_0,1_{2mN})=\text{diag}(u_0&,\underbrace{1_m,...,1_m}_N,1_{mN}) \\ & \text{ and } \quad \text{diag}( u_0,u_{t_1}^*u_{t_1},....,u_{t_N}^*u_{t_N},1_{mN})
\end{align*}
by the straight line homotopy between the $i^\text{th}$ copy of $1_m$ and $u_{t_i}^*u_{t_i}$.
\item Use a rotation homotopy between 
$$
\text{diag}( 1_m,u_{t_1}^*,....,u_{t_N}^*,1_{mN}) \quad \text{and} \quad \text{diag}(u_{t_1}^*,....,u_{t_N}^*,1_m,1_{mN}),
$$
to produce a homotopy between 
\begin{align*}
\text{diag}&( u_0,u_{t_1}^*u_{t_1},....,u_{t_N}^*u_{t_N},1_{mN}) \\ &=\text{diag}( 1_m,u_{t_1}^*,....,u_{t_N}^*,1_{mN})\text{diag}( u_0,u_{t_1},....,u_{t_N},1_{mN})
\end{align*}
and 
\begin{align*}
\text{diag}&( u_{t_1}^*u_0,u_{t_2}^*u_{t_1},....,u_{t_{N}}^*u_{t_{N-1}},u_{t_N}1_{mN}) \\ &=\text{diag}(u_{t_1}^*,....,u_{t_N}^*,1_m,1_{mN})\text{diag}( u_0,u_{t_1},....,u_{t_N},1_{mN}).
\end{align*}
\item Use another straight line homotopy between each $u_{t_i}^*u_{t_{i-1}}$ and $1_m$ to build a homotopy between
\begin{align*}
\text{diag}( u_{t_1}^*u_0,u_{t_2}^*u_{t_1} ,....,&u_{t_{N}}^*u_{t_{N-1}},u_{t_N},1_{mN}) \\& \text{and} \quad \text{diag}(\underbrace{1_m,....,1_m}_N,u_{t_N},1_{mN})
\end{align*}
\item Finally, one more rotation homotopy connects 
$$
\text{diag}(\underbrace{1_m,....,1_m}_N,u_{t_N},1_{mN}) \quad \text{and} \quad \text{diag}(u_{1},1_{2mN}),
$$
where we used that $u_{t_N}=u_1$.
\end{enumerate}

Write $\{v_t\}_{t\in [0,1]}$ for the homotopy resulting from concatenating the above homotopies; it is straightforward to check that this homotopy is Lipschitz for some universal Lipschitz constant, and that each $v_t$ is a matrix in $M_{m(2N+1)}(\widetilde{S}^2)$.  

We claim that for each $t$,
\begin{equation}\label{v est}
\|v_t^*v_t-1\|<7/8 \quad \text{and} \quad \|v_tv_t^*-1\|<7/8.
\end{equation}
For $t$ associated to step (iv), this is immediate.  For $t$ associated to step (ii), this follows from the fact that if $v$ and $w$ are quasi-unitaries, then their product satisfies 
\begin{align*}
\|(vw)(vw)^*-1\| & \leq \|v(ww^*-1)v^*\|+\|vv^*-1\| \\ & <(1+1/8)(1/8)+(1/8) \\ & <7/8
\end{align*}
and similarly $\|(vw)^*(vw)-1\|<7/8$.  For $t$ associated to steps (i) and (iii), we first claim that if $\delta,\epsilon\in (0,1)$ and $\|u-v\|<\epsilon$, and $\|uu^*-1\|<\delta$ and $\|u^*u-1\|<\delta$, then 
\begin{equation}\label{4de}
\|vv^*-1\|<\delta+4\epsilon\quad \text{and}\quad \|v^*v-1\|<\delta+4\epsilon.
\end{equation}
Indeed, using that $\|u\|<\sqrt{1+\delta}$, 
\begin{align*}
\|vv^*-1\| & < \|vv^*-uu^*\|+\delta\leq \|v\|\|v^*-u^*\|+\|v-u\|\|u^*\|+\delta \\ &<(\sqrt{1+\delta}+\epsilon)\epsilon+(\sqrt{1+\delta})\epsilon+\delta<\delta+4\epsilon,
\end{align*}
and the other estimate is similar.  Now, looking at step (i), we have that all elements appearing in the homotopy are within $1/8$ of $\text{diag}(u_0,1_{2mN})$, so applying the estimate in line \eqref{4de} with $\delta=\epsilon=1/8$ establishes the estimate in line \eqref{v est}.  On the other hand, looking at step (iii), we first note that for any $i\in \{1,...,N\}$,
\begin{align*}
\|u_{t_i}^*u_{t_{i-1}}-1\|&\leq \|u_{t_i}^*\|\|u_{t_i}-u_{t_{i-1}}\|+\|u_{t_i}^*u_{t_i}-1\| \\ & \leq \sqrt{1+1/8}(1/32)+1/8 < 3/16.
\end{align*}
Hence every element in the homotopy in step (iii) is within $3/16$ of 
$$
\text{diag}(1_{mN},u_{t_N},1_{mN}).
$$
Applying the estimate in line \eqref{4de} with $\delta=1/8$ and $\epsilon=3/16$ then again gives the estimate in line \eqref{v est}, and we are done with the claim.

It follows in particular that $(v_t^*v_t)^{-1/2}$ makes sense for all $t$, and thus we may define $h(t):=\kappa(v_t)$.  Moreover, there is a sequence $(f_i)$ of real-valued polynomials that converges uniformly to the function $t\mapsto t^{-1/2}$ on the spectrum of each $v_t^*v_t$.  Analogously to the case of $K_0$, we may now take $h_\epsilon(t):=v_tf_i(v_t^*v_t)$ for some suitably large $i$ (depending on $\epsilon$); this has all the right properties.
\end{proof}

Our main $K$-theoretic goal is a sort of controlled Mayer-Vietoris sequence.  Let us first recall the relevant classical Mayer-Vietoris sequence in operator $K$-theory: see for example \cite[Section 3]{Higson:1993th} or \cite[Proposition 2.7.15]{Willett:2010ay} for more details.

\begin{proposition}\label{class mv}
Let $A$ be a $C^*$-algebra, and let $I$ and $J$ be ideals in $A$ such that $A=I+J$.  Then there is a functorial six-term exact sequence
$$
\xymatrix{ K_0(I\cap J)\ar[r] & K_0(I)\oplus K_0(J) \ar[r] & K_0(A) \ar[d]^-{\partial} \\ K_1(A) \ar[u]^-{\partial} & K_1(I)\oplus K_1(J) \ar[l] & K_1(I\cap J) \ar[l]}.
$$ 
The maps $$K_i(I\cap J)\to K_i(I)\oplus K_i(J)$$ are of the form $x\mapsto (x,-x)$ \textup{(}where we abuse notation by writing $x$ both for an element of $K_i(I\cap J)$, and its image in $K_i(I)$ and $K_i(J)$\textup{)}, and the maps $$K_i(I)\oplus K_i(J)\to K_i(A)$$ are of the form $(x,y)\mapsto x+y$ \textup{(}with a similar abuse of notation\textup{)}. \qed
\end{proposition}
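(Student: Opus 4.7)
The plan is to deduce this from the standard six-term exact sequence of $C^*$-algebra $K$-theory, applied to a suitably chosen short exact sequence. Specifically, I would consider
\begin{equation*}
0 \longrightarrow I\cap J \xrightarrow{\ \alpha\ } I\oplus J \xrightarrow{\ \beta\ } A \longrightarrow 0,
\end{equation*}
where $\alpha(x) = (x,x)$ is the diagonal inclusion and $\beta(x,y) = x - y$. Both maps are genuine $*$-homomorphisms (one must use $x-y$ rather than $x+y$ for $\beta$ to be multiplicative). Verifying exactness is the one place the hypothesis $A = I + J$ is used: injectivity of $\alpha$ and $\beta \circ \alpha = 0$ are immediate; if $(x,y) \in \ker\beta$ then $x = y$, forcing $x \in I \cap J$ and $(x,y) = \alpha(x)$; and for surjectivity, any $a \in A$ can be written $a = x + y$ with $x \in I$, $y \in J$, so $a = \beta(x, -y)$.

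Next I would apply the standard six-term exact sequence in $K$-theory (e.g.\ as in \cite{Rordam:2000mz}) to this short exact sequence, and use the canonical isomorphism $K_*(I \oplus J) \cong K_*(I) \oplus K_*(J)$ induced by the two coordinate projections. This produces a six-term exact sequence of the stated shape, in which the first map is $x \mapsto (x,x)$ and the second is $(x,y) \mapsto x - y$. To put it in the sign convention stated in the proposition, I would post-compose with the group automorphism
\begin{equation*}
\phi : K_*(I)\oplus K_*(J) \longrightarrow K_*(I)\oplus K_*(J), \quad (x,y) \longmapsto (x,-y),
\end{equation*}
which preserves exactness (being an isomorphism) and converts $(x,x) \mapsto (x,-x)$ and $(x,y) \mapsto x-y$ into $(x,y) \mapsto x+y$, exactly as required.

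Functoriality of the resulting Mayer--Vietoris sequence reduces to two observations: first, the formation of the short exact sequence displayed above is natural in the triple $(A,I,J)$, since a $*$-homomorphism $A \to A'$ sending $I \to I'$ and $J \to J'$ induces a morphism of short exact sequences in the obvious way; and second, the six-term exact sequence is itself functorial, the naturality of the boundary map $\partial$ being the standard statement found in any reference on $C^*$-algebra $K$-theory. The main obstacle is essentially bookkeeping — verifying surjectivity of $\beta$ (which requires $A = I + J$) and tracking signs through the automorphism $\phi$ — but no deep input beyond the classical six-term sequence is needed.
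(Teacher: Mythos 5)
Your proposed short exact sequence does not exist in the category of $C^*$-algebras: the map $\beta:I\oplus J\to A$, $(x,y)\mapsto x-y$, is linear and $*$-preserving but it is \emph{not} multiplicative. Indeed
$$
\beta\bigl((x_1,y_1)(x_2,y_2)\bigr)=x_1x_2-y_1y_2,\qquad \beta(x_1,y_1)\,\beta(x_2,y_2)=x_1x_2-x_1y_2-y_1x_2+y_1y_2,
$$
and these differ by $x_1y_2+y_1x_2-2y_1y_2$, which is nonzero in general (take $x_1=x_2=0$). Choosing $x+y$ instead of $x-y$ fails in exactly the same way; your parenthetical remark that ``one must use $x-y$ rather than $x+y$ for $\beta$ to be multiplicative'' is mistaken. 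Since $\beta$ is not a $*$-homomorphism, the displayed sequence is not a short exact sequence of $C^*$-algebras, and the six-term exact sequence of $K$-theory does not apply to it. This is the critical gap, and it is not one that can be patched by signs.

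The paper itself does not prove Proposition~\ref{class mv}; it simply cites \cite[Section 3]{Higson:1993th} and notes that one \emph{can} derive it from the usual six-term sequence. The intended derivation, however, uses a different input. With $A=I+J$, the second isomorphism theorem gives a $*$-isomorphism $I/(I\cap J)\cong A/J$, so the inclusion $I\hookrightarrow A$ induces a morphism of short exact sequences
$$
\xymatrix{0\ar[r] & I\cap J\ar[r]\ar[d] & I\ar[r]\ar[d] & I/(I\cap J)\ar[r]\ar[d]^{\cong} & 0\\
0\ar[r] & J\ar[r] & A\ar[r] & A/J\ar[r] & 0,}
$$
in which the rightmost vertical arrow is an isomorphism. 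Applying the six-term exact sequence to both rows and using naturality of $\partial$, one splices the two sequences together by the standard algebraic Mayer--Vietoris argument (a diagram chase, or the five lemma applied to the resulting ladder) to obtain the stated six-term Mayer--Vietoris sequence. Functoriality and the explicit descriptions of the maps $K_i(I\cap J)\to K_i(I)\oplus K_i(J)$ and $K_i(I)\oplus K_i(J)\to K_i(A)$ then follow by tracking the chase. Your observations about where $A=I+J$ is used and about adjusting signs by the automorphism $(x,y)\mapsto(x,-y)$ are both fine in spirit, but they need to be carried out in this two-sequence framework rather than via the nonexistent map $\beta$.
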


The maps above labeled `$\partial$' can also be described explicitly (they are connected to the index and exponential maps of the usual six-term exact sequence), but we will not need this.

We will need some notation and an appropriate excisiveness condition for our controlled Mayer-Vietoris sequence.

\begin{definition}\label{stab subsp}
Let $\mathcal{K}=\mathcal{K}(\ell^2(\N))$, and $A$ be a $C^*$-algebra.  Let $A\otimes \mathcal{K}$ denote the spatial tensor product of $A$ and $\mathcal{K}$; using the canonical orthonormal basis on $\ell^2(\N)$, we think of elements of $A\otimes \mathcal{K}$ as $\N$-by-$\N$ matrices with entries from $A$.  For a subspace $S$ of $A$, let $S\otimes \mathcal{K}$ denote the subspace of $A\otimes \mathcal{K}$ consisting of matrices with all entries in $S$.

In particular, if $A$ is filtered as in Definition \ref{filt}, then we may define $(A\otimes \mathcal{K})_r:=A_r\otimes \mathcal{K}$.  It is straightforward to check that this definition induces a filtration on $A\otimes \mathcal{K}$.
\end{definition}

\begin{definition}\label{ind filt}
Let $A$ be a filtered $C^*$-algebra, and $I$ be a $C^*$-ideal in $A$ equipped with its own filtration.  We say that $I$ is a \emph{filtered ideal}\footnote{This is a more general notion than the filtrations on ideals used by Oyono-Oyono and the third author in \cite[Subsection 3.1]{Oyono-Oyono:2011fk}.}of $A$ if for any $r\geq 0$, $I_r\subseteq A_r$, and if for any $r,s\geq 0$, $A_s\cdot I_r\cup I_r\cdot A_s\subseteq I_{s+r}$.  
\end{definition}

\begin{remark}
For our applications, the special case where $I=A$ as a $C^*$-algebra, but where $I$ and $A$ do not have the same filtration, turns out to be particularly interesting.
\end{remark}

\begin{definition}\label{uni ex}
Let $(I^\omega,J^\omega;A^\omega)_{\omega\in \Omega}$ be an indexed set of triples, where each $A^\omega$ is a filtered $C^*$-algebra, and each $I^\omega$ and $J^\omega$ is a filtered ideal in $A^\omega$.  Give each stabilization $A^\omega\otimes\mathcal{K}$, $I^\omega\otimes \mathcal{K}$ and $J^\omega\otimes \mathcal{K}$ the filtration from Definition \ref{stab subsp} (note that $I^\omega\otimes \mathcal{K}$ and $J^\omega\otimes \mathcal{K}$ are also filtered ideals in $A^\omega\otimes \mathcal{K}$ with these definitions). 

The collection $(I^\omega,J^\omega;A^\omega)_{\omega\in \Omega}$ of pairs of ideals and $C^*$-algebras containing them is \emph{uniformly excisive} if for any $r_0,m_0\geq 0$ and $\epsilon>0$, there are $r\geq r_0$, $m\geq 0$, and $\delta>0$ such that:
\begin{enumerate}[(i)]
\item \label{uni ex i} for any $\omega\in \Omega$ and any $a\in (A^\omega\otimes \mathcal{K})_{r_0}$ of norm at most $m_0$, there exist elements $b\in (I^\omega\otimes \mathcal{K})_r$ and $c\in (J^\omega\otimes \mathcal{K})_r$ of norm at most $m$ such that $\|a-(b+c)\|<\epsilon$;
\item \label{uni ex ii} for any $\omega\in \Omega$ and any $a\in I^\omega\otimes \mathcal{K}\cap J^\omega\otimes \mathcal{K}$ such that 
$$
d(a,(I^{\omega}\otimes \mathcal{K})_{r_0})<\delta \quad \text{and} \quad d(a,(J^\omega\otimes\mathcal{K})_{r_0})<\delta
$$ 
there exists $b\in I^\omega_r\otimes \mathcal{K}\cap J^\omega_r\otimes \mathcal{K}$ such that $\|a-b\|<\epsilon$.
\end{enumerate}
Note that condition \eqref{uni ex ii} implies that for any $\omega$, the family of subspaces $(I^\omega_r\otimes \mathcal{K}\cap J^\omega_r\otimes \mathcal{K})_{r\geq 0}$ defines a filtration of $I^\omega\otimes \mathcal{K}\cap J^\omega\otimes \mathcal{K}$; we equip each $I^\omega \otimes \mathcal{K} \cap J^\omega\otimes \mathcal{K}$ with this filtration.  
\end{definition}

Note that condition \eqref{uni ex i} above is a controlled analogue of the condition `$A=I+J$' from Proposition \ref{class mv}, while condition \eqref{uni ex ii} is a controlled analogue of the fact that if $a\in A$ is close to both $I$ and $J$, then there is an element of $I\cap J$ that is close to $a$ (this can be shown using approximate units).  

We are now ready for our controlled Mayer-Vietoris theorem.  See \cite{Oyono-Oyono:2016qd} for related `controlled Mayer-Vietoris sequences', approached in a somewhat different way.

\begin{proposition}\label{con mv}
Let $(I^\omega,J^\omega;A^\omega)_{\omega\in \Omega}$ be a uniformly excisive collection, where the algebras and ideals are all non-unital.  Then for any $r_0\geq 0$ there are $r_1,r_2\geq r_0$ with the following property.  For each $\omega$ and each $x\in K_*^{r_0,1/8}(A^\omega)$ there is an element
$$
\partial_c(x) \in K_*^{r_1,1/8}(I^\omega\cap J^\omega)
$$
such that if $\partial_c(x)=0$ then there exist 
$$
y\in K_*^{r_2,1/8}(I^\omega) \quad \text{and} \quad z\in K_*^{r_2,1/8}(J^\omega)
$$
such that 
$$
x=y+z \quad \text{in}\quad K_*^{r_2,1/8}(A^\omega)
$$ 
\textup{(}where as usual we abuse notation by omitting explicit notation for subspace-inclusion maps\textup{)}.

Moreover, the `boundary map' $\partial_c$ has the following naturality property.  Let $(K^\theta,L^\theta;B^\theta)_{\theta\in \Theta}$ be another uniformly excisive collection, where the algebras and ideals are non-unital.  Assume moreover that there is a map $\pi:\Theta\to \Omega$ and for each $\theta\in \Theta$ an inclusion $A^{\pi(\theta)}\subseteq B^\theta$ such that for each $r\geq 0$ we have that $A^{\pi(\theta)}_r\subseteq B^\theta_r$, $I^{\pi(\theta)}_r\subseteq  K^\theta_r$, $J^{\pi(\theta)}_r\subseteq L^\theta_r$.  Let $r_0$ be given, and let $r_1$ be as in the statement above for both uniformly excisive families\footnote{We may assume the same $r_1$ works for both families at once by combining them into a single family and applying the first part.}.  Then the diagram
$$
\xymatrix{ K_*^{r_0,1/8}(A^{\pi(\theta)})\ar[r]^-{\partial_c} \ar[d] & K_*^{r_1,1/8}(I^{\pi(\theta)}\cap J^{\pi(\theta)})  \ar[d] \\ 
K_*^{r_0,1/8}(B^{\theta})\ar[r]^-{\partial_c} & K_*^{r_1,1/8}(K^{\theta}\cap J^{\theta})~,} 
$$
where the vertical maps are subspace inclusions, commutes.
\end{proposition}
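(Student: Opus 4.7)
The plan is to imitate the standard construction of the Mayer-Vietoris boundary map coming from the short exact sequence
$$
0 \to I^\omega\cap J^\omega \to I^\omega\oplus J^\omega \xrightarrow{(b,c)\mapsto b+c} A^\omega \to 0,
$$
but tracking propagation at every step, using condition (i) of uniform excisiveness in place of the literal equation $A^\omega = I^\omega+J^\omega$, and condition (ii) to bridge the gap between `approximately in $I^\omega\cap J^\omega$' and `genuinely in $I^\omega_r\cap J^\omega_r$'. Throughout, stability of controlled $K$-theory lets me work inside the stabilizations and freely replace matrix algebras over $\widetilde{A^\omega}$ with matrix algebras over $\widetilde{A^\omega\otimes\mathcal{K}}$.

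To construct $\partial_c$ in the $K_0$ case, I would start with a quasi-projection $p$ of propagation at most $r_0$ and apply condition (i) of Definition \ref{uni ex} to write $p-\rho(p) = b+c+\eta$ with $b\in (I^\omega\otimes\mathcal{K})_r$, $c\in (J^\omega\otimes\mathcal{K})_r$ of uniformly bounded norm and $\|\eta\|$ small. Replacing $b$ and $c$ by their self-adjoint parts, we still have $b+c\approx p-\rho(p)$, and then $p^2\approx p$ forces $b^2-b\approx -(bc+cb)$, which lies (approximately) in $I^\omega\cap J^\omega$. The unitary $e^{2\pi i b}$ then lies in $\widetilde{I^\omega}$ with $\Gamma$-propagation controlled by some $r_1=r_1(r_0)$ and differs from the identity by an element that is approximately in $\widetilde{I^\omega\cap J^\omega}$; applying condition (ii) to this difference produces a genuine quasi-unitary representative in $\widetilde{I^\omega\cap J^\omega}$ at scale $r_1$, whose class I take to be $\partial_c[p,n]\in K_1^{r_1,1/8}(I^\omega\cap J^\omega)$. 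The $K_1$ case is dual: given a quasi-unitary $u$, write $u\approx vw$ with $v\in\widetilde{I^\omega}$ and $w\in\widetilde{J^\omega}$ quasi-unitary (via the standard Whitehead-style trick $\mathrm{diag}(u,u^*)=(\cdots)(\cdots)$ to reduce to the connected component of the identity), and form the relevant Bott-type projection.

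The main obstacle will be the converse direction: showing that $\partial_c(x)=0$ forces a controlled splitting $x=y+z$. The idea is that if the class in $K_1^{r_1,1/8}(I^\omega\cap J^\omega)$ of the quasi-unitary built above is trivial, then Lemma \ref{con zero}(ii) furnishes an $L$-Lipschitz homotopy through quasi-unitaries in matrices over $\widetilde{(I^\omega\cap J^\omega)^M}$ connecting it to the identity, \emph{with $L$ and $M$ depending only on a chosen error $\epsilon$}. This homotopy lets me perturb $e^{2\pi i b}$ to the identity through elements of $\widetilde{I^\omega\cap J^\omega}$ of bounded propagation; transporting the perturbation back through the exponential and the decomposition $p=b+c$, I modify $b$ and $c$ so that they become honest quasi-projections in $I^\omega$ and $J^\omega$ at a new scale $r_2$, whose classes give the desired $y\in K_0^{r_2,1/8}(I^\omega)$ and $z\in K_0^{r_2,1/8}(J^\omega)$ with $y+z=x$ in $K_0^{r_2,1/8}(A^\omega)$. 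The hardest bookkeeping lies here: every perturbation introduces fresh error and fresh propagation, and verifying that $r_1$ and $r_2$ depend only on $r_0$ requires exploiting the \emph{uniformity} in Definition \ref{uni ex} together with the universal constants $L(\epsilon)$ and $M(\epsilon)$ of Lemma \ref{con zero}.

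Naturality is then essentially formal. The construction of $\partial_c$ uses only the decompositions guaranteed by uniform excisiveness together with functional calculus, and the inclusions $A^{\pi(\theta)}\hookrightarrow B^\theta$, $I^{\pi(\theta)}\hookrightarrow K^\theta$, $J^{\pi(\theta)}\hookrightarrow L^\theta$ respect the filtrations by hypothesis. Combining the two families $(I^\omega,J^\omega;A^\omega)_{\omega\in\Omega}$ and $(K^\theta,L^\theta;B^\theta)_{\theta\in\Theta}$ into a single uniformly excisive family (to which a single $r_1$ applies) and choosing the decomposition of a quasi-projection in $A^{\pi(\theta)}$ once and for all, both paths around the naturality square produce the same quasi-unitary, so the diagram commutes.
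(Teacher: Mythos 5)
Your proposal constructs the controlled boundary map $\partial_c$ \emph{by hand}, mimicking the exponential map of classical $K$-theory with approximations and propagation bookkeeping at each step. The paper takes a quite different route: it packages the entire problem into a single auxiliary product $C^*$-algebra.  Concretely, the paper sets $\Lambda:=\{(\omega,x)\mid x\in K_0^{r_0,1/8}(A^\omega)\}$ and forms
$$
A_\Lambda := \overline{\Big\{(a_\lambda)_{\lambda}\ \Big|\ \exists\,r\ \text{with}\ a_\lambda\in A^{\pi(\lambda)}_r\otimes\mathcal{K}\ \text{for all}\ \lambda\Big\}},
$$
with analogous ideals $I_\Lambda, J_\Lambda$. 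Uniform excisiveness gives $A_\Lambda=I_\Lambda+J_\Lambda$, so the \emph{classical} Mayer-Vietoris sequence applies to $A_\Lambda$ once and for all. The chosen quasi-projections $(p_\lambda)$ assemble to a single class $\vec{x}\in K_0(A_\Lambda)$; its classical boundary $\partial(\vec{x})\in K_1(I_\Lambda\cap J_\Lambda)$ is then perturbed \emph{componentwise} to quasi-unitaries at a uniform scale $r_1$, and these define $\partial_c$. The uniformity in $\omega$ that you worry about in your third paragraph is then automatic: any element of $I_\Lambda\cap J_\Lambda$ is, by construction, approximable by uniformly-propagation-bounded sequences. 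The converse direction and Lemma~\ref{con zero} are likewise invoked at the level of $A_\Lambda$ and then transported back componentwise. What your approach buys is a more explicit description of $\partial_c$ as an exponential; what the paper's buys is that it never re-derives any homotopy-theoretic content, it just delegates to the classical six-term sequence applied to one big algebra.

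There are, however, concrete points in your sketch that would need to be repaired before it is a proof. First, the exponential $e^{2\pi i b}$ of an element $b$ with finite propagation has \emph{infinite} propagation (every power of $b$ contributes), so at some point you must replace it by a polynomial truncation; the degree of that truncation controls the propagation growth and must be tied to the fixed constant $1/8$ in the quasi-unitary condition, which means carefully tracking how many terms you need for a universal error bound. Second, the identity ``$b^2-b\approx-(bc+cb)$'' is not quite right: writing $p\approx\rho(p)+b+c$, the relation $p^2\approx p$ actually gives $(b^2-b)+\rho(p)b+b\rho(p) \approx -\big(c^2-c+\rho(p)c+c\rho(p)\big)-(bc+cb)$, so the cross-terms with the scalar projection $\rho(p)$ survive; they are harmless (each lands in the appropriate ideal) but they have to be carried. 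Third, the assertion that $e^{2\pi ib}-1$ is ``approximately in $\widetilde{I^\omega\cap J^\omega}$'' needs a Baker--Campbell--Hausdorff-type argument: one must compare $e^{2\pi ib}e^{2\pi ic}$ with $e^{2\pi i(b+c)}$, and the commutators $[\rho(p),b]$, $[\rho(p),c]$, $[b,c]$ do \emph{not} all lie in $I^\omega\cap J^\omega$, so showing that the accumulated error is nonetheless absorbed by condition~(ii) of Definition~\ref{uni ex} is a genuine estimate, not a remark. None of these are fatal obstructions---this kind of direct, quantitative construction has been carried out elsewhere in controlled $K$-theory---but as written the sketch asserts the conclusions of these steps without any of the supporting estimates, and the bookkeeping is precisely where the real work lies.
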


The subscript in `$\partial_c$' stands for `controlled': $\partial_c$ is a controlled analogue of the usual Mayer-Vietoris boundary map in $K$-theory.  It will be crucial for our applications that the numbers $r_0,r_1,r_2$ appearing in the above are all independent of the index $\omega$.

\begin{proof}
Let $\Lambda$ be a set equipped with a map $\pi:\Lambda\to \Omega$.  Let $\mathcal{K}$ denote the compact operators on $\ell^2(\N)$ and let $\prod_{\lambda\in \Lambda}A^{\pi(\lambda)}\otimes \mathcal{K}$ denote the $C^*$-algebra of bounded, $\Lambda$-indexed sequences where the $\lambda^\text{th}$ element comes from  $A^{\pi(\lambda)}\otimes \mathcal{K}$.  With notation as in Definition \ref{stab subsp}, define
$$
\mathcal{A}_\Lambda:=\Bigg\{(a_\lambda)\in \prod_{\lambda\in \Lambda} A^{\pi(\lambda)}\otimes \mathcal{K}~\Big|~\text{ there is } r\geq 0 \text{ with } a_\lambda\in A^{\pi(\lambda)}_r\otimes \mathcal{K} \text{ for all }\lambda\Bigg\},
$$
which is a $*$-subalgebra of $\prod_{\lambda\in \Lambda}A^{\pi(\lambda)}\otimes \mathcal{K}$, and let $A_\Lambda$ be its $C^*$-algebraic closure.  Define also $\mathcal{I}_\Lambda$ to be
$$
\Big\{(a_\lambda)\in \mathcal{A}_\Lambda~\Big|~\text{ there is } r\geq 0 \text{ with } a_\lambda\in I^{\pi(\lambda)}_r\otimes \mathcal{K} \text{ for all }\lambda\Big\}
$$
and similarly for $\mathcal{J}_\Lambda$.  The definition of a filtered ideal (see Definition \ref{ind filt} above) implies that $\mathcal{I}_\Lambda$ and $\mathcal{J}_\Lambda$ are $*$-ideals in $\mathcal{A}_\Lambda$, whence their closures $I_\Lambda$ and $J_\Lambda$ are $C^*$-ideals in $A_\Lambda$.  Moreover, the uniform excisiveness assumption implies that $A_\Lambda=I_\Lambda+J_\Lambda$.   Hence (see for example \cite[Section 3]{Higson:1993th}) there is a six-term exact Mayer-Vietoris sequence
\begin{equation}\label{big ol mv}
\xymatrix{ K_0(I_\Lambda \cap J_\Lambda) \ar[r] & K_0(I_\Lambda)\oplus K_0(J_{\Lambda})  \ar[r] & K_0(A_\Lambda)  \ar[d]^{\partial} \\ K_1(A_\Lambda) \ar[u]^{\partial} & K_1(I_\Lambda)\oplus K_1(J_{\Lambda}) \ar[l] & K_1(I_\Lambda \cap J_\Lambda)  .\ar[l]}
\end{equation}

From now on, we will focus on the case of $K_0$; the case of $K_1$ is essentially the same.  Define 
$$
\Lambda:=\{(\omega,x)~|~\omega\in\Omega,~x\in K_0^{r_0,1/8}(A^\omega)\}
$$
equipped with the map $\pi:\Lambda\to \Omega$ that sends an element to its first coordinate.  For each $\lambda=(\omega,x)\in \Lambda$ choose a pair 
$$
(p_\lambda,n_\lambda)\in P_{m_\lambda}^{1/8}(A^{\omega}_{r_0})\times \N
$$ 
for some $m_\lambda\in \N$ (see Definition \ref{qp} for notation) such that $x=[p_\lambda,n_\lambda]$.  Identifying $M_{m_\lambda}(A^\omega)$ with the $C^*$-subalgebra of $A^\omega\otimes \mathcal{K}$ consisting of $\N\times \N$ matrices that are only non-zero in the first $m_\lambda\times m_\lambda$ square in the top left corner, we get a well-defined element $\vec{p}:=(p_\lambda)_{\lambda\in \Lambda}$ of $A_\Lambda$.   With $\kappa$ as in Definition \ref{qp}, the formal difference
$$
\vec{x}:=[(\kappa(p_\lambda))_{\lambda\in \Lambda}]-[(1_{n_\lambda})_{\lambda\in \Lambda}]
$$
defines an element of $K_0(A_\Lambda)$, and so the Mayer-Vietoris sequence of line \eqref{big ol mv} gives an element $\partial(\vec{x})$ in $K_1(I_\Lambda\cap J_\Lambda)$.  We may represent $\partial(\vec{x})$ as a $\Lambda$-indexed collection $(u_\lambda)_{\lambda\in \Lambda}$, where each $u_\lambda$ is a unitary in a matrix algebra over the unitization of $I^{\pi(\lambda)}\cap J^{\pi(\lambda)}$.  On the other hand, by definition of $I_\Lambda$ and $J_\Lambda$ and by the uniform excisiveness condition there is $r_1\geq 0$ (which we may assume is at least $r_0$) and a $\Lambda$-tuple $(v_\lambda)_{\lambda\in \Lambda}$ with each $v_\lambda$ in some matrix algebra over the space spanned by $I^{\pi(\lambda)}_{r_1}\cap J^{\pi(\lambda)}_{r_1}$ and the unit, and so that $\|u_\lambda-v_\lambda\|<1/20$.  From this estimate, one checks that each $v_\lambda$ is a quasi-unitary as in Definition \ref{qu} and thus defines a class $[v_\lambda]\in K_1^{r_1,1/8}(I^{\pi(\lambda)}\cap J^{\pi(\lambda)})$.  For each $\lambda=(\omega,x)\in J$, define 
$$
\partial_c(x):=[v_\lambda]\in K_1^{r_1,1/8}(I^\omega\cap J^\omega).
$$

We now look at what happens when $\partial_c(x)=0$.  Let $\Lambda'\subseteq \Lambda$ be the subset of all $(\omega,x)\in \Lambda$ such that $\partial_c(x)=0$ in $K_1^{r_1,1/8}(I^{\pi(\lambda)}\cap J^{\pi(\lambda)})$.  Define a new element $\vec{x}\,'\in K_0(A_\Lambda)$ by setting the $\lambda^{\text{th}}$ component equal to $[\kappa(p_\lambda)]-[1_{n_\lambda}]$ if $\lambda\in \Lambda'$, and equal to $0$ otherwise.

For each $\lambda=(\omega,x)\in \Lambda'$ let $v_\lambda$ be a quasi-unitary such that $\partial_c(x)=[v_\lambda]$.  The fact that $[v_\lambda]$ is zero in $K_1^{r_1,1/8}(I^{\pi(\lambda)}\cap J^{\pi(\lambda)})$ for each $\lambda\in \Lambda'$ and Lemma \ref{con zero} together give a homotopy between a stabilized version of the sequence $(\kappa(v_\lambda))_{\lambda\in \Lambda'}$ and zero in $K_1(I_\Lambda\cap J_\Lambda)$.  With $\partial$ the standard boundary map as in diagram \eqref{big ol mv} we thus have that $\partial(\vec{x}\,')=0$ in $K_1(I_\Lambda\cap J_\Lambda)$.    Hence by exactness of the Mayer-Vietoris sequence there are elements $\vec{y}\in K_0(I_\Lambda)$ and $\vec{z}\in K_0(J_\Lambda)$ such that $\vec{x}\,'=\vec{y}+\vec{z}$ in $K_0(A_\Lambda)$ (as usual, we have suppressed notation for subspace-inclusion maps).  Suitably approximating representatives of $\vec{y}$ and $\vec{z}$ in each component and applying the injectivity part of Proposition \ref{comparelem} gives the desired conclusion.

The naturality property in the second paragraph of the statement follows directly from the corresponding naturality property for the classical Mayer-Vietoris sequence: we leave the details to the reader.
\end{proof}

\subsection*{Dynamical part}

We now use the dynamical assumptions to produce ingredients for the $K$-theoretic machine just built, and thus complete the proof of Proposition \ref{indstep}. 

First, we define the algebras we will be using.  These are subalgebras of our usual obstruction $C^*$-algebras $A^s$ from Definition \ref{main ob}, but we will also need to allow ourselves to change the filtrations involved in order to `relax control' in some sense.  This will give us two different uniformly excisive families in the sense of Definition \ref{uni ex}: our first task in this section will be to define these families and establish that they are indeed uniformly excisive.

\begin{definition}\label{uni algs}
Fix an open subgroupoid $G$ of $\Gamma\ltimes X$ and a constant $s_0\geq 1$.  Let $\Omega$ be the set of all pairs $\omega=(G_0,G_1)$ where $G_0$ and $G_1$ are open subgroupoids of $G$ such that $G^{(0)}=G_0^{(0)}\cup G_1^{(0)}$.   Throughout this definition, we work relative to the subgroupoid $G$ when defining expansions as in Definition \ref{extgpd}.

For our first family, let $\omega=(G_0,G_1)\in \Omega$ and $r\geq 0$, and define  
$$
B^\omega_r:=A^{s_0}(G_0^{+r})_r+A^{s_0}(G_1^{+r})_r+A^{s_0}(G_0^{+r}\cap G_1^{+r})_r
$$
and define also 
$$
B^\omega:=\overline{\bigcup_{r\geq 0}B^\omega_r}
$$
to be the closure of the union of the family $\{B^\omega_r\}_{r\geq 0}$ in the norm topology of $A^{s_0}$.  

\begin{remark}
We note that the filtration $(B_r^\omega)_{r\geq 0}$ depends on the pair $\omega=(G_0,G_1)$, and not only on the ambient groupoid $G$.  In the main proof, we will choose an appropriate pair $(G_0,G_1)$ based on a given scale $r$ for the original filtration on $A^s$ that was introduced in Definition \ref{main ob}.  Thus the filtration on $A^{s}$ will be adapted depending on the scale at which we are working, and the decomposition of $G$ that is appropriate to that scale.
\end{remark}

Define subspaces of $B^\omega_r$ by
$$
I^\omega_r:=A^{s_0}(G_0^{+r})_r+A^{s_0}(G_0^{+r}\cap G_1^{+r})_r 
$$
and 
$$
J^\omega_r:=A^{s_0}(G_1^{+r})_r+A^{s_0}(G_0^{+r}\cap G_1^{+r})_r
$$
and define 
$$
I^\omega:=\overline{\bigcup_{r\geq 0}I^\omega_r}, \quad J^\omega:=\overline{\bigcup_{r\geq 0}J^\omega_r}.
$$

We now come to our second family.  Let $\Omega\times[s_0,\infty)$ be the set of all triples $(\omega,s)=(G_0,G_1,s)$, where $G_0$ and $G_1$ are open subgroupoids of $G$ and $s\geq s_0$.  For $(G_0,G_1,s)\in \Omega\times [s_0,\infty)$ and $r\geq 0$, define a subspace $B^{\omega,s}_r$ of $A^s$ by
$$
B^{\omega,s}_r:=A^{s_0}(G_0^{+r})_r+A^{s_0}(G_1^{+r})_r+A^s(G_0^{+r}\cap G_1^{+r})_{sr}
$$
and define also
$$
B^{\omega,s}:=\overline{\bigcup_{r\geq 0}B^{\omega,s}_r}
$$
to be the closure of the union of the family $\{B^{\omega,s}_r\}_{r\geq 0}$ in the norm topology of $A^{s}$.  Define subspaces of $B^{\omega,s}_r$ by
$$
I^{\omega,s}_r:=A^{s_0}(G_0^{+r})_r+A^s(G_0^{+r}\cap G_1^{+r})_{sr} 
$$
and
$$
J^{\omega,s}_r:=A^{s_0}(G_1^{+r})_r+A^s(G_0^{+r}\cap G_1^{+r})_{sr}
$$
and finally define 
$$
I^{\omega,s}:=\overline{\bigcup_{r\geq 0}I^{\omega,s}_r}, \quad J^\omega:=\overline{\bigcup_{r\geq 0}J^{\omega,s}_r}.
$$
\end{definition}

\begin{remark}\label{fdc comp}
(This remark may be safely ignored by readers who do not know the earlier work).  Comparing our work in this paper to \cite{Guentner:2009tg}, the second filtration above plays an analogous role to the \emph{relative Rips complex} of \cite[Appendix A]{Guentner:2009tg}.
\end{remark}
 
Our aim is to show that the definitions above give us two uniformly excisive families in the sense of Definition \ref{uni ex}.  This requires some preliminaries on `partition of unity' type constructions.  

The next definition and lemma will be given in slightly more generality than we need as this does not complicate the proof, and maybe makes the statements slightly cleaner.

\begin{definition}\label{pou ops}
Let $K$ be a compact subset of $X$, let $\{U_0,...,U_n\}$ be a finite collection of open subsets of $X$ that cover $K$, and let $\{\phi_0,...,\phi_n\}$ be a subordinate partition of unity on $K$: precisely, each $\phi_i$ is a continuous function $X\to [0,1]$ with support contained in $U_i$, and for each $x\in K$ we have $\phi_0(x)+\cdots +\phi_n(x)=1$.  

Let $s\geq 0$ and recall the definitions of the Rips complex $P_{s}(\Gamma)$, barycentric coordinates $t_g:P_{s}(\Gamma)\to [0,1]$, and Hilbert space $H_{s}:=\ell^2(Z_{s}\times X,H\otimes \ell^2(\Gamma))$ from Section \ref{ass sec}.  For $i\in \{0,...,n\}$, let $M_i$ be the multiplication operator on $H_{s}$ associated to the function
$$
Z_{s}\times X\to [0,1],\quad (z,x)\mapsto \sum_{g\in \Gamma} t_g(z)\phi_i(gx).
$$
\end{definition}

For the next lemma, recall the notion of the support of an operator in $C^*(\Gamma\lefttorightarrow X;s)$ from Definition \ref{t supp} above.

\begin{lemma}\label{pou lem}
With notation as in Definition \ref{pou ops}, the operators $M_i$ have the following properties.
\begin{enumerate}[(i)]
\item \label{norm mi} Each $M_i$ has norm at most one.
\item \label{pou mi} If $T\in C^*(\Gamma\lefttorightarrow X;s)$ satisfies 
$$
\{x\in X\mid (gx,g,x)\in \text{supp}(T) \text{ for some } g\in \Gamma\}\subseteq K,
$$
then $T=T(M_0+\cdots +M_n)$.
\item \label{supp mi} For any $T\in C^*(\Gamma\lefttorightarrow X;s)$ and $i\in \{0,...,n\}$, 
$$
\text{supp}(TM_i)\subseteq \Bigg\{(gx,g,x)\in \Gamma\ltimes X~\Big|~ x\in \bigcup_{|h|\leq s}h\cdot U_i\Bigg\}\cap \text{supp}(T).
$$ 
\end{enumerate}
\end{lemma}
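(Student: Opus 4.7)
The three parts follow from routine unwinding of the definitions, and I would treat them in order.

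For (i), I would observe that $M_i$ is multiplication on $H_s$ by the function $(z,x) \mapsto \sum_g t_g(z)\phi_i(gx)$; since each $t_g(z), \phi_i(\cdot) \in [0,1]$ and $\sum_g t_g(z) = 1$, this function takes values in $[0,1]$, so $\|M_i\| \le 1$.

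For (ii), the plan is to compute matrix entries. Viewing each $M_i$ as a $Z_s \times Z_s$ diagonal matrix with $(M_i)_{z,z}(x) = \sum_g t_g(z)\phi_i(gx) \cdot \mathrm{id}$, we have
$$
(T(M_0+\cdots+M_n))_{y,z}(x) \;=\; T_{y,z}(x) \cdot \sum_{g\in\text{supp}(z)} t_g(z)\sum_{i=0}^n\phi_i(gx).
$$
The key step is to use the hypothesis: whenever $T_{y,z}(x) \neq 0$, Definition \ref{t supp} places $(g'x, g'h^{-1}, hx)$ in $\text{supp}(T)$ for every $g' \in \text{supp}(y)$ and $h \in \text{supp}(z)$, so by hypothesis the source $hx$ lies in $K$ for every $h \in \text{supp}(z)$. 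On $K$ the partition of unity sums to $1$, hence the inner sum is $1$ for each relevant $g$, and the outer sum collapses to $\sum_{g\in\text{supp}(z)}t_g(z) = 1$. This gives $T(M_0+\cdots+M_n) = T$.

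For (iii), I would work directly from the definition of support. If $(gx, gh^{-1}, hx) \in \text{supp}(TM_i)$, then there exist $y, z \in Z_s$ with $g \in \text{supp}(y)$, $h \in \text{supp}(z)$, and
$$
(TM_i)_{y,z}(x) = T_{y,z}(x) \cdot \sum_{k} t_k(z)\phi_i(kx) \neq 0.
$$
This forces $(gx, gh^{-1}, hx) \in \text{supp}(T)$, together with the existence of some $k \in \text{supp}(z)$ with $kx \in U_i$. The only geometric input—and the only real content of the lemma—is that $h, k \in \text{supp}(z)$ lie in a common simplex of $P_s(\Gamma)$, so $d(h,k) = |hk^{-1}| \le s$ by Definition \ref{rips}. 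Then $hx = (hk^{-1})(kx) \in (hk^{-1})\cdot U_i \subseteq \bigcup_{|k'|\le s}k'\cdot U_i$, as required.

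No step presents a serious obstacle; the main thing to be careful about is the bookkeeping of source versus target in $\Gamma\ltimes X$ (the source of $(gx, gh^{-1}, hx)$ is $hx$, which is what the hypothesis in (ii) constrains to lie in $K$), and the simplex-diameter observation driving (iii).
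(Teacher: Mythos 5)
Your proof is correct and follows essentially the same route as the paper's: reduce (i) to the observation that $M_i$ is multiplication by a $[0,1]$-valued function, compute the matrix entries $(TM_i)_{y,z}(x) = T_{y,z}(x)\cdot\sum_g t_g(z)\phi_i(gx)$ and use the source-lies-in-$K$ hypothesis for (ii), and for (iii) exploit that any two elements of $\mathrm{supp}(z)$ for $z\in Z_s$ are within distance $s$ in $\Gamma$. The only differences are notational (your $h,k$ roles are swapped relative to the paper's).
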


\begin{proof}
Part \eqref{norm mi} follows as each $M_i$ is a multiplication operator associated to a function with range contained in $[0,1]$.  For use in the remainder of the proof, note that for $i\in \{0,...,n\}$, any $T\in  C^*(\Gamma\lefttorightarrow X;s)$  and any $y,z\in Z_s$ and $x\in X$,
\begin{equation}\label{m coeffs}
(TM_i)_{y,z}(x)=T_{y,z}(x)\cdot \sum_{h\in \Gamma}t_h(z)\phi_i(hx).
\end{equation}
Hence 
\begin{equation}\label{sum mcs}
T(M_0+\cdots + M_n)_{y,z}(x)=T_{y,z}(x)\cdot \sum_{h\in \Gamma}t_h(z)(\phi_0(hx)+\cdots +\phi_n(hx)).
\end{equation}
Assume now $T$ satisfies the support condition in part \eqref{pou mi}.  If $T_{y,z}(x)=0$, then clearly the above is zero.  Otherwise, if $T_{y,z}(x)\neq 0$, then $(gx,gh^{-1},hx)\in \text{supp}(T)$ for all $g\in \text{supp}(y)$ and $h\in \text{supp}(z)$.  In particular, $hx\in K$ for all $h\in \text{supp}(z)$, and so 
$$
\sum_{h\in \Gamma}t_h(z)(\phi_0(hx)+\cdots+\phi_n(hx))=\sum_{h\in \Gamma}t_h(z)=1,
$$
using that $\{\phi_0,...,\phi_n\}$ is a partition of unity on $K$; combined with line \eqref{sum mcs}, this gives part \eqref{pou mi}.

For part \eqref{supp mi}, say $(gx,gk^{-1},kx)\in \text{supp}(TM_i)$ for some $T\in C^*(\Gamma\lefttorightarrow X;s)$.  Hence there are $y,z\in Z_s$ with $g\in \text{supp}(y)$, $k\in \text{supp}(z)$ and $(TM_i)_{y,z}(x)\neq 0$.  From line \eqref{m coeffs}, this implies that $T_{y,z}(x)\neq 0$, whence $(gx,gk^{-1},kx)\in \text{supp}(T)$.  On the other hand, we must also have 
$$
\sum_{h\in \Gamma}t_h(z)\phi_i(hx)\neq 0,
$$
whence there is $h\in \text{supp}(z)$ with $\phi_i(hx)\neq 0$, and thus $hx$ is in $U_i$.  As $h$ and $k$ are both in $\text{supp}(z)$ and $z$ is in $Z_s$, this forces $|kh^{-1}|\leq s$.  On the other hand, $kx=(kh^{-1})hx$ is in $(kh^{-1})U_i$, which completes the proof.
\end{proof}

We are now ready to prove that the families from Definition \ref{uni algs} are uniformly excisive.

\begin{lemma}\label{gpd exc}
Fix an open subgroupoid $G$ of $\Gamma\ltimes X$, and $s_0\geq 1$.  Let $\Omega$ be the set of all pairs $(G_0,G_1)$ of open subgroupoids of $G$ as in Definition \ref{uni algs}.   Then with notation as in that definition, the collections 
$$
(I^\omega,J^\omega;B^\omega)_{\omega\in \Omega}\quad \text{and}\quad (I^{\omega,s},J^{\omega,s};B^{\omega,s})_{(\omega,s)\in \Omega\times [s_0,\infty)}
$$
are uniformly excisive.
\end{lemma}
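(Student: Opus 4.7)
The plan is to use the partition-of-unity multipliers of Definition \ref{pou ops} to split elements of $(B^\omega \otimes \mathcal{K})_{r_0}$ and $(B^{\omega,s} \otimes \mathcal{K})_{r_0}$ into two pieces whose supports lie in expansions of $G_0$ and $G_1$ respectively.  I expect $r = r_0 + s_0$ to work for both families, with $m$ slightly above $m_0$ and $\delta$ slightly below $\epsilon$.

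First I will establish a key support estimate.  Since $\{G_0^{(0)}, G_1^{(0)}\}$ is an open cover of $G^{(0)}$, any compact $K \subseteq G^{(0)}$ admits a subordinate partition of unity $\{\phi_0, \phi_1\}$ with $\text{supp}(\phi_i) \subseteq G_i^{(0)}$ and $\phi_0 + \phi_1 = 1$ on $K$; let $M_0, M_1$ be the corresponding multipliers.  The main claim is that if $T \in A^{s_0}(H)$ has $\Gamma$-propagation at most $r_0$ for some open subgroupoid $H \subseteq G$, then $\text{supp}(TM_i) \subseteq G_i^{+(r_0+s_0)}$.  Indeed, if $(gx, gh^{-1}, hx) \in \text{supp}(TM_i)$, Lemma \ref{pou lem}(iii) supplies $z \in Z_{s_0}$ with $T_{y,z}(x) \neq 0$, $h \in \text{supp}(z)$, and some $k \in \text{supp}(z)$ satisfying $kx \in \text{supp}(\phi_i) \subseteq G_i^{(0)}$.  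Using the groupoid decomposition
$$
(gx, gh^{-1}, hx) = (gx, gk^{-1}, kx)(kx, kh^{-1}, hx),
$$
the first factor lies in $\text{supp}(T) \subseteq G$ with source $kx \in G_i^{(0)}$ and group element of length at most $r_0$, hence belongs to $G_i^{+r_0}$; the second factor's inverse has source $kx \in G_i^{(0)}$ and group element of length at most $s_0$ (by the Rips condition on $z$), hence lies in $G_i^{+s_0}$.  Both factors lie in the subgroupoid $G_i^{+(r_0+s_0)}$, so does their product.

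For condition (i) of uniform excisiveness, given $a$ of norm $\leq m_0$ and $\epsilon > 0$, first approximate $a$ within $\epsilon$ by a finite-rank matrix $\tilde{a}$ with compact unit-support $K$, handling the potential non-compactness of $G^{(0)}$.  Choose a partition of unity covering $K$ and set $b = \tilde{a}M_0$, $c = \tilde{a}M_1$.  Decomposing $\tilde{a}$ via the defining sum for $B^\omega_{r_0}$ (respectively $B^{\omega,s}_{r_0}$) and applying the key estimate termwise shows that the cross terms $\alpha_0 M_1, \alpha_1 M_0$ land in the intersection subalgebra $A^{s_0}(G_0^{+(r_0+s_0)} \cap G_1^{+(r_0+s_0)})$, while the diagonal terms $\alpha_i M_i$ stay in $A^{s_0}(G_i^{+(r_0+s_0)})$, and the intersection-supported summand $\beta M_i$ is preserved (with the larger $\Gamma$-propagation bound $s r_0 \leq s(r_0 + s_0)$ for the second family).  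Hence $b \in I^\omega_{r_0+s_0}$ and $c \in J^\omega_{r_0+s_0}$, and similarly for the second family.  The partition identity gives $\tilde{a}(M_0 + M_1) = \tilde{a}$ exactly on the support, so $\|a - b - c\| < \epsilon$, with $\|b\|, \|c\| \leq m_0 + \epsilon$.

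For condition (ii), given $a$ within $\delta$ of both $I^\omega_{r_0}$ and $J^\omega_{r_0}$, pick witnesses $a_1 \in I^\omega_{r_0}$, $a_2 \in J^\omega_{r_0}$.  Applying the key estimate to the defining sum decompositions of $a_1$ and $a_2$ shows that $a_2 M_0$ and $a_1 M_1$ both land in $A^{s_0}(G_0^{+(r_0+s_0)} \cap G_1^{+(r_0+s_0)})_{r_0+s_0}$ (or the second-family analog), hence in $I^\omega_{r_0+s_0} \cap J^\omega_{r_0+s_0}$.  The triangle inequality then gives $\|a - (a_2 M_0 + a_1 M_1)\| \leq 2\delta$ up to a finite-rank approximation error, so taking $\delta < \epsilon/3$ supplies the required approximant.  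The main obstacle is the support-decomposition argument: its success hinges on the Rips-scale simplicial structure meshing correctly with the definition of groupoid expansion, so that the product $(gx, gk^{-1}, kx)(kx, kh^{-1}, hx)$ assembles two controlled factors of the right type.  The secondary technicalities around the non-compactness of $G^{(0)}$ are handled by finite-rank approximation.
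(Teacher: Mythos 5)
Your proof is essentially correct and follows the same broad strategy as the paper -- partition-of-unity multipliers from Definition \ref{pou ops} plus a groupoid decomposition of elements in the support -- but with a few genuine differences worth noting. You take the partition of unity subordinate to $\{G_0^{(0)}, G_1^{(0)}\}$, whereas the paper uses $\{U_0, U_1\}$ with $U_i = (G_i^{+r_0})^{(0)}$; this is legitimate (the unit support of $a$ lies in $G^{(0)} = G_0^{(0)} \cup G_1^{(0)}$ regardless), and it gives you the tighter constant $r = r_0 + s_0$ where the paper obtains $2r_0 + s_0$ -- a small improvement. Your support estimate for the cross term is carried out correctly: the factor $(gx, gk^{-1}, kx)$ lies in $\supp(T)\subseteq G$ with $|gk^{-1}|\leq r_0$ (by $\Gamma$-propagation, since $g\in\supp(y)$ and $k\in\supp(z)$ with $T_{y,z}(x)\neq 0$) and source in $G_i^{(0)}$, so it lands in $G_i^{+r_0}$; and $(kx,kh^{-1},hx)$ is a quotient of two support elements of $T$ so is again in $G$, has $|kh^{-1}|\leq s_0$, and its inverse has source in $G_i^{(0)}$, so it lands in $G_i^{+s_0}$. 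For condition (ii) you use a symmetric two-sided approximant $a_2 M_0 + a_1 M_1$ while the paper uses a one-sided $a_1 M_0$ with a single-set cover of the unit support of $a_0$; both work (your triangle-inequality estimate comes out to $3\delta$ rather than the stated $2\delta$, but $\delta = \epsilon/3$ covers it).

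There are two gaps. First, the lemma asserts that the triples are uniformly excisive, and Definition \ref{uni ex} presupposes that each $B^\omega$ (respectively $B^{\omega,s}$) is filtered and that $I^\omega$, $J^\omega$ are filtered ideals. This requires checking that the subspaces $B^\omega_r$ actually satisfy $B^\omega_{r_1}\cdot B^\omega_{r_2}\subseteq B^\omega_{r_1+r_2}$ and that $I^\omega$, $J^\omega$ are ideals, which is a non-trivial bookkeeping exercise involving Lemmas \ref{supp alg}, \ref{filt ext}, and \ref{twice}; it occupies the first paragraph of the paper's proof, and you have skipped it. Second, a minor attribution error: the finite-rank truncation you insert is indeed needed, but the reason is the $\otimes\mathcal{K}$ factor appearing in Definition \ref{uni ex} (an element of $B^\omega_{r_0}\otimes\mathcal{K}$ has infinitely many matrix entries, so its unit support need not be compact), not "the potential non-compactness of $G^{(0)}$" -- elements of $B^\omega_{r_0}$ itself always have compact unit-support by the definition of $\C_{L,0}[\,\cdot\,;s_0]$. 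The paper side-steps the $\mathcal{K}$ issue with a "the reader can check" remark, so your explicit truncation is the more careful move; just fix the diagnosis.
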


\begin{proof} 
Each subspace $B^\omega_r$ of $A^{s_0}$ is self-adjoint as it is a sum of subspaces that are themselves self-adjoint by Lemma \ref{supp alg}.  It is clear that $B^\omega_{r_0}\subseteq B^\omega_r$ for $r_0\leq r$, and the union $\bigcup_{r\geq 0}B^\omega_r$ is dense in $B^\omega$ by definition.  These observations apply similarly for $B^{\omega,s}_r$, and for $I^\omega_r$, $J^\omega_r$, $I^{\omega,s}_r$, and $J^{\omega,s}_r$.  To complete the proof that $B^\omega$ is filtered and $I^\omega$, $J^\omega$ are filtered ideals (and similarly for the $s$-decorated versions) we must look at products.  Let $r_1,r_2\geq 0$.  First, note that the inclusions
$$
A^{s_0}(G_i^{+r_1})_{r_1}\cdot A^{s_0}(G_i^{+r_2})_{r_2}\subseteq A^{s_0}(G_i^{+(r_1+r_2)})_{r_1+r_2}, \quad \text{where } i\in \{0,1\},
$$
$$
A^{s_0}(G_0^{+r_1}\cap G_1^{+r_1})_{r_1}\cdot A^{s_0}(G_0^{+r_2}\cap G_1^{+r_2})_{r_2}\subseteq A^s(G_0^{+(r_1+ r_2)}\cap G_1^{+(r_1+r_2)})_{r_1+r_2},
$$
and 
$$
A^s(G_0^{+r_1}\cap G_1^{+r_1})_{r_1s}\cdot A^s(G_0^{+r_2}\cap G_1^{+r_2})_{r_2s}\subseteq A^s(G_0^{+(r_1+ r_2)}\cap G_1^{+(r_1+r_2)})_{(r_1+r_2)s}
$$
follow directly from Lemmas \ref{supp alg} and \ref{filt ext}.  Similarly, 
$$
A^{s_0}(G_0^{+r_1})_{r_1}\cdot A^{s_0}(G_1^{+r_2})_{r_2}\subseteq A^{s_0}(G_0^{+(r_1+ r_2)})_{r_1+r_2}\cap A^{s_0}(G_1^{+(r_1+ r_2)})_{r_1+r_2}
$$
and finally, using that $A^{s_0}(G)_r\subseteq A^s(G)_r$ for any open $G$ and $r\geq 0$ and that $s\geq s_0\geq 1$,
\begin{align*}
A^{s_0}(G_i^{+r_1})_{r_1}\cdot A^s(G_0^{+r_2}\cap G_1^{+r_2})_{r_2s} &\subseteq  A^s((G_0^{+r_2}\cap G_1^{+r_2})^{+r_1})_{r_2s+r_1} \\
& \subseteq A^s((G_0^{+r_2}\cap G_1^{+r_2})^{+r_1})_{(r_2+r_1)s}\\
 & \subseteq A^s((G_0^{+r_2})^{+r_1}\cap (G_1^{+r_2})^{+r_1})_{(r_1+r_2)s} \\ &\subseteq A^s(G_0^{+(r_1+r_2)}\cap G_1^{+(r_1+r_2)})_{(r_1+r_2)s}
\end{align*}
for $i\in \{0,1\}$, where the last step uses Lemma \ref{twice}.  Combining the last four displayed lines completes the check that both $B^\omega$ and $B^{\omega,s}$ are filtered.  Moreover, they show that $I^\omega$ and $J^\omega$ are filtered $C^*$-ideals in $B^\omega$, and similarly for the $s$-decorated versions.\\

We now have to check that the collection $(I^\omega,J^\omega;B^\omega)_{\omega\in \Omega}$ is uniformly excisive as in Definition \ref{uni ex}, and similarly for the $s$-decorated versions.  For notational simplicity, we will ignore the copy of the compact operators appearing in Definition \ref{uni ex}: the reader can check this makes no real difference to the proof.

Look first at part \eqref{uni ex i} of the definition.  Let $U_i$ be the unit space of $G_i^{+r_0}$ for $i\in \{0,1\}$.  Say $a$ is an element of $B^\omega_{r_0}$ whence 
$$
K:=\overline{\{x\in X\mid (gx,g,x)\in \text{supp}(a(t)) \text{ for some } t\in [0,\infty),g\in \Gamma\}},
$$ 
is a compact subset of $U_0\cup U_1$ (see Definitions \ref{smallob} and \ref{main ob} above).  Let $M_0$, $M_1$ be as in Definition \ref{pou ops} with respect to the Rips complex $P_s(\Gamma)$, the compact set $K$, the open sets $U_0$ and $U_1$, and some choice of partition of unity $\{\phi_0,\phi_1\}$.  From Lemma \ref{pou lem} part \eqref{norm mi} we have that $\|M_0\|\leq 1$ and $\|M_1\|\leq 1$, and from part \eqref{pou mi} that $a(t)(M_0+M_1)=a(t)$ for all $t$.  Hence to complete the proof that our algebras satisfy part \eqref{uni ex i} of Definition \ref{uni ex}, it suffices to show that there exists $r\geq 0$ (which is allowed to depend on $r_0$ and $s_0$,  but not on any of the other data) such that for each $\omega$, $t\mapsto a(t)M_0$ is in $I^\omega_r$ and $t\mapsto a(t)M_1$ is in $J^\omega_r$.  We focus on the case of $M_0$ and $I^\omega$; the other case is similar.   We claim that in fact $r=2r_0+s_0$ works.

Write $a=b_0+b_1+c$, where $b_i\in A^{s_0}(G_i^{+r_0})_{r_0}$ and $c\in A^{s_0}(G_0^{+r_0}\cap G_1^{+r_0})_{r_0}$. Part \eqref{supp mi} of Lemma \ref{pou lem} implies that for each $t\in [0,\infty)$
$$
\supp(b_0(t)M_0)\subseteq \supp(b_0(t)) \quad \text{and}\quad \supp(c(t)M_0)\subseteq \supp(c(t)),
$$  
from which it follows straightforwardly that $t\mapsto b_0(t)M_0$ and $t\mapsto c(t)M_0$ are in $A^{s_0}(G_0^{+r_0})_{r_0}$ and $A^{s_0}(G_0^{+r_0}\cap G_1^{+r_0})_{r_0}$ respectively; moreover, these are subspaces of $I^\omega_r$.  To complete the proof, we check that $t\mapsto b_1(t)M_0$ is in $I^\omega_r$.  

Assume that $(gx,gh^{-1},hx)$ is in the support of $b_1(t)M_0$ for some $t$, and write $T=b_1(t)$ for ease of notation.  Then there exist $y,z\in P_{s_0}(\Gamma)$ with $g\in \text{supp}(y)$, $h\in \text{supp}(z)$ and $(TM_0)_{y,z}(x)\neq 0$.  Hence from line \eqref{m coeffs} we must have that $T_{y,z}(x)\neq 0$ and so $y,z$ are actually in $P_{s_0}(\Gamma)$ and $|gh^{-1}|\leq r_0\leq r$.  Moreover,
$$
\sum_{k\in \Gamma} t_k(z)\phi_0(kx)\neq 0,
$$
whence there is $k\in \text{supp}(z)$ such that $\phi_0(kx)\neq 0$, and in particular, $kx$ is in the unit space of $G_0^{+r_0}$.  On the other hand,
$$
(gx,gh^{-1},hx)=(gx,gk^{-1},kx)(kx,kh^{-1},hx).
$$ 
The first factor in the product is in $(G_0^{+r_0})^{+r_0}\subseteq G_0^{+2r_0}$ using that $|gk^{-1}|\leq r_0$, that $kx$ is in the unit space of $G_0^{+r_0}$, and Lemma \ref{twice}.  The second factor is in $(G_0^{+r_0})^{+s_0}\subseteq G_0^{+(r_0+ s_0)}$ using that $|kh^{-1}|\leq s_0$, that $kx$ is in the unit space of $G_0^{+r_0}$, and Lemma \ref{twice}.  Hence $(gx,gh^{-1},hx)$ is in $G_0^{+(2r_0+s_0)}$.  To summarize, we have shown at this point that $b_1(t)M_0$ has support in $G_0^{+(2r_0+s_0)}$ for all $t$, and thus $t\mapsto b_1(t)M_0$ is in $B^{s_0}(G_0^{+r})_{r}$ as claimed.  The $s$-decorated case can be handled precisely analogously.\\

We now look at part \eqref{uni ex ii} of Definition \ref{uni ex}.  Say $a$ is in both $I^\omega$ and $J^\omega$, and that $a$ is within $\delta:=\epsilon/3$ of both $I^\omega_{r_0}$ and $J^\omega_{r_0}$.  Let $a_0$ and $a_1$ be elements of $I^\omega_{r_0}$ and $J^\omega_{r_0}$ respectively which are at most $\delta$ away from $a$.  Define 
$$
K:=\overline{\{x\in X\mid (gx,g,x)\in \supp(a_0(t))\text{ for some } t\in [0,\infty),g\in \Gamma\}},
$$
a compact subset of the unit space $U_0$ of $G_0^{+r_0}$.  Let $M_0$ be as in Definition \ref{pou ops} with respect to the Rips complex $P_{s_0}(\Gamma)$, the compact set $K$, and the open cover $\{U_0\}$ of $K$.  From Lemma \ref{pou lem} parts \eqref{norm mi} and \eqref{pou mi} we have that $\|M_0\|\leq 1$ and that $a_0(t)M_0=a_0(t)$ for all $t$.  Hence for any $t\in [0,\infty)$
\begin{align*}
\|a(t)&-a_1(t)M_0\| \\ & \leq \|a(t)-a_0(t)\|+\|a_0(t)-a(t)M_0\|+\|a(t)M_0-a_1(t)M_0\| \\ &<\epsilon.
\end{align*}
On the other hand, an argument precisely analogous to the one used above to establish part \eqref{uni ex i} of Definition \ref{uni ex} shows that $a_1(t)M_0$ is contained in $I^\omega_r\cap J^\omega_r$, where $r=2r_0+s_0$, and we are done.  The $s$-decorated case can again be handled analogously.
\end{proof}

We need one more preliminary lemma before the proof of Proposition \ref{indstep}.  The proof is similar to (and simpler than) the part of the proof of Lemma \ref{gpd exc} above that establishes part \eqref{uni ex i} of Definition \ref{uni ex}, and is therefore omitted.  

\begin{lemma}\label{gpd contain}
Fix an open subgroupoid $G$ of $\Gamma\ltimes X$ and $s_0\geq 0$.  Let $r_0\geq 0$.  Let $G^{(0)}=U_0\cup U_1$ be an open cover of $G^{(0)}$, and for $i\in \{0,1\}$, let $G_i$ be the subgroupoid of $G$ generated by 
$$
\{(gx,g,x)\in G\mid x\in U_i,~|g|\leq r_0\}.
$$
Then $A^{s_0}(G)_{r_0}\subseteq A^{s_0}(G_0^{+(2r_0+s_0)})_{2r_0+s_0}+A^{s_0}(G_1^{+(2r_0+s_0)})_{2r_0+s_0}$.  \qed
\end{lemma}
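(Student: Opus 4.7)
The plan is to adapt the partition of unity technique from the proof of part~\eqref{uni ex i} of Lemma~\ref{gpd exc}; the conclusion will actually follow directly, with no approximation required, so the argument should be cleaner than in that earlier lemma.

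First I would fix $a \in A^{s_0}(G)_{r_0}$ and form the compact subset
$$
K := \overline{\{x \in X \mid (gx, g, x) \in \supp(a(t)) \text{ for some } t \in [0,\infty), g \in \Gamma\}}
$$
of $G^{(0)}$, which is compact since $\bigcup_t \supp(a(t))$ has compact closure in $G$. As $\{U_0, U_1\}$ covers $G^{(0)}$ (and hence $K$), I would pick a subordinate partition of unity $\{\phi_0, \phi_1\}$ on $K$ and form the cutoff multiplication operators $M_0, M_1$ on $H_{s_0}$ as in Definition~\ref{pou ops}, using the Rips complex $P_{s_0}(\Gamma)$. Parts~\eqref{norm mi} and~\eqref{pou mi} of Lemma~\ref{pou lem} then give $a(t) = a(t)M_0 + a(t)M_1$ for all $t$, so it remains to check that $t \mapsto a(t)M_i$ lies in $A^{s_0}(G_i^{+(2r_0+s_0)})_{2r_0+s_0}$ for each $i$.

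The heart of the argument is the support-tracking step, which is exactly the one carried out on $b_1(t)M_0$ in the final paragraph of the proof of Lemma~\ref{gpd exc}; I will outline it for $i = 0$. Suppose $(gx, gh^{-1}, hx) \in \supp(a(t)M_0)$. Using the matrix formula of line~\eqref{m coeffs} in that proof, one reads off that $(a(t))_{y,z}(x) \neq 0$ for some $y,z$ with $g \in \supp(y)$, $h \in \supp(z)$, and separately that $\phi_0(kx) \neq 0$ for some $k \in \supp(z)$. The first conclusion gives $|gh^{-1}| \leq r_0$ (from the $\Gamma$-propagation bound on $a$), while the second gives $kx \in U_0 \subseteq G_0^{(0)}$, and $|kh^{-1}| \leq s_0$ since $k,h$ lie in the support of the same point of $Z_{s_0}$. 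Factoring
$$
(gx, gh^{-1}, hx) = (gx, gk^{-1}, kx)(kx, kh^{-1}, hx),
$$
we see that the second factor lies in $G_0^{+s_0}$ (directly from Definition~\ref{extgpd}, since $kx \in G_0^{(0)}$ and $|kh^{-1}| \leq s_0$), while the first factor has $|gk^{-1}| \leq r_0 + s_0$ and $kx \in G_0^{(0)}$ and so lies in $G_0^{+(r_0+s_0)}$. Hence the product lies in $G_0^{+(2r_0+s_0)}$, and since $|gh^{-1}| \leq r_0 \leq 2r_0 + s_0$, we indeed have $a(t)M_0 \in A^{s_0}(G_0^{+(2r_0+s_0)})_{2r_0+s_0}$.

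The only step requiring any thought is the factorization of $(gx, gh^{-1}, hx)$ and the correct bookkeeping of expansion parameters, but this is a direct translation of the corresponding passage of Lemma~\ref{gpd exc}. The symmetric argument handles $a(t)M_1$, completing the required decomposition $a = aM_0 + aM_1$.
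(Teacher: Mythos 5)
Your proof is correct and is precisely the argument the paper has in mind: the paper omits this proof and points to the part of the proof of Lemma~\ref{gpd exc} that establishes part~(i) of Definition~\ref{uni ex}, which is exactly what you adapt. The only small imprecision is the claim that $(kx,kh^{-1},hx)$ lies in $G_0^{+s_0}$ ``directly from Definition~\ref{extgpd}'': the source of $(kx,kh^{-1},hx)$ is $hx$, not $kx$, so it is not itself a generator of the expansion. One should instead note that $(gx,gk^{-1},kx)\in\supp(a(t))\subseteq G$ (since $g\in\supp(y)$, $k\in\supp(z)$, and $a(t)_{y,z}(x)\neq 0$), whence $(hx,hk^{-1},kx) = (gx,gh^{-1},hx)^{-1}(gx,gk^{-1},kx)\in G$; this element has source $kx\in G_0^{(0)}$ and $|hk^{-1}|\leq s_0$, so it \emph{is} a generator of $G_0^{+s_0}$, and its inverse $(kx,kh^{-1},hx)$ lies there too. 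With that adjustment the bookkeeping is exactly as you say, and indeed yields the slightly stronger containment in $G_0^{+(r_0+s_0)}$ for the whole product.
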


Finally, we are ready for the proof of Proposition \ref{indstep}, the last step we need in the proof of Theorem \ref{main}.

\begin{proof}[Proof of Proposition \ref{indstep}] 
Let $\mathcal{B}$ be the class of open subgroupoids $G$ of $\Gamma\ltimes X$ such that the conclusion of Proposition \ref{indstep} holds for all open subgroupoids of $G$, in such a way that the resulting constant $s$ depends only on $G$ and not on the particular open subgroupoid under consideration.  It will suffice to show that $\mathcal{B}$ contains $\mathcal{D}$.  For this it suffices to show that $\mathcal{B}$ contains all relatively compact open subgroupoids of $\Gamma\ltimes X$, and that it is closed under decomposability.  

Let then $G$ be an open subgroupoid of $\Gamma\ltimes X$ with compact closure, and let $r_0$ and $s_0$ be given.   As $G$ has compact closure, the number 
$$
s_1=\max\{|g|\mid (gx,g,x)\in G\},
$$ 
is finite.  Let $s=\max\{r_0,s_0,s_1\}$; we claim this $s$ has the right property.  We have that $A^s(G)_s=A^s(G)$ and so
$$
K_*^{s,1/8}(A^{s}(G))=K_*(A^s(G))
$$
by Proposition \ref{comparelem}.  Moreover, the group on the right hand side is zero by Proposition \ref{basecase}, and so in particular the map
$$
K_*^{r_0,1/8}(A^{s_0}(G))\to K_*^{s,1/8}(A^{s}(G))=K_*(A^s(G))
$$
is certainly zero.  Moreover, the same $s$ clearly works for any open subgroupoid of $G$.  Hence $G$ is in $\mathcal{B}$ as required.

Now let $G$ be an open subgroupoid of $\Gamma\ltimes X$ that decomposes over $\mathcal{B}$, and let $r_0,s_0\geq 0$ be given; we may assume that $s_0\geq 1$.  Let $r_1=r_1(2r_0+s_0,s_0)\geq r_0$ be the constant given by Proposition \ref{con mv} with respect to the uniformly excisive families $(I^\omega,J^\omega;B^\omega)_{\omega\in \Omega}$ and $(I^{\omega,s},J^{\omega,s};B^{\omega,s})_{(\omega,s)\in \Omega\times[s_0,\infty)}$ from Definition \ref{uni algs}; we may assume that $r_1\geq 1$.  Let $r_2=r_2(r_1,s_0)\geq r_1$ be the constant given by Proposition \ref{con mv} for the uniformly excisive collection $(I^{\omega,s},J^{\omega,s};B^{\omega,s})_{(\omega,s)\in \Omega\times[s_0,\infty)}$.  Let $G^{(0)}=U_0\cup U_1$ be an open cover with the property that if $G_i$ is the subgroupoid of $G$ generated by 
$$
\{(gx,g,x)\in G\mid x\in U_i,~|g|\leq r_0\},
$$
then $G_i^{+r_2}$ is in the class $\mathcal{B}$ (and therefore that $G_0^{+r_1}\cap G_1^{+r_1}$ is in $\mathcal{B}$ too, as $\mathcal{B}$ is closed under taking open subgroupoids).  Let the constant $s_1=s_1(r_0,s_0,G_0^{+r_1}\cap G_1^{+r_1})\geq \max\{r_0,s_0\}$ be as in the inductive hypothesis for the groupoid $G_0^{+r_1}\cap G_1^{+r_1}$.  Finally, let $s=s(r_2+s_1,s_1,G_0^{+r_2},G_1^{+r_2})$ be as in the inductive hypothesis for both of the groupoids $G_i^{+r_2}$ simultaneously (this is possible, as if $s$ has the right property for some groupoid, then clearly any $s'\geq s$ works too).  We claim that this $s$ has the right properties.

Let then $x$ be an element of $K_*^{r_0,1/8}(A^{s_0}(G))$.  Using Lemma \ref{gpd contain} we have a subspace inclusion map 
$$
K_*^{r_0,1/8}(A^{s_0}(G))\to K_*^{2r_0+s_0,1/8}(B^\omega(G)),
$$  
where we have used the notation of Definition \ref{uni algs} and written $\omega=(G_0,G_1)\in \Omega$.  Hence we may consider $x$ as an element of $K_*^{2r_0+s_0,1/8}(B^\omega(G))$.  Using Proposition \ref{con mv}, we have a commutative diagram of controlled boundary maps
$$
\xymatrix{ K_*^{2r_0+s_0,1/8}(B^\omega) \ar[d] \ar[r]^-{\partial_c} &  K_*^{r_1}(I^\omega\cap J^\omega) \ar[d] \\
K_*^{2r_0+s_0,1/8}(B^{\omega,s_1}) \ar[r]^-{\partial_c} &  K_*^{r_1}(I^{\omega,s_1}\cap J^{\omega,s_1}) }.
$$
The definition of the algebras involved implies that the right hand vertical map identifies with the forget control map
$$
K_*^{r_1}(A^{s_0}(G^{+r_1}_0\cap G^{+r_1}_1))\to K_*^{s_1r_1}(A^{s_1}(G^{+r_1}_0\cap G^{+r_1}_1)),
$$
which is zero by hypothesis and the fact that $r_1\geq 1$.  Hence the image of $x$ in $K_*^{r_1}(I^{\omega,s_1}\cap J^{\omega,s_1})$ is zero.  

We now apply Proposition \ref{con mv} to get 
$$
y\in K^{r_2,1/8}(I^{\omega,s_1}),\quad z\in K^{r_2,1/8}(J^{\omega,s_1})
$$
such that $x=y+z$ inside $K_*^{r_2,1/8}(B^{\omega,s_1})$.  Consider the commutative diagram
$$
\xymatrix{ & x\in K_*^{2r_0+s_0,1/8}(B^\omega) \ar[d] \\ 
 K^{r_2,1/8}(I^{\omega,s_1}) \oplus K^{r_2,1/8}(J^{\omega,s_1})\ar[r] \ar[d] & K_*^{r_2,1/8}(B^{\omega,s_1}) \ar[d]   \\
K^{r_2+s_1,1/8}(A^{s_1}(G_0^{+r_2}))\oplus K^{r_2+s_1,1/8}(A^{s_1}(G_1^{+r_2})) \ar[d] \ar[r] & K^{r_2+s_1,1/8}(A^{s_1}(G)) \ar[d]  \\
K^{s,1/8}(A^{s}(G_0^{+r_2}))\oplus K^{s,1/8}(A^{s_1}(G_1^{+r_2}))  \ar[r] & K^{s,1/8}(A^{s}(G))  }
$$
where the horizontal maps are defined by taking sums, and the vertical maps by inclusion of the various subspaces involved.  Note that $y$ and $z$ both go to zero under the lower vertical map on the left hand side by inductive hypothesis and the choice of $s$.  Hence $x$ goes to zero in the bottom right group as it is equal to $y+z$ there, and we are done for $G$ itself.  A precisely analogous argument works for any open subgroupoid of $G$, completing the proof.
\end{proof}

\appendix

\section{Finite dynamical complexity for \'{e}tale groupoids}\label{fdc sec}

Our goals in this appendix are: to relate finite dynamical complexity to finite decomposition complexity in the sense of Guentner, Tessera, and Yu \cite{Guentner:2009tg,Guentner:2013aa}; to show that finite dynamical complexity implies topological amenability of the underlying action;  and to collect together several open questions.  This material is not necessary to read the main body of the paper, but provides some useful context, and also shows that many examples of groupoids with finite dynamical complexity exist.  

\subsection*{Finite decomposition complexity}

We will give a convenient definition of finite decomposition complexity in \ref{classical fdc}, adapted slightly from \cite[Definition 2.1.3]{Guentner:2013aa}.  This needs some preliminaries.  We will write `$A=B\bigsqcup C$' to mean that a set $A$ is the disjoint union of subsets $B$ and $C$, and similarly for unions of more than two subsets.  As in \cite[Section 2]{Guentner:2013aa}, if $Z$ and $\{Z_i\}_{i\in I}$ are subspaces of a metric space $X$, then the notation
$$
Z=\bigsqcup_{i,~r\text{-}\text{disjoint}}Z_i
$$
means that $Z$ is the disjoint union of the $Z_i$, and that $d(Z_i,Z_j)>r$ for $i\neq j$.  

\begin{definition}\label{fam subsp}
Let $X$ be a metric spac (with finite-valued metric).  A collection of subsets $\mathcal{Y}$ is a \emph{disjoint family} if no two elements of $\mathcal{Y}$ intersect.  Given a disjoint family $\mathcal{Y}$, we associate a metric space $X_\mathcal{Y}$ by taking the underlying set to be 
$$
X_{\mathcal{Y}}:=\bigsqcup_{Y\mathcal{Y}}Y,
$$
and equipping $X_\mathcal{Y}$ with the (possibly infinite-valued) metric
$$
d_{\mathcal{Y}}(x,y):=\left\{\begin{array}{ll} d_X(x,y) & x,y\in Y \text{ for some } Y\in \mathcal{Y} \\ 0 & \text{otherwise} \end{array}\right.
$$
(in words, the metric agrees with that from $X$ on each `component' subset $Y\in \mathcal{Y}$, and sets the distance between distinct `components' to be infinity).  

Finally, for $r>0$ the \emph{$\mathcal{Y}$-neighbourhood} of a subset $Z$ of $X_{\mathcal{Y}}$ is defined to be 
$$
N_{r,\mathcal{Y}}(Z):=\{y\in X_{\mathcal{Y}}\mid d_{\mathcal{Y}}(y,z)<r \text{ for some } z\in Z\}
$$
\end{definition}

\begin{definition}\label{classical fdc}
Let $X$ be a metric space (with finite-valued metric).  A disjoint family $\mathcal{Y}$ of subspaces of $X$ is \emph{uniformly bounded} if $\text{sup}_{Y\in \mathcal{Y}}\text{diam}(Y)$ is finite. 

Let $\mathcal{C}$ be a collection of disjoint families of subspaces of $X$.  A disjoint family of subspaces $\mathcal{Y}$ is \emph{decomposable} over $\mathcal{C}$ if for all $r\geq 0$ there exist disjoint families $\mathcal{Z}_0,\mathcal{Z}_1\in \mathcal{C}$ such that for all $Y\in \mathcal{Y}$ there exists a decomposition
$$
Y=Y_0\cup Y_1
$$
and further decompositions
$$
Y_i=\bigsqcup_{j\in J_{Y,i},~2r-\text{disjoint}}Y_{ij}
$$
such that for each $j$, $N_{r,\mathcal{Y}}(Y_{ij})$ is in $\mathcal{Z}_i$.  

Define $\mathcal{D}_m$ to be smallest collection of disjoint families of subspaces of $X$ that: contains the uniformly bounded disjoint families; and is closed under decomposability.  The metric space $X$ has \emph{finite decomposition complexity} if the singleton family $\{X\}$ is contained in $\mathcal{D}_m$.
\end{definition}

Using the discussion in \cite[3.1.3]{Guentner:2013aa}, it is not too difficult to see that the above definition is equivalent to \cite[Definition 2.1.3]{Guentner:2013aa}.

Here is the first main goal of this appendix.

\begin{theorem}\label{fdc vs fdc}
Let $\Gamma$ be a countable discrete group, equipped as usual with a metric arising from a proper length function.  Then the following are equivalent:
\begin{enumerate}[(i)]
\item $\Gamma$ has finite decomposition complexity;
\item the canonical action of $\Gamma$ on its Stone-\v{C}ech compactification has finite dynamical complexity in the sense of Definition \ref{gpd fdc}.
\end{enumerate}
\end{theorem}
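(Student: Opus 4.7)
My plan centres on a correspondence between disjoint families of subsets of $\Gamma$ and open subgroupoids of the transformation groupoid $\Gamma \ltimes \beta\Gamma$. The first step is to define, for each disjoint family $\mathcal{Y}$ of subsets of $\Gamma$, the set
\[
G_{\mathcal{Y}} := \{(g\omega, g, \omega) \in \Gamma \ltimes \beta\Gamma : \omega, g\omega \in \overline{Y} \text{ for some } Y \in \mathcal{Y}\},
\]
where closures are taken in $\beta\Gamma$. Because $\Gamma$ is discrete, every subset $Y \subseteq \Gamma$ has a clopen closure in $\beta\Gamma$ and disjoint subsets of $\Gamma$ have disjoint clopen closures, so $G_{\mathcal{Y}}$ is a well-defined open subgroupoid of $\Gamma \ltimes \beta\Gamma$. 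I would then record three basic facts: $G_{\{\Gamma\}} = \Gamma \ltimes \beta\Gamma$; if $\mathcal{Y}$ is uniformly bounded with diameters at most $R$ then $G_{\mathcal{Y}}$ has $\Gamma$-propagation bounded by $R$, hence is relatively compact; and conversely, if $G_{\mathcal{Y}}$ has $\Gamma$-propagation bounded by $R$ then for any $\gamma_1, \gamma_2 \in Y$ the arrow $(\gamma_1, \gamma_1\gamma_2^{-1}, \gamma_2)$ belongs to $G_{\mathcal{Y}}$ and forces $d(\gamma_1, \gamma_2) \leq R$.

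For the forward direction (i) $\Rightarrow$ (ii), I would show that the class $\mathcal{T} := \{\mathcal{Y} : G_{\mathcal{Y}} \in \mathcal{D}\}$ contains the uniformly bounded disjoint families and is closed under metric decomposability; this forces $\mathcal{D}_m \subseteq \mathcal{T}$ since $\mathcal{D}_m$ is the smallest such class. The first property is immediate from the base-case observation above. For the second, given a metric decomposition of $\mathcal{Y}$ at scale $r$ --- with $Y = Y_0^Y \cup Y_1^Y$, $Y_i^Y = \bigsqcup_{j,\, 2r\text{-disjoint}} Y_{ij}^Y$, and $\mathcal{Z}_i = \{N_{r,\mathcal{Y}}(Y_{ij}^Y)\}_{Y,j}$ in $\mathcal{T}$ --- I would take the clopen cover $U_i := \bigsqcup_Y \overline{Y_i^Y}$ of $G_{\mathcal{Y}}^{(0)}$. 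By tracking an arbitrary composition of generators (and inverses) starting from a point $\omega \in \overline{Y_{ij}^Y}$ and using that the $2r$-disjointness of the pieces $Y_{ij'}^Y$ for $j' \neq j$ prevents the walk from crossing between components, one shows $(U_i)^{+r} \subseteq G_{\mathcal{Z}_i}$. Applying Lemma \ref{fdc lem}(i) then yields $(U_i)^{+r} \in \mathcal{D}$ and hence $G_{\mathcal{Y}} \in \mathcal{D}$. Specialising to $\mathcal{Y} = \{\Gamma\}$ yields (ii).

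For the reverse direction (ii) $\Rightarrow$ (i), I would proceed by a rank-controlled induction on the hierarchy defining $\mathcal{D}$: if $G_{\mathcal{Y}}$ lies in the $\alpha$-th stage $\mathcal{D}_\alpha$, then $\mathcal{Y}$ lies in $\mathcal{D}_{m,\alpha}$. The base case $\alpha = 0$ is handled above. For the successor step, use extremal disconnectedness of $\beta\Gamma$ to refine any scale-$r$ groupoid decomposition $V_0 \cup V_1$ to a clopen cover (or pass to the strong-decomposability version by Lemma \ref{fdc lem}(ii)); this automatically induces a set-theoretic decomposition $Y = A_0^Y \cup A_1^Y$ where $A_i^Y = V_i \cap Y$, and splitting each $A_i^Y$ into its $2r$-connected components $A_{ij}^Y$ yields the candidate family $\mathcal{Z}_i := \{N_{r,\mathcal{Y}}(A_{ij}^Y)\}_{Y,j}$. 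The critical step is to identify $G_{\mathcal{Z}_i}$ (possibly after an $r$-expansion in the strong-decomposability sense) with $V_i^{+r}$ (respectively $G_i^{+r}$); once this is done, $G_{\mathcal{Z}_i}$ inherits rank $\alpha$ via a rank-controlled variant of Lemma \ref{fdc lem}(i), so the inductive hypothesis produces $\mathcal{Z}_i \in \mathcal{D}_{m,\alpha}$ and finishes the metric decomposition of $\mathcal{Y}$.

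The main obstacle is this identification of groupoids in the reverse direction. The inclusion $V_i^{+r} \subseteq G_{\mathcal{Z}_i}$ is immediate from the forward-direction argument, but the converse --- expressing each arrow $(g\omega, g, \omega) \in G_{\mathcal{Z}_i}$ as a composition in $V_i^{+r}$ --- requires more care. At classical points $\omega \in \Gamma$ one writes $\omega = h_1 \gamma$ and $g\omega = h_2 \gamma'$ with $\gamma, \gamma' \in A_{ij}^Y$ and $|h_1|, |h_2| \leq r$, joins $\gamma$ to $\gamma'$ by a $2r$-step path in the connected component $A_{ij}^Y$, and realises each $2r$-step as a composition of two $V_i^{+r}$-generators through an intermediate point (possibly outside $V_i$). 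The real difficulty is extending this description to ideal points $\omega \in \beta\Gamma \setminus \Gamma$: one must argue that the slice $\{\omega \in \overline{N_{r,\mathcal{Y}}(A_{ij}^Y)} : (g\omega, g, \omega) \in V_i^{+r}\}$ is clopen rather than merely open, so that the equality on the dense subset of classical points propagates to $\beta\Gamma$. This is where either the zero-dimensional structure of $\beta\Gamma$ is exploited more delicately, or the $r$-expansion in the strong-decomposability formulation is used to absorb the ideal-boundary contribution into a subgroupoid that still sits inside the correct rank of $\mathcal{D}$.
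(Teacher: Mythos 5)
Your forward direction, (i) implies (ii), follows essentially the same contour as the paper's argument (which proves the slightly more general Theorem~\ref{gen fdc vs fdc} for coarse groupoids of bounded geometry spaces): you attach an open subgroupoid $G_{\mathcal{Y}}$ to each disjoint family and verify that $\{\mathcal{Y} : G_{\mathcal{Y}} \in \mathcal{D}\}$ contains the uniformly bounded families and is closed under metric decomposability, hence absorbs $\mathcal{D}_m$. I have no substantive objection there.

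Your reverse direction is where you diverge from the paper and where the issue you yourself flag as the ``main obstacle'' is a genuine hole, not a routine verification. Your transfinite induction is phrased entirely for subgroupoids of the special form $G_{\mathcal{Y}}$, but when $G_{\mathcal{Y}}$ decomposes as a groupoid the pieces $G_i$ are arbitrary open subgroupoids with no reason to be of that form, so the inductive hypothesis cannot be invoked on them directly. Your repair is to show $G_{\mathcal{Z}_i} \subseteq G_i^{+r}$ and transfer rank across that inclusion, and this is exactly the step you leave open. Two distinct problems lurk here. First, a scale mismatch: your chains inside the $2r$-connected components $A_{ij}^Y$ proceed by $2r$-steps, which are arrows of $G_\mathcal{Y}$ but not generators of $G_i$ at scale $r$, and splitting a $2r$-step into two $r$-steps through an intermediate point of $\Gamma$ fails because the intermediate point can leave $Y$, so the half-arrows need not lie in $G_\mathcal{Y}$ at all. (Decomposing the groupoid at scale $2r$, as the paper does with $K = \overline{E_{2r}}$, repairs this.) Second, and more seriously, even with the scales aligned you only get the inclusion over classical points; $G_i^{+r}$ is open but need not be closed, so density of the classical arrows in $G_{\mathcal{Z}_i}$ does not propagate the inclusion to the ideal boundary, and the ``clopen slice'' argument you gesture at is not established. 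You would also need a rank-preserving variant of Lemma~\ref{fdc lem} that the paper does not supply.

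The paper's reverse direction sidesteps all of this by running the correspondence in the opposite direction. To an \emph{arbitrary} open subgroupoid $H$ of $G(X)$ it assigns the disjoint family $\mathcal{X}_H$ of equivalence classes of $x \sim y \Leftrightarrow (x,y) \in H$ on $X \cap H^{(0)}$, and shows that $\{H : \mathcal{X}_H \in \mathcal{D}_m\}$ contains the relatively compact open subgroupoids and is closed under groupoid decomposability. By minimality this class contains $\mathcal{D}_g$, and since $\mathcal{X}_{G(X)} = \{X\}$ the conclusion follows. The decisive simplification is that the only containment required is the set-theoretic statement $N_{r,\mathcal{Y}}(Y_{ij}) \subseteq X_{ij}$ among subsets of $X$ --- a statement about classical points only --- so there is no ideal-point analysis, no need to identify decomposition pieces with $G_{\mathcal{Z}}$-type subgroupoids, and no rank bookkeeping. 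Restructuring your reverse direction around the forgetful map $H \mapsto \mathcal{X}_H$, applied to arbitrary open subgroupoids rather than to those of the form $G_\mathcal{Y}$, dissolves the obstacle entirely.
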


We will actually prove this in a little more generality, more because this makes the proof more conceptual than because we want the generality for its own sake.  Throughout the rest of this section, then, we will work in the context of \'{e}tale\footnote{We will always assume our groupoids are locally compact and Hausdorff, and do not repeat these assumptions.} groupoids: our conventions here match those of \cite[Section 5.6]{Brown:2008qy}, so in particular we will write $G$ for an \'{e}tale groupoid, $G^{(0)}$ for its unit space, $s,r:G\to G^{(0)}$ for the source and range maps, and for $x\in G^{(0)}$, $G_x$ and $G^x$ denote $s^{-1}(x)$ and $r^{-1}(x)$ respectively.  A pair of elements $(g,h)\in G\times G$ is composable if $s(g)=r(h)$, and their product or composition is then written $gh$.

Here is the definition of finite dynamical complexity for general \'{e}tale groupoids.  

\begin{definition}\label{real gpd fdc}
Let $G$ be an \'{e}tale groupoid, let $H$ be an open subgroupoid of $G$, and let $\mathcal{C}$ be a set of open subgroupoids of $G$.  We say that $H$ is \emph{decomposable} over $\mathcal{C}$ if for any open, relatively compact subset $K$ of $H$ there exists an open cover $H^{(0)}=U_0\cup U_1$ of the unit space of $H$ such that for each $i\in \{0,1\}$ the subgroupoid of $H$ generated by 
$$
\{h\in K\mid s(h)\in U_i\}
$$
is in $\mathcal{C}$. 

An open subgroupoid of $G$ (for example, $G$ itself) has \emph{finite dynamical complexity} if it is contained in the smallest set $\mathcal{D}_g$ of open subgroupoids of $G$ that: contains all relatively compact open subgroupoids; and is closed under decomposability.
\end{definition}

We leave the elementary check that this reduces to Definition \ref{gpd fdc} in the case that $G=\Gamma\ltimes X$ for some action $\Gamma\lefttorightarrow X$ to the reader: compare \cite[Lemma 5.4]{Guentner:2014aa}.  

The following basic lemma will also be left to the reader: compare part (i) of Lemma \ref{fdc lem} above for the part about groupoids and \cite[3.1.3]{Guentner:2013aa} for the part about spaces.

\begin{lemma}\label{subgpd fdc}
\begin{enumerate}[(i)]
\item Let $G$ be an \'{e}tale groupoid, and $H$ an open subgroupoid in the class $\mathcal{D}_g$ of Definition \ref{real gpd fdc}.  Then all open subgroupoids of $H$ are also contained in $\mathcal{D}_g$.
\item Let $X$ be a metric space, and let $\mathcal{Y}$ be a family of subspaces of $X$ in the class $\mathcal{D}_m$ of Definition \ref{classical fdc}.  Let $\mathcal{Z}$ be another family of subspaces of $X$ such that each $Z\in \mathcal{Z}$ is contained in some element of $\mathcal{Y}$.  Then $\mathcal{Z}$ is also in $\mathcal{D}_m$. \qed
\end{enumerate}
\end{lemma}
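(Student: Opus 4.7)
The plan for both parts is the same trick already used in Lemma \ref{fdc lem}(i): define the collection of `hereditarily good' objects and show that it contains the generators and is closed under decomposability, so it contains all of $\mathcal{D}_g$ (respectively $\mathcal{D}_m$).

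For part (i), I would let $\mathcal{D}_g'$ be the set of open subgroupoids $H$ of $G$ such that every open subgroupoid of $H$ lies in $\mathcal{D}_g$, and prove $\mathcal{D}_g \subseteq \mathcal{D}_g'$. Relatively compact open subgroupoids are in $\mathcal{D}_g'$, since open subsets of relatively compact sets are relatively compact. For closure under decomposability, suppose $H$ decomposes over $\mathcal{D}_g'$ and let $H'$ be an open subgroupoid of $H$. Given an open relatively compact $K' \subseteq H'$, view $K'$ as an open relatively compact subset of $H$ and apply decomposability to obtain a cover $H^{(0)} = U_0 \cup U_1$ such that the subgroupoid $L_i$ of $H$ generated by $\{h \in K' : s(h) \in U_i\}$ lies in $\mathcal{D}_g'$. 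Setting $V_i := U_i \cap (H')^{(0)}$, the subgroupoid of $H'$ generated by $\{h \in K' : s(h) \in V_i\}$ is an open subgroupoid of $L_i$ (it coincides with the subgroupoid of $H$ generated by the same set, since $H'$ is already closed under the groupoid operations), hence lies in $\mathcal{D}_g$ by the defining property of $\mathcal{D}_g'$. Thus $H'$ decomposes over $\mathcal{D}_g$, and since $H'$ was arbitrary, $H \in \mathcal{D}_g'$.

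For part (ii), I would analogously define $\mathcal{D}_m'$ as the collection of disjoint families $\mathcal{Y}$ such that every disjoint family $\mathcal{Z}$ whose members are each contained in some member of $\mathcal{Y}$ lies in $\mathcal{D}_m$. Uniformly bounded families belong to $\mathcal{D}_m'$, since a sub-family of the described kind inherits the uniform diameter bound. For closure under decomposability, suppose $\mathcal{Y}$ decomposes over $\mathcal{D}_m'$ and let $\mathcal{Z}$ satisfy the sub-family condition; for each $Z \in \mathcal{Z}$ fix $Y(Z) \in \mathcal{Y}$ with $Z \subseteq Y(Z)$. Given $r \geq 0$, pick $\mathcal{W}_0, \mathcal{W}_1 \in \mathcal{D}_m'$ witnessing decomposability of $\mathcal{Y}$ at scale $r$, so each $Y = Y(Z)$ admits a decomposition $Y = Y_0 \cup Y_1$ with $Y_i = \bigsqcup_{j,\,2r\text{-disjoint}} Y_{ij}$ and $N_{r,\mathcal{Y}}(Y_{ij}) \in \mathcal{W}_i$. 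Setting $Z_{ij} := Z \cap Y_{ij}$ gives the required decomposition $Z_i = \bigsqcup_{j,\,2r\text{-disjoint}} Z_{ij}$, and one checks $N_{r,\mathcal{Z}}(Z_{ij}) \subseteq N_{r,\mathcal{Y}}(Y_{ij})$ directly from the definition of $d_\mathcal{Z}$. Hence $\mathcal{Z}_i := \{N_{r,\mathcal{Z}}(Z_{ij})\}_{Z,j}$ is a disjoint family (disjointness uses $2r$-disjointness within each $Z$ and disjointness of distinct $Z, Z' \in \mathcal{Z}$), each member of which is contained in a member of $\mathcal{W}_i \in \mathcal{D}_m'$. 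The defining property of $\mathcal{D}_m'$ then places $\mathcal{Z}_i$ in $\mathcal{D}_m$, showing $\mathcal{Z}$ decomposes over $\mathcal{D}_m$ and hence lies in $\mathcal{D}_m$.

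Both parts are formal; the main subtlety I anticipate is the metric bookkeeping in part (ii), where one must verify that $z \in N_{r,\mathcal{Z}}(Z_{ij})$ forces $z$ to share an element of $\mathcal{Z}$ with a point of $Z_{ij}$, hence $z \in Z \subseteq Y(Z)$, so that $d_\mathcal{Z}$-distances coincide with $d_\mathcal{Y}$-distances on the pairs in question and the claimed neighbourhood inclusion holds.
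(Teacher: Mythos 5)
Your proof is correct and takes the approach the paper intends: the paper leaves this lemma to the reader, pointing to the proof of Lemma~\ref{fdc lem}(i) and to \cite[3.1.3]{Guentner:2013aa}, and your ``hereditary closure'' argument carries out exactly that strategy in both settings. (A minor simplification in part (i): since $K'\subseteq H'$ forces $s(h)\in (H')^{(0)}$ for every $h\in K'$, one has $\{h\in K':s(h)\in V_i\}=\{h\in K':s(h)\in U_i\}$, so your $M_i$ is in fact equal to $L_i$, which already lies inside $H'$; the conclusion is the same either way.)
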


We will look at a particular class of groupoids arising from discrete metric spaces: we will assume such metric spaces have \emph{bounded geometry} meaning that for all $r\in [0,\infty)$, the cardinality of all $r$-balls in the space is uniformly bounded. Recall that we allow our metrics to be infinite valued.

The following groupoids were introduced by Skandalis, Tu, and Yu \cite{Skandalis:2002ng}; see also \cite[Chapter 10]{Roe:2003rw}.

\begin{definition}\label{coarse gpd}
Let $X$ be a bounded geometry metric space (possibly with infinite-valued metric), and let $\beta X$ be its Stone-\v{C}ech compactification.  For each $r\in [0,\infty)$, let 
$$
E_r=\{(x,y)\in X\times X\mid d(x,y)\leq r\}.
$$
As $X$ is a subspace of $\beta X$ we may identify $E_r$ with a subspace of $\beta X\times \beta X$, and take its closure $\overline{E}_r$.  The \emph{coarse groupoid} of $X$ is the union
$$
G(X):=\bigcup_{r\in [0,\infty)}\overline{E}_r
$$
equipped with the restriction of the pair groupoid operations it inherits as a subset of $\beta X\times \beta X$, and with the weak topology it inherits from the union above\footnote{This means that a subset $U$ of $G(X)$ is defined to be open exactly when $U\cap \overline{E}_r$ is open for each $r\in [0,\infty)$; this is not the same as the subspace topology from $\beta X\times \beta X$.}, when each $\overline{E_r}$ is given the subspace topology from $\beta X\times \beta X$.
\end{definition} 

The groupoids $G(X)$ are always locally compact, Hausdorff, \'{e}tale and $\sigma$-compact; this is not too hard to check directly, or see the references \cite{Skandalis:2002ng} or \cite[Chapter 10]{Roe:2003rw} from above.  Moreover, if $X=\Gamma$ is a discrete group equipped with a metric as in Definition \ref{length fun} above, then $G(X)$ is canonically isomorphic to $\Gamma\ltimes \beta \Gamma$: see \cite[Proposition 3.4]{Skandalis:2002ng}.  Note that if $\mathcal{Y}$ is a disjoint family of subspaces of $X$ and $X_{\mathcal{Y}}$ is the associated metric space as in Definition \ref{fam subsp}, then $G(X_{\mathcal{Y}})$ identifies naturally with a (closed and open) subgroupoid of $G(X)$; we will always make this identification in what follows.  

Here then is the theorem we will actually prove.  From the comments above, it clearly implies Theorem \ref{fdc vs fdc}.

\begin{theorem}\label{gen fdc vs fdc}
Let $X$ be a bounded geometry metric space (with finite-valued metric).  Then the following are equivalent:
\begin{enumerate}[(i)]
\item $X$ has finite decomposition complexity;
\item the coarse groupoid $G(X)$ has finite dynamical complexity.
\end{enumerate}
\end{theorem}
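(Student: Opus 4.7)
The plan is to prove both implications by induction on the generating classes $\mathcal{D}_m$ and $\mathcal{D}_g$, exploiting the natural assignment $\mathcal{Y} \mapsto G(X_\mathcal{Y})$ that takes a disjoint family of subspaces of $X$ to the corresponding open subgroupoid of $G(X)$.

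For the direction $(i) \Rightarrow (ii)$, I would consider the class $\mathcal{C}^\sharp := \{\mathcal{Y} : G(X_\mathcal{Y}) \in \mathcal{D}_g\}$ of disjoint families and show $\mathcal{D}_m \subseteq \mathcal{C}^\sharp$ by a direct induction. The base case is immediate, since a uniformly bounded family $\mathcal{Y}$ with diameter bound $r$ satisfies $G(X_\mathcal{Y}) \subseteq \overline{E_r}$, so $G(X_\mathcal{Y})$ is a relatively compact open subgroupoid. For the inductive step, suppose $\mathcal{Y}$ decomposes over $\{\mathcal{Z}_0, \mathcal{Z}_1\} \subseteq \mathcal{C}^\sharp$; given an open relatively compact $K \subseteq G(X_\mathcal{Y})$ with $K \subseteq \overline{E_r}$, the metric decomposition at scale $r$ splits each $Y$ as $Y_0 \cup Y_1$ with $Y_i = \bigsqcup_j Y_{ij}$ (the $Y_{ij}$'s $2r$-disjoint and $N_{r,\mathcal{Y}}(Y_{ij}) \in \mathcal{Z}_i$). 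Setting $U_i := \overline{\bigsqcup_Y Y_i} \cap G(X_\mathcal{Y})^{(0)}$ gives a clopen cover of the unit space, and the $2r$-disjointness forces any arrow in $\overline{E_r}$ with principal source in some $Y_{ij}$ to have range in $N_{r,\mathcal{Y}}(Y_{ij})$ (a single piece); by a limiting argument the entire generating set $\{h \in K : s(h) \in U_i\}$ lies in $G(X_{\mathcal{Z}_i})$. Hence the generated subgroupoid is contained in $G(X_{\mathcal{Z}_i}) \in \mathcal{D}_g$, and by Lemma~\ref{subgpd fdc}(i) is itself in $\mathcal{D}_g$.

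For the converse $(ii) \Rightarrow (i)$, I would introduce the class $\mathcal{C}^\flat$ of open subgroupoids $H \in \mathcal{D}_g$ such that \emph{every} disjoint family $\mathcal{Y}$ whose principal arrows of $G(X_\mathcal{Y})$ all lie in $H$ satisfies $\mathcal{Y} \in \mathcal{D}_m$. Showing that $\mathcal{C}^\flat$ itself contains the relatively compact open subgroupoids and is closed under decomposability will give $\mathcal{D}_g \subseteq \mathcal{C}^\flat$; applying this to $H = G(X)$ and $\mathcal{Y} = \{X\}$ then yields $\{X\} \in \mathcal{D}_m$. The base case is straightforward: if $H \subseteq \overline{E_R}$, then any principal arrow within a component $Y$ has length $\leq R$, so diameters are uniformly bounded by $R$. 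For the inductive step, given $H$ decomposing over $\mathcal{C} \subseteq \mathcal{C}^\flat$ and an admissible $\mathcal{Y}$, I would produce a decomposition of $\mathcal{Y}$ at each scale $r$ by choosing an open relatively compact $K \supseteq \overline{E_{3r}} \cap H$, applying the groupoid decomposition to obtain a cover $H^{(0)} = U_0 \cup U_1$, and setting $Y_i := Y \cap U_i \cap X$, $Y_{ij}$ := the $3r$-chain components of $Y_i$, and $\mathcal{Z}_i := \{N_{r,\mathcal{Y}}(Y_{ij})\}$.

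The main obstacle is verifying that the principal arrows of $G(X_{\mathcal{Z}_i})$ lie inside $H_i$, so that the inductive hypothesis $H_i \in \mathcal{C}^\flat$ delivers $\mathcal{Z}_i \in \mathcal{D}_m$. Given a principal arrow $(x, y)$ with $x, y \in N_{r,\mathcal{Y}}(Y_{ij})$, I would pick $x_0, y_0 \in Y_{ij}$ within $r$ of $x, y$ respectively and factor $(x, y) = (x, x_0)(x_0, y_0)(y_0, y)$, where the outer factors are directly in $H_i$ (using $(y_0, y) = (y, y_0)^{-1}$ in the second case), and the middle factor is expressed as a composition $\prod_k (w_{k-1}, w_k)$ along a $3r$-chain in $Y_{ij}$. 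Crucially, each step $(w_{k-1}, w_k)$ lies in $K$ with source $w_k \in Y_{ij} \subseteq U_i$, making it a generator of $H_i$; this is why $3r$ (and not $r$) is the correct scale at which to form chain components. The restriction to \emph{principal} arrows in the definition of $\mathcal{C}^\flat$ is essential: since the generated subgroupoid $H_i$ is open but need not be closed, a naive attempt to place all of $G(X_{\mathcal{Z}_i})$ into $H_i$ fails on non-principal limits, but the principal-arrow formulation is enough to feed the induction and, applied at $H = G(X)$, reaches the conclusion $\{X\} \in \mathcal{D}_m$. The remaining bookkeeping — ensuring $Y \subseteq H^{(0)}$, that the various $r$-neighborhoods respect component structure, and choosing an open $K$ containing the relevant closed set $\overline{E_{3r}} \cap H$ via local compactness — is routine.
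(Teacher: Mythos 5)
Your proposal is correct and follows essentially the same strategy as the paper: both directions are proved by showing a suitable class of objects (disjoint families for one direction, open subgroupoids for the other) contains the generating class and is closed under decomposability, using the assignment $\mathcal{Y}\mapsto G(X_\mathcal{Y})$ as the bridge. The only deviation is cosmetic: in the (ii)$\Rightarrow$(i) direction the paper takes $K=\overline{E_{2r}}$ and carves up each equivalence class $Y\in\mathcal{X}_H$ via intersection with the classes $\mathcal{X}_{H_i}$ (so that $N_{r,\mathcal Y}(Y_{ij})\subseteq X_{ij}$ is automatic), whereas you use $K\supseteq\overline{E_{3r}}\cap H$, $3r$-chain components, and an explicit chain factorization to land $G(X_{\mathcal Z_i})$'s principal arrows inside $H_i$ — both valid, with the paper's version of the containment being slightly more direct.
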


\begin{proof}
To show that (i) implies (ii), it will suffice to show that if $\mathcal{Y}$ is in $\mathcal{D}_m$, then $G(X_\mathcal{Y})$ is in $\mathcal{D}_g$.  For this, it suffices to show that the collection 
\begin{equation}\label{tech fam}
\{\mathcal{Y}\mid G(X_\mathcal{Y})\in \mathcal{D}_g\}
\end{equation}
of disjoint families of subspaces of $X$ contains the uniformly bounded families, and is closed under decomposability of metric families: indeed, this implies that the family in line \eqref{tech fam} contains $\mathcal{D}_m$ by definition of $\mathcal{D}_m$, and thus that it contains $\{X\}$ by assumption (i); hence $G(X)$ is in $\mathcal{D}_g$, which is the required conclusion.

Say first then that $\mathcal{Y}$ is a disjoint family of uniformly bounded subspaces of $X$, say all with diameters at most $s$.  Then  $G(X_\mathcal{Y})$ is contained in the compact set $\overline{E_s}\subseteq G(X)$ from Definition \ref{coarse gpd}, whence $G(X_\mathcal{Y})$ has compact closure and is thus in $\mathcal{D}_g$.  

To complete the proof of (i) implies (ii), it remains to show that the collection in line \eqref{tech fam} is closed under decomposability of disjoint families.  Say then that $\mathcal{Y}$ is a disjoint family of subspaces of $X$ that decomposes over the collection of disjoint families in line \eqref{tech fam}.  We will show that $G(X_\mathcal{Y})$ decomposes over $\mathcal{D}_g$, which will suffice to show that $\mathcal{Y}$ is in the collection of families in line \eqref{tech fam}.  Let then $K$ be an open, relatively compact subset of $G(X_\mathcal{Y})$.  As $G(X)$ is the union of the (compact) open subsets $\{\overline{E_r}\}_{r\geq 0}$, there is $r\geq 0$ with $K$ contained in $\overline{E_r}$.   As in Definition \ref{classical fdc}, there are families $\mathcal{Z}_0,\mathcal{Z}_1$ in the set in line \eqref{tech fam} such that every $Y\in\mathcal{Y}$ admits a decomposition $Y=Y_{0}\cup Y_{1}$ such that each $Y_i$ further decomposes as 
$$
Y_i=\bigsqcup_{j\in J_{Y,i},~2r\text{-}\text{disjoint}}Y_{ij}
$$
with each $r$-neighbourhood $N_{r,\mathcal{Y}}(Y_{ij})$ in $\mathcal{Z}_i$.  Now, let $\mathcal{Y}_i$ be the family of subspaces $\{N_{r,\mathcal{Y}}(Y_{ij})\mid j\in J_{Y,i},~Y\in \mathcal{Y}\}$.  Let $U_i$ be the closure of the set 
$$
\bigcup_{Y\in \mathcal{Y},j\in J_{Y,i}}Y_{ij}
$$
in $G(X_\mathcal{Y})^{(0)}$, which is a (closed and) open set.  Then $\{U_0,U_1\}$ is an open cover of $G(X_\mathcal{Y})^{(0)}$ and it is not too difficult to check that the subgroupoid $H_i$ of $G(X_\mathcal{Y})$ generated by 
$$
\{g\in G(X_\mathcal{Y})\mid  s(g)\in U_i,~g\in K\}
$$
is contained in $G(X_{\mathcal{Y}_i})$.  As $\mathcal{Z}_i$ is in the collection in line \eqref{tech fam}, we have that $G(X_{\mathcal{Z}_i})$ is in $\mathcal{D}_g$; moreover, each $G(X_{\mathcal{Y}_i})$ is contained in $G(X_{\mathcal{Z}_i})$  and so $G(X_{\mathcal{Y}_i})$ is in $\mathcal{D}_g$ by Lemma \ref{subgpd fdc}, and so each $H_i$ is also in $\mathcal{D}_g$ by the same lemma again.  This completes the proof that $G(X_\mathcal{Y})$ decomposes over $\mathcal{D}_g$, and thus the proof of (i) implies (ii).\\

To prove (ii) implies (i), it will be helpful to first introduce some notation.  If $H$ is an open subgroupoid of $G(X)$, let $\sim$ be the equivalence relation on $X\cap H^{(0)}$ defined by $x\sim y$ if $(x,y)$ is an element of $H$, and let $\mathcal{X}_H$ be the disjoint family of equivalence classes for this equivalence relation.  

Now, to prove (ii) implies (i), it will suffice to show that if $G(X_\mathcal{Y})$ is in $\mathcal{D}_g$, then $\mathcal{Y}$ is in $\mathcal{D}_m$.  For this it suffices to show that the collection 
\begin{equation}\label{tech fam 2}
\{H \mid \mathcal{X}_H\in \mathcal{D}_m\}
\end{equation}
of open subgroupoids of $G(X)$ contains the relatively compact open subgroupoids, and is closed under decomposability of groupoids: indeed, given this, the collection in line \eqref{tech fam 2} contains $\mathcal{D}_g$, whence in particular it contains $G(X)$ by assumption (ii); however, $\mathcal{X}_{G(X)}=\{X\}$ so this gives that $\{X\}$ is in $\mathcal{D}_m$, and so we are done at that point. 

Say first then that $H$ is a relatively compact open subgroupoid of $G(X)$.  Then as the collection $\{\overline{E_r}\}_{r\geq 0}$ of (compact) open subsets covers $G(X)$, there must exist $s\geq 0$ with $H\subseteq \overline{E_s}$.  This implies that every $Y\in \mathcal{X}_H$ has diameter at most $s$, and thus $\mathcal{X}_H$ is in $\mathcal{D}_m$ and so $H$ is contained in the collection in line \eqref{tech fam 2}.

It remains to show that the collection in line \eqref{tech fam 2} is closed under decomposability of groupoids.  Let then $H$ be an open subgroupoid of $G(X)$ that decomposes over the collection in line \eqref{tech fam 2}.  We will aim to show that $\mathcal{X}_H$ decomposes over $\mathcal{D}_m$, and thus that $\mathcal{X}_H$ is in $\mathcal{D}_m$, and so $H$ is in the collection in line \eqref{tech fam 2}.  Let then $r\in [0,\infty)$ be given, and let $K$ be the compact subset $\overline{E_{2r}}$ of $G(X)$.  The definition of decomposability of $H$ gives us an open cover $H^{(0)}=U_0\cup U_1$ of the unit space of $H$ such that the subgroupoids $H_i$ of $G$ generated by 
\begin{equation}\label{hj gen set}
\{g\in G\mid s(g)\in U_i,~g\in K\}
\end{equation}
are in the family in line \eqref{tech fam 2}.  Let $\mathcal{X}_{H_i}=\{X_{ij}\}_{j\in J_i}$ be the disjoint family of equivalence classes corresponding to $H_i$.  Let $Y$ be an element of $\mathcal{X}_H$, and for $j\in J_0$ define 
$$
Y_{0j}:=Y\cap U_0\cap X_{0j}
$$
and for $j\in J_1$, define
$$
Y_{1j}:=(Y\cap X_{1j})\setminus U_0.
$$
Then we have 
$$
Y=\underbrace{\Big(\bigsqcup_{j\in J_0}Y_{0j}\Big)}_{=:Y_0}\sqcup \underbrace{\Big(\bigsqcup_{j\in J_1}Y_{1j}\Big)}_{=:Y_1}.
$$
As $H_i$ is generated by the set in line \eqref{hj gen set} with $\overline{E_{2r}}=K$, we have that 
$$
Y_i=\bigsqcup_{j\in J_i,~2r\text{-disjoint}}Y_{ij},
$$
and that each $N_{r,\mathcal{Y}}(Y_{ij})$ is contained in $X_{ij}$, and thus that each $N_{r,\mathcal{Y}}(Y_{ij})$ is contained in an element of the disjoint family $\mathcal{X}_{H_i}$.  Setting $\mathcal{Z}_i=\mathcal{X}_{H_i}$, we now have that $\mathcal{X}_H$ decomposes over $\mathcal{D}_m$, and we are done. 
\end{proof}

\subsection*{Amenability}

Our second goal in this appendix is to discuss the relationship of finite dynamical complexity to amenability.  See \cite{Anantharaman-Delaroche:2000mw} for a comprehensive discussion of amenable groupoids, and \cite[Section 5.6]{Brown:2008qy} for a self-contained discussion of the \'{e}tale case (which is all we will need).  In particular, the next definition is a slight variant of \cite[Definition 5.6.13]{Brown:2008qy} and \cite[Proposition 2.2.13]{Anantharaman-Delaroche:2000mw}.  The only difference between our definition and that of \cite[Definition 5.6.13]{Brown:2008qy} is that assumption (i) is not present in the latter reference.  It follows, however, from the argument that `condition (a) is irrelevant' in the proof of  \cite[Proposition 2.2.13]{Anantharaman-Delaroche:2000mw} that this leads to an equivalent definition.

\begin{definition}\label{gpd amen}
A locally compact, Hausdorff, \'{e}tale groupoid $G$ is \emph{amenable} if for all compact $K\subseteq G$ and all $\epsilon>0$ there exists a continuous, compactly supported function $\mu:G\to [0,1]$ such that:
\begin{enumerate}[(i)]
\item for all $x\in G^{(0)}$, we have $\sum_{g\in G_x}\mu(g)\leq 1$;
\item for all $k\in K$, we have $|1-\sum_{g\in G_{r(k)}}\mu(g)|<\epsilon$;
\item for all $k\in K$, we have $\sum_{g\in G_{r(k)}}|\mu(g)-\mu(gk)|<\epsilon.$
\end{enumerate}
\end{definition}

Our next goal is to prove the following theorem.

\begin{theorem}\label{fdc vs amen}
Let $G$ be a locally compact, Hausdorff, \'{e}tale groupoid with finite dynamical complexity.  Then $G$ is amenable.
\end{theorem}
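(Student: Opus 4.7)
The plan is to prove by induction on the class $\mathcal{D}_g$ of Definition \ref{real gpd fdc} that every open subgroupoid $H \in \mathcal{D}_g$ is amenable as an étale groupoid in its own right; Theorem \ref{fdc vs amen} will then follow on taking $H = G$. Writing $\mathcal{A}$ for the collection of amenable open subgroupoids of $G$, it suffices to check that (a) $\mathcal{A}$ contains every relatively compact open subgroupoid, and (b) $\mathcal{A}$ is closed under decomposability. For (a), if $H$ is relatively compact and open in $G$, then each source fibre $H_x$, being discrete in the compact set $\overline{H}$, is finite, so $H$ is a proper étale groupoid. Such groupoids admit continuous compactly supported cut-off functions by a standard construction (see, e.g., Tu), and amenability then follows: any cut-off $c \colon H^{(0)} \to [0,1]$ with $\sum_{g \in H_x} c(r(g)) = 1$ produces $\mu(g) := c(r(g))$ satisfying Definition \ref{gpd amen} -- condition (iii) is automatic since $r(gk) = r(g)$.

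For (b), suppose $H$ decomposes over $\mathcal{A}$ and let a compact $K \subseteq H$ and $\epsilon > 0$ be given. I will fix a large integer $n$ below; let $L \supseteq K \cup K^{-1}$ be an open relatively compact symmetric subset of $H$, and let $L^n$ denote the $n$-fold groupoid product, which is still open and relatively compact in $H$. Applying decomposability of $H$ at $L^n$ produces an open cover $H^{(0)} = U_0 \cup U_1$ together with open subgroupoids $H_0, H_1 \in \mathcal{A}$ generated, respectively, by $\{h \in L^n \colon s(h) \in U_i\}$. Since $L$ is symmetric, any $k \in K$ with $s(k) \in U_i$ or $r(k) \in U_i$ lies in $H_i$, so each compact set $K_i := \{k \in K \colon s(k)\text{ or }r(k) \in \overline{U_i}\}$ is a subset of $H_i$. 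Invoking amenability of $H_i$ at $(K_i, \epsilon')$ for a suitably small $\epsilon' \ll \epsilon$ yields continuous compactly supported $\mu_i \colon H_i \to [0,1]$ satisfying Definition \ref{gpd amen}; extend each $\mu_i$ by zero to all of $H$, which is continuous because $H_i$ is open in $H$. Given a partition of unity $\{\phi_0, \phi_1\}$ on $H^{(0)}$ subordinate to $\{U_0, U_1\}$ (to be pinned down below), set
\[
\mu(g) := \phi_0(s(g))\,\mu_0(g) + \phi_1(s(g))\,\mu_1(g).
\]
A direct computation then yields amenability condition (i) from $\phi_0 + \phi_1 = 1$ together with (i) on each $\mu_i$; condition (ii) at $k \in K$ comes out to $\phi_0(r(k))\bigl(1 - \sum \mu_0\bigr) + \phi_1(r(k))\bigl(1 - \sum \mu_1\bigr)$, of absolute value at most $\epsilon'$ because $\phi_i(r(k)) \neq 0$ forces $k \in K_i$; and condition (iii) at $k \in K$ is bounded, after grouping terms and using $\sum_g \mu_i(gk) \leq 1$, by $\epsilon' + \sum_{i=0}^1 |\phi_i(r(k)) - \phi_i(s(k))|$.

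The main obstacle is therefore the construction of a partition of unity $\{\phi_0, \phi_1\}$ with the approximate invariance $|\phi_i(r(k)) - \phi_i(s(k))| < \epsilon/8$ for every $k \in K$ and each $i$; this estimate combined with the bound above will close the argument. Inflating the scale from $K$ to $L^n$ is precisely what provides room for this: the sets $U_i$ produced by decomposability at $L^n$ have ``interiors'' in the $L$-reachability structure that are of depth $n$, and one constructs the desired $\phi_i$ by iteratively averaging the characteristic function of $U_i \setminus U_{1-i}$ against the counting measure on $L$-source-fibres a total of $n$ times. Each averaging step perturbs the function by at most $1/n$ under a single $L$-translation, and depth-of-interior keeps the iterates supported inside $U_i$; choosing $n \geq 8/\epsilon$ therefore secures the required bound. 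The most delicate point is verifying that this iterative averaging yields genuine continuous functions on the possibly non-metric unit space $H^{(0)}$, but this follows from standard paracompactness and Urysohn-type arguments together with local finiteness of the $L$-fibres coming from étaleness.
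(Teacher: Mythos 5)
Your overall framework --- showing that the class $\mathcal{A}$ of amenable open subgroupoids of $G$ contains all relatively compact open subgroupoids (base case) and is closed under decomposability (inductive step) --- is exactly the paper's strategy, and your formula $\mu(g) := \phi_0(s(g))\mu_0(g) + \phi_1(s(g))\mu_1(g)$ for the inductive step coincides with the paper's. However, there is a genuine gap in your base case.

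You assert that a relatively compact open subgroupoid $H$ of $G$ is ``a proper \'{e}tale groupoid'' admitting a continuous compactly supported cut-off function $c$ with $\sum_{g\in H_x} c(r(g)) = 1$, and that $\mu(g) := c(r(g))$ then witnesses amenability. Both claims fail. Openness plus relative compactness does not imply properness of $(r,s):H \to H^{(0)}\times H^{(0)}$, and such a cut-off need not exist even before imposing compact support. A concrete counterexample: let $X = \{1/n : n\in\mathbb{N}\}\cup\{0\}$, let $G = (\mathbb{Z}/2\mathbb{Z})\times X$ be the compact \'{e}tale group bundle with trivial action, and take $H = G^{(0)}\cup(\{1\}\times\{1/n : n\in\mathbb{N}\})$. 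Then $H$ is an open subgroupoid with $\overline{H}=G$ compact. A cut-off would need $c(1/n) = 1/2$ for all $n$ (two elements in each fibre over $1/n$) yet $c(0) = 1$ (one element in the fibre over $0$), contradicting continuity; and even granting $c$, the set $r^{-1}(\operatorname{supp} c)$ need not be compact because $H$ fails to be closed in $G$. Note that $H$ \emph{is} amenable --- but one has to choose $\mu$ that does \emph{not} have the form $c\circ r$. This is precisely why the paper's proof of the base case (Lemma \ref{rel com amen}) is substantially more involved: it inducts on $N = \sup_x |r^{-1}(x)\cap H|$, isolates the open set $U$ where the fibre cardinality is exactly $N$ (on which properness \emph{does} hold and the cut-off construction works), handles the closed complement $F$ by induction, and then glues the two via exactness of maximal groupoid $C^*$-algebras, the coincidence of reduced and maximal for amenable pieces, and nuclearity under extensions. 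Your argument skips all of this, and the skipped content is where the real difficulty lies.

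A secondary, less critical issue: in the inductive step you sketch the construction of the almost-invariant partition of unity $\{\phi_0,\phi_1\}$ by ``iteratively averaging the characteristic function of $U_i\setminus U_{1-i}$ against the counting measure on $L$-source-fibres.'' The idea of inflating scale from $K$ to $L^n$ and averaging is sound in spirit, but the details (continuity on a non-metrizable unit space, keeping the iterates subordinate to $U_i$, controlling the decay per averaging step) are nontrivial and are only asserted. The paper delegates exactly this to a cited technical result (Lemma \ref{decom pou}, referencing \cite[Proposition~7.1]{Guentner:2014aa}), which you would need either to reproduce or to cite. Also, your definition $K_i := \{k\in K : s(k)\text{ or } r(k) \in \overline{U_i}\}$ need not satisfy $K_i\subseteq H_i$: membership in $H_i$ is only guaranteed when $s(k)$ or $r(k)$ lies in $U_i$, not merely in its closure, so the cover must be shrunk slightly before this step.
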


This result is inspired by \cite[Theorem 4.6]{Guentner:2013aa}, part of which states that finite decomposition complexity for a bounded geometry metric space implies property A \cite[Definition 2.1]{Yu:200ve}.  As finite decomposition complexity for a bounded geometry metric space is equivalent to finite dynamical complexity for the corresponding coarse groupoid (Theorem \ref{gen fdc vs fdc} above), and as property A for a bounded geometry metric space is equivalent to amenability of the corresponding coarse groupoid (\cite[Theorem 5.3]{Skandalis:2002ng}), Theorem \ref{fdc vs amen} above is a generalization of the result of \cite[Theorem 4.6]{Guentner:2013aa}.

We now turn to the proof of Theorem \ref{fdc vs amen}.  It suffices, as usual, to show that the class $\mathcal{A}$ of amenable open subgroupoids of any \'{e}tale groupoid $G$ contains the relatively compact open subgroupoids, and is closed under decomposability.   The following lemma (which is presumably well-known to experts) starts this off.

\begin{lemma}\label{rel com amen}
Let $G$ be an \'{e}tale groupoid, and let $H$ be an open subgroupoid of $G$ with compact closure.  Then $H$ is amenable.
\end{lemma}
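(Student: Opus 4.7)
The plan is to construct, for each compact $K \subseteq H$ and $\epsilon > 0$, an explicit $\mu : H \to [0,1]$ witnessing amenability in the sense of Definition \ref{gpd amen}. The essential input from the hypothesis is that, since $\bar H$ is compact and $G$ is \'{e}tale, $\bar H$ can be covered by finitely many open bisections $U_1,\ldots,U_n$ of $G$; consequently every fibre satisfies $|H_x|\leq n$ uniformly in $x$, and any compactly supported continuous function on $H^{(0)}$ can be push-forward-summed along $H$ to produce a continuous function on $H^{(0)}$ (by a standard partition-of-unity argument on the bisection cover).

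Given $K$ and $\epsilon$, I would set $L:=r(K)\cup s(K)$ and choose a continuous compactly supported $\phi:H^{(0)}\to[0,1]$ with $\phi\equiv 1$ on a compact neighbourhood of $L$. Define the push-forward
$$
\psi(x):=\sum_{g\in H_x}\phi(r(g)),
$$
which is continuous, bounded above by $n$, and satisfies $\psi\geq 1$ on $L$. The key observation is that $\psi$ is automatically invariant under the action of $H$: for any $g\in H$, the bijection $H_{s(g)}\to H_{r(g)}$, $h\mapsto hg^{-1}$, preserves the range map, and hence $\psi(s(g))=\psi(r(g))$. I would then propose
$$
\mu(g):=\lambda(s(g))\,\phi(r(g))/\psi(s(g))
$$
for a compactly supported cutoff $\lambda:H^{(0)}\to[0,1]$ with $\lambda\equiv 1$ on $L$ and $\mathrm{supp}(\lambda)\subseteq\{\psi>0\}$ (extending $\mu$ by $0$ where $\psi(s(g))=0$). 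The pointwise bound $\phi(r(g))\leq \psi(s(g))$ gives $\mu\leq\lambda\circ s$, which handles continuity at the boundary of $\{\psi>0\}$. Conditions (i)--(iii) of Definition \ref{gpd amen} then reduce to direct computations using invariance of $\psi$: $\sum_{g\in H_x}\mu(g)=\lambda(x)$, which is $\leq 1$ always and equal to $1$ on $L$, handling (i) and (ii); while (iii) collapses to $0$ because invariance of $\psi$ combined with $r(gk)=r(g)$, $s(gk)=s(k)$ and $\lambda(r(k))=\lambda(s(k))=1$ makes each term $\mu(g)-\mu(gk)$ vanish identically.

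The main obstacle I anticipate is ensuring that $\mu$ has compact support inside $H$, and not merely inside the ambient $G$. The difficulty is that $H$ need not be closed in $G$: the boundary $s(\bar H\setminus H)$ can meet $H^{(0)}$, and when this intersection meets $L$ the closure in $G$ of $\{\mu\neq 0\}$ may pick up limit points in $\bar H\setminus H$, so that $\{\mu\neq 0\}$ fails to be relatively compact in $H$. Handling this will require choosing $\phi$ and $\lambda$ more carefully, localising along the bisection cover of $\bar H$ so that the effective support of $\mu$ consists of pieces of $\bar H$ contained entirely in the open set $H$. This is the only technical point; once it is addressed, the verification of (i)--(iii) is immediate from the construction above.
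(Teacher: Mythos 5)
There is a genuine gap, and it is located one step earlier than the obstacle you anticipated: the push-forward $\psi(x) := \sum_{g \in H_x} \phi(r(g))$ need not be continuous. The ``standard partition-of-unity argument on the bisection cover'' proves continuity of $x\mapsto\sum_{g\in G_x}f(g)$ when $f\in C_c(G)$, but the integrand you are summing is $f(g)=\phi(r(g))\cdot 1_H(g)$, and $1_H$ is only lower semicontinuous since $H$ is open but typically not closed in $G$. Concretely, a partition of unity subordinate to a finite bisection cover of $\overline{H}$ produces functions supported in $\overline{H}\cap U_i$, not in $H\cap U_i$, and truncating to $H$ destroys continuity at $\partial H = \overline{H}\setminus H$. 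The underlying phenomenon is that $x\mapsto |H_x|$ is lower semicontinuous but not upper semicontinuous: a net $g_k\in H_{x_k}$ with $x_k\to x_0$ and $r(g_k)\in\operatorname{supp}(\phi)$ can converge (using compactness of $\overline H$) to a limit in $\partial H$ rather than in $H_{x_0}$, so $\psi$ jumps \emph{down} at $x_0$. For a concrete instance, take $G = \mathbb{Z}/2\ltimes X$ with $X=\{0\}\cup\{\pm 1/n\}$ and the flip action $a\cdot x = -x$, and set $H = G\setminus\{(a,0)\}$; then $H$ is open with $\overline H = G$ compact, $|H_x|=2$ for $x\neq 0$ and $|H_0|=1$, so for any cutoff $\phi$ with $\phi(0)=1$ (forced once $K$ meets a neighbourhood of the unit over $0$) the function $\psi$ is discontinuous at $0$, and your $\mu$ inherits that discontinuity. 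No alternative choice of $\phi,\lambda$ repairs this, because conditions (ii)--(iii) force $\lambda\equiv 1$ on $r(K)\cup s(K)$, which pins you down near the jump locus.

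The paper's proof sidesteps exactly this by not trying to produce a single global continuous witness. It inducts on $N := \sup_x |H_x|$ (finite by compactness of $\overline H$), restricts to the open invariant set $U=\{x : |H_x|=N\}$ where the fibre count is \emph{locally constant}, and there uses the explicit (and genuinely continuous) $\mu(h)=\tfrac{1}{N}\phi(s(h))$ -- which is essentially your formula in the regime where $\psi$ is a constant and continuity is unproblematic. On the closed complement $F = H^{(0)}\setminus U$ the restricted groupoid $H_F$ has strictly smaller $N$ and is amenable by induction, and the two pieces are then glued not by a continuous partition-of-unity argument on $H$ itself, but via a short exact sequence of groupoid $C^*$-algebras combined with nuclearity characterisations of amenability. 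That passage to $C^*$-algebraic extension arguments is precisely what replaces the continuous glueing your construction would need but cannot have. You identified that there was a technical issue to address, but misdiagnosed it as compact support; the failure is already one of continuity, and it is not fixable within your framework.
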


\begin{remark}\label{sig com rem}
In the case $H$ as in Lemma \ref{rel com amen} is $\sigma$-compact, there is a direct proof of the lemma using \cite[Theorem 2.14]{Renault:2013lp}.  Indeed, in this theorem Renault shows that for a $\sigma$-compact, locally compact, Hausdorff, \'{e}tale groupoid $G$ (and indeed more generally), amenability is equivalent to the existence of a sequence $(\mu_n:G\to [0,\infty))$ of Borel functions such that 
\begin{enumerate}[(i)]
\item for all $x\in G^{(0)}$, we have $\sum_{g\in G_x}\mu_n(g)\leq 1$;
\item for all $x\in G^{(0)}$, we have $\sum_{g\in G_x}\mu_n(g)\to 1$;
\item for all $k\in G$, we have $\sum_{g\in G_{r(k)}}|\mu_n(g)-\mu_n(gk)|\to 0.$
\end{enumerate}
Now, let $H$ be as in Lemma \ref{rel com amen} and also be $\sigma$-compact.  Define 
$$
\mu:H^{(0)}\to [0,1], \quad x\mapsto |H_x|^{-1}.
$$
It is not difficult to check that the (constant) sequence $(\mu_n=\mu)$ satisfies the properties above exactly, so we are done.  

Below we give a general proof of Lemma \ref{rel com amen} as some examples that are important to us (specifically, the coarse groupoids of Definition \ref{coarse gpd}) have open, relatively compact subgroupoids that are not $\sigma$-compact.
\end{remark}

\begin{proof}[Proof of Lemma \ref{sig com rem}]
Let $N=\sup\{|r^{-1}(x)\cap H| \mid x\in H^{(0)}\}$.  Compactness of $\overline{H}$ implies that this is finite.  We will proceed by induction on $N$.  In the base case $N=1$, $H$ is just a space and is thus clearly amenable.  Assume now that we have proven all cases up to $N-1$, and assume that $H$ has some range fibers with cardinality $N$, but none higher.  Let $U=\{x\in H^{(0)}\mid |r^{-1}(x)|=N\}$, which is open as $H$ is \'{e}tale, and clearly it is invariant for the $H$ acton.  Let $F=H^{(0)}\setminus U$, which is closed, and let $H_U$ and $H_F$ be the respective restrictions of $H$ to $U$ and $F$.   Note that $H_F$ is amenable by inductive hypothesis.  We first claim that $H_U$ is amenable.

Indeed, to see this, let $K\subseteq H_U$ be a compact set, and $\epsilon>0$.   Let $\phi:H_U^{(0)}\to [0,1]$ be any compactly supported function that is equal to $1$ on $r(K)\cup s(K)$, and define $\mu(h)=\frac{1}{N}\phi(s(h))$ for all $h\in H_U$; we claim that this has the right properties.  We first claim that $\mu$ is compactly supported.  To see this, note that $\mu$ is supported in $s^{-1}(\text{supp}(\phi))$, whence it suffices to show that $s^{-1}(E)$ is compact for any compact subset $E$ of $H^{(0)}_U$.  For each $x\in H^{(0)}$, choose an open set $V_x$ with compact closure, and such that $s^{-1}(V_x)$ can be written as a disjoint union 
$$
s^{-1}(V_x)=\bigsqcup_{j=1}^N V_x^{(j)},
$$
where $s$ restricts to a homeomorphism $s:V_x^{(j)}\to V_x$, and each $V_x^{(j)}$ has compact closure; using local compactness, the fact that $s$ is a local homeomorphism and the fact that each source fiber contains exactly $N$ elements, it is not too difficult to see that such sets exist.  Now, let $x_1,...,x_n$ be a finite collection of points of $H_U^{(0)}$ such that $E\subseteq \bigcup_{i=1}^nV_{x_i}$.  Then 
$$
s^{-1}(E)\subseteq \bigcup_{i=1}^n\bigcup_{j=1}^N V_{x_i}^{(j)}
$$
the set on the right is a finite union of sets with compact closure, so has compact closure, and the set $s^{-1}(E)$ is closed.  It is thus compact, and we have completed the proof that $\mu$ is compactly supported.  

To complete the proof that $\mu$ has the properties needed to show amenability, for each $x\in H_U^{(0)}$, note now that 
$$
\sum_{h\in (H_U)_x}\mu(h)=\frac{1}{N}\sum_{h\in H_x}\phi(x)=\phi(x),
$$
which is at most one for a general $x$, and exactly one if $x=r(k)$ for some $k\in K$.  On the other hand, we have that for each $k\in K$,
$$
\sum_{h\in (H_U)_{r(k)}}|\mu(h)-\mu(hk)|=\frac{1}{N}\sum_{h\in (H_U)_{r(k)}}|\phi(r(k))-\phi(s(k))|,
$$
which is exactly zero as $\phi$ is identically one on $r(K)\cup s(K)$.  This completes the proof that $H_U$ is amenable.

Now, consider the commutative diagram of $C^*$-algebras
$$
\xymatrix{ 0\ar[r] & C^*_{\max}(H_U)\ar[r] \ar[d] & C^*_{\max}(H) \ar[r] \ar[d] & C^*_{\max}(H_F) \ar[r] \ar[d] & 0  \\
 0\ar[r] & C^*_{r}(H_U)\ar[r] & C^*_{r}(H) \ar[r] & C^*_{r}(H_F) \ar[r] & 0~.}
$$
The top row is exact as this always holds for the maximal groupoid $C^*$-algebra (see \cite[Lemma 6.3.2]{Anantharaman-Delaroche:2000mw} -- the second countability assumption used there is unnecessary in the \'{e}tale case).  The bottom row might not be exact, although all that can go wrong is that the kernel of the map out of $C^*_r(H)$ might not equal the image of the map going in.  However, as $H_U$ and $H_F$ are amenable, the left and right hand vertical arrows are the identity map (\cite[Corollary 5.6.17]{Brown:2008qy}); it follows from this and a diagram chase that the bottom row is in fact exact in this case.  Now, as $H_U$ and $H_F$ are amenable, their reduced $C^*$-algebras are nuclear \cite[Theorem 5.6.18]{Brown:2008qy}.  Finally, an extension of nuclear $C^*$-algebras is nuclear \cite[Proposition 10.1.3]{Brown:2008qy}, so this implies that $C^*_r(H)$ is nuclear and thus that $H$ is amenable by \cite[Theorem 5.6.18]{Brown:2008qy} again.
\end{proof}

We will need the following lemma about the existence of almost invariant partitions of unity, which can be proved in the same way as \cite[Proposition 7.1]{Guentner:2014aa}.

\begin{lemma}\label{decom pou}
Let $G$ be an \'{e}tale groupoid, $\mathcal{C}$ a collection of open subgroupoids of $G$, and let $H$ be an open subgroupoid of $G$ that decomposes over $\mathcal{C}$.  Then for any open, relatively compact subset $K$ of $H$ and any $\epsilon>0$ there exists an open cover $\{U_0,U_1\}$ of $H^{(0)}$ and continuous compactly supported functions $\phi_i:H^{(0)}\to [0,1]$ with the following properties.
\begin{enumerate}[(i)]
\item For each $i\in \{0,1\}$, the set 
$$
\{k\in K\mid s(k)\in U_i\}
$$
generates an open subgroupoid $H_i$ of $H$ (whence also of $G$) in the class $\mathcal{C}$.
\item Each $\phi_i$ is supported in $U_i$.
\item For all $x\in H^{(0)}$, $\phi_0(x)+\phi_1(x)\leq 1$, and for all $k\in K$, $\phi_0(r(k))+\phi_1(r(k))=1$.
\item For any $k\in K$, and $i\in \{0,1\}$, $|\phi_i(s(k))-\phi_i(r(k))|<\epsilon$. \qed
\end{enumerate}
\end{lemma}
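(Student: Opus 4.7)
The plan is to apply the decomposability hypothesis not to $K$ itself but to a suitably inflated relatively compact open subset $\tilde K$ of $H$ obtained by iterated multiplication of $K$ with itself, and then to encode the almost-invariance demanded by clause (iv) via the discrete ``distance'' in $\tilde K$-steps to the complement of each piece of a resulting open cover of $H^{(0)}$.

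First I would fix an integer $n>1/\epsilon$. For each $j\geq 0$, let $K^{(j)}$ denote the set of all $j$-fold products $k_1\cdots k_j$ of composable elements $k_i\in K\cup K^{-1}\cup H^{(0)}$; since $H$ is \'{e}tale, multiplication is open and continuous, so $K^{(j)}$ is open, and relative compactness propagates from $K$ to each $K^{(j)}$. Setting $\tilde K:=K^{(n+1)}$, which is open and relatively compact in $H$ and contains $K$, the decomposability hypothesis applied to $\tilde K$ yields an open cover $H^{(0)}=\tilde U_0\cup\tilde U_1$ such that for each $i\in\{0,1\}$, the subgroupoid $\tilde H_i$ of $H$ generated by $\{k\in\tilde K:s(k)\in\tilde U_i\}$ lies in $\mathcal{C}$.

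Next I would construct, for each $i\in\{0,1\}$ and each $j\in\{0,\ldots,n\}$, the nested subsets
$$
V_i^{(j)}:=\{x\in H^{(0)} : s(l)\in\tilde U_i\text{ for every }l\in K^{(j)}\text{ with }r(l)=x\},
$$
satisfying $\tilde U_i=V_i^{(0)}\supseteq V_i^{(1)}\supseteq\cdots\supseteq V_i^{(n)}$, together with the one-step propagation property: if $k\in K$ with $r(k)=x$ and $s(k)=y$, then $x\in V_i^{(j)}$ forces $y\in V_i^{(j-1)}$ and vice versa via $k^{-1}$, because any $l\in K^{(j-1)}$ with $r(l)=y$ yields $kl\in K^{(j)}$ with $r(kl)=x$ and $s(kl)=s(l)$. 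The sets $V_i^{(j)}$ are typically neither open nor closed, so using local compactness of $H^{(0)}$ and compactness of each $\overline{K^{(j)}}$ I replace them from inside by nested open sets $W_i^{(j)}$ preserving the one-step propagation, and pick via Urysohn continuous functions $\chi_i^{(j)}:H^{(0)}\to[0,1]$ with $\chi_i^{(j)}\equiv 1$ on $\overline{W_i^{(j+1)}}$ and $\supp(\chi_i^{(j)})\subseteq W_i^{(j)}$. Setting $\tilde\phi_i:=\tfrac{1}{n}\sum_{j=0}^{n-1}\chi_i^{(j)}$ yields continuous functions supported in $\tilde U_i$ with $|\tilde\phi_i(r(k))-\tilde\phi_i(s(k))|\leq 2/n$ for all $k\in K$.

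To repair the partition-of-unity condition and ensure compact support, I would choose a continuous compactly supported bump $\rho:H^{(0)}\to[0,1]$ equal to $1$ on a neighborhood of the compact set $r(K)\cup s(K)$, itself almost-$K$-invariant (by running the same averaging trick at a slightly larger scale), and set $\phi_i:=\rho\cdot\tilde\phi_i/(\tilde\phi_0+\tilde\phi_1)$ wherever the denominator is positive (which includes the relevant compact set, since $\{\tilde U_0,\tilde U_1\}$ covers $H^{(0)}$). Taking $U_i$ to be a slight open thickening of $\supp(\phi_i)$ contained in $\tilde U_i$, clauses (ii), (iii), (iv) follow from the construction (with $n$ slightly enlarged to absorb the factor of $2$), and clause (i) follows because $\{k\in K:s(k)\in U_i\}\subseteq\{k\in\tilde K:s(k)\in\tilde U_i\}$, so $H_i$ is an open subgroupoid of $\tilde H_i\in\mathcal{C}$ (in the envisaged applications $\mathcal{C}$ is closed under passing to open subgroupoids, cf.\ Lemma \ref{fdc lem}). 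The main obstacle is the mismatch between the discrete, combinatorial nature of iterated $K$-compositions and the continuous topology of $H^{(0)}$: the $V_i^{(j)}$ are intrinsically neither open nor closed, so the open approximations $W_i^{(j)}$ must be chosen carefully enough to maintain the propagation property while still permitting continuous interpolation, exploiting that $r,s$ are local homeomorphisms and that each $\overline{K^{(j)}}$ is compact.
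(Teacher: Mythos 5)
Your high-level strategy (inflate $K$ before invoking decomposability, measure ``$K$-depth'' into each piece of the resulting cover, and average) is a sensible one, but the sketch has a genuine gap at exactly the point you flag as the ``main obstacle'', and one smaller definitional slip.  The smaller slip first: as defined, $K^{(j)}$ is never relatively compact when $H^{(0)}$ is non-compact, since allowing factors from all of $H^{(0)}$ forces $H^{(0)}\subseteq K^{(j)}$ for every $j\geq 1$; you should instead use a fixed relatively compact open set of units (a neighbourhood of $r(K)\cup s(K)$, say) as the ``trivial'' factor, and build the chain $K^{(j)}\subseteq K^{(j+1)}$ inductively so that $\overline{K^{(j)}}\subseteq K^{(j+1)}$.

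The substantive gap is the step ``replace them from inside by nested open sets $W_i^{(j)}$ preserving the one-step propagation'', followed by the choice of Urysohn functions $\chi_i^{(j)}$ that are $1$ on $\overline{W_i^{(j+1)}}$ and supported in $W_i^{(j)}$.  For such a $\chi_i^{(j)}$ to exist one needs the strict nesting $\overline{W_i^{(j+1)}}\cap L\subseteq W_i^{(j)}$ on a suitable compact $L$, and this does not come for free from any construction you describe.  The natural candidate $W_i^{(j)}:=H^{(0)}\setminus r\big(\overline{K^{(j)}}\cap s^{-1}(F_i)\big)$, where $F_i:=H^{(0)}\setminus \tilde U_i$, is open with compact complement and satisfies the propagation property, but can fail strict nesting: through any $l\in\overline{K^{(j)}}\cap s^{-1}(F_i)$ one can choose a bisection $N\subseteq K^{(j+1)}$, and then $r(N\cap s^{-1}(F_i))$ is a homeomorphic copy of the closed set $F_i\cap s(N)$ inside the open set $r(N)$, so may have empty interior; thus $r(l)$ lies in $H^{(0)}\setminus W_i^{(j)}$ but need not lie in the interior of $H^{(0)}\setminus W_i^{(j+1)}$, i.e.\ $\overline{W_i^{(j+1)}}\not\subseteq W_i^{(j)}$.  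Repairing this requires genuinely building in slack, for instance by simultaneously shrinking the cover level-by-level (choose open $\tilde U_i=\tilde U_i^{(0)}\supseteq\cdots\supseteq \tilde U_i^{(n)}$ with $\overline{\tilde U_i^{(j+1)}}\cap L\subseteq \tilde U_i^{(j)}$ and $\tilde U_0^{(n)}\cup \tilde U_1^{(n)}\supseteq L$) and using $\tilde U_i^{(j)}$ in the definition of the depth-$j$ set, then re-verifying propagation for the modified family; none of this is carried out.  Finally, you are right that your argument only proves clause (i) under the extra hypothesis that $\mathcal{C}$ is closed under passage to open subgroupoids: that is satisfied in the paper's one use of this lemma (Theorem~\ref{fdc vs amen}, where $\mathcal{C}$ is the class of amenable open subgroupoids), but it is not in the statement, and any approach that inflates $K$ before invoking decomposability will face the same issue.  (The paper gives no proof but only a citation, so I cannot compare your route directly with the authors' intended one.)
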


We are now ready to complete the proof of the Theorem \ref{fdc vs amen} by showing that the collection $\mathcal{A}$ of amenable open subgroupoids of $G$ is closed under decomposability.

\begin{proof}[Proof of Theorem \ref{fdc vs amen}]
Let $H$ be an open subgroupoid of $G$ that decomposes over $\mathcal{A}$, and let $K\subseteq H$ be compact, and $\epsilon>0$.  Using local compactness, expanding $K$ slightly we may assume that $K$ is in fact open and relatively compact.  Let $U_0,U_1$, $H_0,H_1$, and $\phi_0,\phi_1$ be as in Lemma \ref{decom pou} for the relatively compact set $K$ and error estimate $\epsilon/3$.  An elementary argument shows that $K$ can be written as $K_0\cup K_1$, where each $K_i$ is open, and has compact closure inside $H_i$.  For each $i$, let $\mu_i:H_i\to [0,1]$ be a function as in the definition of amenability, with respect to compact subset the closure $\overline{K_i}$ of $K_i$, and error estimate $\epsilon/3$.  Extending by zero outside (the open set) $H_i$, we may assume that $\mu_i$ is defined on all of $H$.  Define 
$$
\mu:H\to [0,1],\quad h\mapsto \phi_0(s(h))\mu_0(h)+\phi_1(s(h))\mu_1(h),
$$
which we claim has the right properties.

Indeed, note first for any $x\in H^{(0)}$,
\begin{align*}
\sum_{h\in H_{x}} \mu(h) & =\sum_{h\in H_{x}}\phi_0(s(h))\mu_0(h)+\phi_1(s(h))\mu_1(g) \\ 
& = \phi_0(x)\sum_{h\in (H_0)_x}\mu_0(h)+\phi_1(x)\sum_{h\in (H_1)_x}\mu_1(h) \\
& \leq \phi_0(x)+\phi_1(x)\leq 1.
\end{align*}
On the other hand, for any $k\in K$,
\begin{align*}
& \Big|1-\sum_{h\in H_{r(k)}} \mu(h)\Big|  =\Big|1-\sum_{h\in H_{r(k)}}\phi_0(s(h))\mu_0(h)+\phi_1(s(h))\mu_1(h)\Big| \\
& = \phi_0(r(k))\Big|1-\sum_{h\in (H_0)_{r(k)}}\mu_0(h)\Big|+\phi_1(r(k))\Big|1-\sum_{h\in (H_1)_{r(k)}}\mu_1(h)\Big| \\ 
& <\phi_0(r(k))\epsilon+\phi_1(r(k))\epsilon=\epsilon.
\end{align*}
Finally, we note that for any $k\in K$,
\begin{align*}
& \sum_{h\in H_{r(k)}}|\mu(h)-\mu(hk)|  \\& \leq \sum_{i=0}^1\sum_{h\in H_{r(k)}}\phi_i(s(h))|\mu_i(h)-\mu_i(hk)|+\mu_i(hk)|\phi_i(s(h))-\phi_i(s(hk))| \\
& =\sum_{i=0}^1\sum_{h\in (H_i)_{r(k)}}\phi_i(r(k))|\mu_i(h)-\mu_i(hk)|+\mu_i(hk)|\phi_i(r(k))-\phi_i(s(k))| \\
&=\sum_{i=0}^1\phi_i(r(k))\sum_{h\in (H_i)_{r(k)}}|\mu_i(h)-\mu_i(hk)| \\ & \quad \quad \quad \quad \quad \quad+\sum_{i=0}^1|\phi_i(r(k))-\phi_i(s(k))|\sum_{h\in (H_i)_{r(k)}}\mu_i(hk) \\
& < \Big(\sum_{i=0}^1 \phi_i(r(k))\Big)\frac{\epsilon}{3}+\frac{2\epsilon}{3}\cdot 1= \epsilon.
\end{align*}
This completes the proof.
\end{proof}

\subsection*{Open questions}

To state the following lemma, we recall that if $G$ is an \'{e}tale groupoid and $x\in G^{(0)}$, then the \emph{isotropy group} of $G$ at $x$ is 
$$
\{g\in G\mid r(g)=s(g)=x\}.
$$
We then have the following, which provides an easy obstruction to finite dynamical complexity.

\begin{lemma}\label{isotropy lemma}
Let $G$ be an \'{e}tale groupoid with finite dynamical complexity.  Then all isotropy groups of $G$ are locally finite.
\end{lemma}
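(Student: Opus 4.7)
The plan is to proceed by induction on the definition of finite dynamical complexity. Define $\mathcal{D}'$ to be the class of open subgroupoids $H$ of $G$ such that every isotropy group $H_x^x$ (for $x \in H^{(0)}$) is locally finite, i.e.\ every finitely generated subgroup of $H_x^x$ is finite. Since $\mathcal{D}_g$ is by definition the smallest class of open subgroupoids containing the relatively compact ones and closed under decomposability, it suffices to verify these two closure properties for $\mathcal{D}'$.

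For the base case, suppose $H$ is an open subgroupoid of $G$ with compact closure. Since $G$ is \'{e}tale, the range map $r$ is a local homeomorphism, so each fiber $r^{-1}(x)$ is discrete. Intersecting with $\overline{H}$, which is compact, gives a discrete compact set, hence finite; in particular $H_x^x \subseteq r^{-1}(x) \cap H$ is finite. Thus $H \in \mathcal{D}'$.

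For the inductive step, suppose $H$ decomposes over $\mathcal{D}'$, and let $x \in H^{(0)}$ together with a finite subset $F \subseteq H_x^x$; I must show $\langle F \rangle$ is finite. Using local compactness of $G$, choose an open, relatively compact subset $K \subseteq H$ with $F \subseteq K$. By the decomposability hypothesis applied to this $K$, there is an open cover $H^{(0)} = U_0 \cup U_1$ such that, for each $i \in \{0,1\}$, the subgroupoid $H_i$ of $H$ generated by $\{h \in K \mid s(h) \in U_i\}$ lies in $\mathcal{D}'$. Pick $i \in \{0,1\}$ with $x \in U_i$; without loss of generality $i = 0$. Every $f \in F$ satisfies $s(f) = x \in U_0$ and $f \in K$, so $F$ is contained in the generating set of $H_0$, and since $F \subseteq H_x^x$ we in fact have $F \subseteq (H_0)_x^x$. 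By inductive hypothesis $(H_0)_x^x$ is locally finite, so $\langle F \rangle$ is finite, as desired.

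The argument is essentially painless because of the special feature of isotropy elements: both source and range equal $x$, so as soon as $x$ sits in one of the pieces $U_i$ of the decomposition, \emph{all} of $F$ is forced into the corresponding smaller groupoid $H_i$. Consequently there is no real obstacle; the only mild subtlety is simply matching conventions between Definitions \ref{gpd fdc} and \ref{real gpd fdc} so that the generating set in the decomposition is indexed by $s(h) \in U_i$ (as stated in \ref{real gpd fdc}), which is what allows the isotropy elements of $F$ to be captured inside $H_0$ when $x \in U_0$.
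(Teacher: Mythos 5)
Your proof is correct and follows exactly the strategy the paper outlines (defining the class of open subgroupoids with locally finite isotropy and showing it contains the relatively compact ones and is closed under decomposability); the paper simply leaves the details to the reader, and you have filled them in accurately. The key observation you highlight — that isotropy elements have $s(f) = r(f) = x$, so once $x$ lies in one of the cover sets $U_i$ the whole finite set $F$ is captured in the generating set of $H_i$ — is precisely the point that makes the inductive step go through.
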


\begin{proof}
Let $\mathcal{LF}$ be the collection of all open subgroupoids of $G$ whose isotropy groups are locally finite.  It suffices to show that $\mathcal{LF}$ contains the relatively compact open subgroupoids, and is closed under decomposability. This is reasonably straightforward, and we leave the details to the reader.
\end{proof}

At this point, we know two obstructions to a groupoid having finite dynamical complexity: having infinite isotropy, and being non-amenable.  The following question seems particularly interesting.  It is closely related to \cite[Question 5.1.3]{Guentner:2013aa}, and is a more general version of the well-known question as to whether finite decomposition complexity and Yu's property A are equivalent.

\begin{question}\label{fdc vs amen q}
Say $G$ is a principal\footnote{This means that all isotropy groups are trivial; one could also ask what happens when the isotropy groups are just assumed locally finite.}, amenable, \'{e}tale groupoid.  Must $G$ have finite dynamical complexity?
\end{question}

The connection to finite decomposition complexity of groups is not completely clear.  The following questions are thus natural.

\begin{question}\label{fdc vs fdc q}
If $\Gamma\ltimes X$ is a transformation groupoid with finite dynamical complexity (and $X$ compact), must $\Gamma$ have finite decomposition complexity?
\end{question}

By analogy with the case of finite dynamic asymptotic dimension \cite[Section 6]{Guentner:2014aa}, we suspect the answer to the above question is `yes', but did not seriously pursue this.

\begin{question}\label{fdc vs fdc q2}
If $\Gamma$ has finite decomposition complexity, must it admit an action on the Cantor set with finite dynamical complexity?  
\end{question}

Much more ambitiously, we do not see any obvious obstructions to a positive answer to the following question.  While we would be surprised if it has a positive answer in general, it is also interesting to ask about special classes of groups $\Gamma$ such as nilpotent groups (compare \cite{Szabo:2014aa}), free groups, general word hyperbolic groups, or even linear groups (compare \cite[Section 3]{Guentner:2009tg}).

\begin{question}\label{fdc vs fdc q3}
If $\Gamma$ has finite decomposition complexity, must any free amenable action of $\Gamma$ have finite dynamical complexity?  
\end{question}

Another interesting question, related to our earlier work \cite{Guentner:2014aa} is as follows.

\begin{question}\label{fdc vs dad q}
Say $G$ is an \'{e}tale groupoid with finite dynamic asymptotic dimension.  Must $G$ have finite dynamical complexity?
\end{question}

We suspect the answer is 	`yes', but it is currently not clear.  Note that the answer is clearly yes if the dynamic asymptotic dimension of $G$ is zero or one.  

\begin{question}\label{fdc c* q}
Say $G$ is an \'{e}tale groupoid with finite dynamical complexity.  What structural properties must the reduced $C^*$-algebra $C^*_r(G)$ have?
\end{question}

Certainly $C^*_r(G)$ must be nuclear by Theorem \ref{fdc vs amen} and \cite[Theorem 5.6.18]{Brown:2008qy}.  However, we do not know much beyond this.  For example, if $C^*_r(G)$ is also assumed simple, one might ask about properties of interest in the classification program such as comparison and $\mathcal{Z}$-stability (although to avoid examples like those in \cite{Giol:2010aa} and thus have some hope of positive results, one should assume that $G^{(0)}$ is `reasonable', say for example finite-dimensional, or just a Cantor set).

\section{Comparison to the Baum-Connes assembly map}\label{bc sec}

The purpose of this appendix is to identify the standard picture of the Baum-Connes assembly map for $\Gamma$ with coefficients in $C(X)$ as discussed in say \cite{Baum:1994pr}, with our picture defined using localization algebras and the evaluation-at-zero map.   As it is no more complicated and may be useful for other work, we do this in more generality than necessary for this paper in that we allow $C(X)$ to be replaced by an arbitrary (separable) $\Gamma$-$C^*$-algebra $A$.

Of necessity, we assume more of the reader than in the rest of the paper: specifically, some working knowledge of Hilbert modules (see \cite{Lance:1995ys} for background and conventions) and of equivariant $KK$-theory (see \cite[Section 2]{Kasparov:1988dw} for background and conventions).  On the other hand, it is certainly not necessary to read this appendix to understand the rest of the paper.

\begin{definition}\label{gen loc}
Let $\Gamma$ be a countable discrete group, and $A$ a (separable) $\Gamma$-$C^*$-algebra.  Let $Y$ be a locally compact metric space, equipped with a proper, co-compact, and isometric $\Gamma$-action, and fix a compact subset $K\subseteq Y$ such that $\Gamma\cdot K=Y$.  Let $H_Y$ be a non-degenerate, covariant representation of $C_0(Y)$ with the property that no non-zero element of $Y$ acts as a compact operator.  Let $H$ be a separable infinite-dimensional Hilbert space equipped with the trivial $\Gamma$ action.

Define the Hilbert $A$-module $\e$ to be the tensor product
$$
\e:= H_Y\otimes A \otimes H\otimes \ell^2(\Gamma)
$$
(here the tensor products are completed external tensor product of Hilbert modules: see \cite[Chapter 4]{Lance:1995ys}).  We write elementary tensors in $\e$ as 
$$
\xi\otimes a\otimes \eta\otimes \zeta,\quad \xi\in H_Y,~ a\in A,~ \eta\in H,~\zeta\in \ell^2(\Gamma).
$$
The actions of $\Gamma$ on $C_0(Y)$ and $A$ are denoted $\gamma$ and $\alpha$ respectively, and the unitaries implementing the action of $g\in \Gamma$ on $\ell^2(\Gamma)$ and $H_Y$ are denoted by $\lambda_g$ and $u_g$ respectively.  We define an action $\epsilon$ of $\Gamma$ on $\e$ by
$$
\epsilon_g(\xi\otimes a\otimes \eta\otimes \zeta):=u_g\xi\otimes \alpha_g(a)\otimes \eta\otimes \lambda_g\zeta.
$$
We write $\widetilde{\epsilon}$ for the $\Gamma$-action on the $C^*$-algebra $\mathcal{L}(\e)$ of adjointable operators on $\e$ defined for $e\in E$ by
$$
(\widetilde{\epsilon}_g(T))(e):=\epsilon_g(T(\epsilon_{g^{-1}}(e)));
$$
note that even though the linear isometries $\epsilon_g:\e\to \e$ are not adjointable operators, we nonetheless have that if $T$ is adjointable, then $\widetilde{\epsilon}_g(T)$ is too.

The $A$-valued inner product on $\e$ is given on elementary tensors by
$$
\langle \xi_1\otimes a_1\otimes \eta_1\otimes \zeta_1~,~ \xi_2\otimes a_2\otimes \eta_2\otimes \zeta_2\rangle:=\langle \xi_1,\xi_2\rangle\langle \eta_1,\eta_2\rangle \langle \zeta_1,\zeta_2\rangle a_1^*a_2,
$$
the right action of $a_1\in A$ by
$$
(\xi\otimes a\otimes \eta\otimes \zeta)\cdot a_1:=\xi\otimes aa_1\otimes \eta\otimes \zeta,
$$
and the left action of $f\in C_0(Y)$ by
\begin{equation}\label{y rep}
f\cdot (\xi\otimes a\otimes \eta\otimes \zeta)=f\xi\otimes a\otimes \eta\otimes \zeta;
\end{equation}
if we need notation for this representation, we will denote it by $\pi:C_0(Y)\to \mathcal{L}(\e)$, but we will generally omit the $\pi$ when no confusion is likely to arise. 

Denote by $\mathcal{K}(\e)$ the compact operators on $\e$ in the sense of Hilbert module theory, so in this case $\mathcal{K}(\e)$ is naturally isomorphic to $\mathcal{K}(H_Y\otimes H\otimes \ell^2(\Gamma))\otimes A$ (see \cite[pages 9-10]{Lance:1995ys}).  We will need the following properties of an adjointable operator $T$ on $\e$.

\begin{enumerate}[(i)]
\item $T$ is \emph{locally compact} if for any $f\in C_0(Y)$, $fT$ and $Tf$ are in the $C^*$-algebra $\mathcal{K}(\e)$.
\item The \emph{support} of $T$, denoted $\supp(T)$, is the complement of the set 
$$
\left\{ \begin{array}{l|l} (y,z)\in Y\times Y &  \text{ there are } f_1,f_2\in C_0(Y) \text{ with } f_1(y)\neq 0,f_2(y)\neq 0 \\ &  \text{ and } f_1Tf_2=0\end{array}\right\}.
$$
The \emph{metric propagation} of $T$ is the extended real number 
$$
\sup\{d(y,z)\mid (y,z)\in \supp(T)\}.
$$
\item The \emph{$\Gamma$-propagation} of $T$ is the extended real number
$$
\sup\{|g|\mid \text{supp}(T)\cap K\times gK \neq\varnothing\}
$$
(where we recall that $K\subseteq Y$ is a fixed compact set satisfying $\Gamma\cdot K=Y$).
\item $T$ is \emph{$\Gamma$-invariant} if $\widetilde{\epsilon}_g(T)=T$ for all $g\in \Gamma$.
\end{enumerate}
The \emph{Roe algebra}, denoted $C^*(Y;A)$, associated to $\e$ is the $C^*$-algebra closure of the $*$-algebra of all finite $\Gamma$-propagation, locally compact, $\Gamma$-invariant adjointable operators on $\e$ for the norm inherited from $\mathcal{L}(\e)$.  The \emph{localization algebra}, denoted $C_L^*(Y;A)$, associated to $\e$ is the $C^*$-algebra completion of the $*$-algebra all bounded, uniformly continuous functions 
$$
a:[0,\infty)\to C^*(Y;A)
$$
such that the $\Gamma$-propagation of $a(t)$ is bounded independently of $t$, such that the metric propagation tends to zero as $t$ tends to infinity, and where the norm is given by $\sup_t \|a(t)\|_{C^*(Y;A)}$.
\end{definition}

\begin{remark}\label{iso rem}
\begin{enumerate}[(i)]
\item The exact numerical value of the $\Gamma$-propagation as defined above depends on the choice of compact set $K\subseteq Y$ with $\Gamma\cdot K=Y$. However, whether or not the $\Gamma$-propagation of a family of operators is bounded does not depend on the choice of $K$, whence the Roe algebras and localization algebras do not depend on this choice.
\item Say $Y=P_s(\Gamma)$ is a Rips complex of $\Gamma$ and set $H_Y=\ell^2(Z_s)$ as in Definition \ref{rips}.  Say $A=C(X)$.  Considering $\ell^2(X)$ as a left $A$-module, we may form the internal Hilbert module tensor product of $\e$ and $\ell^2(X)$ over $A$ (see \cite[Chapter 4]{Lance:1995ys}), and thus get an isomorphism of Hilbert spaces
$$
\e\otimes_A \ell^2(X)\cong H_Y\otimes \ell^2(X)\otimes H\otimes \ell^2(\Gamma).
$$
The map
$$
\mathcal{L}(\e)\to \mathcal{B}(\ell^2(\Gamma)\otimes H_Y\otimes H\otimes \ell^2(X)),\quad T\mapsto T\otimes_A 1_{\ell^2(X)}
$$
from the adjointable operators on $\e$ to the bounded operators on $\ell^2(\Gamma)\otimes H_Y\otimes H\otimes \ell^2(X)$ is then an isometric $*$-homomorphism (see the discussion on \cite[page 42]{Lance:1995ys}), and it is not difficult to check that it takes the Roe algebra $C^*(Y;A)$ onto the Roe algebra $C^*(\Gamma\lefttorightarrow X;s)$ as in Definition \ref{roe}.  Thus the two notions agree in this special case.
\item Note that if $Y=P_s(\Gamma)$, then we may use $H_Y=\ell^2(Z_s)$ (to avoid silly degeneracies, we should assume here that $s$ is large enough that $P_s(\Gamma)$ is not zero-dimensional).  It is clear then that if $s\leq t$, there are isometric inclusions $C^*(P_s(\Gamma);A)\to C^*(P_t(\Gamma);A)$, and similarly for the localization algebras.
\end{enumerate}
\end{remark}

Now, there is an evaluation-at-zero map
$$
\epsilon_0:K_*(C_L^*(Y;A))\to K_*(C^*(Y;A))
$$
induced by the obvious underlying $*$-homomorphism.  Our goal here is to relate this to the Baum-Connes assembly map for $\Gamma$ with coefficients in $A$ as in \cite[Section 9]{Baum:1994pr}.

In order to make this precise, let us fix some terminology here.  A \emph{cut-off} function for $Y$ is some non-negative valued $c\in C_c(Y)$ such that 
$$
\sum_{g\in \Gamma} c(gy)=1
$$
for all $y\in Y$; using properness and cocompactness, it is not difficult to see that such a $c$ exists and we fix one from now on.  If as usual $\gamma$ denotes the action of $\Gamma$ on $C_0(Y)$, then the \emph{basic projection} associated to $c$ is the element
$$
p_Y\in C_c(G,C_0(Y))\subseteq C_0(Y)\rtimes_r\Gamma
$$
defined by 
\begin{equation}\label{basic}
p_Y(g) :=\gamma_g(c)c. 
\end{equation}
The associated class
$$
[p_Y]\in K_0(C_0(Y)\rtimes_r\Gamma)=KK_0(\C,C_0(Y)\rtimes_r\Gamma)
$$
does not depend on the choice of $c$.  The \emph{assembly map} 
$$
\mu:KK_*^\Gamma(C_0(Y),A)\to K_*(A\rtimes_r\Gamma)
$$
is defined as the composition 
$$
\xymatrix{ KK_*^\Gamma(C_0(Y),A) \ar[r]^-{j^\Gamma_r} & KK_*(C_0(Y)\rtimes_r\Gamma,A\rtimes_r\Gamma) \ar[r]^-{[p_Y]\otimes \cdot} & KK(\C,A\rtimes_r\Gamma) }
$$
where the first map is Kasparov's descent morphism of \cite[Theorem 3.11]{Kasparov:1988dw}, and the second is Kasparov product with $[p_Y]\in KK(\C,C_0(Y)\rtimes_r\Gamma)$.  

The assembly maps as defined above are functorial under proper, equivariant, continuous maps of the space $Y$ appearing on the left hand side.  Let $\underline{E}\Gamma$ be a universal $\Gamma$-space for proper actions as in \cite[Section 1]{Baum:1994pr} and define 
$$
KK^\Gamma_*(\underline{E}\Gamma,A):=\lim_{Y\subseteq \underline{E}\Gamma}KK^\Gamma_*(C_0(Y),A)
$$
where the limit is over all $\Gamma$-invariant cocompact subspaces of $\underline{E}\Gamma$.  Finally, the \emph{Baum-Connes assembly map} is the map
$$
\mu:KK^\Gamma_*(\underline{E}\Gamma,A)\to K_*(A\rtimes_r\Gamma)
$$
defined as the direct limit of the individual assembly maps defined above.  

We will want to use the following concrete model for $\underline{E}\Gamma$ .  Let $X_\Gamma:=\bigcup_{s\geq 0}P_s(\Gamma)$ equipped with $\ell^1$-metric; as discussed in \cite[Section 2]{Baum:1994pr}, this is a model for the classifying space $\underline{E}\Gamma$.  Moreover, the individual Rips complexes form a `homotopy-cofinal' system inside the collection of $\Gamma$-cocompact equivariant subsets (ordered by inclusion) of $X_\Gamma$: precisely, we mean that for any cocompact $Y\subseteq X_\Gamma$, the inclusion map is (equivariantly, properly) homotopic to a map with image in some $P_s(\Gamma)$.  Hence the Baum-Connes assembly map is equivalent to the direct limit of the assembly maps for the individual Rips complexes, i.e.\ the Baum-Connes assembly map can be thought of as a map
$$
\mu:\lim_{s\to\infty} KK^\Gamma_*(C_0(P_s(\Gamma)),A)\to K_*(A\rtimes_r\Gamma).
$$
We are now ready to state the main result of this section.

\begin{theorem}\label{main com}
Let $\Gamma$ be a countable discrete group, and $A$ a $\Gamma$-$C^*$-algebra.  Let $Y$ be a locally compact metric space, equipped with a proper, cocompact, and isometric $\Gamma$-action.  Let 
$$
\mu:KK^\Gamma_*(C_0(Y),A)\to K_*(A\rtimes_r\Gamma)
$$
be the assembly map associated to this data.  Then there is a commutative diagram
$$
\xymatrix { KK^\Gamma_*(C_0(Y),A) \ar[d] \ar[r]^-\mu &  K_*(A\rtimes_r\Gamma) \ar[d] \\
K_*(C_L^*(Y;A)) \ar[r]^-{\epsilon_0} & K_*(C^*(Y;A))  }
$$
where the vertical maps are isomorphisms.

Moreover, let $s\leq t$ be non-negative real numbers, and say $P_s(\Gamma)$, $P_t(\Gamma)$ are the associated Rips complexes.  Then with notation as above, there is a commutative diagram
{\tiny
$$
\xymatrix { KK^\Gamma_*(C_0(P_s(\Gamma)),A) \ar[dd] \ar[dr] \ar[rr]^-\mu && K_*(A\rtimes_r\Gamma) \ar[dr]^-= \ar'[d][dd] &  \\
& KK^\Gamma_*(C_0(P_t(\Gamma)),A) \ar[dd] \ar[rr]^{\mu \quad\quad\quad} && K_*(A\rtimes_r\Gamma) \ar[dd] \\
K_*(C_L^*(P_s(\Gamma);A)) \ar'[r][rr]^-{\epsilon_0} \ar[dr] && K_*(C^*(P_s(\Gamma);A)) \ar[dr] & \\
& K_*(C_L^*(P_t(\Gamma);A)) \ar[rr]^-{\epsilon_0}  && K_*(C^*(P_t(\Gamma);A))~.   }
$$}
Here the diagonal maps are induced by the inclusion $P_s(\Gamma)\to P_t(\Gamma)$, together with Remark \ref{iso rem} for the Roe algebras and localization algebras.  Hence taking the direct limit as $s\to\infty$ identifies the Baum-Connes assembly map 
$$
\mu:\lim_{s\to\infty}KK^\Gamma_*(P_s(\Gamma),A)\to K_*(A\rtimes_r\Gamma)
$$
with the evaluation-at-zero map
$$
\epsilon_0:\lim_{s\to\infty}K_*(C_L^*(P_s(\Gamma);A))\to \lim_{s\to\infty} K_*(C^*(P_s(\Gamma);A))
$$
\end{theorem}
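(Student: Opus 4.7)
The plan is to construct the two vertical isomorphisms, verify commutativity of the central square using explicit index formulas, and then deduce the compatibility cube from functoriality. First I would address the right vertical map. Following the outline of Remark~\ref{cp alg rem}, I would fix a cut-off function $c$ for the action $\Gamma\lefttorightarrow Y$ and exhibit an explicit $\Gamma$-equivariant isomorphism of Hilbert $A$-modules
$$
\e \;\cong\; \big(L^2(\Gamma,A)\otimes H_Y^{\text{loc}}\otimes H\big)
$$
obtained by ``unfolding'' the $\ell^2(\Gamma)$ factor against the cut-off and using covariance. Under this identification, the $\Gamma$-invariant, locally compact, finite-propagation operators in $\mathcal{L}(\e)$ correspond to a concrete stabilization of $A\rtimes_r \Gamma$. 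Equivalently, I would produce a full projection $p\in M(C^*(Y;A))$ built from the cut-off function such that the corner $p\,C^*(Y;A)\,p$ is $*$-isomorphic to $A\rtimes_r\Gamma$; inclusion of this corner then induces the desired isomorphism on $K$-theory. This step is essentially bookkeeping.

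For the left vertical map I would invoke (and sketch the proof of) the equivariant localization-algebra theorem: for every proper, cocompact, isometric $\Gamma$-space $Y$ and every separable $\Gamma$-$C^*$-algebra $A$, there is a natural isomorphism
$$
\mathrm{loc}:KK_*^\Gamma(C_0(Y),A)\xrightarrow{\ \cong\ } K_*(C_L^*(Y;A)).
$$
Given a Kasparov cycle $(\mathcal{F},\phi,F)$, Kasparov's stabilization theorem lets me absorb $\mathcal{F}$ into $\e$ so that $F$ becomes an operator on $\e$; I then replace $F$ by a norm-continuous path $(F_t)_{t\in[0,\infty)}$ of operators of finite $\Gamma$-propagation whose metric propagation tends to zero (standard smoothing against an exhaustion of $Y$ by compacta), and set $\mathrm{loc}[(\mathcal{F},\phi,F)]:=[F_t]$. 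That this is well defined and is an isomorphism is proved by a Mayer--Vietoris / five-lemma induction on cocompact pieces of $Y$: both sides are equivariant cohomology theories in $Y$ with the same values on ``transitive'' pieces $\Gamma\times_H K$ (where $H\leq\Gamma$ is a finite stabilizer), and on these they both reduce, via Green--Julg and the corresponding localization algebra computation, to $K_*(A\rtimes_r H)$.

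The main obstacle is commutativity of the central square, which amounts to identifying two \emph{a priori} different index constructions. On the upper route, $\mu[(\mathcal{F},\phi,F)]$ is, after Kasparov descent $j^\Gamma_r$ and product with the basic projection $[p_Y]$ of~\eqref{basic}, the class in $K_*(A\rtimes_r\Gamma)$ of a Fredholm-type operator on $p_Y\cdot(\mathcal{F}\rtimes_r\Gamma)$. On the lower route, $\epsilon_0\circ\mathrm{loc}$ evaluates the propagation-shrinking path $(F_t)$ at $t=0$ and then applies the corner isomorphism of the first step. I would unpack both in the same Hilbert module $\e$: the construction of $\mathrm{loc}$ makes $F_0$ equal to $F$ acting on $\e$ with support localized near the diagonal, and the Morita identification of the first step rewrites this operator as precisely the Kasparov product $[p_Y]\otimes j^\Gamma_r[(\mathcal{F},\phi,F)]$. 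Concretely, multiplying $F_0$ on either side by $c\otimes 1$ reproduces, under the corner projection, the operator $p_Y\cdot(F\rtimes 1)\cdot p_Y$ whose $K$-theory class is the assembly image. This is the computation that has to be done by hand; once one verifies it on norm-dense cycles, continuity of both constructions finishes the square.

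Finally, for the commutative cube associated to $P_s(\Gamma)\hookrightarrow P_t(\Gamma)$, every arrow was built functorially from the data of the proper $\Gamma$-space: the inclusion induces $C_0(P_t(\Gamma))\to C_0(P_s(\Gamma))$ in one direction and Hilbert-module inclusions $\e_{s}\hookrightarrow \e_{t}$ in the other (cf.\ Remark~\ref{iso rem}), and both vertical isomorphisms were defined using these same modules and cut-off functions, which may be chosen compatibly as $s$ grows. Naturality of Kasparov descent and the Kasparov product with $[p_{P_s(\Gamma)}]$ (which maps to $[p_{P_t(\Gamma)}]$ under the induced map on crossed products) yields commutativity of the back face, while the diagonal compatibility of $\mathrm{loc}$ and of the corner isomorphism gives the remaining faces. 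Passing to the direct limit in $s$ then identifies the Baum--Connes assembly map with $\epsilon_0$ as claimed.
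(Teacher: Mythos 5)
Your outline for the right vertical map (the corner/Morita isomorphism $C^*(Y;A)\cong \mathcal{K}\otimes(A\rtimes_r\Gamma)$ via a cut-off function) coincides with the paper's construction via the modules $\eg$ and the isometries $U,V$ of Lemmas~\ref{u lem 2} and~\ref{v isom}, and the central-square verification you describe — unpacking $\mu$ on the representative $(\e,F,\epsilon,\pi)$ and comparing with the operator acting on $\e$ after multiplying by the cut-off — is exactly the content of Lemma~\ref{pert lem}. So on these two points your plan matches the paper.

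Where you diverge is in the treatment of the left vertical map, and here there is both a genuine difference in approach and a real gap in rigor. The paper does not construct a direct map $KK^\Gamma_*(C_0(Y),A)\to K_*(C_L^*(Y;A))$; instead it factors through a Paschke-duality isomorphism $KK^\Gamma_*(C_0(Y),A)\xrightarrow{\cong}K_{*+1}(D^*/C^*)$ (Definition~\ref{pas dual}, Corollary~\ref{pas dual cor}), and then identifies this with $K_*(C_L^*)$ by showing that the evaluation-at-zero map $D_L^*/C_L^*\to D^*/C^*$ and the boundary map $K_{*+1}(D_L^*/C_L^*)\to K_*(C_L^*)$ are both isomorphisms — via a $3\times3$ diagram and Eilenberg swindle arguments (Lemmas~\ref{top}--\ref{dl die}) rather than by any excision-on-$Y$ induction. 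Your proposed $\mathrm{loc}[(\mathcal{F},\phi,F)]:=[F_t]$ is not literally well defined: the path $F_t$ lives in $D_L^*$, not $C_L^*$, so there is no direct class $[F_t]\in K_*(C_L^*)$. What you actually mean is the image of $[\tfrac12(1+F_t)]\in K_0(D_L^*/C_L^*)$ under the index boundary map, i.e.\ the class of the exponential $e^{2\pi i\cdot\frac12(1+F_t)}$ in $K_1(C_L^*)$ — which is precisely the paper's map $(iv)\circ(iii)^{-1}\circ(i)$. Stated that way your map and the paper's map agree, but you should not elide the boundary-map step.

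The other genuine departure is your proposed proof that the left vertical is an isomorphism by Mayer--Vietoris induction on cocompact pieces of $Y$, reducing to orbits $\Gamma\times_H K$ via Green--Julg. This is a legitimate classical strategy (in the spirit of Yu's original proof of coarse Baum--Connes for finite-dimensional complexes), and it has the appeal of being more conceptual; but it needs a substantial amount of infrastructure you are treating as given. Concretely, you would have to (a) prove an excision/Mayer--Vietoris sequence for $K_*(C_L^*(-;A))$ in the variable $Y$, which is nontrivial because of the finite-propagation conditions — essentially the controlled Mayer--Vietoris technology of Section~\ref{ind sec} of the paper, in disguise; (b) prove homotopy invariance of $K_*(C_L^*(-;A))$ under equivariant proper homotopies of $Y$, which is its own lemma (compare Proposition~\ref{basecase}); and (c) compute $K_*(C_L^*(\Gamma\times_H K;A))\cong K_*(A\rtimes_r H)$ and check that this computation is compatible with your $\mathrm{loc}$. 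The paper's Eilenberg swindle route (showing $K_*(D_L^*)=0$ and that $D_L^*/C_L^* \cong TD^*/TC^*$ and has the same $K$-theory as $D^*/C^*$) avoids all of this at the cost of being less illuminating. So your proposal is plausible but currently missing these ingredients; the paper's argument is shorter because it sidesteps the excision machinery.
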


In order to explain the proof of Theorem \ref{main com}, we will need to define some auxiliary $C^*$-algebras.  The statement in the second part of Theorem \ref{main com} on compatibility with increasing the Rips parameter is straightforward from the proof of the first part, so we only give the proof of the first part.  Say then that $Y$, $H_Y$, and $A$ as above are all fixed.  

\begin{definition}\label{pas}
Let $\e$ be as above, and let $C^*$ and $C^*_L$ be shorthand for the associated Roe algebra and localization algebra.  An adjointable operator $T$ on $\e$ is \emph{pseudolocal} if for any $f\in C_0(Y)$, the commutator $[f,T]$ is in $\mathcal{K}(\e)$.  Let $D^*$ denote the $C^*$-algebra closure of the collection of all finite $\Gamma$-propagation, pseudolocal, $\Gamma$-invariant adjointable operators on $\e$ inside $\mathcal{L}(\e)$.  Let $D_L^*$ denote the $C^*$-algebra completion of all bounded, uniformly continuous functions 
$$
a:[0,\infty)\to D^*
$$
such that the $\Gamma$-propagation of $a(t)$ is uniformly bounded for all $t$, such that the metric propagation tends to zero as $t$ tends to infinity, and where the norm is given by $\sup_t \|a(t)\|_{D^*}$.
\end{definition}
\noindent Note that $C^*$ and $C_L^*$ are ideals in $D^*$ and $D_L^*$ respectively.  

We will prove the first part of Theorem \ref{main com} by showing that there is a commutative diagram
\begin{equation}\label{pas diag}
\xymatrix{ KK_i^\Gamma(C_0(Y),A) \ar[r]^-\mu \ar[d]_{(i)} & K_i(A\rtimes_r\Gamma) \ar[d]^-{(ii)} \\
K_{i+1}(D^*/C^*)  \ar[r]^-\partial &  K_{i}(C^*)  \\
K_{i+1}(D_L^*/C_L^*) \ar[u]^-{(iii)} \ar[r]^-{(iv)} & K_i(C_L^*) \ar[u]_{\epsilon_0} }
\end{equation}
such that the arrows labelled by roman numerals are all isomorphisms.

\begin{remark}\label{pas ass}
The arrow labelled `$\partial$' is the standard boundary map in the $K$-theory six-term exact sequence associated to the short exact sequence
$$
\xymatrix{ 0\ar[r] & C^* \ar[r] & D^* \ar[r] & D^*/C^* \ar[r] & 0 }.
$$
Note that we get for free from this proof that $\partial$ gives another model for the assembly map: this is a version with coefficients of the `Paschke duality' model for the (coarse) Baum-Connes assembly map that is discussed for example in \cite{Roe:2002nx} and \cite{Higson:1995fv}.
\end{remark}

We now explain the main steps of the proof, starting with the top square.  The arrow labelled (i) is a form of Paschke duality, and is shown to be an isomorphism by building on arguments in \cite[Chapter 8]{Higson:2000bs}; the key technical points needed in addition are Fell's trick, Kasparov's stabilization theorem, and Kasparov's Hilbert module version of Voiculescu's theorem \cite{Kasparov:1980sp}.  The arrow labelled (ii) is induced by a Morita equivalence, which is canonical given the choice of cut-off function $c$.  The argument that the top square commutes involves a significant amount of computation, and is based on \cite{Roe:2002nx}.  

For the bottom square, the arrow labelled (iii) is induced by the evaluation-at-zero map, and the arrow labelled (iv) is the boundary map from the $K$-theory six-term exact sequence associated to the short exact sequence
$$
\xymatrix{ 0\ar[r] & C_L^* \ar[r] & D_L^* \ar[r] & D_L^*/C_L^* \ar[r] & 0 }.
$$
In particular, commutativity of the bottom square is immediate from naturality of the six-term exact sequence.  Our proofs that (iii) and (iv) are isomorphisms are closely based on arguments from \cite{Qiao:2010fk} (which were in turn inspired by work of the third author \cite{Yu:1997kb}, although that paper uses quite a different argument); in both cases, the proofs boil down to clever uses of Eilenberg swindles.

In the next two subsections, we look at the top (`Paschke duality') and bottom (`Localization algebra') squares separately.

\subsection*{Paschke duality square}

Let us set up some conventions for equivariant $KK$-theory.  We will work entirely in the odd $KK$ and $K$ groups: the even case can be 
deduced from the odd case by replacing $A$ with $A\otimes C_0(\R)$, where $C_0(\R)$ has the trivial $\Gamma$-action (alternatively, the even case can be handled directly by arguments analogous to those used below for the odd case, but is notationally more complicated due to the necessity of dragging gradings through all the proofs).

Let $B$, $C$ be (trivially graded) $\Gamma$-$C^*$-algebras.  We will write cycles for $KK_1^\Gamma(B,C)$ as quadruples $(\mathcal{E},F,\beta,\phi)$ where $\mathcal{E}$ is a Hilbert $C$-module, $F$ is an adjointable operator on $\mathcal{L}(\mathcal{E})$, $\beta$ is a $\Gamma$-action on $\mathcal{E}$ by bounded linear isometries (not necessarily by adjointable operators, however), and $\phi:B\to\mathcal{L}(\mathcal{E})$ is an equivariant $*$-homomorphism.  Cycles for $KK_1(B,C)$ will analogously be written $(\mathcal{E},F,\phi)$.  There are then required to be various compatibilities between this data: see \cite[Section 2]{Kasparov:1988dw} for precise definitions.

We will need the following Hilbert module version of Voiculescu's theorem, due to Kasparov \cite[Theorem 5]{Kasparov:1980sp}; as the statement is a little technical, we repeat the special case we need for the reader's convenience.

\begin{theorem}\label{kv the}
Let $B$ be a unital, nuclear, separable $C^*$-algebra, and $C$ a $\sigma$-unital $C^*$-algebra.  Assume that $B$ is equipped with a unital $*$-representation $B\to \mathcal{B}(H)$ on some separable infinite dimensional Hilbert space $H$ whose image contains no compact operators.  Let moreover $H\otimes C$ be the standard Hilbert $C$-module, and note that there is a unital inclusion 
$$
\pi:B\to \mathcal{B}(H)\to \mathcal{L}(H\otimes C),
$$
where the map from the bounded operators on $H$ to the adjointable operators on $H\otimes C$ is defined by amplification (see \cite[page 35]{Lance:1995ys}).

Let $\phi:B\to\mathcal{L}(H\otimes C)$ be a unital $*$-homomorphism, and consider the sum $\phi\oplus \pi:B\to \mathcal{L}((H\otimes C)\oplus (H\otimes C))$.  Then there is an adjointable isometry $V:H\otimes C \to (H\otimes C)\oplus (H\otimes C)$ such that the difference
$$
V^*\pi(b)V -\phi(b)
$$ 
is in $\mathcal{K}(H\otimes C)$ for all $b\in B$. \qed
\end{theorem}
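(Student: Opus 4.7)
The plan is to adapt the classical Arveson-Voiculescu absorption argument to the Hilbert module setting, using nuclearity of $B$ as a substitute for the finite-dimensional approximations available in the original proof, and Kasparov's stabilization theorem to keep everything inside the standard module $H\otimes C$. Throughout, ``compact'' is meant in the Hilbert module sense ($\mathcal{K}(H\otimes C)=\mathcal{K}(H)\otimes C$), and the key structural fact I want to exploit is that a unital normal representation of $\mathcal{K}(H)$ on $H\otimes C$ (obtained by amplifying $\mathcal{K}(H)\subseteq\mathcal{B}(H)$) is absorbing: any cyclic representation of $\mathcal{K}(H)$ on a Hilbert $C$-module embeds, up to unitary equivalence, as a direct summand of countably many copies of this amplified representation. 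This will be the algebraic core of the argument.

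First I would use nuclearity of $B$ to obtain, for each finite subset $F\subseteq B$ and each $\varepsilon>0$, a factorization $\psi=\beta\circ\alpha$ with $\alpha:B\to M_n(\C)$ and $\beta:M_n(\C)\to \mathcal{L}(H\otimes C)$ unital completely positive contractions such that $\|\phi(b)-\psi(b)\|<\varepsilon$ for all $b\in F$. Applying Stinespring dilation in the Hilbert module category (which is available because $M_n(\C)$ is nuclear and $H\otimes C$ is a standard module) promotes $\beta$ to a genuine $*$-representation $\tilde\beta$ of $M_n(\C)$ on a dilation of $H\otimes C$; the compression of $\tilde\beta\circ\alpha$ back to $H\otimes C$ differs from $\psi$ by a defect that is controlled by a projection of finite rank over $C$, hence lies in $\mathcal{K}(H\otimes C)$ once we arrange things correctly using Kasparov's stabilization theorem to reabsorb the dilation space into a fresh copy of $H\otimes C$. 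This reduces the problem to the case where $\phi$ itself factors, modulo compacts and a small norm error, through a $*$-representation of a matrix algebra $M_n(\C)$.

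Next I would establish the absorption statement for such finite-dimensional pieces: any unital $*$-representation of $M_n(\C)$ on $H\otimes C$ is a finite direct sum of copies of the standard amplified representation (this is the Hilbert module analogue of the fact that finite-dimensional von Neumann algebras have unique representations up to multiplicity). Since the given $\pi:B\to\mathcal{L}(H\otimes C)$ contains no compacts, the restriction to the image of any such matrix algebra splits off infinitely many copies of the identity amplification, so we can produce an adjointable isometry $W_{F,\varepsilon}:H\otimes C\to H\otimes C$ with $\pi(b)W_{F,\varepsilon}-W_{F,\varepsilon}\phi(b)\in\mathcal{K}(H\otimes C)$ for every $b\in F$, together with the norm estimate from the previous step.

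Finally I would assemble the single isometry $V:H\otimes C\to (H\otimes C)\oplus (H\otimes C)$ promised by the theorem by a standard patching argument: choose a dense sequence $(b_k)$ in $B$, a quasicentral approximate unit $(e_k)$ for $\mathcal{K}(H\otimes C)$ inside the $C^*$-algebra generated by $\pi(B)$, $\phi(B)$ and the partial isometries $W_k:=W_{\{b_1,\dots,b_k\},1/k}$, and set $V$ as the telescoping sum $\sum_k (e_{k+1}-e_k)^{1/2}W_k(e_{k+1}-e_k)^{1/2}$ (landing in the first summand), regularized by an error correction into the second summand to make it an isometry. Quasicentrality makes the commutators $[e_k,\pi(b)]$ and $[e_k,\phi(b)]$ tend to zero in norm, which is exactly what is needed to glue the locally good approximations $W_k$ into a globally good $V$. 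The main obstacle I expect is bookkeeping in this last step: the Hilbert module setting forces one to work with adjointable operators throughout, and one must verify that the telescoping sum converges strictly to an adjointable isometry and that $V^*\pi(b)V-\phi(b)$ lies in $\mathcal{K}(H\otimes C)$ rather than merely in the larger ideal of operators compact on each fiber. Once these subtleties are handled, combining all three steps yields the claimed $V$.
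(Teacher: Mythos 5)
The paper does not actually prove this theorem; it is stated with a terminal ``\qed'' and attributed to Kasparov \cite[Theorem 5]{Kasparov:1980sp}, with the remark that the paper is merely repeating the special case needed for the reader's convenience. So you have attempted a proof of a result the paper only cites, and there is no internal argument to compare against. (Incidentally, the printed statement ``$V^*\pi(b)V-\phi(b)$'' does not typecheck as written, since $\pi(b)$ acts on $H\otimes C$ rather than on $(H\otimes C)\oplus(H\otimes C)$; the only sensible reading is $V^*(\phi\oplus\pi)(b)V-\phi(b)$, which is the standard absorption formulation and is what your proposal is implicitly addressing.)

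Your sketch follows the general shape of Kasparov's argument --- nuclear factorization through $M_n(\C)$, Stinespring dilation, absorption for finite-dimensional blocks, and quasicentral patching --- but two steps have genuine gaps, not just bookkeeping. First, the claim that every unital $*$-representation of $M_n(\C)$ on $H\otimes C$ is a finite direct sum of copies of the standard amplification is false as stated: such a representation only gives an isomorphism $H\otimes C\cong\C^n\otimes F$ for some Hilbert $C$-module $F$, and $F$ need not itself be standard. Repairing this requires Kasparov's stabilization theorem (which is where $\sigma$-unitality of $C$ is spent), and the promise of ``infinitely many copies of the identity amplification'' requires an argument using the no-compacts hypothesis on $\pi$ that you do not supply. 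Second, the telescoping operator $V=\sum_k(e_{k+1}-e_k)^{1/2}W_k(e_{k+1}-e_k)^{1/2}$ is not an isometry, is not obviously close to one, and its strict convergence is unproved; the ``regularization into the second summand'' you mention is exactly where the real construction lives. A correct proof builds the intertwiner inductively so that successive pieces are genuinely orthogonal, rather than by post hoc perturbation of a quasicentral sum. Your proposal identifies the right landmarks of Kasparov's proof, but the arguments are missing at precisely the points where the Hilbert module setting departs from the Hilbert space setting.
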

While Kasparov's theorem also applies in the presence of a compact group action, there is unfortunately no general version for non-compact groups.  To get around this issue in the case of proper actions that is relevant for us, we need a version of Fell's trick for Hilbert modules that we now discuss.

Let $\mathcal{E}$ be an equivariant Hilbert $A$-module, with $\Gamma$-action $\beta$, and recall that the $\Gamma$-action on $A$ is denoted by $\alpha$.  We will denote by $\ell^2(\Gamma)\otimes\mathcal{E}$ the usual external tensor product of Hilbert modules (see \cite[Chapter 4]{Lance:1995ys}), equipped with the $\Gamma$ action $\lambda\otimes \beta$ defined as the tensor product of the left regular representation and $\beta$, the right action of $A$ defined by $(\delta_g\otimes e)\cdot a:=\delta_g\otimes ea$, and the inner product defined by 
$$
\langle \delta_{g_1}\odot e_1~,~\delta_{g_2}\odot e_2\rangle:=\langle \delta_{g_1},\delta_{g_2}\rangle_{\ell^2(\Gamma)}\langle e_1,e_2\rangle_{\mathcal{E}}.
$$
With this structure, $\ell^2(\Gamma)\otimes \mathcal{E}$ is again an equivariant Hilbert $C$-module.

On the other hand, let $\ell^2_0(\Gamma)$ denote the elements of $\ell^2(\Gamma)$ with finite support, and let $\ell^2_0(\Gamma)\odot \mathcal{E}$ denote the algebraic tensor product (over $\C$).  Let $\Gamma$ act on $\ell^2_0(\Gamma)\odot \mathcal{E}$ by the tensor product $\lambda\odot 1$ of the left regular representation and the trivial representation.  Define a right action of $A$ on $\ell^2_0(\Gamma)\odot \mathcal{E}$ by the formula $(\delta_g\odot e)\cdot a:=\delta_g \odot e\alpha_{g^{-1}}(a)$, and define an $A$-valued inner product by the formula
$$
\langle \delta_{g_1}\odot e_1~,~\delta_{g_2}\odot e_2\rangle:=\langle \delta_{g_1},\delta_{g_2}\rangle_{\ell^2(\Gamma)}\langle \beta_{g_1^{-1}}(e_1),\beta_{g_2^{-1}}(e_2)\rangle_{\mathcal{E}}
$$
on elementary tensors, and extending.  One checks that this is an $A$-valued inner product, so completion gives a Hilbert $A$-module, which we denote by $\ell^2(\Gamma,\mathcal{E})$.  The action of $\Gamma$ moreover extends to an action on $\ell^2(\Gamma,\mathcal{E})$, which we still denote by $\lambda\otimes 1$, and the result is an equivariant Hilbert $A$-module.

For $\delta_g\otimes e\in \ell^2(\Gamma)\otimes \mathcal{E}$, define $U(\delta_g\otimes e):=\delta_g\otimes \alpha_{g}(a)$.  It is straightforward to check that $U$ extends to an equivariant unitary isomorphism
$$
U:\ell^2(\Gamma)\otimes \mathcal{E}\to\ell^2(\Gamma,\mathcal{E}),
$$
of Hilbert $A$-modules.  Using such a $U$ to switch `on / off' the second component of a $\Gamma$-action of this form is called \emph{Fell's trick}.

\begin{lemma}\label{kk rep}
Let $(\mathcal{E},F,\beta,\phi)$ be a cycle representing some class $x\in KK_1^\Gamma(C_0(Y),A)$.  Then there is a (non-canonical) way of associated a new cycle representing $x$ to $(\mathcal{E},F,\beta,\phi)$ that has the following additional properties.
\begin{enumerate}[(i)]
\item The new cycle has the form $(\e,F,\epsilon,\pi)$, where $F$ is a self-adjoint element of $D^*$ and $\pi:C_0(Y)\to \mathcal{L}(\e)$ is as defined in line \eqref{y rep} above.
\item The process takes: degenerate cycles to compact perturbations of degenerate cycles; unitary equivalences of cycles to compact perturbations of unitary equivalences of cycles; operator homotopies to operator homotopies; and direct sums of cycles to orthogonal sums of operators.
\end{enumerate}
\end{lemma}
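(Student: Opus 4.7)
The plan is to systematically replace the given cycle by one on the standard module $\e$ with the canonical representation $\pi$, through four transformations that each preserve the $KK$-class (at worst up to an operator homotopy or compact perturbation). First, I would average $F$ with $F^*$ via the linear operator homotopy $t\mapsto (1-t)F+t\frac{F+F^*}{2}$ so as to assume $F$ is self-adjoint from the outset. Next, I would form the direct sum of the cycle with the degenerate cycle $(\e,1,\epsilon,\pi)$, which is genuinely degenerate because the identity commutes with all of $\mathcal{L}(\e)$ and squares to itself; this does not change the $KK$-class but produces the needed `room' on the right-hand summand.

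Now I would invoke Kasparov's equivariant stabilization theorem to identify $\mathcal{E}\oplus\e$ with $\e$ by an equivariant unitary isomorphism of Hilbert $A$-modules. Transporting the cycle across this isomorphism produces a cycle of the form $(\e,F_1,\epsilon,\phi_1)$ where $\phi_1$ is an equivariant $*$-homomorphism $C_0(Y)\to\mathcal{L}(\e)$ not yet equal to $\pi$. The crucial step is then to absorb $\phi_1$ into $\pi$ using a Voiculescu-type argument. Since Theorem \ref{kv the} is non-equivariant, I would first apply Fell's trick: the isomorphism $U:\ell^2(\Gamma)\otimes\e\to\ell^2(\Gamma,\e)$ described just before the lemma converts the diagonal $\Gamma$-action on the tensor product into one of the form $\lambda\otimes 1$, so that a non-equivariant application of Kasparov's Voiculescu theorem (applied to the trivially-acting tensor factor carrying the $C_0(Y)$-representation) produces an adjointable isometry $V$ satisfying $V^*\pi(f)V-\phi_1(f)\in\mathcal{K}(\e)$ for every $f\in C_0(Y)$. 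Upgrading $V$ to a unitary by a further degenerate-cycle-plus-stabilization trick, and conjugating, replaces $\phi_1$ by $\pi$ at the cost of a compact perturbation of $F_1$, which is allowed in $KK$.

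At this stage the cycle has the form $(\e,F_2,\epsilon,\pi)$, $F_2=F_2^*$, and it remains to perturb $F_2$ within its class so that it lies in $D^*$. Pseudolocality comes for free from the cycle condition $[\pi(f),F_2]\in\mathcal{K}(\e)$. The $\Gamma$-invariance can be arranged by an averaging argument using a cutoff function for $Y$ and the freeness/properness of the action combined with the structure of $\e$; alternatively, the equivariance built into the cycle already gives that $F_2-\widetilde\epsilon_g(F_2)$ lies in the correct ideal, and we can modify $F_2$ by a compact perturbation to make it exactly invariant. The main obstacle will be producing \emph{finite} $\Gamma$-propagation: for this I would approximate $F_2$ in norm by operators of the form $\sum_{g\in F}M_g F_2 M_g'$ coming from cutting down with an equivariant partition of unity on $Y$ subordinate to balls of fixed radius, so that $\pi(f)F-F\pi(f)$ remains compact for compactly supported $f$ and $F^2-1$ is compact on such cutoffs. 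Since a norm-limit of operators with uniformly finite $\Gamma$-propagation lies in $D^*$, a diagonal argument then produces the desired representative of the class.

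For part (ii), the claimed compatibility with degenerate cycles, unitary equivalence, operator homotopies, and direct sums follows by carrying out the four steps above functorially: each of stabilization, Fell's trick, and Voiculescu absorption can be performed simultaneously on a homotopy (using the parametrized version of Kasparov's theorems) or on a pair of cycles, and the final compact truncation producing finite propagation can be done uniformly. No step disturbs degeneracy beyond a compact perturbation, and orthogonal sums are preserved by construction because the constructions act blockwise on an orthogonal decomposition. The real work — and the main obstacle — is the finite $\Gamma$-propagation truncation in the last step, since this is the only place where the Roe-algebraic control (absent from abstract $KK$-theory) must be imposed by hand.
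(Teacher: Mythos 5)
There is a genuine gap at the Voiculescu absorption step, which is in fact the crux of the whole argument.  Kasparov's Hilbert-module Voiculescu theorem (Theorem \ref{kv the}) gives an adjointable isometry $V$ with $V^*\pi(f)V-\phi(f)\in\mathcal{K}(\e)$, but this $V$ is \emph{not} equivariant, and conjugating the cycle by a non-equivariant $V$ destroys the required equivariance.  You claim Fell's trick resolves this, but it does not: Fell's trick untwists the $\Gamma$-action on the \emph{module} (exchanging the diagonal action for $\lambda\otimes 1$ on $\ell^2(\Gamma,\mathcal{E})$), but under that same unitary the $C_0(Y)$-representation does not become $1\otimes\rho$ on a ``trivially-acting tensor factor''; it becomes the twisted family $\delta_g\otimes e\mapsto\delta_g\otimes\pi(\gamma_g(f))(e)$ across the $\ell^2(\Gamma)$-fibers, and even if one had a single fiberwise $V$, the error $1\otimes(\text{compact})$ is not compact on $\ell^2(\Gamma)\otimes\e$.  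The paper's proof resolves the non-equivariance of $V$ by a separate averaging construction that is entirely absent from your argument: choose equivariant isometries $(v_g)_{g\in\Gamma}$ on $\ell^2(\Gamma)\otimes H$ with $\sum_g v_gv_g^*=1$ and mutually orthogonal ranges, and with the cut-off function $c$ form
$$
W:=\sum_{g\in\Gamma}v_g\,\widetilde{\epsilon}_g(V)\,\gamma_g(c),
$$
which is an \emph{equivariant} partial isometry with range projection onto $\phi(C_0(Y))\cdot\e$ still satisfying $W^*\pi(f)W-\phi(f)\in\mathcal{K}(\e)$.  Without something of this nature your argument does not get onto the representation $\pi$.

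Two further, lesser issues.  The paper uses Fell's trick at the \emph{stabilization} step, not the Voiculescu step: it passes from $\ell^2(\Gamma)\otimes\mathcal{E}$ to $\ell^2(\Gamma,\mathcal{E})$, where $\Gamma$ acts only on $\ell^2(\Gamma)$, so that the ordinary non-equivariant Kasparov stabilization of $\mathcal{E}$ tensors up to an equivariant embedding into $\ell^2(\Gamma,H_Y\otimes H\otimes A)$, and then converts back.  Your invocation of an equivariant stabilization theorem is a plausible alternative route (the module $\e$ is indeed absorbing because of the $\ell^2(\Gamma)$ factor), but it is a different path from the paper's and would need to be checked.  Finally, your diagonal-truncation argument for finite $\Gamma$-propagation is both more roundabout and more delicate than necessary --- a norm limit of operators of growing propagation is only in $D^*$ by definition of the closure, and one would still need to check that the limit is a compact perturbation of $F$ so that the Kasparov cycle conditions hold on the nose.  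The paper does this in one stroke: replacing $F$ by $\sum_{g}\gamma_g(c)\,\widetilde{\epsilon}_g(F)\,\gamma_g(c)$ simultaneously yields $\Gamma$-invariance, finite $\Gamma$-propagation (by compact support of $c$ and properness), and is a compact perturbation of $F$ (cf.\ the computation in Lemma \ref{pert lem}), so all cycle conditions are preserved exactly.
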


\begin{proof}
We just give the proof of part (i) above; part (ii) follows from the proof we give and some straightforward direct checks.

Recall that $\gamma$ denotes the action of $\Gamma$ on $C_0(Y)$, and that $c$ is a fixed choice of cut-off function for the action of $\Gamma$ on $Y$.  Let $(\mathcal{E},F,\beta,\phi)$ be a cycle for $KK^\Gamma_1(C_0(Y),A)$.  Cutting down to $C_0(Y)\cdot \mathcal{E}$, we may assume that the action of $C_0(Y)$ on $\mathcal{E}$ is nondegenerate (compare \cite[Lemma 2.8]{Kasparov:1988dw}).  Let $\mathcal{E}_c$ denote the equivariant subspace $C_c(Y)\cdot \mathcal{E}$ of $\mathcal{E}$, which is dense by non-degeneracy.   Define $V:\mathcal{E}_c\to \ell^2(\Gamma)\otimes \mathcal{E}$ by the formula
$$
V:e\mapsto \sum_{g\in \Gamma} \delta_g\otimes \gamma_g(c)e;
$$
the sum makes sense as properness of the $\Gamma$ action combined with the compact support conditions on $c$ and $e$ imply that only finitely many terms are non-zero.  Computing gives
$$
\langle Ve,Ve\rangle=\sum_{g,h\in \Gamma}\langle \delta_g,\delta_h \rangle \langle \gamma_g(c)e,\gamma_h(c)e\rangle=\Big\langle \sum_{g\in \Gamma}\gamma_g(c^2)e,e\Big\rangle=\langle e,e\rangle, 
$$
from which it follows that $V$ extends to an isometric linear map $V:\mathcal{E}\to \ell^2(\Gamma)\otimes \mathcal{E}$.  It is straightforward to check moreover that $V$ is equivariant, and has an adjoint defined by
$$
V^*:\delta_g\otimes e\mapsto \gamma_g(c)e.
$$
In particular, there is an equivariant submodule $\mathcal{E}'$ of $\ell^2(\Gamma)\otimes \mathcal{E}$ such that $V(\mathcal{E})\oplus \mathcal{E}'\cong \ell^2(\Gamma)\otimes \mathcal{E}$.  Summing our cycle $(\mathcal{E},F,\beta,\phi)$ with the degenerate cycle $(\mathcal{E}',1,\beta\otimes\lambda|_{\mathcal{E}'},0)$ and applying a unitary isomorphism, we may replace our original cycle by one of the form $(\ell^2(\Gamma)\otimes \mathcal{E},F, \lambda\otimes \beta,\phi)$ (this $F$ and $\phi$ are not the same as the original ones, but what exactly they are does not matter at this point; we abuse notation as the price to pay for not multiplying primes or subscripts).

Conjugating by the unitary appearing in Fell's trick, we may replace our cycle by one of the form $(\ell^2(\Gamma,\mathcal{E}),F,\lambda\otimes 1,\phi)$.  Now, ignoring the $\Gamma$ actions, Kasparov's stabilization theorem (see for example \cite[Chapter 6]{Lance:1995ys}) embeds $\mathcal{E}$ as a complemented submodule of $H_Y\otimes H\otimes A$.  Hence we may embed $\ell^2(\Gamma,\mathcal{E})$ \emph{equivariantly}  as a complemented submodule of $\ell^2(\Gamma,H_Y\otimes H\otimes A)$.  Adding a degenerate cycle equipped with the zero action of $C_0(Y)$, we may thus assume that our class is represented by a cycle of the form 
$$
(\ell^2(\Gamma, H_Y\otimes H\otimes A),F,\lambda\otimes 1_{H_Y\otimes H\otimes A},\phi).
$$
Applying Fell's trick again, this time `in reverse' then shows that there is a cycle of the form
\begin{equation}\label{a cycle}
(\e,F,\epsilon,\phi)
\end{equation}
representing the same class.  Note from our construction so far that while the action of $C_0(Y)$ on $\e$ need not be non-degenerate, we do at least have that the submodule $\phi(C_0(Y))\cdot \e$ is complemented.  

Now, let $\widetilde{C_0(Y)}$ denote the unitization of $C_0(Y)$; abusing notation slightly, write 
$$
\pi:\widetilde{C_0(Y)}\to \mathcal{B}(H_Y)\to \mathcal{L}(\e),
$$
for the unital $*$-homomorphism extending our fixed $\pi$; here the first arrow is the unital extension of our fixed representation, and the second is amplification.   The $C^*$-algebra $\widetilde{C_0(Y)}$ is nuclear, and we assumed that no non-zero element acts as a compact operator on $H_Y$.  Hence (replacing $\phi$ with its unitization) Theorem \ref{kv the} gives an adjointable isometry
$$
V:\e\to \e
$$
with the property that 
$$
V^*\pi(f)V-\phi(f)\in \mathcal{K}(\e)
$$
for all $f\in C_0(Y)$.  

Unfortunately, $V$ does not need to respect the action of $\Gamma$, but we can rectify this as follows.  Choose a family of equivariant isometries 
$$
\big(v_g:\ell^2(\Gamma)\otimes H\to \ell^2(\Gamma)\otimes H\big)_{g\in \Gamma}
$$
with the properties that $\sum_g v_gv_g^*=1$ (convergence in the strong operator topology) and $v_h^*v_g=0$ for $h\neq g$ (such exist by the classical version of Fell's trick), and abusing notation, also write $v_g$ for the isometries on $\e$ induced by these.  Consider the submodule $\mathcal{E}_c:=\phi(C_c(Y))\cdot \e$ of $\e$, which is dense in the complemented submodule $\mathcal{E}:=\phi(C_0(Y))\cdot \e$ of $\e$.  Define a map
$$
W:\mathcal{E}_c\to \e,\quad e\mapsto  \sum_{g\in \Gamma}v_g\widetilde{\epsilon}_g(V)\gamma_g(c)e,
$$ 
which makes sense as the compact support conditions on $c$ and $\mathcal{E}_c$ guarantee that the sum on the right is finite.  Computing, for any $e_1,e_2\in \mathcal{E}_c$, 
\begin{align*}
\langle We_1,We_2\rangle & = \sum_{g,h\in \Gamma} \langle v_g\widetilde{\epsilon}_g(V)\gamma_g(c)e_1~,~ v_h\widetilde{\epsilon}_h(V)\gamma_h(c)e_2\rangle \\ & = \sum_{g,h\in \Gamma} \langle v_h^*v_g\widetilde{\epsilon}_g(V)\gamma_g(c)e_1~,~ \widetilde{\epsilon}_h(V)\gamma_h(c)e_2\rangle  \\
&= \sum_{g\in \Gamma}\langle \widetilde{\epsilon}_g(V)\gamma_g(c)e_1~,~ \widetilde{\epsilon}_h(V)\gamma_h(c)e_2\rangle \\
& = \sum_{g\in \Gamma} \langle \epsilon_g(V^*V)\gamma_g(c)e_1,\gamma_g(c)e_2\rangle \\
& = \sum_{g\in \Gamma} \langle \gamma_g(c^2)e_1,e_2\rangle=\langle e_1,e_2\rangle.
\end{align*}
Hence $W$ extends to an isometry $\mathcal{E}\to \e$, which is clearly equivariant.  Extending $W$ by zero on the complement of $\mathcal{E}$, we may consider $W$ as an equivariant partial isometry $W:\e\to\e$, and it is straightforward to check (using that $WW^*$ is the projection onto $\mathcal{E}:=\phi(C_0(Y))\cdot \e$) that 
$$
W^*\pi(f)W-\phi(f)\in \mathcal{K}(\e)
$$
for all $f\in C_0(Y)$.  This gives us that 
$$
(\e,W^*FW,\epsilon,\pi)
$$
(where $F$ is in the cycle in line \eqref{a cycle}) is a cycle for $KK_1^\Gamma(C_0(Y),A)$ that is equivalent to our original cycle.  

We now have a cycle $(\e,F,\epsilon,\pi)$ on the correct equivariant $C_0(Y)$-$A$ module $\e$.  Replacing the operator $F$ by
$$
\sum_{g} \gamma_g(c)\widetilde{\epsilon}_g(F)\gamma_g(c)
$$
(the sum converges strictly, as is straightforward to see using elements of $C_c(Y)\cdot \e$ as we have a couple of times already), we get a new cycle which is just a compact perturbation of the old one, and for which $F$ is equivariant and of finite $\Gamma$-propagation.  Together with the other conditions defining a Kasparov cycle, this gives that $F$ is an element of $D^*$.  Finally, replacing $F$ by $\frac{1}{2}(F+F^*)$, we may assume that $F$ is self-adjoint and are done.
\end{proof}

The above lemma now allows us to define the map labelled (i) in Diagram \eqref{pas diag}, and show it to be an isomorphism.

\begin{definition}\label{pas dual}
Define a homomorphism 
$$
\delta:KK_1^\Gamma(C_0(Y),A)\to K_1(D^*/C^*)
$$
by first representing a class $x$ in $KK^\Gamma_1(C_0(Y),A)$ as a cycle of the form in Lemma \ref{kk rep}; then note the conditions on a Kasparov cycle imply that the image of $\frac{1}{2}(1+F)$ in $D^*/C^*$ is a projection $p$, and define $\delta(x):=[p]$.
\end{definition}

The following proof is based on \cite[Theorem 8.4.3]{Higson:2000bs}: see the discussion there for more details.

\begin{corollary}\label{pas dual cor}
The map $\delta$ from Definition \ref{pas dual} above is a well-defined isomorphism.
\end{corollary}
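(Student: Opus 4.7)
The plan is to verify four things in turn: that $\delta$ is well-defined, a homomorphism, surjective, and injective. The core technical input is Lemma \ref{kk rep}, which lets us always replace a cycle by a standard-form cycle $(\e, F, \epsilon, \pi)$ with $F \in D^*$ self-adjoint, and which records how the standard KK-equivalences translate into operator-level data. Given such a cycle, the Kasparov axioms force $F^2 - 1$ and $[\pi(f), F]$ to be compact for all $f \in C_0(Y)$. Since $F$ is $\Gamma$-invariant and the first of these operators is in fact locally compact (because for any $f \in C_0(Y)$, $\pi(f)(F^2-1)$ is compact), a short approximation argument puts $F^2 - 1$ in $C^*$, so $\tfrac{1}{2}(1+F)$ is a projection $p$ in the quotient $D^*/C^*$.

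For well-definedness and the homomorphism property, I would invoke Lemma \ref{kk rep}(ii): degenerate cycles produce projections of the form $0$ or $1$, representing the zero class; unitary equivalence of cycles becomes compact perturbation of the corresponding self-adjoint, hence equivalence of the resulting projections in the quotient; operator homotopies in $D^*$ descend to homotopies of projections in $D^*/C^*$; and direct sums of cycles go to block-diagonal sums of projections, which is the addition law in $K$-theory. For surjectivity, given a projection $p \in D^*/C^*$, lift $p$ to a self-adjoint $F_0 \in D^*$ and apply continuous functional calculus (using a function that pushes the spectrum of $F_0$ close to $\{0,1\}$) to obtain a self-adjoint $F' \in D^*$ with $(F')^2 - F' \in C^*$; set $F := 2F' - 1$. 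Then $F \in D^*$ is self-adjoint, $F^2 - 1 \in C^*$, pseudolocality of $D^*$ gives $[\pi(f),F] \in \mathcal{K}(\e)$, and local compactness of $C^*$ gives $\pi(f)(F^2 - 1) \in \mathcal{K}(\e)$ for all $f \in C_0(Y)$; hence $(\e, F, \epsilon, \pi)$ is a Kasparov cycle whose image under $\delta$ is $[p]$.

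For injectivity, suppose $x$ is represented by a standard cycle $(\e, F, \epsilon, \pi)$ with $\delta(x) = 0$. After adding a degenerate standard cycle (which does not change $x$), we may assume that $\tfrac{1}{2}(1+F)$ is null-homotopic through projections in a matrix algebra over the unitization of $D^*/C^*$, connected to the projection coming from a degenerate cycle. Lifting this homotopy back to a norm-continuous path of self-adjoints in the corresponding matrix algebra over $D^*$ (by combining a pointwise self-adjoint lift with functional-calculus smoothing, exactly as in surjectivity) yields an operator homotopy in $D^*$ of the form $(\e, F_t, \epsilon, \pi)$ between our original cycle and a degenerate one. By Lemma \ref{kk rep}(ii) this is an operator homotopy of Kasparov cycles, so $x = 0$.

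The main obstacle is the lifting step in the injectivity argument: one must arrange that the lifted path of self-adjoints stays inside $D^*$ (i.e.\ is pseudolocal, $\Gamma$-invariant, and of finite $\Gamma$-propagation throughout) and that each $F_t$ together with $\pi$ satisfies the Kasparov-cycle conditions uniformly in $t$. This is handled in the standard Paschke-duality style (compare \cite[Theorem 8.4.3]{Higson:2000bs}): one does the lifting inside the self-adjoint part of a suitable dense $*$-subalgebra on which the relevant propagation and invariance conditions are automatic, and then uses the closedness of $D^*$ to pass to the full $C^*$-algebra.
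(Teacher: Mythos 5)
Your overall strategy matches the paper's: reduce to checking that each generating operation on Kasparov cycles preserves the class of $\tfrac{1}{2}(1+F)$, then argue surjectivity by lifting projections and injectivity by lifting homotopies. However, there is a concrete error in the well-definedness step, and two places where a needed justification is silently assumed.

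The error is the claim that \emph{``degenerate cycles produce projections of the form $0$ or $1$.''} This is false. After processing through Lemma~\ref{kk rep}, a degenerate cycle yields (modulo a compact) an $F_d\in D^*$ with $F_d=F_d^*$, $F_d^2=1$, and $[\pi(f),F_d]=0$, so $q=\tfrac{1}{2}(1+F_d)$ is an honest projection in $D^*$ commuting with $\pi(C_0(Y))$ --- but $q$ need not be $0$ or $1$: a degenerate $F_d$ can equal $+1$ on one $\pi$-invariant summand and $-1$ on a complementary one, giving a nontrivial $q$. Moreover, even $[1]$ need not be zero in $K_0$ of a unital $C^*$-algebra a priori. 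The correct argument (and the one the paper gives) is an Eilenberg swindle: because $q$ commutes with $\pi(C_0(Y))$ and the module $\e$ has the extra infinite-dimensional tensor factor $H$, one can repeat $q$ infinitely and shift, forcing $[q]=0$ in $K_0(D^*/C^*)$. The same swindle is also what makes $[1]=0$ there, which you implicitly rely on in surjectivity: you take a single projection $p\in D^*/C^*$ as representing an arbitrary class in $K_0(D^*/C^*)$, but for a general unital $C^*$-algebra a $K_0$ class is a formal difference of classes in matrix algebras; that one can avoid matrices here is a nontrivial property of the Paschke dual, again provable by a swindle. Finally, you invoke Lemma~\ref{kk rep}(ii) as if it reduces the KK-equivalence relation to operator homotopy + degenerates + unitaries; that lemma only tells you how the standardization \emph{behaves} on those operations. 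One must separately cite (as the paper does) that the equivariant KK-equivalence relation is generated by these operations --- the Skandalis result extended to the equivariant setting via the $\Gamma$-averaging $F\leadsto\sum_g\gamma_g(c)\widetilde\epsilon_g(F)\gamma_g(c)$.
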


\begin{proof}
We note first that the equivalence relation on $KK_1^\Gamma(C_0(Y),A)$ may be taken to be that generated by operator homotopies, addition of degenerate cycles, and unitary equivalences.  Indeed, as already noted there is a canonical process
$$
F \quad \leadsto \quad \sum_{g\in \Gamma}\gamma_{g}(c)\widetilde{\epsilon}_g(F)\gamma_g(c)
$$
for replacing operators by $\Gamma$-invariant ones.  Using this, it is not too difficult to see that the proof of the corresponding fact in the \emph{non}-equivariant case \cite{Skandalis:1984aa} extends to our setting.  The fact that $\delta$ is well-defined follows from this: operator homotopies give rise to homotopic projections, unitary equivalences to equivalent projections, and degenerate cycles to projections vulnerable to an Eilenberg swindle.  Moreover, $\delta$ is a homomorphism as one can add orthogonal projections.  

To see that $\delta$ is an isomorphism, note that it is surjective as every element of $K_*(D^*/C^*)$ can be represented by a projection in $D^*/C^*$ (as opposed to a matrix algebra over it), and lifting to $D^*$ gives rise to a cycle for $KK^\Gamma(C_0(Y),A)$.  It is injective as the equivalence relations on projections and unitaries defining $K_*(D^*/C^*)$ lift to equivalences of Kasparov cycles.
\end{proof}

We now recall some more details about the $KK$-theoretic assembly map
$$
\mu:KK^\Gamma_1(C_0(Y),A)\to K_1(A\rtimes_r\Gamma).
$$
We start with a class on the left hand side represented by some cycle $(\mathcal{E},F,\beta,\phi)$.  Kasparov \cite[3.7 -- 3.11]{Kasparov:1988dw} defines a \emph{descent homomorphism}
$$
j^\Gamma_r:KK^\Gamma_1(C_0(Y),A)\to KK_1(C_0(Y)\rtimes_r\Gamma,A\rtimes_r\Gamma)
$$
from equivariant $KK$ groups to the non-equivariant $KK$ groups of crossed products as follows.  Define a scalar product on $C_c(\Gamma,\mathcal{E})$ with values in $C_c(\Gamma,A)$ by for each pair for $e_1,e_2\in C_c(\Gamma,\mathcal{E})$ defining the function 
$$
\langle e_1,e_2\rangle:\Gamma\to A 
$$
by the formula
$$
\langle e_1,e_2\rangle(g):=\sum_{h\in \Gamma}\alpha_{h^-1}\big(\langle e_1(h),e_2(hg)\rangle_{\mathcal{E}}\big).
$$
Define a right action of $C_c(\Gamma,A)$ on $C_c(\Gamma,\mathcal{E})$ by the formula
$$
(e\cdot a)(g):=\sum_{h\in G} e(h)\alpha_h(a(h^{-1}g)).
$$
Kasparov shows that the inner product is positive, and thus it makes sense to define $\mathcal{E}\rtimes\Gamma$ as the Hilbert $A\rtimes_r\Gamma$-module defined by simultaneous completion (see \cite[pages 4-5]{Lance:1995ys}).  The module $\mathcal{E}\rtimes\Gamma$ is equipped with a left action of $C_0(Y)\rtimes_r\Gamma$ defined as the integrated form of the covariant representation defined by setting
$$
(\widetilde{\phi}(f)\cdot e)(g):=\phi(f)\cdot e(g),\quad f\in C_0(Y),~e\in C_c(\Gamma,\mathcal{E}),~g\in \Gamma
$$
and the unitary representation of $\Gamma$ defined by 
$$
(u_g\cdot e)(h):=\epsilon_g(e(g^{-1}h)), \quad g,h\in \Gamma,~e\in C_c(\Gamma,\mathcal{E}).
$$
Kasparov shows that this integrates to a representation of $C_0(Y)\rtimes_r\Gamma$, also denoted $\widetilde{\phi}$, so $\mathcal{E}\rtimes \Gamma$ is a $C_0(Y)\rtimes_r\Gamma$-$A\rtimes_r\Gamma$ bimodule.  An operator $\widetilde{F}$ is defined on $\mathcal{E}$ by the formula
$$
(\widetilde{F}\cdot e)(g):=F\cdot e(g).
$$
The map $j^\Gamma_r$ is then defined by $j[\mathcal{E},F,\beta,\phi]=[\mathcal{E}\rtimes \Gamma,\widetilde{F},\widetilde{\phi}]$.

The second step in defining the assembly map is to choose a cut-off function $c$ for $Y$, and use it to construct a basic projection as in line \eqref{basic} above and thus a class $[p_Y]$ in $K_0(C_0(Y)\rtimes_r\Gamma)\cong KK_0(\C,C_0(Y)\rtimes_r\Gamma)$.  The assembly map is now defined by
$$
\mu[\mathcal{E},F,\beta,\phi]:=[p_Y]\bigotimes_{C_0(Y)\rtimes_r\Gamma}j^\Gamma_r[\mathcal{E},F,\beta,\phi]\in KK_*(\C,A\rtimes_r\Gamma).
$$
where `$\otimes_{C_0(Y)\rtimes_r\Gamma}$' denotes Kasparov product over the $C^*$-algebra $C_0(Y)\rtimes_r\Gamma$.  More explicitly, it is straightforward to check that this class is represented by the Kasparov cycle 
$$
(\widetilde{p_Y}(\mathcal{E}\rtimes \Gamma),\widetilde{p_Y}\widetilde{F}\widetilde{p_Y},\iota)
$$ 
for $KK_*(\C,A\rtimes_r\Gamma)$, where $\iota:\C\to \mathcal{L}(\mathcal{E}\rtimes\Gamma)$ is the unital representation given by $z\mapsto zp_Y$ (in what follows will usually omit the `$\widetilde{\phi}$' where this is unlikely to cause confusion).  Hence 
$$
\mu[\mathcal{E},F,\beta,\phi]=[p_Y(\mathcal{E}\rtimes \Gamma),p_Y\widetilde{F}p_Y,\iota].
$$  

In order to analyze this cycle, it will be extremely convenient to introduce a new Hilbert $A\rtimes_r\Gamma$ modules as follows; the following discussion is inspired by, but a little different from \cite[Lemmas 2.1, 2.2, 2.3 and 3.4]{Roe:2002nx}.  Let $\ell^2_0(\Gamma)$ denote the subspace of finitely supported functions in $\ell^2(\Gamma)$.  Write `$\odot$' for the uncompleted tensor product over $\C$, and define 
$$
\mathcal{E}_0:=H_Y\odot A\odot H\odot \ell^2_0(\Gamma),
$$
which is a dense subspace of $\e$.  Equip $\mathcal{E}_0$ with the restriction of the $\Gamma$-action $\epsilon$ on $\e$; symbolically, this is given by 
$$
\epsilon_g(\xi\odot a\odot \eta\odot\delta_h):=u_g\xi\odot \alpha_g(a)\odot \eta\odot \delta_{gh}.
$$
Provisionally define a new inner product on $\mathcal{E}_0$ with values in $C_c(\Gamma,A)\subseteq A\rtimes_r\Gamma$ by the formula
$$
\langle e_1,e_2\rangle_{\eg}(g):=\langle e_1,\epsilon_g(e_2)\rangle_{\e},
$$
and a right action of $C_c(\Gamma,A)$ by 
$$
e\cdot b:=\sum_{g\in \Gamma}\epsilon_{g^{-1}}(e\cdot b(g)),
$$
where the product `$e\cdot b(g)$' on the right refers to the $A$-module structure of $\e$.  Define finally a linear map
\begin{equation}\label{u map}
U:\mathcal{E}_0\to \e\rtimes\Gamma,\quad (Ue)(g):=c\cdot \epsilon_g(e).
\end{equation}
A straightforward computation that we leave to the reader shows that for any $e_1,e_2\in \mathcal{E}_0$, we have 
\begin{equation}\label{u isom}
\langle Ue_1,Ue_2\rangle_{\e\rtimes\Gamma}=\langle e_1,e_2\rangle_{\eg}
\end{equation}
and moreover for all $e\in\mathcal{E}_0$ and $b\in C_c(\Gamma,A)$ we have $(Ue)\cdot b=U(e\cdot b)$.

It follows from this that the form $\langle~,~\rangle_{\eg}$ is positive semi-definite, and thus simultaneous completion as discussed in \cite[pages 4-5]{Lance:1995ys} gives rise to a Hilbert $A\rtimes_r\Gamma$-module $\eg$.  Moreover, the map in line \eqref{u map} above extends to an isometric inclusion
$$
U:\eg\to \e\rtimes \Gamma.
$$

\begin{lemma}\label{u lem 2}
The map $U:\eg\to \e\rtimes\Gamma$ is an adjointable isometry, with image exactly equal to $p_Y\cdot (\e\rtimes \Gamma)$.
\end{lemma}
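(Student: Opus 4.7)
The plan is to write down an explicit formula for a candidate adjoint $U^*$ on the dense subspace $C_c(\Gamma,\mathcal{E}_0)\subseteq\e\rtimes\Gamma$, and then verify directly that $UU^*$ acts as left multiplication by $p_Y$ on $\e\rtimes\Gamma$. Once this is done, $U$ is automatically adjointable, and its image coincides with the range of the projection $UU^*$, namely $p_Y\cdot(\e\rtimes\Gamma)$.

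First I would define $V\colon C_c(\Gamma,\e)\to\eg$ by the formula
\begin{equation*}
Vf := \sum_{k\in\Gamma}\epsilon_{k^{-1}}\bigl(c\cdot f(k)\bigr),
\end{equation*}
which is a finite sum because $f$ has finite support, and which lies in $\mathcal{E}_0$ whenever $f$ takes values in $\mathcal{E}_0$. Using that the cut-off $c$ acts as a self-adjoint multiplication operator, the identity $\alpha_{h^{-1}}\langle\epsilon_h\xi,\eta\rangle_{\e}=\langle\xi,\epsilon_{h^{-1}}\eta\rangle_{\e}$, and the reindexing $k=hg$, a direct calculation yields
\begin{equation*}
\langle Ue,f\rangle_{\e\rtimes\Gamma}(g)=\sum_{h\in\Gamma}\alpha_{h^{-1}}\langle c\cdot\epsilon_h(e),f(hg)\rangle_{\e}=\langle e,\epsilon_g(Vf)\rangle_{\e}=\langle e,Vf\rangle_{\eg}(g)
\end{equation*}
for all $e\in\mathcal{E}_0$. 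Combined with the isometry identity \eqref{u isom}, this adjoint relation forces $\|Vf\|_{\eg}\leq\|f\|_{\e\rtimes\Gamma}$, so $V$ extends by continuity to a bounded operator on all of $\e\rtimes\Gamma$ which is the adjoint $U^*$ of $U$.

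Next I would verify $UU^*=\widetilde{p_Y}$ on the dense subspace $C_c(\Gamma,\e)$. Using the equivariance relation $\epsilon_h(c\cdot\xi)=\gamma_h(c)\cdot\epsilon_h(\xi)$,
\begin{equation*}
(UU^*f)(g)=c\cdot\epsilon_g\Bigl(\sum_{k}\epsilon_{k^{-1}}(c\cdot f(k))\Bigr)=\sum_{k}c\cdot\gamma_{gk^{-1}}(c)\cdot\epsilon_{gk^{-1}}\bigl(f(k)\bigr).
\end{equation*}
On the other hand, unwinding the convolution formula for the left action of $p_Y\in C_c(\Gamma,C_0(Y))$ on $\e\rtimes\Gamma$, substituting $p_Y(h)=\gamma_h(c)\cdot c$, and reindexing by $k=h^{-1}g$, gives exactly the same expression (using that the multiplication operators $c$ and $\gamma_{gk^{-1}}(c)$ commute). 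Hence $UU^*=\widetilde{p_Y}$ on a dense subspace, and therefore everywhere by continuity.

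Finally, since $U^*U=\operatorname{id}_{\eg}$ is immediate from \eqref{u isom}, and $UU^*=\widetilde{p_Y}$, the image of $U$ equals the range of $\widetilde{p_Y}$, i.e., $p_Y\cdot(\e\rtimes\Gamma)$. The main obstacle is purely bookkeeping: matching the two convolution expressions in the computation of $UU^*$ requires careful tracking of three different actions (the representation of $C_0(Y)$ on $\e$, the action $\alpha$ of $\Gamma$ on $A$, and the action $\epsilon$ of $\Gamma$ on $\e$) together with the convolution indexing conventions, but there is no real conceptual difficulty once everything is written out carefully.
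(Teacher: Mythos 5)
Your proof is correct and follows essentially the same route as the paper's: you exhibit the same explicit formula for $U^*$ on a dense subspace, use the isometry identity to conclude $U$ is adjointable, and then compute $UU^*=\widetilde{p_Y}$ directly to identify the image. In fact your displayed computation of $(UU^*f)(g)$ retains the factor $\epsilon_{gk^{-1}}$ acting on $f(k)$, which the paper's final line silently drops; your version is the one that actually matches the convolution formula for $p_Y\cdot f$ after the reindexing $k=h^{-1}g$.
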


\begin{proof}
An elementary computation shows that the adjoint of $U$ is given for $e\in \e\rtimes \Gamma$ by the formula
$$
U^*e=\sum_{g\in \Gamma}\epsilon_{g^{-1}}(c\cdot e(g));
$$
combined with the formula in line \eqref{u isom}, we now have that $U$ is an adjointable isometry.  Computing, for any $e\in \e\rtimes\Gamma$ and $g\in \Gamma$
\begin{align*}
(UU^*e)(g)=c\cdot \epsilon_g(U^*e) & =c\cdot \epsilon_g\Big(\sum_{h\in \Gamma}\epsilon_{h^{-1}}(c\cdot e(h))\Big) \\ 
& = \sum_{h\in \Gamma}c\gamma_{gh^{-1}}(c)\cdot e(h).
\end{align*}
Making the change of variables $k=gh^{-1}$, this becomes
$$
UU^*e(g)=\sum_{k\in \Gamma}c\gamma_k(c)e(k^{-1}g)=(p_Y\cdot e)(g).
$$
In other words, the range projection of $U$ is $p_Y$, which completes the proof.
\end{proof}

Now, let $E:A\rtimes_r\Gamma\to A$ denote the faithful conditional expectation defined by $b\mapsto b(e)$, where $e$ here denotes the identity element of $\Gamma$.  Following the discussion in \cite[pages 57-58]{Lance:1995ys}, this conditional expectation gives rise to a `localization' Hilbert $A$-module $\mathcal{E}_{A\rtimes_r\Gamma,E}$ defined as the separated completion of $\eg$ for the $A$-valued inner product defined by
$$
\langle e_1,e_2\rangle_{\mathcal{E}_{A\rtimes_r\Gamma,E}}:=E(\langle e_1,e_2\rangle_{\eg}).
$$
Moreover, there is a $*$-representation 
$$
\pi_E:\mathcal{L}(\eg)\to \mathcal{L}(\mathcal{E}_{A\rtimes_r\Gamma,E}).
$$
defined on the dense subspace of $\mathcal{E}_{A\rtimes_r\Gamma,E}$ defined as the image of $\eg$ by the formula $\pi_E(T)\cdot e=T\cdot e$; as in our case $E$ is faithful, $\pi_E$ is isometric.

\begin{lemma}\label{local lem}
The localization $\mathcal{E}_{A\rtimes_r\Gamma,E}$ identifies naturally with $\e$: more precisely, on the dense subspace of these $A$-modules defined by $\mathcal{E}_0$, the inner products agree. 

Moreover, having made this identification, the $*$-representation $\pi_E$ takes $\mathcal{K}(\eg)$ onto $C^*$.  
\end{lemma}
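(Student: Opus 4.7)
The first statement is a direct calculation. Since the conditional expectation $E:A\rtimes_r\Gamma\to A$ is evaluation at the identity element $e\in \Gamma$, for any $e_1,e_2\in \mathcal{E}_0$ one has
\[
\langle e_1,e_2\rangle_{\mathcal{E}_{A\rtimes_r\Gamma,E}}=E(\langle e_1,e_2\rangle_{\eg})=\langle e_1,e_2\rangle_{\eg}(e)=\langle e_1,\epsilon_e(e_2)\rangle_{\e}=\langle e_1,e_2\rangle_{\e}.
\]
Thus the $A$-valued inner products on the dense subspace $\mathcal{E}_0$ agree, so the simultaneous-completion construction giving $\mathcal{E}_{A\rtimes_r\Gamma,E}$ produces an $A$-module canonically isomorphic to $\e$.

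For the second statement, I first compute $\pi_E$ on rank-one operators. For $e_1,e_2,e_3\in \mathcal{E}_0$, unravelling the definitions of the right $A\rtimes_r\Gamma$-action on $\eg$ and of the inner product gives
\[
\pi_E(\theta_{e_1,e_2})e_3 = e_1\cdot \langle e_2,e_3\rangle_{\eg}=\sum_{g\in \Gamma}\epsilon_{g^{-1}}\!\bigl(e_1\cdot \langle e_2,\epsilon_g(e_3)\rangle_{\e}\bigr)=\sum_{g\in \Gamma}\widetilde{\epsilon}_{g^{-1}}\!\bigl(\theta^{\e}_{e_1,e_2}\bigr)e_3,
\]
where $\theta^{\e}_{e_1,e_2}\in \mathcal{K}(\e)$ is the usual rank-one operator on $\e$ and only finitely many terms in the sum are nonzero on each $e_3\in\mathcal{E}_0$ because $e_2,e_3$ have finite support in the $\ell^2(\Gamma)$ factor. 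So $\pi_E(\theta_{e_1,e_2})$ is the $\Gamma$-average of a rank-one operator. Restricting attention to the (still dense) subspace of $\mathcal{E}_0$ with $H_Y$-components in $C_c(Y)\cdot H_Y$, such averages are manifestly $\Gamma$-invariant, locally compact (as $\theta^{\e}_{e_1,e_2}$ is compact with compactly supported range in $Y$), and have finite $\Gamma$-propagation (from the finite $\ell^2(\Gamma)$-support together with the compact $Y$-support of the $e_i$). Hence such averages lie in $C^*$, and by density of these rank-one operators in $\mathcal{K}(\eg)$ and continuity of $\pi_E$, the inclusion $\pi_E(\mathcal{K}(\eg))\subseteq C^*$ follows.

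For the reverse inclusion, the key trick uses the cut-off function $c$. If $T\in C^*$ is $\Gamma$-invariant, locally compact, and of finite $\Gamma$-propagation, then $T_0:=\pi(c)T\in \mathcal{K}(\e)$ (by local compactness), and using $\sum_{g\in \Gamma}\gamma_{g^{-1}}(c)=1$ together with $\Gamma$-invariance of $T$,
\[
\sum_{g\in \Gamma}\widetilde{\epsilon}_{g^{-1}}(T_0)=\sum_{g\in \Gamma}\gamma_{g^{-1}}(c)\,T=T,
\]
where the sum converges strictly. Approximating $T_0$ in norm by finite sums of rank-one operators $\theta^{\e}_{e_1^i,e_2^i}$ with $e_1^i$ supported in a fixed compact neighbourhood of $\supp(c)$ and $e_2^i$ supported in a $\Gamma$-propagation-sized neighbourhood, the corresponding $\Gamma$-averages approximate $T$ in the norm on $\mathcal{L}(\e)$ (only boundedly many $\Gamma$-translates are nonzero on a fixed compact set, by the support conditions). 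Since each such finite-sum $\Gamma$-average is in $\pi_E(\mathcal{K}(\eg))$ by the previous paragraph, we conclude $T\in \pi_E(\mathcal{K}(\eg))$, establishing the reverse inclusion.

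The main technical obstacle will be the bookkeeping for the norm approximation in the last paragraph: one needs to control how errors in approximating $T_0$ by rank-one operators propagate once one sums over $\Gamma$. This is handled by choosing the support of the approximants compatibly with the fixed finite $\Gamma$-propagation of $T$, so that only a uniformly bounded number of $\Gamma$-translates can act nontrivially on any given element of $\e$, turning the strict-topology sum into a locally finite (hence norm-convergent) sum.
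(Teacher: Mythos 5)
Your proof is correct and follows the same route as the paper: compute $\pi_E$ on rank-one operators $\theta_{e_1,e_2}$, show the result lies in $C^*$, and argue density. Your reformulation $\pi_E(\theta_{e_1,e_2}) = \sum_{g}\widetilde{\epsilon}_{g^{-1}}\bigl(\theta^{\e}_{e_1,e_2}\bigr)$ as a $\Gamma$-average is the same computation the paper does on elementary tensors, just organized more conceptually. The genuine addition in your writeup is the explicit argument for the reverse inclusion $C^*\subseteq\pi_E(\mathcal{K}(\eg))$: the paper asserts this with ``it is straightforward to check\ldots that linear combinations of such operators are dense in $C^*$'' and stops, whereas you supply the argument (cut down by $c$, $\Gamma$-average, and control norms by the bounded overlap of the $\Gamma$-translated compact supports --- a coloring/Cotlar--Stein type estimate makes the last step precise and gives $\|\sum_g\widetilde{\epsilon}_{g^{-1}}(S)\|\leq C_{L,L'}\|S\|$ for $S$ supported in $L\times L'$). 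One caution on normalization: you invoke $\sum_{g}\gamma_{g^{-1}}(c)=1$, which matches the paper's stated definition of the cut-off function, but the paper's own nearby computations (in the proof of Lemma~\ref{kk rep}, in Lemma~\ref{pert lem}, and for $p_Y$ to be a projection at all) actually require $\sum_{g}\gamma_{g}(c^2)=1$; under that convention you should take $T_0:=\pi(c^2)T$, or $\pi(c)T\pi(c)$, and the rest of your argument is unchanged.
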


\begin{proof}
The first part is clear from the formulas involved.  For the second part, recall first that $\mathcal{K}(\eg)$ is generated by operators of the form
$$
\theta_{e_1,e_2}:e\mapsto e_1\langle e_2,e\rangle_{\eg},
$$
where $e_1,e_2,e$ are in $\eg$ (or just in the dense subspace $\mathcal{E}_0$).  Computing,
$$
\theta_{e_1,e_2}(e)=\sum_{g\in \Gamma}\epsilon_{g^{-1}}(e_1\langle e_2,e_2\rangle_{\eg}(g))=\sum_{g\in \Gamma}\epsilon_{g^{-1}}(e_1\langle e_2,\epsilon_g(e)\rangle_{\e}).
$$
Now, specialize to the case where $e_i=\xi_i\odot a_i\odot \eta_i\odot \delta_{h_i}$ for $i\in \{1,2\}$, and $e=\xi\odot a\odot \eta\odot \delta_{h}$ are all given by elementary tensors, which we may regard equally as elements of $\e$.  The first part combined with the above computation then says that 
\begin{align*}
\pi_E(\theta_{e_1,e_2})e & =\sum_{g\in \Gamma} \epsilon_{g^{-1}}(e_1\langle e_2,\epsilon_g(e)\rangle_{\e}) \\ & =\sum_{g\in \Gamma}\langle \eta_2,\eta\rangle\langle \xi_1,u_g\xi_2\rangle \langle \delta_{h_1},\delta_{gh}\rangle u_{g^{-1}}\xi_1\otimes \alpha_{g^{-1}}(a_1a_2^*)a\otimes \eta\otimes \delta_{g^{-1}h_1}.
\end{align*}
It is straightforward to check that the operator $\pi_E(\theta_{e_1,e_2})$ is in $C^*$, and moreover that linear combinations of such operators are dense in $C^*$, completing the proof.
\end{proof}

At this point, we have that $\eg$ is an $A\rtimes_r\Gamma$-module, and that the compact operators on it identify naturally with $C^*$.  To complete the proof that $A\rtimes_r\Gamma$ is Morita equivalent with $C^*$, it will suffice to show that $\eg$ is full.  For the sake of completeness, as well as to ease the subsequent analysis, the next lemma gives a more precise statement.

To state it, define an $C_c(\Gamma,A)$-valued inner product on $\mathcal{E}_0$ by the formula 
$$
\langle \xi_1\odot a_1\odot \eta_1\odot\delta_{h_1}~,~\xi_2\odot a_2\odot \eta_2\odot \delta_{h_2}\rangle(g):=\langle \xi_1,\xi_2\rangle \langle \eta_1,\eta_2\rangle \langle \delta_g,\delta_{h_1^{-1}h_2} \rangle \alpha_{h_1^{-1}}(a_1^*a_2).
$$
In other words, identifying $\ell^2_0(\Gamma)\odot A$ with $C_c(\Gamma,A)$ in the natural way, this is $H_Y\odot H\odot C_c(\Gamma,A)$ with its natural $C_c(\Gamma,A)$-valued inner product.  Thus it is positive definite, and completion gives rise to the standard $A\rtimes_r\Gamma$-module $H_Y\otimes H\otimes A\rtimes_{r}\Gamma$.

\begin{lemma}\label{v isom}
The map
$$
V:\mathcal{E}_0\to \mathcal{E}_0,\quad \xi\odot a\odot\eta\odot \delta_h\mapsto u_{h^{-1}}\xi\odot \alpha_{h^{-1}}(a)\odot \eta\odot \delta_{h^{-1}}
$$
extends to an isometric isomorphism
$$
V:\eg\to H_Y\otimes H\otimes A\rtimes_r\Gamma.
$$
In particular, conjugation by $V$ induces an isomorphism
$$
\mathcal{K}(\eg)\cong \mathcal{K}(H_Y\otimes H)\otimes A\rtimes_r\Gamma,
$$
and combining with Lemma \ref{local lem}, $C^*\cong \mathcal{K}\otimes A\rtimes_r\Gamma$.
\end{lemma}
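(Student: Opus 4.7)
The plan is to verify $V$ is an isometric isomorphism of Hilbert $A\rtimes_r\Gamma$-modules by a direct computation on elementary tensors, then deduce the compact-operator statement formally.

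First I would check that $V$ preserves the $A\rtimes_r\Gamma$-valued inner product. Fix elementary tensors $e_i=\xi_i\odot a_i\odot \eta_i\odot\delta_{h_i}$ ($i=1,2$). On the source, using the defining formula $\langle e_1,e_2\rangle_{\eg}(g)=\langle e_1,\epsilon_g(e_2)\rangle_\e$ and unravelling $\epsilon_g(e_2)=u_g\xi_2\odot \alpha_g(a_2)\odot\eta_2\odot\delta_{gh_2}$ gives
$$
\langle e_1,e_2\rangle_{\eg}(g)=\langle \xi_1,u_g\xi_2\rangle\langle\eta_1,\eta_2\rangle\,\delta_{g,h_1h_2^{-1}}\,a_1^*\alpha_g(a_2).
$$
On the target, inserting $Ve_i=u_{h_i^{-1}}\xi_i\odot\alpha_{h_i^{-1}}(a_i)\odot\eta_i\odot\delta_{h_i^{-1}}$ into the inner-product formula displayed just before the lemma and using $\langle u_{h_1^{-1}}\xi_1,u_{h_2^{-1}}\xi_2\rangle=\langle\xi_1,u_{h_1h_2^{-1}}\xi_2\rangle$ together with $\alpha_{h_1}(\alpha_{h_1^{-1}}(a_1)^*\alpha_{h_2^{-1}}(a_2))=a_1^*\alpha_{h_1h_2^{-1}}(a_2)$ yields the same expression. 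Hence $V$ is isometric on $\mathcal{E}_0$ and extends to an isometric linear map on the completion.

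Next I would check right $C_c(\Gamma,A)$-linearity by a parallel bookkeeping computation: for $e=\xi\odot a\odot\eta\odot\delta_h$ and $b\in C_c(\Gamma,A)$, compute $V(e\cdot b)=V\big(\sum_g\epsilon_{g^{-1}}(e\cdot b(g))\big)$, cancel the various $\alpha$'s and $u$'s, and compare with $(Ve)\cdot b$ in $H_Y\otimes H\otimes A\rtimes_r\Gamma$ (under the standard identification $\delta_g\odot a'\mapsto a'\ast\delta_g$ inside $A\rtimes_r\Gamma$). Density then lets $V$ be a well-defined adjointable isometry of Hilbert $A\rtimes_r\Gamma$-modules. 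Surjectivity is immediate: the formula shows $V\circ V=\mathrm{Id}$ on the dense subspace $\mathcal{E}_0$, so the image of $V$ is dense, hence (being closed by isometry) equal to the whole target. Thus $V$ is the desired isometric isomorphism.

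For the compact-operator consequence, conjugation by $V$ induces an isomorphism $\mathcal{K}(\eg)\cong\mathcal{K}(H_Y\otimes H\otimes A\rtimes_r\Gamma)$, and the latter identifies canonically with $\mathcal{K}(H_Y\otimes H)\otimes A\rtimes_r\Gamma$ by the standard description of compact operators on a standard Hilbert module (see \cite[Chapter 4]{Lance:1995ys}). Since $H$ is separable and infinite-dimensional, so is $H_Y\otimes H$, whence $\mathcal{K}(H_Y\otimes H)\cong\mathcal{K}$. Combining with Lemma~\ref{local lem}, which identifies $C^*$ with $\mathcal{K}(\eg)$ via $\pi_E$, gives the claimed isomorphism $C^*\cong \mathcal{K}\otimes A\rtimes_r\Gamma$.

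The main obstacle is nothing conceptual but purely notational: one has to keep track simultaneously of the $\Gamma$-actions $\gamma,\alpha,u,\lambda,\epsilon$ on all four tensor factors during the inner-product and right-module-linearity verifications, and make sure the conventions chosen for identifying $\ell^2(\Gamma)\otimes A$ with $A\rtimes_r\Gamma$ (and its right module structure) are internally consistent with the formula defining $\langle\cdot,\cdot\rangle_{\eg}$. Once the right conventions are pinned down, both calculations reduce to a single application of $\alpha_{h_1}\alpha_{h_1^{-1}}=\mathrm{id}$ and the orthogonality of the $\delta_h$'s.
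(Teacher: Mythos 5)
Your proposal is correct and takes essentially the same approach as the paper: a direct verification on elementary tensors that $V$ preserves the $A\rtimes_r\Gamma$-valued inner product, followed by the observation that $V$ is its own inverse on $\mathcal{E}_0$ (the paper phrases this via computing $V^*$ and noting it has the same formula; you phrase it as $V\circ V=\mathrm{Id}$ on the dense subspace), and the standard chain of identifications $\mathcal{K}(H_Y\otimes H\otimes A\rtimes_r\Gamma)\cong\mathcal{K}(H_Y\otimes H)\otimes A\rtimes_r\Gamma$. Your explicit mention of checking right $C_c(\Gamma,A)$-linearity is a detail the paper leaves implicit, and is worth doing; otherwise the two arguments coincide.
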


\begin{proof}
Computing, 
\begin{align*}
\langle V(\xi_1&\odot a_1\odot\eta_1\odot \delta_{h_1}) ~,~V(\xi_2\odot a_2\odot\eta_2\odot \delta_{h_2})\rangle_{H_Y\otimes H\otimes A\rtimes_r\Gamma}(g) \\ 
& =\langle u_{h^{-1}}\xi,u_{h_2}^{-1}\xi_2\rangle \langle \eta_1,\eta_2\rangle \langle \delta_g,\delta_{h_1h_2^{-1}}\rangle \alpha_{h_1}(\alpha_{h_1^{-1}}(a_1^*)\alpha_{h_2}(a_2)) \\
& = \langle \xi,u_{h_1h_2}^{-1}\xi_2\rangle \langle \eta_1,\eta_2\rangle \langle \delta_g,\delta_{h_1h_2^{-1}}\rangle a_1^*\alpha_{h_1h_2^{-1}}(a_2) \\
& =\langle \xi,u_{g}^{-1}\xi_2\rangle \langle \eta_1,\eta_2\rangle \langle \delta_{h_1},\delta_{gh_2}\rangle a_1^*\alpha_{g}(a_2) \\ 
& =
 \langle \xi_1\odot a_1\odot\eta_1\odot \delta_{h_1}~,~\xi_2\odot a_2\odot\eta_2\odot \delta_{h_2}\rangle_{\eg}(g).
\end{align*}
Hence $V$ extends to an isometry from $\eg$ into $H_Y\otimes H\otimes A\rtimes_r\Gamma$.  A standard computation shows that $V$ is adjointable, with adjoint given on $\mathcal{E}_0$ by the same formula as for $V$, i.e.\
$$
V^*(\xi\odot a\odot \eta\odot \delta_h)=u_{h^{-1}}\xi\odot \alpha_{h^{-1}}(a)\odot \eta\odot \delta_{h^{-1}}.
$$
Clearly from these formulas $V$ has dense image, and thus extends to a unitary isomorphism as claimed.

To complete the statement about the compact operators, note that we now have 
\begin{align*}
\mathcal{K}(\eg) &\cong \mathcal{K}(H_Y\otimes H\otimes A\rtimes_r\Gamma)\cong \mathcal{K}(H_Y\otimes H)\otimes \mathcal{K}(A\rtimes_r\Gamma)\\ &\cong \mathcal{K}(H_Y\otimes H)\otimes A\rtimes_r\Gamma,
\end{align*}
where the first isomorphism is conjugation by $V$, the second is a standard general isomorphism for external tensor products of Hilbert modules discussed in \cite[page 37]{Lance:1995ys}, and the third is the standard identification $\mathcal{K}(B)\cong B$ for any $C^*$-algebra considered as a Hilbert module over itself as discussed in \cite[page 10]{Lance:1995ys}.
\end{proof}

We now go back to commutativity of the top square of diagram \eqref{pas diag}.  Filling in some more details, the top square in Diagram \eqref{pas diag} looks as follows.
$$
\xymatrix{ KK_1^\Gamma(C_0(Y),A) \ar[r] \ar[ddd]_-\delta^-\cong & KK_1(\C,A\rtimes_r\Gamma) \ar[r]_-\cong^-\kappa & K_1(A\rtimes_r\Gamma) \ar[d] \\
& & K_1((A\rtimes_r\Gamma)\otimes \mathcal{K}) \ar[d] \ar[u]^-\cong_-{s} \\
& & K_1(\mathcal{K}(\eg)) \ar[u]^-\cong_-{\text{ad}_V} \ar[d]_-\cong^-{\pi_E}  \\
K_0(D^*/C^*) \ar[rr]_-\partial & & K_1(C^*) ~~,}
$$
where in the above:
\begin{enumerate}[(i)]
\item the map labelled $\delta$ is the Paschke duality isomorphism of Corollary \ref{pas dual cor};
\item the map labelled $\partial$ is the standard boundary map in $K$-theory;
\item the composition of the top two horizontal arrows is the the Baum-Connes assembly map $\mu$ (we have explicitly included the isomorphism $\kappa$);
\item the map labelled $\pi_E$ is the map on $K$-theory induced by the isomorphism of Lemma \ref{local lem};
\item the map labelled $\text{ad}_V$ is the map on $K$-theory induced by conjugation by the unitary isomorphism of Lemma \ref{v isom};
\item the map labelled $s$ is the stabilization isomorphism in $K$-theory.
\end{enumerate}

Consider now what happens to a class in $KK_1^\Gamma(C_0(Y),A)$ as it goes around this diagram.  Using Lemma \ref{kk rep}, we may assume our class is of the form $[\e,F,\epsilon,\pi]$, where $F$ is in $D^*$.  As discussed above, the assembly map $\mu$ along the top row of diagram \eqref{pas diag} takes this class to 
$$
[p_Y\cdot (\e\rtimes \Gamma),p_Y\widetilde{F}p_Y,\iota]\in KK_1(\C,A\rtimes_r\Gamma)
$$
where $\iota$ is the unit representation of $\C$.  Using Lemma \ref{u lem 2}, this class is the same as $[\eg,U^*\widetilde{F}U,\iota]$.  Lemma \ref{v isom} implies that $\eg$ is actually a standard module over $A\rtimes_r\Gamma$, and thus one of the standard formulations of the isomorphism between $KK_*(\C,B)$ and $K_*(B)$ (see \cite[17.5.4 -- 17.5.6]{Blackadar:1998yq}) says that this class corresponds to the image of the projection $\frac{1}{2}(1+U^*\widetilde{F}U)$ under the composition 
$$
\partial:K_0(\mathcal{L}(\eg)/\mathcal{K}(\eg))\to K_1(\mathcal{K}(\eg)) \to K_1(A\rtimes_r\Gamma),
$$
of the $K$-theory boundary map and the combination of the isomorphism $\text{ad}_V$ and the stabilization isomorphism.  On the other hand, going around the square to the bottom right corner in the other direction, our class $[\e,F,\epsilon,\pi]$ goes to the image of the projection $\frac{1}{2}(1+F)$ in $D^*/C^*$ under the boundary map
$$
\partial:K_0(D^*/C^*)\to K_1(C^*).
$$
Consider then the commutative diagram of boundary maps
$$
\xymatrix{ K_0(\mathcal{L}(\eg)/\mathcal{K}(\eg)) \ar[r]^-\partial & K_1(\mathcal{K}(\eg)) \\ 
K_0(D^*/C^*)\ar[r]_-\partial \ar[u]^-{\pi_E^{-1}} &  K_1(C^*) \ar[u]_-{\pi_E^{-1}}  },
$$
where the vertical maps are induced by the inverse of $\pi_E$ restricted to its image.  To complete the proof, the discussion above implies that it will be enough to show that the projections
$$
\frac{1}{2}(1+U^*\widetilde{F}U) \quad \text{and} \quad \pi_E^{-1}(\frac{1}{2}(1+F))
$$
in $\mathcal{L}(\eg)/\mathcal{K}(\eg)$ are the same.  For this the following lemma suffices, so it completes our analysis of the top sqaure.

\begin{lemma}\label{pert lem}
For any $F\in D^*$, 
$$
\pi_E(U^*\widetilde{F}U)-F
$$
is in $C^*$.
\end{lemma}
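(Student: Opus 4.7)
The plan is to exploit pseudolocality of operators in $D^*$: the difference $\pi_E(U^*\widetilde{F}U) - F$ will turn out to be a commutator-type expression that is compact in each term, and globally locally compact, $\Gamma$-invariant, and of finite $\Gamma$-propagation---that is, an element of $C^*$. Both $F \mapsto F$ and $F \mapsto \pi_E(U^*\widetilde{F}U)$ are norm-continuous maps $D^* \to \mathcal{L}(\e)$ (since $U$, $U^*$, and $\pi_E$ are contractive) and $C^*$ is norm-closed in $\mathcal{L}(\e)$, so it suffices to verify the claim for $F$ ranging over the dense $*$-subalgebra of $\Gamma$-invariant, pseudolocal, finite $\Gamma$-propagation operators. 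Fix such an $F$.

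First I would compute $U^*\widetilde{F}U$ explicitly on the dense subspace $\mathcal{E}_0 \subseteq \e$. Using $(Ue)(g) = c \cdot \epsilon_g(e)$ and the adjoint formula $U^*e' = \sum_h \epsilon_{h^{-1}}(c \cdot e'(h))$ from the proof of Lemma \ref{u lem 2} (both sums finite on $\mathcal{E}_0$ by properness and compact support), a straightforward calculation yields
\begin{equation*}
U^*\widetilde{F}U \cdot e \;=\; \sum_{g \in \Gamma} \widetilde{\epsilon}_{g^{-1}}(cFc) \cdot e \;=\; \sum_{g \in \Gamma} \gamma_g(c)\, F\, \gamma_g(c) \cdot e \qquad (e \in \mathcal{E}_0),
\end{equation*}
where the second equality uses $\Gamma$-invariance of $F$ together with $\widetilde{\epsilon}_g(c) = \gamma_g(c)$. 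By the identification of $\mathcal{E}_{A\rtimes_r\Gamma,E}$ with $\e$ from Lemma \ref{local lem}, $\pi_E(U^*\widetilde{F}U)$ is given by the same formula on $\mathcal{E}_0$. Invoking the cut-off identity $\sum_g \gamma_g(c)^2 = 1$ (strictly convergent) then gives
\begin{equation*}
F - \pi_E(U^*\widetilde{F}U) \;=\; \sum_g F \gamma_g(c)^2 - \sum_g \gamma_g(c) F \gamma_g(c) \;=\; \sum_{g \in \Gamma} [F, \gamma_g(c)]\, \gamma_g(c).
\end{equation*}

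Next I would verify that the right-hand side lies in $C^*$. Each summand is compact by pseudolocality of $F$. Reindexing $g \mapsto hg$ and using $\widetilde{\epsilon}_h(F) = F$ together with $\widetilde{\epsilon}_h(\gamma_g(c)) = \gamma_{hg}(c)$ establishes $\Gamma$-invariance of the sum. The $\Gamma$-propagation of the sum is bounded by that of $F$, since the factor $\gamma_g(c)$ only restricts support. Finally, local compactness holds because for any $f \in C_c(Y)$, finite $\Gamma$-propagation of $F$ combined with cocompactness of the $\Gamma$-action on $Y$ leaves only finitely many nonzero terms in $f \cdot \sum_g [F, \gamma_g(c)] \gamma_g(c)$, making it a finite sum of compact operators.

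The main technical obstacle is verifying that $\sum_g \gamma_g(c) F \gamma_g(c)$ genuinely extends from $\mathcal{E}_0$ to a bounded adjointable operator on all of $\e$, so that the displayed identities are valid in $\mathcal{L}(\e)$. I would handle this via the Hilbert-module Cauchy--Schwarz bound
\begin{equation*}
\Big\|\Big\langle \sum_g \gamma_g(c) F \gamma_g(c)\xi,\, \eta\Big\rangle\Big\| \;\leq\; \|F\| \Big\|\sum_g \langle \gamma_g(c)\xi, \gamma_g(c)\xi\rangle\Big\|^{1/2} \Big\|\sum_g \langle \gamma_g(c)\eta, \gamma_g(c)\eta\rangle\Big\|^{1/2},
\end{equation*}
whose right-hand sums collapse via $\sum_g \gamma_g(c)^2 = 1$ to $\|\langle\xi,\xi\rangle\|^{1/2}\|\langle\eta,\eta\rangle\|^{1/2}$, simultaneously establishing strict convergence and the norm bound $\|F\|$. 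Pulling this back through the density argument then completes the proof for all $F \in D^*$.
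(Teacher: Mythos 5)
Your proof is correct and takes essentially the same approach as the paper: compute $\pi_E(U^*\widetilde{F}U)$ explicitly on $\mathcal{E}_0$ as $\sum_g \gamma_g(c)F\gamma_g(c)$, rewrite the difference as a sum of commutator terms, and verify the defining conditions for membership in $C^*$ term by term. One small technical divergence is worth flagging. The paper writes the difference as $\sum_{g}\gamma_{g^{-1}}(c)[F,\gamma_{g^{-1}}(c)]$, with the cut-off factor on the left, so that $f\cdot(\text{sum})$ has only finitely many nonzero terms directly from properness and compact supports, without any reference to the propagation of $F$. Your expression $\sum_g [F,\gamma_g(c)]\gamma_g(c)$ puts the cut-off on the right, so for the \emph{left} multiplication $f\cdot(\text{sum})$ you do need finite $\Gamma$-propagation of $F$ (and properness of $Y$ as a metric space) to see that $fF\gamma_g(c)^2\neq 0$ for only finitely many $g$; you correctly invoke the propagation condition here, though the relevant hypothesis is properness rather than cocompactness per se. A slicker route with your expression would have been to check $Tf$ instead of $fT$, where the finiteness is immediate. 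The Cauchy--Schwarz boundedness argument and the density reduction are harmless extras that the paper handles implicitly (the operators of interest always come from Kasparov cycles, so lie in the dense $*$-subalgebra).
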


\begin{proof}
We compute what the operator $\pi_E(U^*\widetilde{F}U)$ does on an element $e$ of $\mathcal{E}_0\subseteq \e$.  
\begin{align*}
\pi_E(U^*\widetilde{F}U)e & =U^*\widetilde{F}Ue=\sum_{g\in \Gamma}\epsilon_{g^{-1}}(c(\widetilde{F}Ue)(g))=\sum_{g\in \Gamma}\epsilon_{g^{-1}}(cF(Ue)(g)) \\ & =\sum_{g\in \Gamma}\epsilon_{g^{-1}}(cFc\epsilon_g(e))=\sum_{g\in \Gamma}\gamma_{g^{-1}}(c)F\gamma_{g^{-1}}(c)e,
\end{align*}
where the last inequality used $\Gamma$-invariance of $F$.  Hence 
$$
\pi_E(U^*\widetilde{F}U)-F=\sum_{g\in \Gamma}\gamma_{g^{-1}}(c)F\gamma_{g^{-1}}(c)-F.
$$
To see that this operator is in $C^*$, we must show that it is $\Gamma$-invariant, has finite $\Gamma$-propagation, and is $A$-locally compact.  The first two of these are clear, as they hold for each of the two terms individually.  To see that the operator is $A$ locally compact, let $f$ be an element of $C_c(Y)$.  Let $S:=\{g\in \Gamma\mid f\cdot \gamma_{g^{-1}}(c)\neq0\}$, which is finite by properness of the action, and compact support of $f$ and $c$.  Then we have
\begin{align*}
f\cdot \Big(\sum_{g\in \Gamma}\gamma_{g^{-1}}(c)F\gamma_{g^{-1}}(c)-F\Big) &= f\cdot \Big(\sum_{g\in S}\gamma_{g^{-1}}(c)F\gamma_{g^{-1}}(c)-\sum_{g\in S} \gamma_{g^{-1}}(c^2)F\Big) \\
& =f\cdot \Big(\sum_{g\in S}\gamma_{g^{-1}}(c)[F,\gamma_{g^{-1}}(c)]\Big),
\end{align*}
where the first equality uses that $\sum\gamma_{g^{-1}}(c^2)=1$.  The sum in parentheses is a finite sum of operators in $\mathcal{K}(\e)$, so we are done.
\end{proof}

\subsection*{Localization algebra square}

In studying the bottom square, it will help to introduce some auxiliary $C^*$-algebras.   For a $C^*$-algebra $B$, let $TB$ denote the $C^*$-algebra of all bounded, uniformly continuous functions from $[0,\infty)$ to $B$.  We then have a commutative diagram of short exact sequences of $C^*$-algebras.  
\begin{equation}\label{3by3}
\xymatrix{ 0 \ar[r] & C^* \ar[r] & D^* \ar[r] & D^* / C^* \ar[r] & 0 \\
0 \ar[r] & TC^* \ar[r] \ar[u] & TD^* \ar[r] \ar[u] & TD^* / TC^* \ar[r] \ar[u] & 0 \\
0 \ar[r] & C_L^* \ar[r]  \ar[u] & D_L^* \ar[r] \ar[u]  & D_L^* / C_L^* \ar[r]  \ar[u] & 0 }
\end{equation}
Here the upper three vertical arrows are all induced by evaluation-at-zero maps, while the lower three vertical arrows are all induced by simply forgetting the condition on metric propagation in the definition of $D_L^*$ and $C^*_L$.  As already mentioned, the bottom square in Diagram \eqref{pas diag} is induced by the boundary maps from the top and bottom sequences in Diagram \eqref{3by3}, and thus automatically commutes, so it remains to show that the maps labelled (iii) and (iv) in Diagram \eqref{pas diag} are isomorphisms.  Indeed, that (iii) is an isomorphism follows from Lemmas \ref{top} and \ref{c*-iso} below, while isomorphism of (iv) is Lemma \ref{dl die} below, so these lemmas complete our analysis of the bottom square.

\begin{lemma}\label{top}
(Compare \cite[Proposition 3.6]{Qiao:2010fk}).  The upper three vertical maps in Diagram \eqref{3by3} induce isomorphisms on $K$-theory.
\end{lemma}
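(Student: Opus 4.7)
The plan is to show each of the three evaluation-at-zero maps induces an isomorphism on $K$-theory. I will handle the first two cases ($TC^* \to C^*$ and $TD^* \to D^*$) by a direct Eilenberg-swindle argument and then deduce the third case ($TD^*/TC^* \to D^*/C^*$) from a five-lemma argument applied to the six-term $K$-theory exact sequences coming from the top and middle rows of Diagram~\eqref{3by3}.

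Fix $B \in \{C^*, D^*\}$. The inclusion of constant functions $\phi: B \to TB$, $\phi(b)(t) = b$, is a splitting of $e_0: TB \to B$, so by the six-term exact sequence associated to $0 \to T_0B \to TB \to B \to 0$ (where $T_0B := \ker e_0$) it suffices to show $K_*(T_0B) = 0$. The key input is the infinite-dimensional Hilbert space factor $H$ in $\e$ of Definition~\ref{gen loc}, on which $\Gamma$ acts trivially and neither $C_0(Y)$ nor $A$ acts. Fix isometries $(u_n)_{n=0}^\infty$ on $H$ with mutually orthogonal ranges summing strongly to the identity; extended by the identity on the other tensor factors of $\e$, they define isometries in $M(B)$ of zero $\Gamma$-propagation that commute with $C_0(Y)$, and hence in $M(T_0B)$ (acting pointwise in $t$).

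Now define $\mu: T_0B \to T_0B$ by
$$\mu(a)(t) := \sum_{n=0}^\infty u_n\, a((t-n)_+)\, u_n^*, \qquad (s)_+ := \max\{s,0\}.$$
Since $a(0)=0$, at each fixed $t$ only finitely many terms are nonzero, so $\mu(a)(t) \in B$; orthogonality of the ranges of the $u_n$ gives $\|\mu(a)(t)\| \leq \|a\|$, and the modulus of continuity of $a$ controls that of $\mu(a)$ uniformly in $t$, so $\mu(a) \in T_0B$. Writing $\mu = \alpha_0 + \mu^{+1}$ with $\alpha_0(a)(t) = u_0 a(t) u_0^*$ and $\mu^{+1}(a)(t) = \sum_{n \geq 1} u_n a((t-n)_+) u_n^*$, the images of $\alpha_0$ and $\mu^{+1}$ are orthogonal, so Lemma~\ref{orth} gives $\mu_* = (\alpha_0)_* + \mu^{+1}_*$ on $K_*(T_0B)$; and Lemma~\ref{isom con} gives $(\alpha_0)_* = \mathrm{id}_*$.

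Finally, the isometry $v := \sum_{n \geq 0} u_{n+1} u_n^* \in M(T_0B)$ satisfies $\mu(a) v^*v = \mu(a)$, so by Lemma~\ref{isom con} one has $\mu_* = (v\mu v^*)_*$; a direct computation gives $v \mu(a) v^*(t) = \sum_{m \geq 1} u_m a((t-m+1)_+) u_m^*$, which is joined to $\mu^{+1}$ by the point-norm continuous homotopy of $*$-homomorphisms
$$h_s(a)(t) := \sum_{m \geq 1} u_m\, a((t-m+s)_+)\, u_m^*, \quad s \in [0,1],$$
continuity in $s$ following from orthogonality of the $u_m$ and uniform continuity of $a$. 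Hence $\mu_* = \mu^{+1}_*$, and combined with $\mu_* = \mathrm{id}_* + \mu^{+1}_*$ this forces $\mathrm{id}_* = 0$ on $K_*(T_0B)$, so $K_*(T_0B) = 0$ as desired. The main obstacle throughout is the bookkeeping needed to verify that every object constructed genuinely lives in $T_0B$ rather than only in $M(T_0B)$ — crucially, that $\mu(a)$ is bounded and uniformly continuous and that $s \mapsto h_s$ is point-norm continuous — but each such check reduces, via the orthogonality of the ranges of the $u_n$, to a statement about the single function $a$ and its modulus of continuity.
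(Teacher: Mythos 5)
Your proof is correct and follows essentially the same route as the paper's: reduce to showing $K_*(T_0B)=0$ for $B\in\{C^*,D^*\}$ via the split exact sequence and five lemma, then run the Eilenberg swindle $\mu(a)(t)=\sum_n u_n a((t-n)_+)u_n^*$ using isometries from the auxiliary Hilbert space factor, a conjugation by $v=\sum u_{n+1}u_n^*$, and a shift homotopy. The only differences are cosmetic: the paper extends $a$ by zero rather than using $(\cdot)_+$, and it performs the conjugation/shift step before the orthogonal decomposition $\mu=\alpha_0+\mu^{+1}$ rather than after, but the argument is the same.
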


\begin{proof}
Using the six term exact sequence and the five lemma, it suffices to show the two maps on the left induce isomorphisms on $K$-theory.  For this it suffices to show the following: if $B$ is a $C^*$-algebra which has a stability structure in the sense of Definition \ref{stab str}, then the evaluation-at-zero map $TB\to B$ induces an isomorphism on $K$-theory.  Using the six term exact sequence again, it suffices to show that if $B$ is any $C^*$-algebra with a stability structure, then 
$$
T_0B:=\{f\in TB\mid f(0)=0\}
$$
has trivial $K$-theory.  This is what we will now do.

Let $(u_n)$ be the unitaries in the definition of a stability structure.  For an element $b\in T_0(B)$, extend $b$ to a function $b:\R\to B$ by setting $b(t)=0$ for all $t<0$.   For each $n$, define an inclusion 
$$
\mu_n:T_0B\to T_0B,\quad (\mu_nb)(t)=b(t-n).
$$
Then each $\mu_n$ is a $*$-homomorphism.  Moreover, the map
$$
\mu:T_0B\to T_0B, \quad b\mapsto \sum_{n=0}^\infty u_n \mu_n(b)u_n^*
$$
is a $*$-homomorphism, as for any fixed $t$, all but finitely many of the functions $\mu_n(b)$ take the value zero at $t$.  Conjugating by the isometry 
$$
v=\sum_{n=0}^\infty u_{n+1}u_n^*
$$
shows that $\mu$ induces the same map on $K$-theory as the map $\mu':T_0B\to T_0B$ defined by 
$$
\mu'(b)=\sum_{n=1}^\infty u_n\mu_{n-1}(b)u_n^*,
$$
and applying a shift homotopy at each `level' indexed by $n$ (plus using uniform continuity of $b$) shows that $\mu'$ induces the same map on $K$-theory as $\mu^{+1}:T_0B\to T_0B$ defined by
$$
\mu^{+1}(b):=\sum_{n=1}^\infty u_n\mu_n u_n^*.
$$
Then we clearly have that 
$$
\mu=\text{ad}_{u_0}\circ \mu_0+\mu^{+1} 
$$
as $*$-homomorphisms (the right hand side is a $*$-homomorphism as $\mu_0$ and $\mu^+1$ have orthogonal images: compare Lemma \ref{orth} above).  Note that $\text{ad}_{u_0}$ is just conjugation by an isometry in the multiplier algebra of $T_0B$, and thus defines the identity on $K$-theory (see Lemma \ref{isom con} above).  Hence passing to induced maps on $K$-theory gives
$$
\mu_*=(\text{ad}_{u_0})_*\circ (\mu_0)_*+\mu^{+1}_*=\text{id}+\mu_*,
$$
and cancelling $\mu_*$ gives that the identity map is zero, which gives $K_*(T_0(B))=0$ as claimed.
\end{proof}

\begin{lemma}\label{c*-iso}
(Compare \cite[Proposition 2.3]{Qiao:2010fk}).  With notation as in Diagram \eqref{3by3} above, the map $D_L^*/C_L^*\to TD^*/TC^*$ is an isomorphism of $C^*$-algebras.
\end{lemma}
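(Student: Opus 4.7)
The map is clearly a well-defined $*$-homomorphism: the inclusion $D_L^* \hookrightarrow TD^*$ (as sub-$C^*$-algebras of bounded operators on $L^2([0,\infty), \e)$) carries $C_L^*$ into $TC^*$, and so descends to the quotients. It remains to prove injectivity and surjectivity, and my plan is to derive both from a single construction: a family of ``compression'' $*$-linear maps $\alpha_n : D^* \to D^*$ that reduce metric propagation while differing from the identity by elements of the ideal $C^*$.

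To build $\alpha_n$, I would use properness and cocompactness of the $\Gamma$-action on $Y$ to construct, for each $n \in \N$, a $\Gamma$-invariant locally finite open cover of $Y$ by sets of diameter at most $1/n$, together with a $\Gamma$-invariant subordinate partition of unity $\{\phi_i^{(n)}\}_{i \in I_n}$. Setting $\psi_i^{(n)} := (\phi_i^{(n)})^{1/2}$, one has $\sum_i (\psi_i^{(n)})^2 = 1$, and I would define
$$
\alpha_n(T) := \sum_{i \in I_n} \psi_i^{(n)}\, T\, \psi_i^{(n)}
$$
interpreted via strong operator convergence. A direct computation using $\|\sum_i \psi_i^{(n)} T \psi_i^{(n)} \xi\|^2 \leq \|T\| \langle \sum_i (\psi_i^{(n)})^2 \xi, \xi\rangle \|T\|$ shows $\|\alpha_n(T)\| \leq \|T\|$. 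The key properties I would verify are: (i) $\alpha_n(T)$ has metric propagation at most $2/n$; (ii) $\alpha_n$ preserves $\Gamma$-invariance and $\Gamma$-propagation; (iii) $\alpha_n$ maps $D^*$ into $D^*$ and $C^*$ into $C^*$; and crucially (iv) for $T \in D^*$,
$$
T - \alpha_n(T) = \sum_i \psi_i^{(n)}\bigl[T, \psi_i^{(n)}\bigr]
$$
lies in $C^*$, because pseudolocality of $T$ makes each commutator $[T,\psi_i^{(n)}]$ compact and local finiteness guarantees that multiplication by any $f \in C_c(Y)$ leaves only finitely many nonzero summands.

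For surjectivity, given $b \in TD^*$ I would define $a \in TD^*$ by the affine interpolation
$$
a(t) := (n+1-t)\alpha_n(b(t)) + (t-n)\alpha_{n+1}(b(t)) \qquad \text{for } t \in [n, n+1].
$$
Then $a$ is bounded and uniformly continuous (because the $\alpha_n$ are $1$-Lipschitz and $b$ is uniformly continuous), has metric propagation tending to zero, and inherits a $\Gamma$-propagation bound from $b$; hence $a \in D_L^*$. Property (iv) above, together with the convexity of the interpolation, gives $a(t) - b(t) \in C^*$ for every $t$, and uniform continuity of $a - b$ then places it in $TC^*$. For injectivity, suppose $a \in D_L^*$ maps to zero in $TD^*/TC^*$, i.e.\ $a \in TC^*$. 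Given $\epsilon > 0$, I would first choose $a' \in \mathcal{C}_L[G;s]$-type representatives with $\|a' - a\| < \epsilon/2$ (using $a \in D_L^*$), then apply the interpolation construction to the error $a' - a \in TD^*$ to produce a correction $c \in D_L^*$ with $c - (a' - a) \in TC^*$. Because property (iii) implies $\alpha_n$ preserves the ideal $C^*$, and because $a$ itself takes values in $C^*$, one sees that $a' - c$ lands in the pre-completion $*$-algebra defining $C_L^*$, producing a $C_L^*$-element within $\epsilon$ of $a$.

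The main obstacle I foresee is verifying property (iv) in full rigor: one must confirm that the sum of compact commutators $\sum_i \psi_i^{(n)}[T,\psi_i^{(n)}]$ not only converges in a reasonable sense but actually defines an element of $C^*$ (i.e.\ is $\Gamma$-invariant, has finite $\Gamma$-propagation, and is $C_0(Y)$-locally compact), and that this remains true when $T$ is a general element of $D^*$ rather than an element of the dense $*$-subalgebra of finite $\Gamma$-propagation pseudolocal operators. A secondary obstacle is handling the completions carefully: although the construction is transparent for representatives in the pre-completion $*$-algebras, care is needed when $a \in D_L^*$ or $b \in TD^*$ is only a norm-limit, which I would address by approximating first by finite-$\Gamma$-propagation elements of $D^*$ in a uniformly continuous way and then applying the $\alpha_n$ machinery.
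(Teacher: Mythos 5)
Your proposal takes essentially the same route as the paper: both construct an explicit inverse $\Phi : TD^* \to D_L^*$ by choosing $\Gamma$-invariant partitions of unity $\{\phi_{i,n}\}$ with $\sum_i \phi_{i,n}^2 = 1$ and supports of diameter at most $1/n$, compressing $a(t)$ by $T \mapsto \sum_i \phi_{i,n} T \phi_{i,n}$, and interpolating affinely as $t$ moves through $[n,n+1]$, with pseudolocality (your commutator identity, up to a sign: $T - \alpha_n(T) = -\sum_i \psi_i^{(n)}[T,\psi_i^{(n)}]$) ensuring $\Phi(a)-a \in TC^*$. You spell out the injectivity direction ($D_L^* \cap TC^* = C_L^*$) which the paper folds into ``the result follows from this,'' and the two rigor concerns you flag are handled, respectively, by norm-continuity of the contractive map $T \mapsto T - \alpha_n(T)$ from the dense subalgebra of $D^*$ together with closedness of $C^*$, and by $\Phi$ being a complete contraction so that it passes to the $C^*$-completions.
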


\begin{proof}
We define an inverse map.  As the action of $\Gamma$ on $Y$ is proper, it is not too difficult to see that for each $n$, there exists a partition of unity $\{\phi_{i,n}:Y\to [0,1]\}_{i\in I_n}$, which is $\Gamma$-invariant, such that each $\phi_{i,n}$ has compact support of diameter at most $1/n$, and such that $\sum_{i\in I_n}\phi_{i,n}^2(y)=1$ for all $y\in Y$.  Define a map $\Phi:TD^*\to D_L^*$ by stipulating that when $t\in [n,n+1]$, 
$$
\Phi(a)(t):=(n+1-t)\sum_{i\in I_{n+1}}\phi_{i,n+1}a(t)\phi_{i,n+1} +(t-n)\sum_{i\in I_{n+2}}\phi_{i,n+2}a(t)\phi_{i,n+2}.
$$
Then it is not too difficult to see that $\Phi$ is a well-defined complete contraction, that $\Phi$ descends to a well-defined $*$-homomorphism on the quotients, and that $\Phi(a(t))-a(t)\in TC^*$ for all $a$ and all $t$ (compare \cite[Lemma 2.2]{Qiao:2010fk}).  The result follows from this.
\end{proof}

\begin{lemma}\label{dl die}
(Compare \cite[Proposition 3.5]{Qiao:2010fk}).  The $C^*$-algebra $D_L^*$ has trivial $K$-theory.  
\end{lemma}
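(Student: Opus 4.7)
The plan is to prove $K_*(D_L^*) = 0$ by an Eilenberg swindle in the spirit of the argument for $T_0 B$ in Lemma \ref{top}. The main difference is that elements of $D_L^*$ are not required to vanish at $t = 0$; instead they enjoy decay of metric propagation as $t \to \infty$. The swindle therefore has to shift ``towards infinity,'' where $D_L^*$ already looks tame, rather than ``towards zero.''

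First, using an identification $H \cong \bigoplus_{n=0}^\infty H$, I would fix isometries $u_n: H \to H$ with mutually orthogonal ranges such that $\sum_n u_n u_n^* = 1$ strongly. Tensored with the identity on $H_Y \otimes A \otimes \ell^2(\Gamma)$ and taken constantly in the localization parameter $t$, each $u_n$ gives an element of $M(D_L^*)$, and $v := \sum_{n=0}^\infty u_{n+1} u_n^*$ is a strictly convergent isometry in $M(D_L^*)$. Next, for $a \in D_L^*$ and $n \geq 0$, define the future-shifted element
$$
a^{(n)}(t) := a(\max(t,n)).
$$
It is immediate that $a^{(n)}$ is uniformly continuous with the same modulus as $a$, has the same $\Gamma$-propagation bound, and has metric propagation $\mathrm{prop}(a(\max(t,n)))$ at time $t$, which tends to zero as $t \to \infty$; hence $a^{(n)} \in D_L^*$.

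The heart of the proof is the $*$-homomorphism
$$
\mu: D_L^* \to D_L^*, \qquad \mu(a) := \sum_{n=0}^\infty u_n a^{(n)} u_n^*.
$$
At each time $t$ the sum converges strictly in $M(D^*)$ (orthogonality of the $u_n u_n^*$ gives an orthogonal sum of operators), and the crucial point is that the metric propagation of $\mu(a)(t)$ equals $\sup_n \mathrm{prop}(a(\max(t,n))) \leq \sup_{s \geq t} \mathrm{prop}(a(s))$, which tends to zero as $t \to \infty$. Uniform continuity and the $\Gamma$-propagation bound descend from $a$ in the same way, so $\mu(a) \in D_L^*$; multiplicativity follows from $u_n^* u_m = \delta_{nm}$ and the obvious identity $(ab)^{(n)} = a^{(n)} b^{(n)}$.

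Now I would carry out the swindle. Writing $\mu = \mathrm{ad}_{u_0} \circ \mathrm{id} + \mu^+$ with $\mu^+(a) := \sum_{n \geq 1} u_n a^{(n)} u_n^*$ (orthogonal images), Lemmas \ref{orth} and \ref{isom con} give $\mu_* = \mathrm{id}_* + \mu^+_*$ on $K_*(D_L^*)$. On the other hand, Lemma \ref{isom con} applied to conjugation by $v \in M(D_L^*)$ shows $\mu_*$ equals the map induced by $a \mapsto v\mu(a)v^* = \sum_{n \geq 1} u_n a^{(n-1)} u_n^*$. This map is in turn norm-homotopic to $\mu^+$ via
$$
\psi_s(a) := \sum_{n \geq 1} u_n a^{(n-1+s)} u_n^*, \qquad s \in [0,1],
$$
where $a^{(m)}(t) := a(\max(t,m))$ for real $m \geq 0$; uniform continuity of $a$ makes $s \mapsto \psi_s(a)$ norm-continuous into $D_L^*$, with $\psi_0 = v\mu(\cdot)v^*$ and $\psi_1 = \mu^+$. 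Hence $\mu_* = \mu^+_*$. Combining, $\mathrm{id}_* + \mu^+_* = \mu_* = \mu^+_*$, forcing $\mathrm{id}_* = 0$ on $K_*(D_L^*)$.

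The main obstacle is the verification that $\mu(a) \in D_L^*$: the whole argument hinges on designing a shift for which the infinite orthogonal sum still has metric propagation decaying at infinity. This is why I use a future-shift $a^{(n)}(t) = a(\max(t,n))$ rather than a naive translation; at each time $t$ only values $a(s)$ with $s \geq t$ appear, where $\mathrm{prop}(a(s))$ is already uniformly small. Once this is in place, the remaining steps are standard swindle manipulations already encoded in Lemmas \ref{isom con} and \ref{orth}.
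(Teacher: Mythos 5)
Your argument follows the paper's strategy exactly: shift the localization variable towards infinity, where propagation is small, and run an Eilenberg swindle there. The only cosmetic difference is the choice of shift, $a^{(n)}(t) = a(\max(t,n))$ instead of the paper's $a(t+n)$; both work equally well, you correctly verify uniform continuity, the $\Gamma$-propagation bound, and decay of metric propagation of the infinite orthogonal sum, and the cancellation $\mu_* = \mathrm{id}_* + \mu^+_* = \mu^+_*$ is set up correctly via Lemmas \ref{isom con} and \ref{orth}.

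There is, however, a genuine gap at the step ``so $\mu(a) \in D_L^*$.'' You establish that the sum $\mu(a)(t)$ converges strictly in $M(D^*)$ and has the right propagation behavior, but strict convergence in the multiplier algebra does not place $\mu(a)(t)$ in $D^*$: one must also check pseudolocality, i.e.\ that $[f,\mu(a)(t)] = \sum_n u_n[f,a^{(n)}(t)]u_n^*$ is compact for every $f\in C_0(Y)$. Each summand is compact, but an infinite orthogonal sum of compacts is compact only when the norms $\|[f,a^{(n)}(t)]\|$ tend to zero with $n$. This is precisely what the paper flags as ``the point'': it follows because a pseudolocal operator with small metric propagation nearly commutes with $C_0(Y)$ (cf.\ \cite[Proposition 5.18]{Roe:1993lq}), combined with the observation that $a^{(n)}(t)=a(\max(t,n))$ has metric propagation tending to zero as $n\to\infty$ for each fixed $t$. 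You identify the obstacle as ensuring the sum has decaying \emph{metric propagation}, but the decisive issue is really that each $\mu(a)(t)$ (and each $\psi_s(a)(t)$ in your homotopy) lies in $D^*$ at all; without the commutator estimate the map $\mu$ only lands in $M(D_L^*)$ and the swindle never gets started.
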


\begin{proof}
Again, we use the stability structure $(u_n)_{n=0}^\infty$ on $D^*_L$ coming from a decomposition of the `auxiliary Hilbert space' $H$ into countably many infinite dimensional summands.  For each $n$, define a $*$-homomorphism $\mu_n:D_L^*\to D_L^*$ by the formula
$$
(\mu_n(a))(t)=a(t+n).
$$
Then we may define $\mu:D^*_L\to M(D^*_L)$ by the formula 
$$
\mu(a):=\sum_{n=0}^\infty u_n\mu_n(a)u_n^*.
$$
Note however, that the image actually lands in $D^*_L$, not its multiplier algebra: the point is that $[\mu_n(a)(t),f]\to 0$ in norm as $n\to\infty$ for any $f\in C_0(Y)$ and $t\in [0,\infty]$, using the propagation condition (compare the proof of \cite[Proposition 5.18]{Roe:1993lq}).  A combination of conjugation by an isometry and a homotopy quite analogous to the argument of Lemma \ref{top} shows that $\mu$ induces the same map on $K$-theory as $\mu_{+1}$, where the latter is defined by the same formula, except that the sum starts at $n=1$.  Finally, we have that as maps on $K$-theory
$$
\mu_*=(\text{ad}_{u_0})_*\circ (\mu_0)_*+\mu^{+1}_*=\text{id}+\mu_*,
$$
whence the identity induces the zero map on $K$-theory, and we are done.
\end{proof}

\bibliographystyle{abbrv}

\bibliography{Generalbib}

\end{document}